\documentclass[hidelinks,12pt]{amsart}
\usepackage[dvipsnames]{xcolor}
\usepackage{xcolor,amssymb,enumerate,tikz-cd,hyperref,amsmath,mathrsfs,mathtools,enumitem,changepage,mathdots,amssymb }

\usepackage{eucal}

\setlength\parindent{10pt}

\setlength{\textwidth}{\paperwidth}
\addtolength{\textwidth}{-2in}
\calclayout

\author{Alexei Latyntsev}

\theoremstyle{definition}

\newtheorem{theorem}[subsubsection]{Theorem}

\newtheorem{prop}[subsubsection]{Proposition}

\newtheorem{cor}[subsubsection]{Corollary}

\newtheorem{lem}[subsubsection]{Lemma}

\newtheorem*{theorem*}{Theorem}
\newtheorem*{prop*}{Proposition}
\newtheorem*{lem*}{Lemma}
\newtheorem*{cor*}{Corollary}
\newtheorem*{defn*}{Definition}
\newtheorem*{pdefn*}{Proto-Definition}
\newtheorem*{theoremmain*}{Theorem \ref{mainthm}}

\def\End{\mathop{\text{End}}}
\def\Frac{\mathop{\text{Frac}}}
\def\Spec{\mathop{\text{Spec}}}
\def\Sym{\mathop{\text{Sym}}}

\makeatletter
\pgfqkeys{/tikz/commutative diagrams}{
  row sep/.code={\tikzcd@sep{row}{#1}{}},
  column sep/.code={\tikzcd@sep{column}{#1}{}},
  bo row sep/.code={\tikzcd@sep{row}{#1}{between origins}},
  bo column sep/.code={\tikzcd@sep{column}{#1}{between origins}},
  bo column sep/.default=normal,
  bo row sep/.default=normal,
}
\def\tikzcd@sep#1#2#3{
  \pgfkeysifdefined{/tikz/commutative diagrams/#1 sep/#2}%
    {\pgfkeysalso{/tikz/#1 sep={\ifx\\#3\\1*\else1.7*\fi\pgfkeysvalueof{/tikz/commutative diagrams/#1 sep/#2},#3}}}%
    {\pgfkeysalso{/tikz/#1 sep={#2,#3}}}}
\makeatother

\begin{document}

 \title{Cohomological Hall algebras and vertex algebras}
 \maketitle

\begin{adjustwidth}{20pt}{20pt}
\small{\textsc{Abstract}: The moduli stack of representations of a quiver, or coherent sheaves on a proper curve, carries two structures on its cohomology: a Hall algebra and braided vertex coalgebra. We show that they are compatible, by developing a formulation of abelian localisation which works in the derived/singular setting. An application of these techniques gives an explicit formula for the cohomological Hall algebra products.}
\end{adjustwidth}

\section*{Contents}

\begin{itemize}[leftmargin=10pt]
\item[1] \hyperref[Intro]{Introduction}  \enspace\dotfill\enspace \pageref{Intro}

\item[2] \textcolor{black}{\hyperref[coha]{Cohomological Hall algebras}}   \enspace\dotfill\enspace \pageref{coha}
\item[3] \textcolor{black}{\hyperref[va]{Vertex algebras and the cohomology of moduli spaces}}   \enspace\dotfill\enspace \pageref{va}
\item[4] \textcolor{black}{\hyperref[description]{Examples}}   \enspace\dotfill\enspace \pageref{description}
\item[5] \textcolor{black}{\hyperref[euler]{Euler classes}}   \enspace\dotfill\enspace \pageref{euler}
\item[6] \hyperref[note]{Relative case}   \enspace\dotfill\enspace \pageref{note}
\item[7] \textcolor{black}{\hyperref[spec]{Specialisation, deformation and fundamental classes}}   \enspace\dotfill\enspace \pageref{spec}
\item[8] \hyperref[abloc]{Abelian localisation}   \enspace\dotfill\enspace \pageref{abloc}
\item[9] \textcolor{black}{\hyperref[shclass]{Sheaves on classifying spaces}}   \enspace\dotfill\enspace \pageref{shclass}
\item[10] \hyperref[mainsect]{The main theorem}   \enspace\dotfill\enspace \pageref{mainsect} 
\item[11] \hyperref[explicit]{Explicit computations}   \enspace\dotfill\enspace \pageref{explicit}
\item[{}]  
\item[A] \textcolor{black}{\hyperref[sheaves]{Spaces and sheaves}} \enspace\dotfill\enspace \pageref{sheaves}
\item[B] \textcolor{black}{\hyperref[tot]{Total spaces of perfect complexes}}   \enspace\dotfill\enspace \pageref{tot}
\item[C] \hyperref[stratifications]{Stratifications}   \enspace\dotfill\enspace \pageref{stratifications} 
\item[D] \hyperref[stab]{Stabiliser groups}   \enspace\dotfill\enspace \pageref{stab} 
\item[] \textcolor{black}{\hyperref[references]{References}}   \enspace\dotfill\enspace \pageref{references}
\end{itemize}

\section{Introduction} \label{Intro} 

\subsection{} There are (at least) two deep structures coming from moduli spaces of objects in an abelian category, which are expected to be related to string theory.

\begin{center}
\begin{tikzpicture}
\node [] at (0,0) {$\mathcal{A}$};
\node [] at (2,0) {$X$};

\node [] at (1,0) {$\rightsquigarrow$};

\node [rotate=28] at (3.7,0.65) {$\rightsquigarrow$};
\node [rotate=-28] at (3.7,-0.65) {$\rightsquigarrow$};

\node [left] at (3.7,1) {\small{\cite{KS}}};
\node [left] at (3.7,-1) {\small{\cite{J}}};

\node [right] at (5,1) {$\text{H}^\cdot(X)\otimes \text{H}^\cdot(X)\to \text{H}^\cdot(X)$};
\node [right] at (5,-1) {$\text{H}_\cdot(X)\stackrel{Y}{\to} \End\text{H}_\cdot(X)((z))$};

\node [] at (0,-1.5) {\textcolor{white}{a}};

\end{tikzpicture}
\end{center}
\textit{Cohomological Hall algebras (CoHAs)} use short exact sequences in a abelian category $\mathcal{A}$ to define an algebra structure on the cohomology of  its moduli stack of objects $X$:
\begin{center}
\begin{tikzcd}[bo column sep,execute at end picture={
\node [] at ([yshift=0.7cm,xshift=2cm]F) {$\rightsquigarrow$};
\node [right] at ([yshift=0.7cm,xshift=4cm]F) {$\text{H}^\cdot(X)\otimes \text{H}^\cdot(X)\stackrel{p_*q^*}{\to} \text{H}^\cdot(X)$.};
\node [] at ([yshift=1.1cm,xshift=2cm]F) {\small{sect. \ref{coha}}};
}]

   &\text{Ext}\arrow[rd,"p"]\arrow[ld,"q",swap]&\\
X\times X & & |[alias=F]|  X 
\end{tikzcd}
\end{center}
 Here $\text{Ext}$ is the moduli space of short exact sequences in $\mathcal{A}$, and $p$ and $q$ pick out the middle and outer terms, respectively. The corresponding physics notion is an algebra of \textit{BPS states}. 

The second uses a $K$-linear abelian structure on $\mathcal{A}$ to define a holomorphic \textit{vertex algebra} 
\begin{center}
\begin{tikzpicture}

\node [] at (0,-0.07) {$K^\times\to \text{End}(\mathcal{A})$};
\node [] at (4,0) {$\mathcal{A}\times\mathcal{A}\stackrel{\oplus}{\longrightarrow}\mathcal{A}$};

\node [] at (7,-0.1) {$\rightsquigarrow$};
\node [] at (7,0.3) {\small{sect. \ref{va}}};

\node [right] at (9,0) {$\text{H}_\cdot(X)\to \End\text{H}_\cdot(X)((z)),$};

\end{tikzpicture}
\end{center}
then twists it using the extensions in $\mathcal{A}$. Vertex algebras formalise the notion of $2$\textit{d conformal field theory}; they consist of a vector space $V$ of ``states living on $\mathbf{A}^1$" and a map 
$$Y(z)\ :\ V \ \longrightarrow \ \End V((z)),$$
which expresses the effect of inserting another state at point $z$. When independent of $z$ this is just a commutative algebra.

\subsubsection{} CoHAs were discovered in \cite{KS} by Kontsevich and Soibelman, vertex algebras by Borcherds in \cite{B} and the vertex algebra structure on $\text{H}_\cdot(X)$ by Joyce in \cite{J}.

\subsubsection{} Our main theorem relates these two structures: if $\mathcal{A}$ is the category of finite dimensional representations of a finite quiver or coherent sheaves on a smooth proper curve, and $X$ is its moduli stack of objects, then
\begin{theoremmain*} $\text{H}^\cdot(X)$ is a braided super vertex bialgebra.
\end{theoremmain*}
Informally, this theorem says that the two structures interact well with each other; for definitions see section \ref{vadefn}.  The basic idea behind the proof is that both structures are related to Euler classes.

\subsubsection{} Our categories have dimension one: any higher and the CoHA construction does not work as $p$ is not quasismooth, so $p_*$ does not exist. Also implicit in the theorem is the fact that we actually get a \textit{braided} super vertex algebra structure on homology; this is a genuine super vertex algebra for symmetric quivers and elliptic curves. 

\subsubsection{} Throughout $\text{H}^\cdot(X)$ denotes a \textit{rational} cohomology theory over a field $k$, for example de Rham cohomology over $\mathbf{C}$ or $\ell$-adic cohomology over $\mathbf{Q}_\ell$ or singular cohomology over $\mathbf{Q}$. Moreover $X$ is a derived Artin stack; see appendix \ref{sheaves} for details.

\subsection{``Free rank one boson" example: $\mathcal{A}=\text{Vect}$} The moduli space classifying finite dimensional vector spaces is $X=\coprod \text{BGL}_n$. Recall that the classifying space $\text{BGL}_n$ classifies rank $n$ vector bundles, and its cohomology is generated by Chern classes:
$$\text{H}^\cdot(\text{BGL}_n) \ = \ k[c_1,...,c_n].$$
\subsubsection{} The CoHA product can be expressed in terms of the Chern roots
$$(f(s_1,...,s_n),g(t_1,...,t_m)) \ \longmapsto \  \sum_{\sigma} \frac{1}{e(N_\sigma)}f(u_{\sigma 1},...,u_{\sigma n}) g(u_{\sigma 1},...,u_{\sigma m});$$
summing is over all partitions $\sigma$ of $\{1,...,n+m\}$ into two subsets of sizes $n$ and $m$, where
$$e(N_\sigma) \  = \ \prod_{i=1}^n \prod_{j=1}^m (u_{\sigma i} - u_{\sigma j}).$$
For a proof, and an explanation of how $e(N_\sigma)$ is an Euler class of a normal bundle $N_\sigma$, see section \ref{heuristicvect} or \cite{KS}.

\subsubsection{} As a vertex algebra $\text{H}_\cdot(X)$ is a subalgebra of a \textit{lattice vertex algebra} $V_\Lambda$, which is a ``noncommutative quantisation" of the space of maps
$$D\ \longrightarrow\ \mathbf{C}^n/\Lambda$$
from the (formal) disc $D$ into a torus: it has a filtration with associated graded $\text{gr}V_\Lambda\simeq \mathcal{O}\left(\text{Map}(D,\mathbf{C}^n/\Lambda)\right)$, which is an example of a \textit{commutative} vertex algebra (see \cite{AM}).

\subsubsection{} This works because we can enlarge $\mathcal{A}=\text{Vect}$ to the triangulated category $\mathcal{T}$ of perfect complexes, which has heart $\mathcal{A}$. This gives an inclusion 
$$\text{BGL}_n\to \text{Perf}_n \textcolor{white}{aaaaaa} \rightsquigarrow\textcolor{white}{aaaaaa}  \text{H}_\cdot(X)\ \hookrightarrow \ \text{H}^\cdot(\text{Perf})$$
and Joyce's construction makes the right into a lattice vertex algebra:
$$\text{H}^\cdot(\text{Perf}) \ \simeq\ V_\mathbf{Z}.$$
Note that Joyce's construction works equally well for triangulated as for abelian categories. Here $\text{Perf}$ (resp. $\text{Perf}_n$) is the moduli stack classifying (resp. $n$) perfect complexes. Its cohomology is generated by Chern classes of the tautological complex $\gamma$
$$\text{H}^\cdot(\text{Perf}_n) \ = \ k[c_1,c_2,...\ ].$$
Now to view the homology of $\text{Perf}=\coprod \text{Perf}_n$ as a vertex algebra; to zeroeth approximation it is the algebra structure given by the direct sum $\oplus : \text{Perf}_n\times\text{Perf}_m\to \text{Perf}_{n+m}$, in the dual monomial basis it sends $c^\vee_i\cdot c^\vee_j=c^\vee_{i+j}$. Next, the automorphism group of every object in $\mathcal{T}$ containing $k^\times$ gives rise to a map
$$\text{B}\mathbf{G}_m \times \text{Perf} \ \longrightarrow \ \text{Perf}.$$
Multiplying by the generator $t$ of $\text{H}_2(\text{B}\mathbf{G}_m)$ gives a derivation of this algebra, and
$$\text{H}_\cdot(\text{Perf}) \ \longrightarrow \ \End \text{H}_\cdot(\text{Perf})((z))\ \textcolor{white}{aaaaaaaaaa} \alpha \ \longrightarrow \  e^{zt}\alpha$$
defines a vertex algebra ``without poles". To finish, we twist by the complex $\theta=\gamma^\vee\boxtimes \gamma$ on $\text{Perf}^2$, whose fibre above a pair of perfect complexes is the vector space of maps between them: 
$$e^{zt}\alpha \ \ \ \rightsquigarrow \ \ \  Y(\alpha,z) \ = \ \oplus_*e^{zt} \sum_{k\ge 0} z^{\text{rk}\theta-k}  \left(c_k(\theta)\cdot \alpha\otimes (-)\right)$$
where $t$ acts on the first factor. This is an example of a rank one lattice super vertex algebra:
$$\text{H}_\cdot(\text{Perf})\ \simeq\ V_\mathbf{Z}.$$

\subsection{Abelian localisation} The main tool we use and extend in this paper is \textit{abelian localisation}. This is a powerful method for ``turning geometry into combinatorics" by localising cohomological computations onto fixed loci. In the classical setting of a manifold $X$ acted on by a torus $\text{T}$,  Atiyah and Bott explain in \cite{AB} how the equivariant cohomology of $X$ and $X^\text{T}$ are very close to being equal, then prove the \textit{integration formula}
$$\int_X\alpha\ =\ \int_{X^\text{T}}\frac{\alpha\vert_{X^\text{T}}}{e(N)}$$
for equivariant cohomology classes $\alpha$. Here $N$ is the normal bundle to $X^\text{T}$. This is extensively used in Gromov-Witten theory, and there is a folk heuristic relating it to Grothendieck-Riemann-Roch. 

\subsubsection{} Our first task is generalising this to singular schemes. The main obstacle is that the Euler class $e(N)$ becomes significantly subtler since $N$ is no longer a vector bundle. Our answer is the \textit{bivariant Euler class},
$$e(X^\text{T}/X)\ \in\ \text{H}^\cdot(X^\text{T}/X).$$
This lives in the bivariant homology group of the map $i:X^\text{T}\to X$, a notion which was first introduced by Fulton and Macpherson in \cite{FM} (see appendix \ref{sheaves}) as an analogue of ordinary cohomology especially well suited to the analysis of singular spaces. 

We justify our answer in section \ref{euler}, where a Whitney sum formula (section \ref{whitneysingular}) allows us to make sense of statements like
$$E \ = \ (E_0\stackrel{f}{\to} E_1) \ \ \ \rightsquigarrow \ \ \ e(E) \ = \ e(E_0)/e(E_1)$$
where $E$ is a perfect complex (``derived vector bundle") represented by a two-term complex of vector bundles; its classical truncation $E_{cl}=\text{ker}f$ is singular. Further, in section \ref{eqvtloc} we construct a cohomological shadow of the (nonexistent) singular exponential map $N\to X$, called \textit{equivariant specialisation}, which sends
$$e(X^\text{T}/X) \ \longrightarrow \ e(X^T/N).$$
Finally, in the same section we show that these two Euler classes are also compatible with fundamental classes in a suitable sense, which will allow us to compute integrals.

\subsubsection{} To get the above two statements to work, we need to work in the \textit{localised equivariant} setting. If $X$ is a space with a torus action, the equivariant cohomology $\text{H}^\cdot_\text{T}(X)$ is a module over $\text{H}^\cdot_\text{T}(\text{pt})=k[t_1,...,t_n]$, its localisation is
$$\text{H}_\text{T}(X)\otimes_{\text{H}_\text{T}(\text{pt})} \Frac \text{H}_\text{T}(\text{pt}).$$
One recurring point of view throughout is that, on the level of localised equivariant cohomology, singular spaces behave as if they were smooth (likewise bivariant homology, etc.). So killing the singular data contained in the torsion removes any obstruction to Whitney sum, constructing exponential maps on cohomology, and proving abelian localisation.

\subsubsection{} Our second task is generalising to stacks. Most of the work on singular schemes carries through, except that since the cohomology of a finite type stack is not necessarily finite dimensional, it is not as easy to prove that the equivariant cohomology of a stack with free action is torsion (moreover it is not immediately obvious what free should mean).

We satisfy ourselves with proving abelian localisation in the relative case, where $X\to B$ is a representable map of stacks with an action of $T$ (so it is an equivariant map for the trivial action on $B$); this is enough for our purposes. The point of section \ref{note} is to formalise the argument that the equivariant cohomology of $X\setminus X^\text{T}$ is torsion since by the scheme case the fibres of $(X\setminus X^\text{T})/\text{T}\to B$ have torsion cohomology.

\subsubsection{} Having developed these tools, proving the integration formula Theorem \ref{intformula} and using it to compute push-pull maps in section \ref{cohaproducts} follows straightforwardly.

\subsection{History} Cohomological Hall algebras descend from the construction of \textit{Hall algebras} of an abelian category over a finite field. There are close connections between CoHAs and Donaldson-Thomas theory (\cite{DM}, \cite{KV}, \cite{Sz}) and quantum groups (\cite{DM}, \cite{YZ2}). 

Shortly after Borcherds defined vertex algebras, Beilinson and Drinfeld in \cite{BD} discovered the closely related notion of \textit{chiral algebra}. Both notions have since had a profound impact on mathematics (\cite{Lu}, \cite{FG}, \cite{Ga}, \cite{GL}, \cite{KV}, to name a few). 

Our result gives a new perspective to the growing body of work (\cite{KV}, \cite{RSYZ}) exploring the deep relationship between vertex algebras and CoHA-like structures. 

We bypass the use of perfect obstruction theories by using the recent work \cite{K} of Khan on fundamental classes, making our work on abelian localisation substantially cleaner than it might have otherwise been.

\subsection{Summary of sections} \textbf{Section \ref{coha}} recalls what a cohomological Hall algebra is, \textbf{section \ref{va}} recalls what a (braided) vertex algebra is and the construction in \cite{J}, and \textbf{section \ref{description}} describes explicitly the moduli spaces we are interested in.

\textbf{Sections \ref{euler} to \ref{abloc}} form the technical bulk of this paper, whose result is a formulation of abelian localisation which works for certain singular derived stacks. \textbf{Section \ref{euler}} is about the bivariant Euler class, \textbf{section \ref{note}} proves vanishing results for localised equivariant cohomology of certain Artin stacks, \textbf{section \ref{spec}} recaps the fundamental class of a quasismooth map, the smooth specialisation construction then generalises to the singular case, and \textbf{section \ref{abloc}} proves abelian localisation, the integration formula, and a method for computing CoHA-like products in the singular case.

\textbf{Section \ref{shclass}} gives an explicit description of the dg category of sheaves on quotient stacks, the aim being to give a more concrete idea of the objects considered in the paper.

An impatient reader should skip to \textbf{sections \ref{mainsect} and \ref{explicit}}. \textbf{Section \ref{mainsect}} gives a heuristic to compute CoHA products, and is used to prove the main theorem \ref{mainthm} and in \textbf{section \ref{explicit}} to compute CoHA products explicitly for quiver representations and coherent sheaves on curves.

\textbf{Appendix \ref{sheaves}} sets out which categories of spaces and sheaves we are working with and recaps the six functor formalism and bivariant homology, \textbf{appendix \ref{tot}} recalls the contruction of the total space of a perfect complex and \textbf{appendix \ref{stratifications}} records some results about stratified spaces, and  \textbf{appendix \ref{stab}} some results about stabiliser groups.

\subsection{Acknowledgements} \label{ack} I would like to thank Kevin Costello, Adeel Khan, Minhyong Kim, Frances Kirwan and Kevin Lin for many interesting conversations around themes of the paper, and the EPSRC for funding me. I would especially like to thank my supervisor Dominic Joyce, who proposed this project and suggested the idea of using abelian localisation in the first place.

\section{Cohomological Hall algebras} \label{coha}

\subsection{} The cohomology of a space with a little extra structure (which exists for many moduli spaces) carries a ``cohomological Hall" algebra structure, which we will now introduce essentially as in \cite{KS}, however following \cite{K} we make explicit the relation to quasismoothness. 

For the precise meanings of \textit{space} and \textit{cohomology}, see appendix \ref{sheaves}. The reader will not lose anything by focusing on the examples of Artin stacks and their $\ell$-adic or de Rham cohomology, or of topological spaces and singular cohomology.

\subsection{} \label{cohaconst} Take a space $X$ and a correspondence
\begin{center}
\begin{tikzcd}
&\text{Ext}\arrow[rd,"p"]\arrow[ld,"q",swap]&\\
X\times X & & X
\end{tikzcd}
\end{center} 
such that $p$ is proper and quasismooth, so as to give a pushforward map on cohomology
$$p_*\ :\ \text{H}^\cdot(\text{Ext})\ \longrightarrow\ \text{H}^{\cdot }(X),$$
defined in \ref{umkehr}. Quasismoothness is guaranteed if both spaces are smooth. Note that unlike $q^*$ this map is not degree-preserving, and on each component of $\text{Ext}$ it has degree $-2\dim p$. Associated to this data is a \textbf{cohomological Hall algebra} (CoHA) structure
$$p_*q^* \ :\ \text{H}^\cdot(X\times X)\ \longrightarrow\ \text{H}^\cdot(X).$$
This algebra is associative if the composite correspondence
\begin{equation}\label{diag12}
\begin{tikzcd}[bo column sep]
 & &\text{Ext}_3\arrow[rd]\arrow[ld,swap] & &\\
 &\text{Ext}\times X\arrow[rd,"p\times \text{id}"]\arrow[ld,swap]& &\text{Ext}\arrow[rd]\arrow[ld,swap]&\\
X\times X \times X& & X\times X & & X
\end{tikzcd}
\end{equation}
is isomorphic to the correspondence where we replace $\text{Ext}\times X$ by $X\times \text{Ext}$. The structure is unital if there is a singleton connected component $\text{pt} \subseteq X$ such that the restriction of the correspondence to $\text{pt}\times X$ and $X\times \text{pt}$ is
\begin{center}
\begin{tikzcd}
&X\arrow[rd,"\text{id}"]\arrow[ld,"\text{id}",swap]&\\
X & & X
\end{tikzcd}
\end{center} 
In this case, the CoHA identity element is $1\in \text{H}^\cdot(\text{pt})$.

\subsection{} Examples tend to come from $X$ the moduli space of objects in an abelian category $\mathcal{A}$. In this case $\text{Ext}$ is the moduli space classifying short exact sequences in $\mathcal{A}$, and $p,q$ act on points as
\begin{center}
\begin{tikzcd}[bo column sep]
&(0\to\mathcal{E}_1\to\mathcal{E}_2\to\mathcal{E}_3\to 0)\arrow[rd,|->,"p"]\arrow[ld,swap,|->,"q"]&\\
(\mathcal{E}_1,\mathcal{E}_3) & & \mathcal{E}_2
\end{tikzcd}
\end{center} 
For most abelian categories $\mathcal{A}$ the map $p:\text{Ext}\to X$ is unfortunately not quasismooth and so we do not get an algebra structure on cohomology this way. Indeed, its tangent complex is 
$$\mathbf{T}_p\ =\ q^*\text{Hom}(\ ,\ )$$
where $\text{Hom}(\ ,\ )$ is the perfect complex on $X\times X$ whose fibre at $(V,W)$ is the dg vector space $\text{Hom}(V,W)$. This is concentrated in degrees $0$ and $1$, and the map is quasismooth, precisely when $\mathcal{A}$ has dimension at most one. Finally, $\text{Ext}_3$ classifies length three sequences of injections and the correspondence (\ref{diag12}) acts on objects as
\begin{equation}\label{diag13}
\begin{tikzcd}[bo column sep]
 & &(\mathcal{E}\subseteq\mathcal{E}'\subseteq\mathcal{E}'')\arrow[rrdd,|->]\arrow[ld,swap,|->] & &\\
 &(\mathcal{E}\subseteq\mathcal{E}',\mathcal{E}''/\mathcal{E}')\arrow[ld,swap]& &&\\
(\mathcal{E},\mathcal{E}'/\mathcal{E},\mathcal{E}''/\mathcal{E}') & &  & & \mathcal{E}''
\end{tikzcd}
\end{equation}
and for the correspondence using $X\times\text{Ext}$ in place of $\text{Ext}\times X$, the above is the same except that the middle term is 
$$(\mathcal{E}\subseteq\mathcal{E}',\mathcal{E}''/\mathcal{E}')\ \ \  \rightsquigarrow \ \ \ (\mathcal{E}, \mathcal{E}'/\mathcal{E}\subseteq \mathcal{E}''/\mathcal{E})$$
and so the composite maps in (\ref{diag13}) are left unchanged. Thus we have
\begin{prop}
Let $X$ be either the moduli space of representations of a finite quiver $Q$, or of coherent sheaves on a curve $C$. Then the above construction endows its cohomology $\text{H}^\cdot(X)$ with an algebra structure. 
\end{prop}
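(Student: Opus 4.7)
The plan is to reduce the proposition to checking the hypotheses set out in the abstract construction of section \ref{cohaconst}: that $p:\text{Ext}\to X$ is proper and quasismooth, that the two composite correspondences in diagram (\ref{diag12}) are isomorphic up to swapping $\text{Ext}\times X$ with $X\times\text{Ext}$, and that the zero object gives a distinguished singleton component witnessing the unit axiom. Once these are in place, the algebra structure comes for free from $p_*q^*$.

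First I would establish quasismoothness of $p$. Its tangent complex is identified in the text as $\mathbf{T}_p=q^*\mathrm{Hom}(-,-)$, so the question reduces to showing that $\mathrm{Ext}^{\geq 2}_{\mathcal A}(V,W)=0$ for all objects $V,W$ in the relevant category. For finite dimensional representations of a finite quiver this is standard: the projective resolution of any representation has length at most one because $kQ$ is a hereditary algebra, so $\mathrm{Ext}^{\geq 2}$ vanishes. For coherent sheaves on a smooth proper curve $C$ it follows from Serre duality combined with the fact that $C$ has cohomological dimension one. Thus $\mathbf{T}_p$ is concentrated in degrees $[0,1]$, so $p$ is quasismooth.

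Next I would check properness of $p$. Given a point $\mathcal E_2\in X$, the fibre $p^{-1}(\mathcal E_2)$ parametrises subobjects $\mathcal E_1\subseteq\mathcal E_2$, which is cut out as a closed substack of the relevant Quot/Grassmannian scheme classifying subobjects of $\mathcal E_2$ with prescribed numerical invariants. For quivers this is a closed subscheme of a product of Grassmannians, hence proper; for coherent sheaves on a smooth proper curve, this is Grothendieck's Quot scheme, which is well known to be proper. Since properness is checked on fibres of a map of (locally) finite type stacks, we conclude that $p$ is proper.

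For associativity I would construct the comparison explicitly at the level of moduli: $\text{Ext}_3$ classifies two-step flags $\mathcal E\subseteq\mathcal E'\subseteq\mathcal E''$, and both composite correspondences in (\ref{diag12}) are naturally identified with it through the descriptions in (\ref{diag13}); the only difference is how one regroups the associated graded pieces into pairs for the leftmost leg, which lands in $X^{\times 3}$ in the same way in both cases. Thus the two composite correspondences have isomorphic total spaces and identical outer legs, and by functoriality of $p_*q^*$ in correspondences this yields associativity. Unitality follows because the point $0\in X$ corresponding to the zero object is an open-and-closed component (its deformation theory is trivial in $\mathcal A$), and the preimage of $\{0\}\times X$ and $X\times\{0\}$ in $\text{Ext}$ is tautologically identified with $X$ via the middle term, so the restricted correspondence is the identity.

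The main obstacle is really the properness of $p$, especially in the coherent-sheaves-on-a-curve case where one must keep track of the stacky structure and the numerical invariants carefully; the other steps are either formal consequences of quasismoothness (which is a local derived computation) or follow by unwinding the moduli-theoretic meaning of the spaces involved.
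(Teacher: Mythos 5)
Your proposal is correct and follows essentially the same route as the paper: quasismoothness of $p$ via the identification $\mathbf{T}_p=q^*\mathrm{Hom}(-,-)$ together with the one-dimensionality of the two categories, and associativity by identifying both composite correspondences with $\text{Ext}_3$ and observing the outer legs coincide. You are somewhat more thorough than the paper in spelling out properness of $p$ (via Grassmannians/Quot schemes) and the unit axiom, which the paper leaves implicit in its general setup.
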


\subsubsection{} For concreteness we give the dimension zero example, when $\mathcal{A}=\text{Vect}$ is the category of finite dimensional vector spaces. The correspondence is disjoint union over all $n,m\ge 0$ of
\begin{center}
\begin{tikzcd}[bo column sep]
&\text{BP}_{n,m}\arrow[rd,"p"]\arrow[ld,"q",swap]&\\
\text{BGL}_n\times \text{BGL}_m& & \text{BGL}_{n+m}
\end{tikzcd}
\end{center} 
Here the parabolic subgroup $\text{P}_{n,m}\subseteq \text{GL}_{n+m}$ is the stabiliser of a fixed dimension $n$ subspace. The map $q$ and $p$ are induced by the inclusion and quotient maps
$$\text{P}_{n,m}\to \text{GL}_{n+m} \ \ \ \text{ and }\ \ \ \text{P}_{n,m}\to \text{GL}_n\times \text{GL}_m.$$
Since all spaces are smooth, $p$ is quasismooth. Note that $\text{BP}_{n,m}$ carries a tautological short exact sequence of vector bundles
$$0\ \longrightarrow\ \mathcal{E}_n\ \longrightarrow\ \mathcal{E}_{n+m}\ \longrightarrow\ \mathcal{E}_m\ \longrightarrow\ 0$$
Each vector bundle is a pullback of a tautological bundle: the outer ones by $q$, and the middle one by $p$. Since $\text{BP}_{n,m}$ is the Grassmann bundle of $n$-planes inside $\mathcal{E}_{n+m}$, the cohomology $\text{H}^\cdot(\text{BP}_{n,m})$ is a free module over $\text{H}^\cdot(\text{BGL}_{n+m})$, generated as ring over $\text{H}^\cdot(\text{BGL}_{n+m})$ by the Chern classes of $\mathcal{E}_n,\mathcal{E}_m$ subject to the relation $c(\mathcal{E}_{n+m})=c(\mathcal{E}_n)c(\mathcal{E}_m)$. 

The pushforward $p_*$ then picks out the coefficient of the highest degree generator. The pullback $q^*$ is an isomorphism.

\subsection{} The above is the ``off-shell'' version of cohomological Hall algebras appearing in \cite{KS}, with zero potential $W=0$. There are many other variants, for instance using Borel-Moore homology or the cohomology with coefficients in a sheaf of vanishing cycles. These are not relevant to us because as yet there is no analogue of Joyce's vertex algebra construction living on them.

\section{Vertex algebras and the cohomology of moduli spaces} \label{va}

\subsection{} Vertex algebras are a mathematical formulation of two-dimensional chiral conformal field theories appearing in physics, see e.g. \cite{BPV}. They were discovered by Borcherds, and a little while after Beilinson and Drinfeld followed with the closely related notion of a chiral algebra. For an introduction to vertex algebras see \cite{FB}, and to chiral algebras \cite{BD} and \cite{FG}.  In this section we will introduce a class of vertex algebras discovered by Joyce in \cite{J}, whose underlying space is the moduli space of certain abelian or derived categories.

\subsection{Vertex algebras} \label{vadefn}
 A \textit{vertex algebra} consists of a vector space $V$, whose elements $v\in V$ are called \textit{states}, each of which also acts on $V$ as a \textit{field} $Y(v,z)$. By this we mean that there is a linear map
$$Y\ :\ V\ \longrightarrow\ \text{End}V[[z,z^{-1}]]$$
such that each $Y(v,z)v'\in V((z))$, and that these fields \textit{weakly commute}, meaning
$$(z-w)^n(Y(v,z)Y(v',w)-Y(v',w)Y(v,z))\ =\ 0$$
for $n\gg 0$.\footnote{The two summands, which are a priori elements of the different vector spaces $V((w))((z))$ and $V((z))((w))$, are viewed as subspaces of $V[[z^{\pm 1}, w^{\pm 1}]]$.} Two final pieces of data are a \textit{vacuum vector} $|0\rangle\in V$ whose field is $Y(|0\rangle,z)=\text{id}$ and such that every $Y(v,z)|0\rangle\in V[[z]]$ has constant coeffient $v$, and secondly an operator $\partial:V\to V$ annihilating the vacuum vector and such that $Y(\partial v,z)=\partial_z Y(v,z)$.

\subsubsection{} A \textit{super vertex algebra} consists of the same data, except now $V$ is a $\mathbf{Z}/2$ \textit{graded} vector space, $|0\rangle$ and $\partial$ have degrees zero and one, and weak commutativity is replaced by
$$(z-w)^n(Y(v,z)Y(v',w)-(-1)^{|v|\cdot|v'|}Y(v',w)Y(v,z))\ =\ 0.$$

\subsubsection{} A $\mathbf{Z}$ \textit{grading} on a vertex algebra is a $\mathbf{Z}$ grading on $V$ such that writing
$$Y(v,z)\ =\ \sum v_n z^{-n-1}$$
whenever $v$ is homogeneous of degree $|v|$ then the endomorphism $v_n$ has degree $|v|-n-1$. In other words, the above field has degree $|d|$ if we consider $z$ to have degree $-1$. We finally require that $|0\rangle$ has degree zero and $\partial$ has degree one.

\subsubsection{} The field map $Y$ can be rewritten as
$$Y\ :\ V\otimes V\ \longrightarrow\ V[[z,z^{-1}]].$$
We can define similarly a \textit{vertex coalgebra}, which consists of a vector space $V$ together with an operator $\partial:V\to V$, a map
$$Y^\vee\ :\ V\ \longrightarrow\ V\otimes V[[z,z^{-1}]].$$
and a covacuum element $V\to k$, satisfying dual axioms to above (see \cite{Hu} for a precise account). For our purposes all we need to know is that if $V$ is a vertex algebra its dual is a vertex coalgebra,  and taking contragredient duals
$$V\ \longrightarrow\ V^\vee$$
defines an equivalence from the category of $\mathbf{Z}$ graded vertex algebras with finite dimensional graded pieces to $\mathbf{Z}$ graded vertex coalgebras with finite dimensional graded pieces. In our examples $V$ will be the homology and $V^\vee$ the cohomology of a space. See appendix \ref{definitions} for the further notions of \text{super vertex (co)algebra} and \textit{(super) vertex bialgebra}.

\subsubsection{} \label{holvaexplanation} If all the fields only involve positive powers of $z$, a vertex algebra is called \textit{holomorphic}. It follows that the fields $Y(v,z)$ all commute and that 
$$v\cdot v'\ = \ \text{constant term of }Y(v,z)v'$$
defines an algebra structure on $V$, for which $\partial$ is a derivation. Moreover, $Y(v,z)=e^{z\partial}v$, so that the category of holomorphic vertex algebras is equivalent to that of algebras with a derivation.

\subsection{} Let $\mathcal{A}$ be $k$-linear abelian category, and $X$ its moduli space of objects. Then the additive structure on the category induces a map on the moduli space
$$\mathcal{A}\otimes\mathcal{A}\ \stackrel{\oplus}{\longrightarrow}\ \mathcal{A}\ \ \  \rightsquigarrow\ \ \  X\times X\ \stackrel{\oplus}{\longrightarrow}\ X$$
making $X$ into a commutative monoid space. Similarly, the action of $k^\times$ on objects induces 
$$\mathbf{G}_m\otimes\mathcal{A}\ \stackrel{act}{\longrightarrow}\ \mathcal{A}\ \ \  \rightsquigarrow\ \ \  \text{B}\mathbf{G}_m\times X\ \stackrel{act}{\longrightarrow}\ X$$
making $X$ into a module for the group space $\text{B}\mathbf{G}_m$.

\subsection{Joyce's construction: holomorphic case} \label{holva}  Let $X$ be a commutative monoid object in the category of spaces, with a compatible action of $\text{B}\mathbf{G}_m$. Then we get a (co)algebra structure on (co)homology
$$\text{H}^\cdot(X)\ \stackrel{\oplus^*}{\longrightarrow}\ \text{H}^\cdot(X)\otimes \text{H}^\cdot(X)$$
and taking the coefficient of a generator $t\in \text{H}^2(\text{B}\mathbf{G}_m)$ gives a (co)derivation
$$\text{H}^\cdot(X)\ \stackrel{act^*}{\longrightarrow}\ \text{H}^\cdot(\text{B}\mathbf{G}_m)\otimes \text{H}^\cdot(X)\ \stackrel{[t]\otimes \text{id}}{\longrightarrow}\ \text{H}^\cdot(X).$$
As discussed in section \ref{holvaexplanation}, this data is equivalent to that of a holomorphic vertex (co)algebra. Unpacking the equivalence, we arrive at the vertex (co)algebra structure 
$$Y^\vee\ :\ \text{H}^\cdot(X)\ \longrightarrow\ \text{H}^\cdot(X)\otimes \text{H}^\cdot(X)[t]\textcolor{white}{aaaaaaaaaaa}\alpha\ \longmapsto \ act^*_1(\oplus^*\alpha).$$
Here $act_1$ denotes the $\text{B}\mathbf{G}_m$ action on the first factor of $X\times X$.

\subsubsection{} Taking graded duals, we get that the homology $\text{H}_\cdot(X)=\text{H}^\cdot(X)^\vee$ is a holomorphic vertex algebra.

\subsubsection{}The general construction will be a Borcherds bicharacter twist of the above: this twisting takes in a holomorphic vertex algebra and bicharacter and produces a vertex algebra (see \cite{B}). In our case the construction is very explicit.

\subsection{Joyce's construction: general case} \label{general} Let $X$ be a commutative monoid object in the category of spaces, with a compatible action of $\text{B}\mathbf{G}_m$, and
$$\theta\ \in \ \text{Perf}(X\times X)$$
be a perfect complex which
\begin{enumerate}[label = \arabic*)]
\item Respects the monoidal structure on $X$: pulling back by $ (X\times X)\times X\to X\times X$ gives
$$(\oplus\times \text{id})^*\theta\ =\  \theta_{13}\oplus  \theta_{23}$$
where $\theta_{ij}$ is the pullback by the projection $X^3\to X^2$ onto the $ij$th factor, and similarly for $(\text{id}\times\oplus)^*\theta$.

\item Has weight $-1$ in the first factor and $+1$ in the second.\footnote{Given a group action $act:\text{B}\mathbf{G}_m\times Y\to Y$ of the group object $\text{B}\mathbf{G}_m$, the sheaf $\mathcal{F}\in \text{QCoh}(Y)$ or $D^b\text{QCoh}(Y)$ is said to have \textit{weight} $n$ if $act^*\mathcal{F}\ =\ \gamma_1^{\otimes n}\boxtimes \mathcal{F}$.}
\item Is \textit{symmetric} in the sense that $\sigma^*\theta = \theta^\vee[2n]$ where $\sigma:X\times X\to X\times X$ is the map swapping the two factors, and $n\in\mathbf{Z}$. So its Chern classes and rank are symmetric.
\item Its rank on each connected component is even.
\end{enumerate}
The most basic case of the construction is
\begin{prop}
In addition assume that $\theta$ is a vector bundle. Then 
$$Y^\vee\ :\ \text{H}^\cdot(X)\ \longrightarrow\ \text{H}^\cdot(X)\otimes \text{H}^\cdot(X)[t]\textcolor{white}{aaaaaaaaaaa}\alpha\ \longmapsto\ act^*_1(e(\theta)\cdot \oplus^*\alpha)$$
defines a vertex coalgebra structure.
\end{prop}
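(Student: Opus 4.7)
The strategy is to verify the vertex coalgebra axioms directly, treating $Y^\vee$ as a bicharacter-style twist by $e(\theta)$ of the holomorphic vertex coalgebra constructed in section \ref{holva}. The four hypotheses on $\theta$ are calibrated to match the corresponding groups of axioms (truncation, covacuum, translation, locality), and the proof proceeds by matching each property to its axiom.

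Truncation is automatic: since $\theta$ has finite rank $r$, the class $e(\theta)$ has bounded degree and $\text{act}_1^*$ sends bounded-degree classes to polynomials in $t$. For the covacuum, let $\epsilon:\text{H}^\cdot(X)\to k$ be restriction to the zero-object component $\{0\}\hookrightarrow X$. Restricting property 1 along $\{0\}\times X^2\hookrightarrow X^3$ and noting that $(\oplus\times\text{id})|_{\{0\}\times X^2}$ is the identity forces $\theta|_{\{0\}\times X}=0$, hence $e(\theta)|_{\{0\}\times X}=1$, and the covacuum identity collapses to the holomorphic case. For the translation/coderivation axiom, property 2 ensures that $e(\theta)$ is invariant under the diagonal $\text{B}\mathbf{G}_m$-action, so the coderivation $\partial$ built from the $\text{B}\mathbf{G}_m$-action intertwines with $Y^\vee$ exactly as in the holomorphic case.

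The main obstacle is the locality axiom, namely that the iterated coproduct $(Y^\vee\otimes\text{id})Y^\vee(\alpha)$ equals its swap up to multiplication by a power of $t_1-t_2$, where $t_1,t_2$ are the equivariant parameters from the two $\text{B}\mathbf{G}_m$-actions on the outer factors of $X^3$. I would unfold both iterated coproducts on $X^3$, using property 1 to compute $(\oplus\times\text{id})^*e(\theta)=e(\theta_{13})\,e(\theta_{23})$ and the analogous identity for $(\text{id}\times\oplus)^*e(\theta)$; this expresses each side as an explicit class on $X^3$ with copies of $e(\theta)$ inserted on pairs of factors, twisted by the $\text{B}\mathbf{G}_m$-actions carrying $t_1,t_2$. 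Property 3 combined with property 4 gives $\sigma^*e(\theta)=(-1)^{\text{rk}\theta}e(\theta)=e(\theta)$, the symmetric-bicharacter condition that drives cocommutativity. The remaining technical subtlety is matching the formal-variable expansions arising from the two independent $\text{B}\mathbf{G}_m$-actions and verifying that after clearing denominators by $(t_1-t_2)^N$ the two iterated expressions coincide.
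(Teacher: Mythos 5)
The paper itself gives no proof of this proposition: it is stated as a recollection of Joyce's construction and the reader is referred to \cite{J}, so there is no in-paper argument to compare against. Your outline is nonetheless the standard route — twist the holomorphic vertex coalgebra of section \ref{holva} by the bicharacter $e(\theta)$ and check the axioms one by one — and your accounting of how each hypothesis on $\theta$ enters is essentially right. In particular the covacuum argument (restricting condition (1) along $\{0\}\times X^2$ to force $\theta\vert_{\{0\}\times X}=0$, hence $e(\theta)\vert_{\{0\}\times X}=1$) is correct, and so is the observation that conditions (3) and (4) give $\sigma^*e(\theta)=e(\theta^\vee)=(-1)^{\mathrm{rk}\,\theta}e(\theta)=e(\theta)$; note that for an honest vector bundle condition (3) already forces $n=0$.

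The genuine gap is that the one axiom carrying all the content — weak cocommutativity — is set up but not verified. You correctly reduce both iterated coproducts to explicit classes on $X^3$ via $(\oplus\times\mathrm{id})^*e(\theta)=e(\theta_{13})\,e(\theta_{23})$ and the symmetry of $e(\theta)$, but then defer the comparison to "matching the formal-variable expansions... after clearing denominators by $(t_1-t_2)^N$". Two problems here. First, this comparison is the entire proof; without writing out $act^*_1 e(\theta)=\sum_k(-t)^{\mathrm{rk}\,\theta-k}c_k(\theta)$ on each pair of factors and checking that the two orderings agree, nothing has been established. Second, in the vector bundle case both iterated coproducts land in $\mathrm{H}^\cdot(X^3)[t_1,t_2]$ — there are no denominators and no re-expansion of $(t_1-t_2)^{-1}$ — so weak cocommutativity up to a factor $(t_1-t_2)^N$ is equivalent to the two polynomial expressions being literally equal. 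That is a stronger statement than you seem to be aiming for, and it is exactly what must be checked (it does hold, by the multiplicativity from condition (1) together with $\sigma^*e(\theta)=e(\theta)$ and the commutativity of cup product in even degree). The translation axiom is similarly only gestured at: "intertwines exactly as in the holomorphic case" hides the fact that the holomorphic case has no $e(\theta)$ factor, and what one actually needs is that $\partial_t\bigl(act_1^*\beta\bigr)$ agrees with $act_1^*$ applied to the coderivation of $\beta$, applied to $\beta=e(\theta)\cdot\oplus^*\alpha$ using the weight hypothesis (2). So: right strategy, correct identification of where each hypothesis is used, but the two computational axioms are asserted rather than proved.
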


The only obstruction to repeating the above for general perfect complexes is that the Euler class $e(\theta)$ is not a priori defined. We can sidestep this issue by defining $act^*e(V)$ for a perfect complex $V$ of nonzero weight. Indeed, if $V$ were a vector bundle then by the splitting principle
$$act^*e(V)\ =\ (nt)^{\text{rk}V}\sum (nt)^{-k}c_k(V),$$
and we \textit{define} $act^*e(V)\in \text{H}^\cdot(X)((t))$ for a general perfect complex $V$ of weight $n$ by the above formula. Then

\begin{prop}
For $\theta$ a perfect complex as above, 
\begin{equation}\label{eqn1}
Y^\vee\ :\ \text{H}^\cdot(X)\ \longrightarrow\ \text{H}^\cdot(X)\otimes \text{H}^\cdot(X)((t))\textcolor{white}{aaaaaaaaaaa}\alpha\ \longmapsto\ act^*_1e(\theta) \cdot act^*_1\oplus^*\alpha
\end{equation}
defines a vertex coalgebra structure.
\end{prop}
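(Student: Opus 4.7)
The strategy is to bootstrap from the vector bundle case of the previous proposition by a formal argument at the level of Chern classes. The definition
$$act^*_1 e(\theta)\ =\ (nt)^{\mathrm{rk}\theta}\sum_{k\ge 0}(nt)^{-k}c_k(\theta)$$
depends only on the rank and total Chern class of $\theta$, which are exactly the same pieces of data one uses for a vector bundle. Every vertex coalgebra axiom, once unpacked via the formula for $Y^\vee$, becomes an identity in the polynomial algebra generated by the $c_k(\theta)$ together with the formal variables $t$ and $z$; such identities hold for vector bundles by the previous proposition, and hence continue to hold for arbitrary perfect complexes, provided we can make sense of the resulting formal series.

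Concretely, I would check the three axioms in turn. For the \emph{covacuum}, use the zero object $0\in X$ together with property (1) to see that $\theta|_{0\times X}$ and $\theta|_{X\times 0}$ are trivial of rank zero; the Euler factor $e(\theta)$ then disappears after restriction, and $Y^\vee$ reduces to the holomorphic coalgebra structure of section \ref{holva}. The \emph{coderivation axiom} $Y^\vee\circ\partial=\partial_z Y^\vee$ is formal: it only uses that $act^*_1$ is a module map for $\mathrm{H}^\cdot(\mathrm{B}\mathbf{G}_m)$, which is automatic and independent of $\theta$.

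The main content is \emph{weak co-commutativity}. One must show that
$$(z-w)^N\bigl(\sigma_* Y^\vee\bigr)(\alpha)\ =\ (z-w)^N\,Y^\vee(\alpha)$$
in the appropriate target for $N\gg 0$. Property (3), $\sigma^*\theta=\theta^\vee[2n]$, gives $\sigma^*e(\theta)=(-1)^{\mathrm{rk}\theta}e(\theta)$; the even rank condition (4) eliminates the sign. The multiplicative property (1) plays the role of a Borcherds bicharacter axiom, ensuring that upon iterating $Y^\vee$ the two possible twists combine into a single Euler factor depending only on $\theta_{13}\oplus\theta_{23}$, so that the two orderings differ only by the action of $\sigma$ on this pulled-back class, which is symmetric by the same argument. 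The passage from vector bundles to perfect complexes is invisible at this level because the statement is a polynomial identity in Chern classes and the formal variables.

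The main obstacle I anticipate is not the algebraic content but the \emph{analytic} one: for $\theta$ of nonzero virtual rank, $act^*_1 e(\theta)$ is a genuinely infinite Laurent series in $t$, and one must verify that each step of the axiom verification lives in the correct completion, namely $\mathrm{H}^\cdot(X^n)((t))((z))$ or the appropriate further completion needed to make sense of both $Y^\vee\otimes\mathrm{id}$ and $\mathrm{id}\otimes Y^\vee$ simultaneously. The weight condition (2) ensures boundedness below in $t$, and property (1) ensures that the product defining the iterated coproduct remains a finite sum in each fixed $t$-degree. Verifying that these formal manipulations are consistent — so that the vector bundle identities genuinely transfer term by term to the completed setting — is the delicate point, and is essentially the content of Borcherds' original bicharacter construction adapted to the coalgebra side.
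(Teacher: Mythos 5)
Your strategy is the intended one: the paper itself states this proposition without proof (it is imported from Joyce \cite{J}), and the whole point of defining $act^*_1e(\theta)$ by the Chern-class formula $(nt)^{\text{rk}\,\theta}\sum_k (nt)^{-k}c_k(\theta)$ is precisely so that every axiom, already verified in the vector bundle case, transfers as a formal identity in the classes $c_k(\theta)$ and the variables $t,z$ — which is exactly the reduction you describe. Your reading of the hypotheses is also the right one: (1) is the bicharacter/multiplicativity axiom that makes the iterated coproducts combine, (2) guarantees the Euler factors are invertible Laurent series, and (3) together with the even-rank condition (4) kills the sign $(-1)^{\text{rk}\,\theta}$ arising from $\sigma^*\theta=\theta^\vee[2n]$ in the cocommutativity check.
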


Finally, the most general construction does not require that the rank of $\theta$ is even. To continue we need the notion of \textit{orientations}, which are functions
$$\varepsilon\ :\ \pi_0(X\times X)\ \longrightarrow\ \mathbf{Z}/2$$
satisfying axioms as explained in \cite{J}.  From these we get signs $\eta \in \text{End} \text{H}^\cdot(X\times X)$, acting as multiplication by $\pm1$ on the cohomology of each connected component $\text{H}^a(X_\alpha\times X_\beta)$, defined by
$$\eta_{\alpha,\beta}\ =\ (-1)^{a\cdot \text{rk}\theta_{\beta,\beta}}\varepsilon_{\alpha,\beta}.$$
There are potentially many choices, and when the rank of $\theta$ is even $\varepsilon=\eta=1$ is one such choice. Finally, we note that cohomology is $\mathbf{Z}$ graded by
$$\deg\text{H}^a(X_\alpha)\ =\ a\ +\ \text{rk}\theta_{\alpha,\alpha}$$ 
which in particular induces a $\mathbf{Z}/2$ grading. Then the foundational theorem in \cite{J} is
\begin{theorem}
For $\theta$ a perfect complex as above, but allowed to have any rank,  
\begin{equation}\label{eqn1}
Y^\vee\ :\ \text{H}^\cdot(X)\ \longrightarrow\ \text{H}^\cdot(X)\otimes \text{H}^\cdot(X)((t))\textcolor{white}{aaaaaaaaaaa}\alpha\ \longmapsto\ \eta\cdot act^*_1e(\theta) \cdot act^*_1\oplus^*\alpha
\end{equation}
defines a super vertex coalgebra structure.
\end{theorem}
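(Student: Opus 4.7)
The plan is to verify the super vertex coalgebra axioms directly; equivalently, taking graded duals, we check that
$$Y(u,z)v\ =\ \eta\cdot \oplus_*\bigl(e(\theta)\cdot (e^{zt_1}u)\boxtimes v\bigr)$$
on $V = \text{H}_\cdot(X)$ defines a super vertex algebra, where $e(\theta)\in \text{H}^\cdot(X\times X)((z))$ is the formal weight expansion from the preceding proposition. There are four things to check: the vacuum axioms, the translation axiom $Y(\partial u,z)=\partial_z Y(u,z)$, the finiteness of negative powers of $z$ in $Y(u,z)v$, and super weak commutativity. The first proposition of section \ref{general} already handles the vector bundle case, so structurally I would run Joyce's argument and simply track the new contributions coming from the formal definition of $e(\theta)$ and from $\eta$.

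The vacuum and translation axioms should go through as in the holomorphic case of section \ref{holva}: the unit $\text{pt}\to X$ of the monoid gives $|0\rangle$, and property (1) forces $\theta$ to restrict trivially along it (the diagonal contributions cancel when one summand is zero), so the Euler twist disappears at the vacuum and the holomorphic vacuum axioms apply. The derivation $\partial$ is induced by the generator $t$ and the $\text{B}\mathbf{G}_m$-action, and together with weight condition (2) this gives translation covariance. Finiteness of negative powers of $z$ follows from the weight-$n$ expansion: only finitely many $z^{-k}c_k(\theta)$ appear before $\text{rk}\theta$ is exhausted, and the shift encoded by $\eta$ and the $\mathbf{Z}$-grading $\deg\text{H}^a(X_\alpha)=a+\text{rk}\theta_{\alpha,\alpha}$ places everything inside $V\otimes V((z))$.

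The main work is super weak commutativity:
$$(z-w)^N\bigl[Y(u,z)Y(v,w) - (-1)^{|u||v|}Y(v,w)Y(u,z)\bigr]u'\ =\ 0 \qquad (N\gg 0).$$
Using property (1), both sides rewrite as a pushforward along the triple sum $X^3\to X$ of
$$e(\theta_{12})\cdot e(\theta_{13})\cdot e(\theta_{23})\cdot (e^{zt_1}u)\boxtimes (e^{wt_2}v)\boxtimes u',$$
with $(u,z)\leftrightarrow (v,w)$ in positions $1,2$ for the second term. The difference therefore reduces to comparing $e(\theta_{12})$ with $e(\theta_{21})$, and by the symmetry property (3) $\sigma^*\theta = \theta^\vee[2n]$ the ratio $e(\theta_{21})/e(\theta_{12})$ is, after the formal weight expansion, a Laurent polynomial in $(z-w)$ of bounded pole order. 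Multiplying by a sufficient power of $(z-w)$ kills the difference, while the sign $(-1)^{|u||v|}$ is produced by the parity of $\text{rk}\theta$ after the shift by $[2n]$ and the grading convention above. For odd-rank components, the naive identity picks up a wrong sign, and this is precisely corrected by the orientation factor $\eta$; this is where condition (4) is relaxed. The main obstacle is this last computation: carefully tracking the combinatorics of $(z-w)$-factors coming from the ratio of Euler classes, and matching the parity contribution from $\text{rk}\theta_{\alpha,\beta}$ with that from $\eta_{\alpha,\beta}=(-1)^{a\cdot\text{rk}\theta_{\beta,\beta}}\varepsilon_{\alpha,\beta}$ on each pair of connected components.
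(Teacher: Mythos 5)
First, note that the paper does not actually prove this theorem: it is stated as ``the foundational theorem in [J]'' and the burden of proof is placed entirely on Joyce's paper. So there is no internal argument to compare against; what you have written is a sketch of the kind of direct verification that [J] carries out, and at the level of strategy it is the same route — dualise to homology, get the vacuum and translation axioms from the monoid/$\text{B}\mathbf{G}_m$ structure, and reduce super weak commutativity to a comparison of the $\theta_{12}$ and $\theta_{21}$ Euler factors using the symmetry hypothesis (3). Your observations that condition (1) forces $\theta$ to vanish in K-theory on $\text{pt}\times X$ (so the twist is invisible at the vacuum), and that the weight condition (2) yields translation covariance, are both correct.

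The gap is that the one step you identify as ``the main work'' is exactly the step you do not do, and it is where the content of the theorem lives. Two things are left unverified. (i) The locality statement is not really about a ``ratio'' $e(\theta_{21})/e(\theta_{12})$ being a Laurent polynomial: the correct assertion is that $c_k(\sigma^*\theta)=(-1)^k c_k(\theta)$ (from $\sigma^*\theta=\theta^\vee[2n]$) gives $act^*e(\sigma^*\theta)(t)=(-1)^{\operatorname{rk}\theta}\,act^*e(\theta)(-t)$, so that $Y(u,z)Y(v,w)u'$ and $Y(v,w)Y(u,z)u'$ are expansions, in $V((z))((w))$ and $V((w))((z))$ respectively, of one expression whose only pole in $z-w$ has order bounded (for fixed $u,v$) by degree reasons; one must check this boundedness on each pair of connected components before $(z-w)^N$ can kill the difference. (ii) The sign bookkeeping is not automatic: the swap produces $(-1)^{\operatorname{rk}\theta_{\alpha\beta}}$, whereas the axiom demands $(-1)^{|u||v|}$ with $|u|=a+\operatorname{rk}\theta_{\alpha\alpha}$, and reconciling these uses the bilinearity of $\operatorname{rk}\theta$ over $\pi_0$, the symmetry $\operatorname{rk}\theta_{\alpha\beta}=\operatorname{rk}\theta_{\beta\alpha}$, \emph{and} the coherence axioms satisfied by $\varepsilon$ (which you never invoke — without them $\eta$ is just an undetermined sign and nothing forces the discrepancy to cancel). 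As it stands your text is a correct plan with the decisive computation replaced by a description of it; to make it a proof you would need to either carry out that computation or, as the paper does, cite [J] for it.
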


\subsubsection{} We get analogous vertex algebra structures on homology $\text{H}_\cdot(X)=\text{H}^\cdot(X)^\vee$ by taking graded duals of the above constructions. This is the point of view taken in \cite{J}.

\subsubsection{} When the perfect complex and orientations are trivial $\theta=0$ and $\varepsilon=1$, we get back the holomorphic construction of section \ref{holva}.

\subsubsection{} If $\theta$ satisfies all conditions except symmetry then the doubled complex $\theta\oplus \sigma^*\theta^\vee$ satisfies all of them. This is closely related to considering the cotangent space $T^*X$, or when $\mathcal{A}$ is the category of representations of a quiver, to doubling the quiver.

\subsection{Braided vertex algebras}  There are many definitions for the notion of braided vertex algebras. The definition we will follow is \cite{J}, where they are called quantum vertex algebras, which in turn is based on earlier definitions in \cite{Li} and \cite{EK}.

\subsubsection{} A \textit{nonlocal vertex algebra} consists of the same data as a vertex algebra, with as before $\text{Y}(|0\rangle , z)=\text{id}$ and the constant term of $Y(v,z)|0\rangle$ being $v$, but now the fields are only required to be \textit{weakly associative}:
$$(z+w)^nY(Y(v,z)v',w)v''\ =\ (z+w)^nY(v,z+w)Y(v',w)v''$$
for $n\gg 0$.\footnote{The two sides are a priori elements of $V((z))((w))$ and $V((z+w))((w))$, which are viewed as subspaces of $V[[z^{\pm 1}, w^{\pm 1}]]$ by expanding negative powers of $z+w$ as a power series in $w/z$.} This is a strictly weaker property than weak commutativity. Nonlocal algebras are called \textit{field algebras} in Kac's terminology.

\subsubsection{} A (\textit{rational}) \textit{Yang-Baxter} operator on vector space $V$ is a linear map
$$S(z)\ :\ V\otimes V\ \longrightarrow\ V\otimes V ((z))$$
satisfying the Yang-Baxter equation:
$$S_{12}(z)\cdot S_{13}(z+w)\cdot S_{23}(w)\ =\ S_{23}(w)\cdot S_{13}(z+w)\cdot S_{12}(z).$$
\subsubsection{} \label{braided} A \textit{braided vertex algebra} is a nonlocal vertex algebra with a compatible Yang-Baxter operator. This means that
$$S(z)(|0\rangle \otimes v)\ =\ |0\rangle \otimes v, \ \ \ S(z)(v\otimes |0\rangle)\ =\ v\otimes |0\rangle,$$
that\footnote{As before, we expand powers of $z+w$ as a power series in $w/z$ and view the equation as taking place in $V[[z^{\pm 1}, w^{\pm 1}]]$.}
$$S(w)\cdot (Y(v,z)\otimes \text{id}_V)\ =\ (Y(v,z)\otimes\text{id}_V)\cdot S_{23}(w) \cdot S_{13}(z+w),$$
and finally that
\begin{equation}\label{eqn13}
Y(v,z)\cdot  S(w) v'\ =\ (-1)^{|v|\cdot |v'|} e^{z\partial} \cdot Y(v,-z)v'.
\end{equation}
When the Yang-Baxter operator is the identity we get back the notion of vertex algebra, because then equation (\ref{eqn13}) along with weak associativity imply the vertex algebra axiom.

\subsubsection{} By dualising the above definition one can define a \textit{nonlocal vertex coalgebra}, which has the same definition as a vertex coalgebra except the weak cocommutativity axiom is replaced by \textit{weak coassociativity}, obtained by dualising the weak associativity. Similarly, a \textit{braided vertex coalgebra} is a nonlocal vertex coalgebra with a Yang-Baxter operator as defined above, satisfying the dual of the compatibility relations in section \ref{braided}.

\subsubsection{} \label{bialgebra} If $V$ is both an algebra $(V,m,1)$ and a braided vertex coalgebra $(V,Y^\vee,|0\rangle^\vee,\partial,S(z))$, we can ask when these structures are compatible. We call it a \textit{braided vertex bialgebra} if the following commutes
\begin{equation}\label{diag15}
\begin{tikzcd}
V\otimes V\arrow[d,"m"]\arrow[r,"Y^\vee\otimes Y^\vee"]& (V\otimes V)\otimes (V\otimes V)((z^{-1})) \arrow[r," S_{23}(z)\sigma"]& (V\otimes V)\otimes (V\otimes V)((z^{-1})) \arrow[d,"m\otimes m"]\\
V\arrow[rr,"Y^\vee"]& &V\otimes V((z^{-1}))
\end{tikzcd}
\end{equation}
where $\sigma$ is the map swapping the second and third factors, if
$$Y^\vee(1)\ =\ 1\otimes 1,$$
if $|0\rangle^\vee m=|0\rangle^\vee\otimes |0\rangle^\vee$, and if $|0\rangle^\vee 1=\text{id}$. Notice the appearance of the Yang-Baxter matrix inside the compatibility diagram (\ref{diag15}). Thus this is \textit{not} the a analogue of the usual notion of quasitriangular bialgebra.

\subsection{Joyce's braided construction} \label{summary}  Let $X$ be a commutative monoid object in the category of spaces, with a compatible action of $\text{B}\mathbf{G}_m$, and $\theta \in  \text{Perf}(X\times X)$ be a perfect complex which
\begin{enumerate}[label = \arabic*)]
\item respects the monoidal structure on $X$, and
\item has weight $-1$ in the first factor and $+1$ in the second.
\end{enumerate}
Thus $\theta$ is not necessarily symmetric. Then 
\begin{theorem} \label{joycethm} 
The Yang-Baxter operator given by multiplication by
$$S(t)\ =\  act^*e(\theta)/act^* e(\sigma^*\theta) \ \in \ \text{H}^\cdot(X\times X)((t)) $$
along with
$$
Y^\vee\ :\ \text{H}^\cdot(X)\ \longrightarrow\ \text{H}^\cdot(X)\otimes \text{H}^\cdot(X)((t))\textcolor{white}{aaaaaaaaaaa}\alpha\ \longmapsto\ act^*_1e(\theta) \cdot act^*_1\oplus^*\alpha
$$
define the structure of a braided vertex coalgebra on $\text{H}^\cdot(X)$.
\end{theorem}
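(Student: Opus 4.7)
My plan is to verify the braided vertex coalgebra axioms directly, observing that the failure of $\theta$ to be symmetric gets packaged into the Yang-Baxter operator $S(t)$. After setting up the counit as pullback along the zero object $0\hookrightarrow X$ and the coderivation $\partial$ as cap product with $t\in\text{H}^2(\text{B}\mathbf{G}_m)$ acting via $act^*$, the counit and $\partial$-compatibility axioms are formal consequences of the $\text{B}\mathbf{G}_m$-equivariance of $\oplus$ together with the vanishing of $\theta|_{0\times X}$ and $\theta|_{X\times 0}$ implied by the weight condition (2).

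The core step is weak coassociativity. Iterating $Y^\vee$ in the two natural ways and using the monoidal property (1) to expand $(\oplus\times\text{id})^*\theta$ and $(\text{id}\times\oplus)^*\theta$, I would obtain expressions on $\text{H}^\cdot(X)^{\otimes 3}((z,w))$ differing exactly by a factor of $act_2^*\bigl(e(\theta)/e(\sigma^*\theta)\bigr)=S_{23}(w)$, after pulling back to the appropriate slots of $X^3$. Matching these expansions requires expanding negative powers of $z+w$ in $w/z$ as specified in section \ref{braided}, which is precisely how $S(w)$ enters the braided weak coassociativity identity.

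The Yang-Baxter equation for $S(t)$ itself is then formal: multiplication by Euler classes in $\text{H}^\cdot(X^3)((z,w))$ is commutative and associative, so the three factors $S_{12}$, $S_{13}$, $S_{23}$ can be reordered freely, giving the hexagon identity. Compatibility of $S$ with the covacuum is immediate from the vanishing of $\theta$ along the zero object, and the compatibility relation $S(w)\cdot(Y(v,z)\otimes\text{id})=(Y(v,z)\otimes\text{id})\cdot S_{23}(w)\cdot S_{13}(z+w)$ dualises to another instance of the expansion $(\oplus\times\text{id})^*\theta\simeq\theta_{13}\oplus\theta_{23}$ coming from property (1). The remaining identity (\ref{eqn13}) is extracted from the weight property of $\theta$, which relates $act^*_1$ and $act^*_2$ by a sign change in $t$ up to $e^{z\partial}$.

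The main obstacle I expect is the careful bookkeeping of expansion regions: the expressions $act^*e(\theta)$ are formal series in $t$ coming from the splitting-principle definition in section \ref{general}, and one must ensure that the expansions used on each side of weak coassociativity agree with the convention used to define $S(z)$. Once this is handled, essentially by the same method as in the symmetric case proven in \cite{J}, the remaining axioms reduce to manipulating products of Euler classes of complexes built from $\theta$ via the monoidal property, which is routine.
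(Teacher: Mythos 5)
The paper does not actually prove Theorem \ref{joycethm}: it is recalled from Joyce's work \cite{J} (the subsection is titled ``Joyce's braided construction''), so there is no internal proof to compare your attempt against. Your outline is nonetheless the right general shape for how such a proof goes — direct verification of the axioms, with the monoidal property (1) controlling the iterated coproducts, the weight property (2) producing the $t$-dependence via the splitting-principle definition of $act^*e(-)$, and the Yang--Baxter equation being essentially formal because $S$ is multiplication by an invertible element of a commutative ring.

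Two caveats. First, you put the braiding in the wrong axiom. Weak coassociativity of the \emph{nonlocal} structure holds with no $S$-correction: both iterated coproducts produce the same product of classes $act^*e(\theta_{ij})$ evaluated at $z$, $w$ and $z+w$ (by property (1) and cocommutativity of $\oplus^*$), and the two sides differ only in the region in which $(z+w)^{-k}$ is expanded — which is exactly what multiplying by $(z+w)^n$ for $n\gg 0$ removes. The operator $S$ enters only in the genuinely braided axioms: the locality identity (\ref{eqn13}), where swapping the two factors replaces $\theta$ by $\sigma^*\theta$ and $act_1^*$ by $act_2^*$, and the weight condition converts this into the substitution $t\mapsto -t$ up to the factor $act^*e(\theta)/act^*e(\sigma^*\theta)$; and the compatibility $S(w)\cdot(Y(v,z)\otimes\mathrm{id})=(Y(v,z)\otimes\mathrm{id})\cdot S_{23}(w)\cdot S_{13}(z+w)$. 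You appear to be conflating coassociativity with the bialgebra compatibility diagram (\ref{diag15}), which does contain $S_{23}$ but is the content of the separate main theorem \ref{mainthm}, not of \ref{joycethm}. Second, the ``bookkeeping of expansion regions'' you defer to the end is precisely where the content of the proof lies; as written the proposal is a plan rather than a proof, and for a complete argument you would need to carry out that verification (or simply cite \cite{J}, as the paper does).
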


Notice that unlike in the classical construction, the above has no sign correction. However, this has the disadvantage that when $\theta$ \textit{is} symmetric $S(t)=(-1)^{\text{rk}\theta}$ is not the identity so the above does not reduce to a vertex algebra structure. There is some freedom still to choose signs, and we may define a \textit{braided orientation} to be a pair of functions
$$\varepsilon,\delta\ :\ \pi_0(X\times X)\ \longrightarrow\ \mathbf{Z}/2$$
such that the graded duals of
$$S(t)\ =\  \delta\cdot act^*e(\theta)/act^* e(\sigma^*\theta) $$
along with
$$
Y^\vee\ :\ \text{H}^\cdot(X)\ \longrightarrow\ \text{H}^\cdot(X)\otimes \text{H}^\cdot(X)((t))\textcolor{white}{aaaaaaaaaaa}\alpha\ \longmapsto\ \eta \cdot act^*_1e(\theta) \cdot act^*_1\oplus^*\alpha
$$
define a braided vertex coalgebra structure on $\text{H}^\cdot(X)$. It is easy to extract explicit relations $\varepsilon,\delta$ must satisfy in order to be braided orientations.

\subsection{Example} The typical example the moduli space of objects in an abelian or derived category $\mathcal{C}$, and where $\theta = \text{Ext}(\ ,\ )$ the perfect complex whose fibre above the pair of objects $(V,W)$ is $\text{Ext}(V,W)$. Unless the category is even Calabi-Yau the complex will not be symmetric and we will get a \textit{braided} vertex algebra structure.

\subsubsection{} \label{vectex} We give the simplest example, the abelian category $\mathcal{A}$ finite dimensional vector spaces. It is zero dimensional and Calabi-Yau, and its moduli space of objects is 
$$X\ =\ \coprod \text{BGL}_n.$$
A map into $X$ is uniquely specified by what the pullback is of the tautological vector bundle $\gamma$. Therefore the structure maps $\oplus$ and $act$ are \textit{defined} by requiring
$$\oplus^*\gamma\ =\ \gamma\boxplus \gamma,\ \ \ \text{ and } \ \ \  act^*\gamma\ = \ \gamma_1\boxtimes \gamma,$$
where $\gamma_1$ is the tautological line bundle on $\text{B}\mathbf{G}_m$. The perfect complex here is simply $\theta=\gamma^\vee\boxtimes \gamma$.

\subsubsection{} To describe the action of $\oplus$ and $act$ on cohomology, pick a maximal torus $\text{T}_n\subseteq \text{GL}_n$, so that the map $\text{BT}_n\to \text{BGL}_n$ identifies
$$\text{H}^\cdot(\text{BGL}_n)= k[t_1,...,t_n]^{\mathfrak{S}_n}\ \longrightarrow\ k[t_1,...,t_n]\ =\ \text{H}^\cdot(\text{BT}_n).$$
The maps $\oplus$ and $act$ can be lifted to $\text{BT}_n$ in a manner similar to the above, and they induce maps on cohomology 
$$\oplus^*\ :\ k[s_1,...,s_n,t_1,...,t_m]\ \stackrel{\sim}{\longrightarrow}\ k[s_1,...,s_n]\otimes k[t_1,...,t_m],$$
and
$$act^*\ :\ k[t_1,...,t_n]\ \longrightarrow\ k[t]\otimes k[t_1,...,t_n]$$
which sends $t_i\mapsto t\otimes t_i$. Taking symmetric group invariants then recovers these maps for $X$.

\subsubsection{} To explicitly describe the vertex algebra we get, it is easier instead to work with the derived category $\mathcal{C}$ of finite dimensional vector spaces, which has $\mathcal{C}^\heartsuit=\mathcal{A}$. In this case, 
$$X\ =\ \coprod \text{Perf}_n$$
where $\text{Perf}_n$ classifies perfect complexes of rank $n$. Again we have $\theta=\gamma^\vee\boxtimes\gamma$ where $\gamma$ is the tautological perfect complex on $X$.  It is not hard to show that $\text{H}^\cdot(X)\ \simeq\ V_{\mathbf{Z}}$ is the one dimensional lattice vertex algebra. Thus the vertex subalgebra corresponding to the abelian category has basis 
$$\{ b_{-n}^{a_n}\cdots b_{-1}^{a_1}|n\rangle\ :\ n\ge 0, \ a_n\ge 0\}.$$

\subsubsection{} In this example the complex $\theta$ was symmetric. Moving one dimension higher to one dimensional categories, we lose this and so begin encountering braided vertex algebras. These categories will be considered in the next section.

\subsection{} At the moment there is no satisfying explanation of Joyce's constructions at the level of chiral algebras. It would be interesting to relate these constructions to \cite{KV}.

\section{Examples}\label{description} 

\subsection{} This section gives an explicit description of the main geometric objects we are studying, the moduli spaces of objects in one dimensional abelian categories, and discusses their (braided) vertex algebra and CoHA structures. For the readers' convenience we summarise the accounts in \cite{KS} and \cite{He} respectively. 

\subsection{Quiver representations} Let $Q$ be a finite quiver. This is a finite set $|Q|$ and a finite collection of arrows $e:p\to q$ between elements of the set. A \textit{representation} of $Q$ is a vector space attached to each $q\in |Q|$ and associated to each arrow $e:p\to q$ a linear map between the vector space of $p$ and $q$. These arrange to form an abelian category $\mathcal{A}=\text{Rep}Q$, whose Grothendieck group $\mathbf{Z}^{|Q|}$ is labelled by the virtual dimensions of the vector spaces.

\subsubsection{} The moduli space of objects in $\mathcal{A}$ has connected components labelled by the \textit{multidimension} $\gamma\in \mathbf{N}^{|Q|}$ of the representation:
$$M_\gamma\ =\ \prod_{e:p\to q}\text{Hom}(k^{\gamma_p},k^{\gamma_q})/\prod_{q\in |Q|}\text{GL}(k^{\gamma_q}).$$
Indeed, having fixed a multidimension a representation is determined up to isomorphism by the values of the linear maps. Moreover an automorphism of a representation is simply an automorphism of each of the vector spaces. Denote $M_\gamma= V_\gamma/\text{G}_\gamma$. As it is a vector space, $V_\gamma$ is contractible and so $\text{H}^\cdot(M_\gamma)=\text{H}^\cdot(\text{BG}_\gamma)$ is a polynomial algebra.

\subsubsection{} In this paper we will be considering the braided vertex algebra structure arising from the perfect complex $\theta=\text{Hom}(-,-)$. This does not have a particularly explicit description, although if we take instead the symmetrised complex $\theta\oplus \sigma^*\theta^\vee$ on the moduli space $\widehat{M}$ of the \textit{derived} category $D^b(\text{Rep}Q)$, then it is proven in \cite{J}  that

\begin{prop}
The vertex algebra $\text{H}_\cdot(\widehat{M})$ is isomorphic to the lattice vertex algebra of the Grothendieck group of $D^b(\text{Rep}Q)$
$$\text{H}_\cdot(\widehat{M})\ \simeq\ V_{\mathbf{Z}^{|Q|}, \chi}$$
for a particular choice of orientations $\varepsilon$. Here $\chi=\text{rk}(\theta\oplus \sigma^*\theta^\vee)$ is the bilinear form of the lattice, given explicitly by
$$\chi(\gamma,\gamma')\ =\ \sum_{p,q\in|Q|} a_{p,q}\gamma_p\gamma'_q$$
where $a_{p,q}$ is the number of arrows from $p$ to $q$.
\end{prop}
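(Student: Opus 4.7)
The strategy is to reduce to the rank-one case of section \ref{vectex} via the additive decomposition of the derived category, and then to incorporate the quiver arrows into the bilinear form through the Euler class twist in Joyce's construction.

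First I would identify the connected components $\widehat{M}=\coprod_{\gamma\in\mathbf{Z}^{|Q|}}\widehat{M}_\gamma$, where $\widehat{M}_\gamma$ is the derived moduli stack of complexes of multidimension $\gamma$. Since $\text{Rep}Q$ has homological dimension one, the cohomology of each $\widehat{M}_\gamma$ is a polynomial algebra generated by the Chern classes of the tautological perfect complexes attached to each vertex $q\in|Q|$. The crucial observation is that neither the monoid structure $\oplus$ nor the $\text{B}\mathbf{G}_m$-action on $\widehat{M}$ sees the arrows of $Q$: both are determined by the underlying additive structure of $D^b(\text{Rep}Q)$, which at the level of objects splits as $\prod_q D^b(\text{Vect})$.

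Combining this observation with the rank-one identification $\text{H}_\cdot(\text{Perf})\simeq V_\mathbf{Z}$ from section \ref{vectex}, the untwisted (holomorphic) vertex algebra structure gives
$$\text{H}_\cdot(\widehat{M})\ \simeq\ \text{H}_\cdot(\text{Perf})^{\otimes|Q|}\ \simeq\ V_\mathbf{Z}^{\otimes|Q|}$$
as graded vector spaces equipped with compatible commutative monoid and $\text{B}\mathbf{G}_m$-structures. I would then compute Joyce's twist $\alpha\mapsto\eta\cdot act_1^*e(\theta\oplus\sigma^*\theta^\vee)\cdot act_1^*\oplus^*\alpha$: the weights $\pm 1$ on the factors of $\theta$ force $act_1^*e(\theta\oplus\sigma^*\theta^\vee)$ to have leading term $t^{\chi(\gamma,\gamma')}$ on each component $\widehat{M}_\gamma\times\widehat{M}_{\gamma'}$, with lower-order corrections coming from Chern classes. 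This is precisely the Borcherds bicharacter twist that converts a tensor product of rank-one holomorphic pieces into the lattice vertex algebra $V_{\mathbf{Z}^{|Q|},\chi}$; the formula $\chi(\gamma,\gamma')=\sum_{p,q} a_{p,q}\gamma_p\gamma'_q$ then falls out immediately from computing the rank of $\theta\oplus\sigma^*\theta^\vee$ using the quiver-theoretic description of $\text{Hom}(-,-)$ in $D^b(\text{Rep}Q)$.

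The main difficulty I expect is producing a coherent orientation $\varepsilon:\pi_0(\widehat{M}\times\widehat{M})\to\mathbf{Z}/2$ for which Joyce's super vertex algebra signs reproduce the standard sign cocycle of $V_{\mathbf{Z}^{|Q|},\chi}$. This is a cocycle condition on $\mathbf{Z}^{|Q|}\times\mathbf{Z}^{|Q|}$ whose obstruction is controlled by $\chi\bmod 2$, and the existence of such an $\varepsilon$ is essentially the classical construction of lattice vertex algebras; carefully tracking these signs is the bookkeeping carried out in \cite{J}.
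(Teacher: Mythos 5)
First, a point of comparison: the paper does not actually prove this proposition --- it is imported verbatim from \cite{J} (``it is proven in \cite{J} that\ldots''), so there is no in-paper argument to measure your sketch against. Your outline is, in broad strokes, the standard route: reduce the underlying monoid-plus-$\text{B}\mathbf{G}_m$ structure to a $|Q|$-fold tensor product of the rank-one case of section \ref{vectex}, then identify the twist by $e(\theta\oplus\sigma^*\theta^\vee)$ with the bicharacter defining $V_{\mathbf{Z}^{|Q|},\chi}$, and finally match signs via a choice of orientation. That is the right plan. However, two of its steps are asserted rather than argued, and one of them is false as stated.

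The claim that $D^b(\text{Rep}Q)$ ``splits at the level of objects as $\prod_q D^b(\text{Vect})$'' is wrong for any quiver with arrows: an object carries the arrow data and the category does not decompose. What you actually need --- and what justifies $\text{H}_\cdot(\widehat{M})\simeq \text{H}_\cdot(\text{Perf})^{\otimes |Q|}$ --- is that the forgetful map $\widehat{M}_\gamma\to\prod_q\text{Perf}_{\gamma_q}$ is an isomorphism on cohomology, because its fibres are total spaces of perfect complexes (the derived spaces of arrow maps) and hence cohomologically trivial; this is the theorem of Gross \cite{Gr}, and it is compatible with $\oplus$ and $act$ because both are computed vertexwise. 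Separately, the assertions that the formula for $\chi$ ``falls out immediately'' and that the twist ``is precisely'' the one producing $V_{\mathbf{Z}^{|Q|},\chi}$ conceal the entire content of the proposition. An honest rank computation gives $\text{rk}(\theta\oplus\sigma^*\theta^\vee)=2\sum_q\gamma_q\gamma'_q-\sum_{p,q}(a_{p,q}+a_{q,p})\gamma_p\gamma'_q$, i.e.\ the symmetrised Euler form, which does not literally agree with the displayed formula --- had you carried out the computation you would have had to confront this. More importantly, identifying the geometric field with the lattice vertex operator requires showing that the subleading Chern classes of $\theta\oplus\sigma^*\theta^\vee$, fed through $act_1^*e(-)$, reproduce exactly the Heisenberg exponential factors of the standard lattice vertex operator, not merely the leading power $z^{\chi(\gamma,\gamma')}$. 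That computation, together with the cocycle/orientation matching you correctly flag at the end, \emph{is} the proof; as written, your argument establishes only that both sides have the same leading symbol.
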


\subsubsection{} The moduli stack of extensions has connected components labelled by pairs of multidimension vectors. Each is a global quotient stack
$$M_{\gamma,\gamma'}\ =\ V_{\gamma,\gamma'}/P_{\gamma,\gamma'}$$
where $V_{\gamma,\gamma'}\subseteq V_{\gamma+\gamma'}$ is the subspace of edge maps preserving $k^\gamma\subseteq k^{\gamma+\gamma'}$, and $P_{\gamma,\gamma'}\subseteq G_{\gamma+\gamma'}$ is the stabiliser that subspace. It is a parabolic subgroup. In the Hall correspondence
\begin{center}
\begin{tikzcd}
&M_{\gamma,\gamma'}\arrow[rd,"p"]\arrow[ld,"q",swap]&\\
M_\gamma\times M_{\gamma'}&&M_{\gamma+\gamma'}
\end{tikzcd}
\end{center} 
the proper map 
$$p\ :\ V_{\gamma,\gamma'}/P_{\gamma,\gamma'}\ \longrightarrow\ V_{\gamma+\gamma'}/P_{\gamma+\gamma'}$$
is induced by the inclusion maps $V_{\gamma,\gamma'}\to V_{\gamma+\gamma'}$ and $P_{\gamma,\gamma'}\to G_{\gamma+\gamma'}$, and the other map 
$$q\ :\ V_{\gamma,\gamma'}/P_{\gamma,\gamma'}\ \longrightarrow\ V_{\gamma}/P_{\gamma}\times V_{\gamma'}/P_{\gamma'}$$
is induced by the quotient maps $V_{\gamma,\gamma'}\to V_\gamma\times V_{\gamma'}$ and $P_{\gamma,\gamma'}\to G_\gamma\times G_{\gamma'}$. The latter is a $BU_{\gamma,\gamma'}$ bundle where $U_{\gamma,\gamma'}\subseteq P_{\gamma,\gamma'}$ is the unipotent radical, and so $q$ is an isomorphism on cohomology. There is an explicit description for the CoHA product discovered in \cite{KS}, and which we rederive in section \ref{explicit}.

\subsection{Coherent sheaves on a curve} Let $C$ be a smooth proper curve over an algebraically closed field. We consider the abelian category $\text{Coh}(C)$ of coherent sheaves. Its moduli space has connected components
$$\text{Coh}\ =\ \coprod \text{Coh}_r^d$$
labelled by the rank $r$ and degree $d$ of the coherent sheaf. 

\subsubsection{} There is a tautological coherent sheaf on $C\times \text{Coh}$, denoted $\mathcal{E}$, with Chern classes
$$c_i(\mathcal{E})\ =\ 1\otimes \alpha_i\ +\ \sum b_{k,i}\otimes \beta_{k,i}\ +\ \sigma\otimes \gamma_i,$$
where $1,b_1,...,b_{2g},\sigma$ is a basis of $\text{H}^\cdot(C)$. Then \cite{He} shows
\begin{prop} For positive rank $r>0$,
$$\text{H}^\cdot(\text{Coh}_r^d)\ =\ \text{Sym}(\text{W}_{C,d})$$
is the free supersymmetric algebra generated by the $\alpha_i,\beta_{k,i},\gamma_i$, which form a basis for the graded vector space $\text{W}_{C,d}$.
\end{prop}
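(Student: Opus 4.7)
The plan is to follow \cite{He}, combining Atiyah--Bott's computation for $\text{Bun}_r^d$ with a stratification of $\text{Coh}_r^d$ by torsion length. Since $C$ is a smooth curve, every torsion-free coherent sheaf is locally free, so $\text{Bun}_r^d$ sits inside $\text{Coh}_r^d$ as an open substack whose closed complement is stratified by the length $\ell > 0$ of the maximal torsion subsheaf.

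First I would compute $\text{H}^\cdot(\text{Bun}_r^d)$ via the Atiyah--Bott argument: approximate $\text{Bun}_r^d$ by open substacks $U_N$ of bundles $F$ with $H^1(F(N)) = 0$, each of which is a $\text{GL}$-quotient of an open in a Quot scheme and whose cohomologies stabilise in each fixed degree as $N\to\infty$; then use the Shatz/Harder--Narasimhan stratification, whose deeper strata fibre over products of lower-rank Bun's and contribute only in large codimension, to deduce inductively on rank that $\text{H}^\cdot(\text{Bun}_r^d)$ is the free supercommutative algebra on the K\"unneth components of $c_1(\mathcal{E}|_{\text{Bun}}), \dots, c_r(\mathcal{E}|_{\text{Bun}})$. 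Next, for each torsion stratum, the torsion filtration $0 \to T \to \mathcal{E} \to F \to 0$ induces a morphism
$$
\text{Coh}_r^{d,\ell} \ \longrightarrow \ \text{Coh}_0^{\ell} \times \text{Bun}_r^{d-\ell}
$$
whose fibres are classifying stacks of extensions, cohomologically trivial as they are quotients of vector spaces by unipotent groups in characteristic zero (or with coefficients coprime to the base characteristic). The torsion stack $\text{Coh}_0^{\ell}$ splits over supports into a product of local finite-length module moduli at each closed point of $C$, each of which is a classifying stack whose cohomology is freely generated by Chern classes of its tautological module.

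The main work is to assemble these along the stratification by $\ell$. The generators $\alpha_i, \beta_{k,i}, \gamma_i$ with $i > r$ restrict to zero on $\text{Bun}_r^d$, since the universal sheaf there has rank $r$; they must instead originate from the torsion strata, whose contribution precisely supplies the missing generators. The hard part will be showing that the stratification spectral sequence degenerates with no cancellations: this should follow from a purity/codimension argument exploiting the smoothness of $\text{Coh}_r^d$ as an Artin stack, together with the algebraic independence of the K\"unneth components of the Chern classes, which can be witnessed by explicit point evaluations on sheaves concentrated at distinct closed points of $C$. Putting this together identifies $\text{H}^\cdot(\text{Coh}_r^d)$ with $\text{Sym}(\text{W}_{C,d})$ as claimed.
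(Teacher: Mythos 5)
Your skeleton is the right one, and it is essentially the argument of Heinloth's note [He], which is all the paper itself offers for this proposition (it is stated there as a citation, not proved). The individual ingredients you list are correct: $\text{Bun}_r^d$ is the open stratum and its cohomology is free on the K\"unneth components of $c_1,\dots,c_r$ by the Atiyah--Bott/Harder--Narasimhan argument; the map $\text{Coh}_r^{d,\ell}\to \text{Coh}_0^\ell\times\text{Bun}_r^{d-\ell}$ is a cohomology isomorphism because $\text{Ext}^1(F,T)=0$ for $T$ torsion, so the fibre is $B\,\text{Hom}(F,T)\simeq B\mathbf{G}_a^{r\ell}$, which is acyclic; and $\text{H}^\cdot(\text{Coh}_0^\ell)=\text{Sym}^\ell(\text{V}_C)$ can be taken as input from the rank-zero proposition.

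The gap is in the assembly. First, "smoothness of $\text{Coh}_r^d$" is not what makes the stratification spectral sequence degenerate; what does is purity of the cohomology of the strata (each $\text{H}^i$ of $\text{Coh}_0^\ell\times\text{Bun}_r^{d-\ell}$ is pure of weight $i$, as is visible from the explicit generators), which forces the connecting maps in the Gysin sequences to vanish for weight reasons; equivalently one shows the restriction to each open union of strata is surjective because its cohomology is generated by restrictions of the tautological classes. Second, and more importantly, degeneration only gives you $\text{H}^\cdot(\text{Coh}_r^d)\cong\bigoplus_{\ell\ge 0}\text{H}^{\cdot-2r\ell}(\text{Coh}_0^\ell\times\text{Bun}_r^{d-\ell})$ as graded vector spaces. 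To conclude that the algebra is \emph{freely} generated by all the $\alpha_i,\beta_{k,i},\gamma_i$ you still need (i) the generating-function identity matching $\sum_\ell t^{2r\ell}P(\text{Coh}_0^\ell)\,P(\text{Bun}_r^{d-\ell})$ with $P(\Sym(\text{W}_{C,d}))$ --- this is exactly where the generators with $i>r$ are accounted for by the torsion strata, and it is a genuine computation using the Atiyah--Bott formula and the infinite product for $\sum_\ell P(\text{Coh}_0^\ell)q^\ell$, which your sketch omits entirely --- and (ii) injectivity (or surjectivity) of the tautological map $\Sym(\text{W}_{C,d})\to\text{H}^\cdot(\text{Coh}_r^d)$. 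For (ii), "explicit point evaluations" do not make sense: cohomology classes on a stack are not functions on points. The correct substitute is to pull back along large test families, e.g.\ maps $(C\times\text{B}\mathbf{G}_m)^N\times(\text{Pic}\times\text{B}\mathbf{G}_m)^r\to\text{Coh}_r^d$ classifying direct sums of torsion sheaves at varying points and line bundles, and to check algebraic independence of the pulled-back classes there. With (i) and (ii) supplied, your argument closes; without them it only computes the size of $\text{H}^\cdot(\text{Coh}_r^d)$ stratum by stratum.
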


\subsubsection{} In rank zero $r=0$, we first have  an isomorphism
$$\text{Coh}^1_0\ \stackrel{\sim}{\longrightarrow}\ C\times \text{B}\mathbf{G}_m.$$
It takes a degree one rank zero coherent sheaf $\mathcal{T}$ on $X\times C$  to $(\text{Supp}\mathcal{T},\pi_*\mathcal{T})$, where $\text{Supp}\mathcal{T}:X\to C$ and the pushfoward by $\pi:X\times C\to X$ gives a line bundle. That is to say, we have the short exact sequence of coherent sheaves on $\text{Coh}_0^1\times C$
$$0\ \longrightarrow\ \mathcal{O}\ \longrightarrow\ \mathcal{L}\ \longrightarrow\ \mathcal{T}\ \longrightarrow\ 0$$
where $\mathcal{T}$ is the tautological rank zero degree one coherent sheaf, and $\mathcal{L}=\mathcal{O}(-\text{Supp}\mathcal{T})$. Moreover, Heinloth shows 
\begin{prop} For rank zero $r=0$, 
$$\text{H}^\cdot(\text{Coh}_0^d)= \text{Sym}^d(\text{V}_C)$$
where $\text{V}_C$ is the graded vector space $\text{H}^\cdot(C\times \text{B}\mathbf{G}_m)$.
\end{prop}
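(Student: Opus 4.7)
The plan is to reduce the computation to a stratification of $\text{Coh}_0^d$ by the Jordan type of torsion sheaves, combined with the cohomology of local models $[\mathcal{N}_n/\text{GL}_n]$.

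First I would decompose a torsion sheaf $\mathcal{T}$ of length $d$ uniquely as $\mathcal{T} \simeq \bigoplus_{i=1}^k \mathcal{T}_i$ where each $\mathcal{T}_i$ is supported at a single point $p_i$, with distinct $p_1,\ldots,p_k$ in $C$. Each $\mathcal{T}_i$ is a finite length module over the complete local ring $\hat{\mathcal{O}}_{C,p_i} \simeq k[[z]]$, and is classified up to isomorphism by a partition $\lambda^{(i)}$ of its length. This gives a locally closed stratification of $\text{Coh}_0^d$ indexed by weighted partitions. The open stratum (reduced, multiplicity-free support) is precisely $[(C\times \text{B}\mathbf{G}_m)^d_{\text{dist}}/\mathfrak{S}_d]$, i.e.\ the open part of $\text{Sym}^d(\text{Coh}_0^1)$.

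Second, I would compute the ``local'' pieces: for a single point $p$ and partition $\lambda\vdash n$, the corresponding local moduli stack is $[\mathcal{N}_n/\text{GL}_n]$, where $\mathcal{N}_n \subseteq \mathfrak{gl}_n$ is the nilpotent cone (parametrizing finite length modules of length $n$ at $p$ as quotients of $\mathcal{O}^n$). The key cohomological input is that the $\text{GL}_n$-equivariant retraction $\mathcal{N}_n \to \{0\}$ by scaling induces an isomorphism
$$\text{H}^\cdot\bigl([\mathcal{N}_n/\text{GL}_n]\bigr) \ \stackrel{\sim}{\longrightarrow}\ \text{H}^\cdot(\text{BGL}_n) \ =\ \text{Sym}^n\text{H}^\cdot(\text{B}\mathbf{G}_m).$$
Globalising the point $p$ to vary in $C$ tensors by $\text{H}^\cdot(C)$, giving that each stratum corresponding to a partition-type $\lambda$ contributes a summand matching a summand of $\text{Sym}^d V_C$ under the PBW decomposition $\text{Sym}^d V_C = \bigoplus_{\lambda\vdash d}\prod_i \text{Sym}^{m_i(\lambda)} V_C$.

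Finally, I would assemble the strata. A Leray-style spectral sequence for the stratification, or equivalently, the weight/purity argument using that each stratum has pure cohomology (being a configuration space in $C$ tensored with classifying-space cohomology), shows that the associated graded for the closure order of strata reassembles exactly into $\text{Sym}^d V_C$. The main obstacle is precisely this assembly step: the support map $\text{Coh}_0^d \to \text{Sym}^d(C)$ is not flat, and there is no literal morphism $\text{Sym}^d(\text{Coh}_0^1)\to \text{Coh}_0^d$ extending the open immersion on the distinct-support locus, so one cannot directly invoke a standard $\text{H}^\cdot(\text{Sym}^d Y)=\text{Sym}^d \text{H}^\cdot(Y)$ identity. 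Instead one must check that all potential differentials and extension problems in the spectral sequence vanish, which reduces to the purity of the $[\mathcal{N}_n/\text{GL}_n]$ fibres and the absence of odd-weight classes.
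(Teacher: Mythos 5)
The paper offers no proof of this proposition; it is quoted directly from Heinloth \cite{He}. So the only question is whether your argument stands on its own, and as written it does not. The decisive step is the assembly of the strata, and the justification you give for it is false. The strata of $\text{Coh}_0^d$ fibre over configuration spaces of distinct points of $C$, and these do not have pure cohomology: already $\text{H}^\cdot(C^2\setminus\Delta)$ contains classes whose weight differs from their degree (Gysin sequence for the diagonal), and for $g(C)>0$ the computation is saturated with odd-degree classes coming from $\text{H}^1(C)$, so neither purity nor parity forces the spectral sequence of the stratification to degenerate. There is a further problem with running such a spectral sequence at all: the closures of your strata are singular (fibrations in products of nilpotent cones), so the comparison of $i^!k$ with $i^*k$ that the Gysin/Euler-class bookkeeping requires — compare appendix \ref{stratifications}, which demands smooth strata in a smooth ambient space with non-zero-divisor normal Euler classes — is not available; and if you refine to the smooth stratification by Jordan type, the local contribution of the $\lambda$-stratum is $\text{H}^\cdot(\text{B}Z_\lambda)$ for $Z_\lambda$ the centraliser of a nilpotent of type $\lambda$, not $\text{H}^\cdot(\text{BGL}_n)$ (note that $[\mathcal{N}_n/\text{GL}_n]$ is the stack of \emph{all} length-$n$ modules at a point, not the stratum of a fixed partition). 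Finally, the displayed ``PBW decomposition'' is not an identity: $\bigoplus_{\lambda\vdash d}\prod_i\Sym^{m_i(\lambda)}V_C$ is the degree-$d$ part of $\Sym\bigl(\bigoplus_{i\ge 1}V_C\bigr)$, not of $\Sym(V_C)$, and the strata contributions only match $\Sym^d V_C$ after the degree shifts and cancellations of the spectral sequence, which you have not tracked. What survives is a plausibility check at the level of point counts, not a proof.

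Your stated ``main obstacle'' is also not one: direct sum of sheaves is a genuine morphism of stacks $(\text{Coh}_0^1)^d\to\text{Coh}_0^d$, it is $\mathfrak{S}_d$-equivariant up to coherent isomorphism and hence descends to $\Sym^d(\text{Coh}_0^1)\to\text{Coh}_0^d$, and it restricts to an isomorphism over the multiplicity-free locus. This map (equivalently, the Hall induction product $V_C^{\otimes d}\to\text{H}^\cdot(\text{Coh}_0^d)$, which is exactly the algebra structure invoked in section \ref{explicit}) is the natural comparison map, and the actual content of the proposition, as established in \cite{He}, is that it is an isomorphism — something that requires input beyond the stratification, e.g.\ an induction on $d$ through the extension correspondence for torsion sheaves together with a Poincaré-series comparison. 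I would encourage you to either carry out the spectral-sequence degeneration honestly (identifying what replaces purity) or to switch to the comparison-map strategy.
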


\subsubsection{} The moduli space of extensions has connected components 
$$\text{Coh}_{r,r'}^{d,d'}$$
labelled by the rank and degree $(r,d),(r',d')$ of the subobject and quotient:
$$0\ \longrightarrow\ \mathcal{E}^d_{r}\ \longrightarrow\ \mathcal{E}_{r+r'}^{d+d'}\ \longrightarrow\ \mathcal{E}_{d'}^{r'}\ \longrightarrow\ 0.$$
In section \ref{explicit}, we will describe the CoHA product. To do so, it is useful to consider the stratification on $\text{Coh}$ given by the length $\ell$ of torsion subsheaf:
$$\text{Coh}^{d}_r\ = \ \coprod_{\ell\ge 0} \text{Coh}^{d,\ell}_r.$$
The closure relations of this stratification is $\overline{\text{Coh}^{d,\ell}_r}\ =\ \coprod_{\ell'\ge \ell}\text{Coh}^{d,\ell'}_r$. All strata are smooth. Sending a coherent sheaf to its torsion subsheaf and torsion-free quotient gives
$$\text{Coh}^{d,\ell}_r\ \longrightarrow\ \text{Coh}_0^\ell\times \text{Bun}_r^{d-\ell}.$$
This is a vector bundle of rank $r\ell$, the zero section being the direct sum map. It follows that the $\ell$th stratum has codimension $r\ell$ inside $\text{Coh}^{d}_r$.

\subsubsection{} The cohomology $\text{H}^\cdot(\text{Coh})$ is a braided vertex algebra, for $\theta=\text{Hom}(-,-)$. Explicitly, we have the Ext complex $\mathcal{E}\text{xt}$ living over $C\times \text{Coh}^2$, whose fibre over $(c,V,W)$ is the fibre above $c\in C$ of the complex of sheaves $\mathcal{E}\text{xt}_{\mathcal{O}_C}(V,W)$. We then define
$$\theta\ =\ \pi_*\mathcal{E}\text{xt}$$
where $\pi:C\times \text{Coh}^2\to \text{Coh}^2$ is the projection, which is a perfect complex because $\pi$ is smooth and proper. Thus this construction works for any smooth proper scheme $C$. The conditions in \ref{joycethm} for $\theta$ are inherited from $\mathcal{E}\text{xt}$. When $C$ is an elliptic curve the category of coherent sheaves is one Calabi-Yau, and consequently $\theta$ is also symmetric.

As before, this braided vertex algebra does not have a very explicit description, but one can show that the cohomology of the moduli space of $D^b(\text{Coh}(C))$ for the doubled complex $\theta\oplus \sigma^*\theta^\vee$ has the structure of a lattice vertex superalgebra, for the lattice $\text{H}^\cdot(C,\mathbf{Z})$ with bilinear form $\text{rk}(\theta\oplus \sigma^*\theta^\vee)$ computable by Grothendieck-Riemann-Roch, see e.g. \cite{Gr}.

\section{Euler classes}\label{euler} 

\subsection{} The Euler class of a vector bundle $E\to X$ is defined using the Gysin sequence
$$\cdots  \ \longrightarrow\ \text{H}^{\cdot -2\text{rk}E}(X)\ \stackrel{\cdot e(E)}{\longrightarrow}\ \text{H}^\cdot(X)\ \longrightarrow\ \text{H}^\cdot(E\setminus X)\ \longrightarrow\ \cdots$$
The first map is multiplication by a cohomology class on $X$, which is \textit{defined} to be the Euler class. The Gysin sequence is the long exact sequence on cohomology of a certain distinguished triangle of sheaves on $X$: the Gysin triangle
$$ i^!k \ \stackrel{}{\longrightarrow}\ i^*k\ \longrightarrow\ i^*j_*j^*k\ \stackrel{+1}{\longrightarrow}$$
Here $i:X\hookrightarrow E$ denotes the zero section and $j:E\setminus X\hookrightarrow E$ its open complement. 

\subsubsection{} Now assume $E\to X$ is no longer a vector bundle but something like a cone bundle or perfect complex in nonnegative degrees. More precisely, assume it has a section $i:X\hookrightarrow E$ which is a closed embedding. Then the Gysin triangle still makes sense, and taking cohomology gives
$$\cdots  \ \longrightarrow\ \text{H}^\cdot(X/E)\ \stackrel{\cdot e(E)}{\longrightarrow}\ \text{H}^\cdot(X)\ \longrightarrow\ \text{H}^\cdot(E\setminus X)\ \longrightarrow\ \cdots$$
where the left entry is now a \textit{bivariant} homology group (see appendix \ref{biv} for a review).  The first map is now right multiplication by a \textit{bivariant} homology class $e(E)\in \text{H}^0(E/X)$, which we \textit{define} to be the bivariant Euler class.

\subsubsection{} \label{bivarianteulerclass} Let $E\to X$ be a map admitting a section $i:X\hookrightarrow E$ which is a closed embedding. Because $i$ is proper it induces a pushforward map on bivariant homology
$$i_* \ : \ \text{H}^\cdot(X/X)\ \longrightarrow\ \text{H}^\cdot(E/X).$$
The \textbf{bivariant Euler class} of $E$ is then defined to be the image of $1\in \text{H}^0(X)$ under this map, and is denoted by $e(E)\in \text{H}^0(E/X)$.

\begin{lem}
If $f:Y\to X$ is any map, $f^*e(E)=e(f^*E)$.
\end{lem}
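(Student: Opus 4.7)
The plan is to write both sides as pushforwards of the unit class along closed-embedding sections, then invoke the base-change compatibility of pushforward and pullback in bivariant homology. Forming the Cartesian square
\[
\begin{tikzcd}
Y \arrow[r, "f"] \arrow[d, "i'"'] & X \arrow[d, "i"] \\
f^*E \arrow[r, "\tilde f"'] & E
\end{tikzcd}
\]
the section $i$ pulls back to a section $i' : Y \to f^*E$. Since closed embeddings are stable under base change, $i'$ is itself a closed embedding and hence proper, so by \ref{bivarianteulerclass} we have $e(f^*E) = i'_*(1_Y) \in \text{H}^0(f^*E/Y)$.

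The key step is the base-change identity $f^* \circ i_* = i'_* \circ \tilde f^*$ as maps $\text{H}^\cdot(X/X) \to \text{H}^\cdot(f^*E/Y)$. This is one of the standard compatibilities built into any bivariant theory of Fulton-Macpherson type; in the six-functor formalism realisation used in appendix \ref{sheaves} it reduces to proper base change applied to the Cartesian square above, using that $i$ (and hence $i'$) is proper. Applying the identity to the unit $1_X \in \text{H}^0(X/X)$ and using that pullback preserves units, $\tilde f^*(1_X) = 1_Y$, yields
\[
f^* e(E) \;=\; f^*(i_* 1_X) \;=\; i'_*(\tilde f^* 1_X) \;=\; i'_*(1_Y) \;=\; e(f^*E).
\]
There is no real obstacle: the content of the lemma is precisely that the definition of $e(E)$ is natural in the base, and this naturality is a formal consequence of proper base change, so no further input beyond the bivariant formalism recalled in the appendix is required.
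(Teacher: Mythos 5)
Your proof is correct and follows essentially the same route as the paper: both apply a single compatibility axiom of the bivariant formalism to the Cartesian square formed by the two zero sections. The only (cosmetic) difference is that you invoke the pushforward--pullback compatibility (axiom $A_{23}$ in appendix \ref{biv}), which is indeed the cleanest axiom for this, whereas the paper's one-line proof cites the projection formula; both reduce to the same base-change statement for the proper map $i$.
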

\begin{proof}This follows by applying the projection formula to the pullback
\begin{center}
\begin{tikzcd}
Y\arrow[r]\arrow[d]& X\arrow[d]\\
f^*E\arrow[r]& E
\end{tikzcd}
\end{center}
\end{proof}

If $E$ is a vector bundle then $\text{H}^\cdot(E/X)=\text{H}^{\cdot+2\text{rk}E}(X)$ and so the bivariant Euler class defines a degree $2\text{rk}E$ cohomology class on $X$, the (non-bivariant) Euler class of $E$. Thus bivariant Euler classes extend the standard notion of Euler class. More precisely, multiplication by the fundamental class gives an isomorphism
$$1_{X/E}\cdot \ :\ \text{H}^\cdot(E/X)\ \stackrel{\sim}{\longrightarrow}\ \text{H}^{\cdot+2\text{rk}E}(X)$$
and $1_{X/E}\cdot e(E)=e(E)$, where the left is the bivariant Euler class and the right is the Euler class of a vector bundle, viewed as a cohomology class on the base.

\subsubsection{} If $i:X\hookrightarrow E$ is a closed embedding which does \textit{not} admit a cosection, we cannot define the Euler class $e(E)$. However, the first map in the Gysin sequence still exists, and we will nevertheless denote it by
$$\cdot e(E)\ :\ \text{H}^\cdot(X/E)\ \longrightarrow\ \text{H}^\cdot(X).$$
When a cosection exists, it is right multiplication by the class $e(E)$.

\subsection{} The prototypical example is of the tautological vector bundle $\gamma=\mathbf{A}^1/\mathbf{G}_m$ over $\text{B}\mathbf{G}_m$. Its Gysin sequence
$$\cdots  \ \longrightarrow\ \text{H}^{\cdot -2}(\text{B}\mathbf{G}_m)\ \stackrel{\cdot e(\gamma)}{\longrightarrow}\ \text{H}^\cdot(\text{B}\mathbf{G}_m)\ \longrightarrow\ \text{H}^\cdot(\mathbf{G}_m/\mathbf{G}_m)\ \longrightarrow\ \cdots$$
can be explicitly written as
$$\cdots  \ \longrightarrow\ k[t]\ \stackrel{\cdot e(\gamma)}{\longrightarrow}\ k[t]\ \longrightarrow\ k\ \longrightarrow\ \cdots$$
Thus the Euler class of $\gamma$ is $t$, rescaling if necessary. A more delicate analysis, e.g. using integral $\ell$-adic cohomology, will show that $e(\gamma^{\otimes n}) = n\cdot e(\gamma)$.

\subsubsection{} More generally, if $\text{G}$ is any reductive group over $\mathbf{C}$, a representation $V$ defines a vector bundle $\mathcal{V}=V/\text{G}$ over $\text{BG}$. Pulling back to the classifying space of a maximal torus $\text{BT}\to\text{BG}$ gives $\text{H}^\cdot(\text{BG}) = \text{H}^\cdot(\text{BT})^\text{W}$. It follows from the above that the Euler class of the vector bundle is
$$e(\mathcal{V})\ =\  \prod_{\lambda\in P}\lambda^{\dim V_\lambda}.$$
as an element of $\text{H}^\cdot(\text{BT})^\text{W} =\mathbf{C}[\mathfrak{t}^*]^\text{W}$. Here $P$ is the weight lattice of $\text{G}$. In particular, it vanishes if and only if zero is a weight of $V$.

\subsection{} \label{whitney}The \textit{Whitney sum formula} says that under a short exact sequence of vector bundles
\begin{equation}\label{ses}
0 \ \longrightarrow\ E_1 \ \longrightarrow\ E\ \longrightarrow \ E_2\ \longrightarrow \ 0
\end{equation}
the Euler class is multiplicative: 
$$e(E)\ =\ e(E_1)\cdot e(E_2).$$ 
We now present a proof which after tweaking will generalise to the bivariant situation. The extension of vector bundles (\ref{ses}) induces a fibre product of spaces
\begin{center}
\begin{tikzcd}
E_1\arrow[r]\arrow[d]&E\arrow[d]\\
X\arrow[r,"i_2"]& E_2
\end{tikzcd}
\end{center}
and the pullback map an bivariant homology gives an isomorphism
$$i_2^*\ :\ \text{H}^\cdot(E/E_2)\ \stackrel{\sim}{\longrightarrow}\ \text{H}^\cdot(E_1/X).$$
Indeed, up to a shift this map is just the pullback $\text{H}^\cdot(E)\stackrel{\sim}{\to} \text{H}^\cdot(E_1)$. Under this isomorphism the bivariant Euler class $e(E_1)\in \text{H}^0(E_1/X)$ corresponds to 
$$\underline{e}(E_1)\ \in\  \text{H}^0(E/E_2).$$

\begin{prop}
$e(E)=\underline{e}(E_1)\cdot e(E_2)$ as elements of $\text{H}^\cdot(E/X)$.
\end{prop}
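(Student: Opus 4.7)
The plan is to factor the zero section $i:X\hookrightarrow E$ through the subbundle as $X\xrightarrow{i_1} E_1\xrightarrow{j} E$, and then show that both $e(E)$ and $\underline{e}(E_1)\cdot e(E_2)$ are equal to $j_\ast e(E_1)\in\text{H}^0(E/X)$. The first equality will come from functoriality of bivariant pushforward, and the second from a base change/projection formula along the Cartesian square in the statement.

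For the first reduction: the maps $i_1,j,i$ are all closed immersions and are compatible with the structure maps $E_1\to X$, $E\to X$ in the sense that $\pi_E\circ j=\pi_{E_1}$. Bivariant pushforward respects composition of proper maps over $X$, so together with the definition $e(E_1)=(i_1)_\ast(1_X)$ we get
$$e(E)\ =\ i_\ast(1_X)\ =\ j_\ast\bigl((i_1)_\ast(1_X)\bigr)\ =\ j_\ast\bigl(e(E_1)\bigr)\ \in\ \text{H}^0(E/X).$$

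For the second, expand $e(E_2)=(i_2)_\ast(1_X)\in\text{H}^0(E_2/X)$ and apply the Fulton--MacPherson bivariant projection formula (axiom $A_{13}$ of \cite{FM}, equivalently base change for the six-functor formalism recalled in appendix \ref{sheaves}) to the Cartesian square in the statement, with $\alpha=\underline{e}(E_1)\in\text{H}^0(E/E_2)$ and $\beta=1_X\in\text{H}^0(X/X)$:
$$\underline{e}(E_1)\cdot e(E_2)\ =\ \underline{e}(E_1)\cdot (i_2)_\ast(1_X)\ =\ j_\ast\bigl(i_2^\ast\underline{e}(E_1)\cdot 1_X\bigr)\ =\ j_\ast\bigl(e(E_1)\bigr),$$
the last equality being the defining property of $\underline{e}(E_1)$ as the image of $e(E_1)$ under the pullback isomorphism $i_2^\ast$. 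Combining the two displays proves the proposition.

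The argument is purely formal once the functoriality of $(-)_\ast$ and the bivariant projection formula from appendix \ref{sheaves} are in hand, so the main obstacle, if any, lies not in the proof but in verifying the source and target of each bivariant class and ensuring the various structure maps are compatible with the pushforwards and pullbacks invoked. The key structural input is the observation that the subbundle inclusion $j:E_1\hookrightarrow E$ is literally the base change of the zero section $i_2:X\hookrightarrow E_2$ along the projection $E\to E_2$ — this is precisely what enables us to identify $j_\ast e(E_1)$ from two different directions.
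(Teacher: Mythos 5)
Your argument is correct and is essentially the paper's own proof: both reduce to a single application of the bivariant projection formula along the Cartesian square (pushing forward $1_X$ along $i_2$ against $\underline{e}(E_1)$), together with $i_2^*\underline{e}(E_1)=e(E_1)$ and functoriality of proper pushforward to identify $a_*e(E_1)=e(E)$. The only slip is cosmetic: the projection formula is axiom $A_{123}$ in the paper's appendix (and in Fulton--MacPherson), not $A_{13}$, which is the compatibility of product with pullback.
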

\begin{proof}
Apply the projection formula to the diagram
\begin{center}
\begin{tikzcd}
E_1\arrow[r,"a"]\arrow[d]&E\arrow[d]& \\
X\arrow[r,"i_2"]& E_2\arrow[r]& X
\end{tikzcd}
\end{center}
which says that as elements of $\text{H}^\cdot(E/X)$,
$$a_*(i_{2}^*\underline{e}(E_1)\cdot 1_X)\ =\ \underline{e}(E_1)\cdot i_{2*}1_X.$$
The left side is $a_*(e(E_1))=e(E)$, and the right side is $\underline{e}(E_1)\cdot e(E_2)$.
\end{proof}

\subsubsection{} \label{whitneysingular} The singular analogue of a vector bundle is a perfect complex concentrated in nonnegative degrees, so that the zero section is a closed embedding. Take a distinguished triangle of such
\begin{equation}\label{perf}
E_1 \ \longrightarrow\ E\ \longrightarrow \ E_2\ \stackrel{+1}{\longrightarrow}
\end{equation}
giving as before pullback square of derived stacks
\begin{center}
\begin{tikzcd}
E_1\arrow[r]\arrow[d]&E\arrow[d]\\
X\arrow[r,"i_2"]& E_2
\end{tikzcd}
\end{center}
If the pullback 
$$i_2^*\ :\ \text{H}^\cdot(E/E_2)\ \longrightarrow\ \text{H}^\cdot(E_1/X).$$
were an isomorphism, we could repeat the previous argument verbatim to get a Whitney sum formula for perfect complexes. But this is almost never the case, and the Whitney sum formula is only true if we work \textit{equivariantly}.

\subsubsection{} Let $X$ be a space with a trivial	 action of $\mathbf{G}_m$. Take a distinguished triangle of perfect complexes on $X/\mathbf{G}_m$, concentrated in nonnegative degrees
\begin{equation}\label{perf}
E_1 \ \longrightarrow\ E\ \longrightarrow \ E_2\ \stackrel{+1}{\longrightarrow}
\end{equation}
So each is a $\mathbf{G}_m$ equivariant perfect complex on $X$. Finally assume that the $\mathbf{G}_m$ weights are all nonzero. Then the pullback map is very close to being an isomorphism: it is an isomorphism \textit{modulo torsion}
$$i_2^*\ :\ \text{H}^\cdot_{\mathbf{G}_m}(E/E_2)_{loc}\ \stackrel{\sim}{\longrightarrow}\ \text{H}^\cdot_{\mathbf{G}_m}(E_1/X)_{loc},$$
and so $e(E_1)$ corresponds to an element $\underline{e}(E_1)\in \text{H}^0_{\mathbf{G}_m}(E/E_2)_{loc}$. Here the subscript $loc$ means the localisation of a $\text{H}^\cdot(\text{B}\mathbf{G}_m)=k[t]$ at its unique prime ideal, i.e. invert $t$. The \textbf{Whitney sum formula} for perfect complexes is then

\begin{prop}
$e(E)=\underline{e}(E_1)\cdot e(E_2)$ as elements of $\text{H}^\cdot_{\mathbf{G}_m}(E/X)_{loc}$.
\end{prop}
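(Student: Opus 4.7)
I would mirror the vector bundle argument of \ref{whitney} verbatim; the only genuinely new ingredient is that the pullback map $i_2^*$ is an isomorphism only \emph{after} inverting the equivariant parameter $t\in \text{H}^\cdot(\text{B}\mathbf{G}_m)$. Once that is in hand, the projection formula clinches the result.

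\textbf{Step 1 (the localised isomorphism).} First I would establish
\begin{equation*}
i_2^*\ :\ \text{H}^\cdot_{\mathbf{G}_m}(E/E_2)_{loc}\ \stackrel{\sim}{\longrightarrow}\ \text{H}^\cdot_{\mathbf{G}_m}(E_1/X)_{loc}.
\end{equation*}
Since $E_1$ is by definition the fibre of $E\to E_2$ over the zero section $X$, the square formed by $E_1,E,X,E_2$ is Cartesian, and base change for bivariant homology (appendix \ref{biv}) fits $i_2^*$ into a long exact sequence with third term $\text{H}^\cdot_{\mathbf{G}_m}(E\setminus E_1 / E_2\setminus X)_{loc}$. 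The nonzero weight hypothesis implies that the $\mathbf{G}_m$-fixed locus of each of $E,E_1,E_2$ is exactly the corresponding zero section $X$; in particular both source and target of the open map $E\setminus E_1\to E_2\setminus X$ have empty fixed locus. Abelian localisation (section \ref{abloc}, in its relative bivariant form from section \ref{note}) then kills the third term after inverting $t$, yielding the isomorphism. Define $\underline{e}(E_1)\in \text{H}^0_{\mathbf{G}_m}(E/E_2)_{loc}$ as the unique preimage of $e(E_1)$.

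\textbf{Step 2 (the projection formula).} Apply the projection formula to the factorisation
\begin{center}
\begin{tikzcd}
E_1\arrow[r,"a"]\arrow[d]&E\arrow[d]& \\
X\arrow[r,"i_2"]& E_2\arrow[r]& X
\end{tikzcd}
\end{center}
to obtain the identity $a_*(i_2^*\underline{e}(E_1)\cdot 1_X) = \underline{e}(E_1)\cdot i_{2*}1_X$ inside $\text{H}^\cdot_{\mathbf{G}_m}(E/X)_{loc}$. By construction $i_2^*\underline{e}(E_1)=e(E_1)$, and since $a:E_1\hookrightarrow E$ composed with the zero section of $E_1$ is the zero section of $E$, functoriality of the construction in \ref{bivarianteulerclass} gives $a_*e(E_1)=e(E)$. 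On the right, $i_{2*}1_X=e(E_2)$ by the very definition of the bivariant Euler class. Combining yields $e(E)=\underline{e}(E_1)\cdot e(E_2)$.

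\textbf{Main obstacle.} The substantive work is concentrated entirely in Step 1: one needs a version of abelian localisation strong enough to apply to the bivariant homology of the (derived, possibly singular) relative map $E\setminus E_1\to E_2\setminus X$ between Artin stacks carrying a fibrewise $\mathbf{G}_m$-action. This is precisely what sections \ref{note} and \ref{abloc} are designed to provide; with that machinery in place, every other step in the argument is a purely formal consequence of base change and the projection formula, as in the classical vector bundle case.
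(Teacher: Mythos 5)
Your Step 2 is the paper's argument verbatim: the projection formula applied to the factorisation $E_1\to E\to E_2\to X$, together with $a_*e(E_1)=e(E)$ and $i_{2*}1_X=e(E_2)$, is exactly how the conclusion is drawn once the localised isomorphism is in hand. The two proofs part ways in Step 1, and there your argument as written has a gap. You assert that ``base change'' fits $i_2^*$ into a long exact sequence whose error term is $\text{H}^\cdot_{\mathbf{G}_m}(E\setminus E_1/E_2\setminus X)_{loc}$. What one actually gets, by applying $\pi^!$ (for $\pi:E\to E_2$) to the recollement triangle $j_{2!}j_2^*k\to k\to i_{2*}i_2^*k$ attached to $X\hookrightarrow E_2\hookleftarrow E_2\setminus X$, is a triangle whose last term is $\pi^!i_{2*}i_2^*k\simeq a_*\pi_1^!k$ (this part genuinely is base change, and recovers $\text{H}^\cdot(E_1/X)$), but whose first term is $\pi^!j_{2!}j_2^*k$. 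Identifying this with $j'_!\pi'^!k$ for the restricted map $\pi':E\setminus E_1\to E_2\setminus X$ --- i.e.\ commuting $\pi^!$ past $j_!$ --- is not among the listed axioms of the formalism and is precisely the point requiring an argument. The gap is repairable: since $i_2^!j_{2!}=0$, one has $a^!(\pi^!j_{2!}j_2^*k)=\pi_1^!i_2^!j_{2!}j_2^*k=0$, so by Mayer--Vietoris this sheaf is $j'_*$ of its restriction to $E\setminus E_1$, and the relative results of section \ref{note} (torsion cohomology of \emph{any} sheaf on the locus with \'etale stabilisers --- note that what you need on $E_2\setminus X$ is finite stabilisers, not merely empty fixed locus) then kill it after inverting $t$. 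The paper sidesteps all of this by a different d\'evissage: it extends the pullback square leftward by $E_2[-1]\to E_1$ and uses axiom $A_{23}$ to identify the pullback $i_2^*:\text{H}^\cdot_{\mathbf{G}_m}(E/E_2)\to\text{H}^\cdot_{\mathbf{G}_m}(E_1/X)$ with the pullback $\text{H}^\cdot_{\mathbf{G}_m}(X/E_2)\to\text{H}^\cdot_{\mathbf{G}_m}(X/X)$ conjugated by the zero-section pushforwards $i_*$ and $\overline{\iota}_*$; all three of the resulting maps are pushforwards along zero sections of nonnegatively-graded complexes with nonzero weights, hence isomorphisms modulo torsion directly from the Gysin sequence. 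In other words, the paper only ever needs the easy statement for pushforwards, whereas your route needs the extra observation above to see that the pullback's error term lives on the free locus.
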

\begin{proof}
Assuming that $i_2^*:\text{H}^\cdot_{\mathbf{G}_m}(E/E_2)\to \text{H}^\cdot_{\mathbf{G}_m}(E_1/X)$ is an isomorphism modulo $\text{H}^\cdot(\text{B}\mathbf{G}_m)$ torsion, the argument in (\ref{whitney}) can be repeated verbatim.

To show this claim, consider the pullback diagrams, consisting of perfect complexes over $X$ concentrated in nonnegative degrees
\begin{equation}\label{diag1}
\begin{tikzcd}
E_2[-1]\arrow[r,"\overline{\iota}"]\arrow[d]& E_1\arrow[r]\arrow[d]&X\arrow[d,"i_2"]\\
X\arrow[r,"i"]& E\arrow[r]& E_2
\end{tikzcd}
\end{equation}
Then the following commutes by axiom $A_{23}$ of bivariant homology:
\begin{center}
\begin{tikzcd}
\text{H}^\cdot_{\mathbf{G}_m}(E/E_2)\arrow[r,"i_2^*"]& \text{H}^\cdot_{\mathbf{G}_m}(E_1/X)\\
\text{H}^\cdot_{\mathbf{G}_m}(X/E_2)\arrow[u,"i_*"]\arrow[r,"i_2^*"]& \text{H}^\cdot_{\mathbf{G}_m}(E_2[-1]/X)\arrow[u,"\overline{\iota}_*"]
\end{tikzcd}
\end{center}
We wish to show the upper arrow is an isomorphism modulo torsion. Because of our assumptions on the complexes, $i,\overline{\iota}$ are closed embeddings on whose complement $\mathbf{G}_m$ acts freely, so the vertical maps are isomorphisms modulo torsion. Turn now to the bottom horizontal arrow. The classical truncation of the outer edges of (\ref{diag1}) is
\begin{center}
\begin{tikzcd}
X\arrow[rr]\arrow[d]&\textcolor{white}{aaa}&X\arrow[d,"i_{2}"]\\
X\arrow[rr]&& E_2
\end{tikzcd}
\end{center}
and writing out the definition of pullback, we see that
$$\text{H}^\cdot_{\mathbf{G}_m}(X/E_2)\ \longrightarrow\ \text{H}^\cdot_{\mathbf{G}_m}(E_2[-1]/X)\ =\ \text{H}^\cdot_{\mathbf{G}_m}(X/X)$$ 
agrees with $i_{2*}$, which is an isomorphism modulo torsion for the same reason as $i_*,\overline{\iota}_*$.
\end{proof}

\subsection{}\label{twoterm} The simplest example after vector bundles is when the perfect complex
$$E\ =\ (E_0\ \to \ E_1)$$
is a two-term complex of vector bundles, each with nonzero $\mathbf{G}_m$ weights. In this case the above can be made more explicit. It fits into a distinguished triangle
$$E\ \longrightarrow\ E_0\ \longrightarrow\ E_1\ \stackrel{+1}{\longrightarrow}$$
and thus its total space into a pullback diagram of derived stacks
\begin{center}
\begin{tikzcd}
E\arrow[r]\arrow[d]&X\arrow[d,"i_1"]\\
E_0\arrow[r,"f"]& E_1
\end{tikzcd}
\end{center}
Using the Whitney sum formula we can understand the bivariant Euler class $e(E)$ in terms of the usual Euler classes $e(E_i)$. First, since the product on bivariant homology is skew commutative, the following commutes
\begin{center}
\begin{tikzcd}
\text{H}^\cdot_{\mathbf{G}_m}(X/E)_{loc}\arrow[r,"\cdot e(V)"] \arrow[d,"f^*1_{X/E_1}",swap] & \text{H}^\cdot_{\mathbf{G}_m}(X)_{loc}\arrow[d,"1_{X/E_1}"]\\
\text{H}^{\cdot +2d_1}_{\mathbf{G}_m}(X/E_0)_{loc}\arrow[r,"\cdot \underline{e}(V)"] &\text{H}^{\cdot +2d_1}_{\mathbf{G}_m}(X/E_1)_{loc}
\end{tikzcd}
\end{center}
where $d_i$ denotes the rank of $E_i$. Applying the Whitney sum formula, this becomes
\begin{center}
\begin{tikzcd}
\text{H}^\cdot_{\mathbf{G}_m}(X/E)_{loc}\arrow[rr,"\cdot e(E)"] \arrow[d,"f^*1_{X/E_1}",swap]& & \text{H}^\cdot_{\mathbf{G}_m}(X)_{loc}\arrow[d,"1_{X/E_1}"]\\
\text{H}^{\cdot +2d_1}_{\mathbf{G}_m}(X/E_0)_{loc}\arrow[r,"\cdot e(E_0)"]&\text{H}^{\cdot +2d_1}_{\mathbf{G}_m}(X)_{loc}  &\text{H}^{\cdot +2d_1}_{\mathbf{G}_m}(X/E_1)_{loc} \arrow[l,"\cdot e(E_1)",swap]
\end{tikzcd}
\end{center}
But of course the bivariant Euler classes of $E_i$ are essentially the same thing as the usual Euler classes of $E_i$, so this is

\begin{center}
\begin{tikzcd}
\text{H}^\cdot_{\mathbf{G}_m}(X/E)_{loc}\arrow[rr,"\cdot e(E)"] \arrow[d,"f^*1_{X/E_1}",swap]& & \text{H}^\cdot_{\mathbf{G}_m}(X)_{loc}\arrow[d,"1_{X/E_1}"]\\
\text{H}^{\cdot +2d_1}_{\mathbf{G}_m}(X/E_0)_{loc}& &\text{H}^{\cdot +2d_1}_{\mathbf{G}_m}(X/E_1)_{loc} \\
\text{H}^{\cdot +2d_1-2d_0}_{\mathbf{G}_m}(X)_{loc}\arrow[u,"1_{X/E_0}"]\arrow[r," e(E_0)"] &  \text{H}^{\cdot +2d_1}_{\mathbf{G}_m}(X)_{loc}& \text{H}^\cdot_{\mathbf{G}_m}(X)_{loc}\arrow[l," e(E_1)",swap]\arrow[u,"1_{X/E_1}",swap]
\end{tikzcd}
\end{center}
All arrows here are isomorphisms. Thus in conclusion we have that
$$e(E)\ =\ e(E_0)/e(E_1)$$
in the sense that the following diagram commutes
\begin{center}
\begin{tikzcd}
\text{H}^\cdot_{\mathbf{G}_m}(X/E)_{loc}\arrow[rr,"\cdot e(E)"] \arrow[d,"1_{E/X}",swap]& & \text{H}^\cdot_{\mathbf{G}_m}(X)_{loc}\\
\text{H}^{\cdot +2d_1-2d_0}_{\mathbf{G}_m}(X)_{loc}\arrow[r," e(E_0)"] &  \text{H}^{\cdot +2d_1}_{\mathbf{G}_m}(X)_{loc}& \text{H}^\cdot_{\mathbf{G}_m}(X)_{loc}\arrow[l," e(E_1)",swap]\arrow[u,equals,swap]
\end{tikzcd}
\end{center}

\subsubsection{} More generally, let $E$ is a perfect complex concentrated in degrees $0$ and $1$, equivalently $E\to X/\mathbf{G}_m$ is quasismooth and has a fundamental class $1_{E/X}$. Also assume its $\mathbf{G}_m$ weights are nonzero. Then we get an actual localised cohomology class, also denoted $e(E)\in \text{H}^{2\text{rk}E}_{\mathbf{G}_m}(X)_{loc}$, by
\begin{center}
\begin{tikzcd}
\text{H}^\cdot_{\mathbf{G}_m}(X/E)_{loc}\arrow[r,"\cdot e(E)"] \arrow[d,"1_{E/X}",swap] & \ \text{H}^\cdot_{\mathbf{G}_m}(X)_{loc}\\
\text{H}^{\cdot -2 \text{rk}E}_{\mathbf{G}_m}(X)_{loc}\arrow[r," e(E)"] &\  \text{H}^\cdot_{\mathbf{G}_m}(X)_{loc}\arrow[u,equals,swap]
\end{tikzcd}
\end{center}

\subsubsection{} This can help us understand the bivariant Euler class better. When $E=E_0$ is a vector bundle the above is
\begin{center}
\begin{tikzcd}
\text{H}^{\cdot -2 \text{rk}E}_{\mathbf{G}_m}(X)_{loc}\arrow[r,"e(E_0)"] \arrow[d,"1",swap] & \ \text{H}^\cdot_{\mathbf{G}_m}(X)_{loc}\\
\text{H}^{\cdot -2 \text{rk}E}_{\mathbf{G}_m}(X)_{loc}\arrow[r," e(E_0)"] &\  \text{H}^\cdot_{\mathbf{G}_m}(X)_{loc}\arrow[u,equals,swap]
\end{tikzcd}
\end{center} 
but when $E=E_1[1]$ is a vector bundle concentrated in degree one, 
\begin{center}
\begin{tikzcd}
\text{H}^\cdot_{\mathbf{G}_m}(X)_{loc}\arrow[r,"1"] \arrow[d,"e(E_1)",swap] & \ \text{H}^\cdot_{\mathbf{G}_m}(X)_{loc}\\
\text{H}^{\cdot -2 \text{rk}E}_{\mathbf{G}_m}(X)_{loc}\arrow[r,"1/e(E_1)"] &\  \text{H}^\cdot_{\mathbf{G}_m}(X)_{loc}\arrow[u,equals,swap]
\end{tikzcd}
\end{center} 
Indeed, viewed as a map of sheaves $i^!k\to i^*k$ multiplication by the bivariant Euler class of $E_1$ is the identity because our sheaf theories are blind to the derived structure (and $i_{cl}$ is the identity). Thus to see derived structure and thereby obtain desirable formulas like $e(E)=e(E_0)/e(E_1)$, one really has to use fundamental classes.

Said another way, even though in the second example there is an obvious isomorphism $\text{H}^\cdot_{\mathbf{G}_m}(X/E)_{loc}\simeq \text{H}^\cdot_{\mathbf{G}_m}(X)_{loc}$ given by $i^!k\simeq i^*k$, this is not the correct one, not least because it cannot be extended to more general $E$. Multiplication by the fundamental class $\text{H}^{\cdot}_{\mathbf{G}_m}(X/E)_{loc}\stackrel{\sim}{\to} \text{H}^{\cdot-2\text{rk}E}_{\mathbf{G}_m}(X)_{loc}$ is the correct choice. This is a genuinely equivariant phenomenon, since it may not be an isomorphism before localising.

\section{Relative case} \label{note}

\subsection{} For a space $X$ acted on by a group $\text{G}$, the equivariant cohomology $\text{H}^\cdot_\text{G}(X,\mathcal{F})$ of any sheaf is a module over $\text{H}^\cdot(\text{BG})$. When is this a torsion module?

\subsubsection{} The most basic example is when $X$ is a finite dimensional dg scheme acted on \textit{freely} by $\text{G}$, because in that case

\begin{prop} \label{schtors}
All elements of $\text{H}^\cdot(\text{BG})\subseteq \text{H}^\cdot_{\text{G}}(X)$ in nonzero degree are nilpotent.
\end{prop}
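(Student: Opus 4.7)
The plan is to reduce the claim to a standard vanishing of cohomology in high degrees. The map $\text{H}^\cdot(\text{BG})\to \text{H}^\cdot_\text{G}(X)=\text{H}^\cdot(X/\text{G})$ is pullback along the classifying map $X/\text{G}\to\text{BG}$ of the $\text{G}$-torsor $X\to X/\text{G}$, so it suffices to exhibit a degree $N_0$ above which $\text{H}^\cdot(X/\text{G})$ vanishes.

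First I would use freeness of the action to deduce that the quotient stack $X/\text{G}$ is representable by a dg scheme, whose classical truncation has dimension $\dim X_{cl}-\dim\text{G}<\infty$. Since each of the rational cohomology theories considered in the paper (singular, de Rham, $\ell$-adic) is insensitive to derived structure — the relevant sheaf theories factor through the classical truncation, as emphasised at the end of Section \ref{twoterm} — we may replace $X/\text{G}$ by $(X/\text{G})_{cl}$. For a finite-dimensional scheme the standard cohomological-dimension bounds (Grothendieck's vanishing for $\ell$-adic cohomology, and the analogous bounds on singular/de Rham cohomology) provide the desired $N_0$.

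To conclude, take any $\alpha\in\text{H}^d(\text{BG})$ with $d>0$; its $N$th power sits in degree $Nd$, which exceeds $N_0$ as soon as $N>N_0/d$. Hence the image of $\alpha^N$ in $\text{H}^\cdot_\text{G}(X)$ vanishes, i.e.\ $\alpha$ is nilpotent there.

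The argument is essentially a one-liner once the geometric reduction is in place; the main point requiring care is verifying that $X/\text{G}$ is genuinely a finite-dimensional dg scheme rather than a more general derived stack. This hinges on the paper's interpretation of ``free action'' (trivial stabilisers on geometric points, in the sense of Appendix \ref{stab}) and on the descent statement for representability along free $\text{G}$-torsors in the derived setting of Appendix \ref{sheaves}.
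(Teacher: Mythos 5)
Your overall strategy is the same as the paper's: reduce to the classical truncation (the sheaf theories only see $(X/\text{G})_{cl}=X_{cl}/\text{G}$), use freeness of the action to control the quotient, and then invoke a cohomological-dimension bound to kill all sufficiently high powers of a positive-degree class. The one step that does not survive scrutiny is your claim that $X/\text{G}$ is \emph{representable by a (dg) scheme}: a free action of a group scheme on a scheme does not in general have a scheme quotient, and there is no descent statement producing one. What freeness actually gives is that the stabiliser groups of $X/\text{G}$ are trivial, hence étale, so by Olsson (Cor.~8.4.2) the quotient is a Deligne--Mumford stack (equivalently here, an algebraic space); this is exactly the route the paper takes. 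The repair is harmless for your argument, because the vanishing you need in high degrees holds for finite-dimensional Deligne--Mumford stacks (the paper cites Edidin, 4.39, for vanishing above twice the dimension), so the nilpotence conclusion goes through unchanged. In short: same proof, but replace ``representable by a dg scheme'' with ``Deligne--Mumford stack via triviality of stabilisers'' and cite the DM-stack version of the cohomological-dimension bound rather than Grothendieck vanishing for schemes.
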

\begin{proof}
As $(X/\text{G})_{cl}=X_{cl}/\text{G}$ it is enough to prove this when $X$ is a classical scheme. As the action is free the stabiliser groups of $X/\text{G}$ are trivial, and corollary $8.4.2$ of \cite{O} implies it is a Deligne-Mumford stack. It follows from $4.39$ of \cite{E} that its cohomology $\text{H}^\cdot_{\text{G}}(X)$ vanishes in degrees above twice its dimension.
\end{proof}

In particular, if $\text{G}$ is a positive dimensional reductive group, so that $\text{H}^\cdot(\text{BG})$ is a polynomial algebra, then it is well known (see for instance $7.2.4$ of \cite{DGai}) that

\begin{cor} \label{schtorscor}
$\text{H}^\cdot_\text{G}(X,\mathcal{F})$ is a torsion $\text{H}^\cdot(\text{BG})$ module for all $\mathcal{F}$.
\end{cor}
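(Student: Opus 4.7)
The plan is to leverage Proposition \ref{schtors} together with the fact that any reasonable cohomology of $X/\text{G}$ with coefficients is naturally a module over the full cohomology ring $\text{H}^\cdot(X/\text{G}) = \text{H}^\cdot_\text{G}(X)$, not just over $\text{H}^\cdot(\text{BG})$. The action of the smaller ring then factors through the bigger one, so nilpotence in $\text{H}^\cdot_\text{G}(X)$ forces torsion in $\text{H}^\cdot_\text{G}(X,\mathcal{F})$.

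More concretely, I would first observe that the structure map $\text{H}^\cdot(\text{BG})\to \text{H}^\cdot_\text{G}(X,\mathcal{F})$ fits into a factorisation
$$\text{H}^\cdot(\text{BG})\ \longrightarrow\ \text{H}^\cdot(X/\text{G})\ \longrightarrow\ \text{End}_k\bigl(\text{H}^\cdot_\text{G}(X,\mathcal{F})\bigr),$$
where the first arrow is pullback along $X/\text{G}\to \text{BG}$ and the second is the cup product action of the cohomology ring on any sheaf cohomology.

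Next, since $\text{G}$ is positive dimensional reductive, $\text{H}^\cdot(\text{BG})$ is a polynomial algebra with finitely many generators, all sitting in strictly positive degree. Pick any nonzero element $t\in\text{H}^{>0}(\text{BG})$; as this ring is a domain, every power $t^N$ is again nonzero. On the other hand, Proposition \ref{schtors} tells us that the image of $t$ in $\text{H}^\cdot(X/\text{G})$ is nilpotent, so there exists $N$ with $t^N=0$ in $\text{H}^\cdot(X/\text{G})$. Through the factorisation above, $t^N$ therefore acts as the zero endomorphism of $\text{H}^\cdot_\text{G}(X,\mathcal{F})$.

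Hence for every $\alpha\in\text{H}^\cdot_\text{G}(X,\mathcal{F})$ we have $t^N\cdot\alpha=0$ with $t^N\neq 0$ in $\text{H}^\cdot(\text{BG})$, which is exactly the statement that $\alpha$ is torsion. Since $\alpha$ was arbitrary, the whole module is torsion. There is really no obstacle here beyond setting up the factorisation correctly; the only subtle point is remembering that the action \emph{a priori} only factors through a nilpotent-laden ring, and that combined with $\text{H}^\cdot(\text{BG})$ being a domain is what converts ``nilpotent action" into ``torsion module".
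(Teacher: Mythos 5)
Your argument is correct and is exactly the deduction the paper intends (the paper itself only cites the statement as well known): the $\text{H}^\cdot(\text{BG})$-action on $\text{H}^\cdot_\text{G}(X,\mathcal{F})$ factors through the cup-product action of $\text{H}^\cdot_\text{G}(X)$, where every positive-degree class is nilpotent by Proposition \ref{schtors}, while $\text{H}^\cdot(\text{BG})$ is a polynomial ring, hence a domain, so the corresponding power is nonzero there and annihilates the whole module. Nothing further is needed.
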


After setting up the preliminary technical details, section \ref{torscomp} will show using the above that $\text{H}^\cdot_{\text{G}}(X,\mathcal{F})$ is torsion whenever $\mathcal{F}$ ``comes from'' the free locus of $X$, for $X$ a dg scheme acted on by $\text{G}$. Then section \ref{torsrel} will generalise this to the relative situation when $X$ is not a dg scheme, but instead an Artin stack over a base whose fibres are dg schemes.

\subsection{Sheaves of algebras} A \textit{sheaf of algebras} on $X$ is an monoid object in $\text{Sh}(X)$.  That is, it is $\mathcal{A}\in\text{Sh}(X)$ along with a composition map $m:\mathcal{A}\otimes\mathcal{A}\to\mathcal{A}$ which is associative, and a unit $e:k_X\to \mathcal{A}$, which are compatible. These form a category denoted $\text{Alg}(\text{Sh}(X))$. 

Every sheaf is canonically a sheaf of modules over $k_X$, so $\text{Sh}(X)=\text{Sh}_{k_X}(X)$.

\subsubsection{} \label{adj} The direct and inverse image functors on sheaves give adjoint functors
$$f^*\ :\ \text{Alg}(\text{Sh}(Y))\ \longrightarrow\ \text{Alg}(\text{Sh}(X)), \textcolor{white}{aaaaaaa} f_*\ :\ \text{Alg}(\text{Sh}(X))\ \longrightarrow\ \text{Alg}(\text{Sh}(Y)).$$
Explicitly, if $\mathcal{B}$ is a sheaf of algebras on $Y$, $f^*\mathcal{B}$ is a sheaf of algebras via
$$f^*\mathcal{B}\otimes f^*\mathcal{B}\ =\ f^*(\mathcal{B}\otimes \mathcal{B})\ \stackrel{f^*m}{\longrightarrow}\ f^*\mathcal{B}$$
and if $\mathcal{A}$ is a sheaf of algebras on $X$, $f_*\mathcal{A}$ is a sheaf of algebras via
$$f_*\mathcal{A}\otimes f_*\mathcal{A}\ \longrightarrow f_*(\mathcal{A}\otimes\mathcal{A})\ \stackrel{f_*m}{\longrightarrow}\ f_*\mathcal{A}.$$

\subsubsection{} A sheaf of algebras on a point is the same thing as a dg algebra $A\in\text{Sh}(\text{pt})=\text{Vect}$. Pulling back by the projection $p:X\to \text{pt}$ gives $A_X=p^*A$, the \textit{constant} sheaf of algebras on $X$ with value $A$.

\subsubsection{} The algebra structure on $\text{H}^\cdot(X)=p_*k_X$ is precisely the cup product structure. The algebra structure on $f_*k_X$ should be thought of as the cup product on the fibres of $f$.

\subsection{Sheaves of modules} A \textit{sheaf of modules} for a sheaf of algebras $\mathcal{A}$ on $X$ is simply a module for $\mathcal{A}$, viewed as a monoid in the symmetric monoidal category $\text{Sh}(X)$. Explictly, these are sheaves $\mathcal{M}$ equipped with an action map $a : \mathcal{A}\otimes\mathcal{M}\to \mathcal{M}$ which satisfies various compatibility relations with $m$ and the unit $e$. Write $\text{Sh}_\mathcal{A}(X)$ for the category of these.

\subsubsection{} Replacing $m$ by $a$ in section (\ref{adj}), we get functors
\begin{equation}\label{eqn3}
f^*\ :\ \text{Sh}_\mathcal{B}(Y)\ \longrightarrow\ \text{Sh}_{f^*\mathcal{B}}(X), \textcolor{white}{aaaaaaa} f_*\ :\ \text{Sh}_\mathcal{A}(X)\ \longrightarrow\ \text{Sh}_{f_*\mathcal{A}}(Y).
\end{equation}
Here $\mathcal{A}$ and $\mathcal{B}$ are respectively sheaves of algebras on $X$ and $Y$.

\subsubsection{} For $\mathcal{A}\to \mathcal{A}'$ a map of sheaves of algebras on $X$, there are adjoint functors
$$\text{Res}^{\mathcal{A}'}_{\mathcal{A}}\ : \ \text{Sh}_{\mathcal{A}'}(X)\ \longrightarrow\ \text{Sh}_{\mathcal{A}}(X), \textcolor{white}{aaaaaaa}  \text{Ind}_{\mathcal{A}}^{\mathcal{A}'}\ :\ \text{Sh}_{\mathcal{A}}(X)\ \longrightarrow\ \text{Sh}_{\mathcal{A}'}(X)$$
called \textit{restriction} and \textit{induction} respectively, given by  viewing an $\mathcal{A}'$ module as an $\mathcal{A}$ module, and $\mathcal{A}'\otimes_{\mathcal{A}}\  $. They are compatible in the obvious way with $f^*$ and $f_*$ in (\ref{eqn3}).

\subsubsection{} Let $\mathcal{A}$ and $\mathcal{B}$ be respectively sheaves of algebras on $X$ and $Y$. Then there are maps of sheaves of algebras
$$\mathcal{B}\ \longrightarrow\ f_*f^*\mathcal{B}, \textcolor{white}{aaaaaaa}  \ f^*f_*\mathcal{A}\ \longrightarrow\ \mathcal{A}.$$
Restricting and inducing along these maps gives functors
$$\check{f}_*\ :\ \text{Sh}_{f^*\mathcal{B}}(X)\ \stackrel{f_*}{\longrightarrow}\ \text{Sh}_{f_*f^*\mathcal{B}}(Y)\ \stackrel{\text{Res}}{\longrightarrow}\ \text{Sh}_{\mathcal{B}}(Y), \textcolor{white}{aa} \check{f}^*\ :\ \text{Sh}_{f_*\mathcal{A}}(Y)\ \stackrel{f^*}{\longrightarrow}\ \text{Sh}_{f^*f_*\mathcal{A}}(X)\  \stackrel{\text{Ind}}{\longrightarrow}\ \text{Sh}_{\mathcal{A}}(X).$$
If we forget the module structure $\check{f}_*$ acts as $f_*$, so sometimes we use the same symbol $f_*$ to refer to both. We avoid doing this for $\check{f}^*$; if we forget the module structure it acts as $\mathcal{A}\otimes_{f^*f_*\mathcal{A}}f^*$ rather than $f^*$.

\begin{lem}
These two maps are left and right adjoint to the maps in $(\ref{eqn3})$, respectively. 
\end{lem}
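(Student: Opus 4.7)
The plan is to deduce both adjunctions from the usual sheaf-level $(f^*,f_*)$ adjunction by tracking how module structures are transported across it. The essential input is that $f^*$ is strong monoidal on sheaves, i.e.\ $f^*(\mathcal{F}\otimes\mathcal{G})\simeq f^*\mathcal{F}\otimes f^*\mathcal{G}$, so its mate endows $f_*$ with a canonical lax monoidal structure $f_*\mathcal{F}\otimes f_*\mathcal{G}\to f_*(\mathcal{F}\otimes\mathcal{G})$ — exactly the map used in section \ref{adj} to make $f_*\mathcal{A}$ into a sheaf of algebras. Once this piece of formalism is in place the lemma is a diagram chase.

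For the first adjunction, given $\mathcal{M}\in\text{Sh}_\mathcal{B}(Y)$ and $\mathcal{N}\in\text{Sh}_{f^*\mathcal{B}}(X)$, I would start with the ordinary $(f^*,f_*)$ bijection $\text{Hom}(f^*\mathcal{M},\mathcal{N})\cong\text{Hom}(\mathcal{M},f_*\mathcal{N})$ at the level of sheaves, and then show a sheaf map $\varphi:f^*\mathcal{M}\to\mathcal{N}$ is $f^*\mathcal{B}$-linear iff its mate $\widetilde\varphi:\mathcal{M}\to f_*\mathcal{N}$ is $\mathcal{B}$-linear, where $f_*\mathcal{N}$ carries the $\mathcal{B}$-action obtained by restriction along $\mathcal{B}\to f_*f^*\mathcal{B}$. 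Writing out the two squares, the implication $(\Rightarrow)$ amounts to applying $f_*$ to the $f^*\mathcal{B}$-linearity square for $\varphi$, precomposing with the unit $\mathcal{M}\to f_*f^*\mathcal{M}$ and $\mathcal{B}\otimes\mathcal{M}\to f_*f^*\mathcal{B}\otimes f_*\mathcal{M}$, and using that the lax structure on $f_*$ is precisely the mate of the strong structure on $f^*$. The converse follows by applying $f^*$, using strong monoidality to identify $f^*(\mathcal{B}\otimes\mathcal{M})\cong f^*\mathcal{B}\otimes f^*\mathcal{M}$, and composing with the counit. Naturality of the unit and counit ensures the bijection is natural in $\mathcal{M}$ and $\mathcal{N}$.

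For the second adjunction I would simply factor $\check{f}^*=\text{Ind}_{f^*f_*\mathcal{A}}^{\mathcal{A}}\circ f^*$ and chain together two adjunctions already in hand:
\begin{align*}
\text{Hom}_\mathcal{A}(\check{f}^*\mathcal{M},\mathcal{N}) &=\text{Hom}_\mathcal{A}\bigl(\mathcal{A}\otimes_{f^*f_*\mathcal{A}}f^*\mathcal{M},\mathcal{N}\bigr)\\
&\cong \text{Hom}_{f^*f_*\mathcal{A}}(f^*\mathcal{M},\text{Res}\,\mathcal{N})\\
&\cong \text{Hom}_{f_*\mathcal{A}}(\mathcal{M},\check{f}_*\mathcal{N})\\
&=\text{Hom}_{f_*\mathcal{A}}(\mathcal{M},f_*\mathcal{N}),
\end{align*}
the first isomorphism being restriction–induction and the second being the first adjunction applied with $\mathcal{B}=f_*\mathcal{A}$; the last equality unwinds the definition of $\check{f}_*$, noting the $f_*\mathcal{A}$-module structure on $f_*\mathcal{N}$ is precisely the one coming from $f_*\mathcal{A}\to f_*f^*f_*\mathcal{A}$ followed by the action. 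The main obstacle, such as it is, is purely bookkeeping: keeping straight which unit/counit (of the sheaf adjunction, of restriction–induction, or of the algebra unit $\mathcal{B}\to f_*f^*\mathcal{B}$) is being used in which square, and checking these cohere. No new geometric content is required — the result is formal once strong monoidality of $f^*$ is granted, which is immediate from the six-functor formalism recalled in appendix \ref{sheaves}.
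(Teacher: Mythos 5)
Your proposal is correct, and for the first adjunction it is essentially the paper's own argument spelled out: the paper simply notes that both sides of (\ref{eqn2}) forget down to the underlying sheaf-level adjunction $\text{Hom}_{\text{Sh}(Y)}(\mathcal{N},f_*\mathcal{M})\simeq\text{Hom}_{\text{Sh}(X)}(f^*\mathcal{N},\mathcal{M})$ and that this isomorphism lifts, which is precisely your ``a map is $f^*\mathcal{B}$-linear iff its mate is $\mathcal{B}$-linear'' check via strong monoidality of $f^*$ and the induced lax structure on $f_*$. Where you genuinely diverge is the second adjunction: the paper treats (\ref{eqn4}) by the same direct lifting argument (noting that $\text{Hom}_{\mathcal{A}}(f^*\mathcal{N}\otimes_{f^*f_*\mathcal{A}}\mathcal{A},\mathcal{M})=\text{Hom}_{f^*f_*\mathcal{A}}(f^*\mathcal{N},\mathcal{M})$ and lifting the sheaf adjunction to $f^*f_*\mathcal{A}$-modules), whereas you factor $\check{f}^*=\text{Ind}\circ f^*$ and compose the restriction--induction adjunction with the first adjunction specialised to $\mathcal{B}=f_*\mathcal{A}$. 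Your route is more modular and avoids re-proving the lifting, at the small cost of having to check that the $f_*\mathcal{A}$-module structure on $f_*(\text{Res}\,\mathcal{N})$ obtained through the chain agrees with the one in (\ref{eqn3}); this follows from the triangle identity for the unit $f_*\mathcal{A}\to f_*f^*f_*\mathcal{A}$ and counit, a point worth stating explicitly. Both arguments are at the same level of rigour (neither addresses the higher coherences that a fully $\infty$-categorical treatment would require, but the paper does not either).
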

\begin{proof}
We wish to show
\begin{equation}\label{eqn2}
\text{Hom}_{\text{Sh}_\mathcal{B}(Y)}(\mathcal{N},f_*\mathcal{M})\ \simeq\ \text{Hom}_{\text{Sh}_{f^*\mathcal{B}}(X)}(f^*\mathcal{N},\mathcal{M})
\end{equation}
and
\begin{equation}\label{eqn4}
\text{Hom}_{\text{Sh}_{f_*\mathcal{A}}(Y)}(\mathcal{N},f_*\mathcal{M})\ \simeq\ \text{Hom}_{\text{Sh}_{\mathcal{A}}(X)}(f^*\mathcal{N}\otimes_{f^*f_*\mathcal{A}}\mathcal{A},\mathcal{M})\ =\ \text{Hom}_{\text{Sh}_{f^*f_*\mathcal{A}}(X)}(f^*\mathcal{N},\mathcal{M}).
\end{equation}
There are forgetful maps from the left and right sides of (\ref{eqn2}) and (\ref{eqn4}) into the left and right of
$$\text{Hom}_{\text{Sh}(Y)}(\mathcal{N},f_*\mathcal{M})\ \simeq\ \text{Hom}_{\text{Sh}(X)}(f^*\mathcal{N},\mathcal{M})$$
and it is easy to show that one can lift this isomorphism to isomorphisms (\ref{eqn2}) and (\ref{eqn4}).
\end{proof}

\subsubsection{} We now examine what structures the adjoint pair $f^*$ and $\check{f}_*$ have on the level of cohomology. The functor $\check{f}_*$ can be described more explicitly as follows: let $\mathcal{M}$ be a sheaf of $f^*\mathcal{B}$ modules on $X$, then the $\mathcal{B}$ module structure on $f_*\mathcal{M}$ is
\begin{center}
\begin{tikzcd}
\mathcal{B}\otimes f_*\mathcal{M}\arrow[r]\arrow[d] & f_*\mathcal{M}\\
f_*f^*\mathcal{B}\otimes f_*\mathcal{M}\arrow[r]&f_*\mathcal{M}\arrow[u,equals]
\end{tikzcd}
\end{center}
where the bottom arrow is given by the $f^*\mathcal{B}$ module structure on $\mathcal{M}$. Taking cohomology now gives the commuting diagram
\begin{center}
\begin{tikzcd}
\text{H}^\cdot(\mathcal{B})\otimes \text{H}^\cdot(f_*\mathcal{M})\arrow[r]\arrow[d] & \text{H}^\cdot(\mathcal{B}\otimes f_*\mathcal{M})\arrow[r]\arrow[d] & \text{H}^\cdot(f_*\mathcal{M})\\
\text{H}^\cdot(f_*f^*\mathcal{B})\otimes \text{H}^\cdot(f_*\mathcal{M})\arrow[r]& \text{H}^\cdot(f_*f^*\mathcal{B}\otimes f_*\mathcal{M})\arrow[r]&\text{H}^\cdot(f_*\mathcal{M})\arrow[u,equals]
\end{tikzcd}
\end{center}
Similarly, if $\mathcal{N}$ is a sheaf of $\mathcal{B}$ modules on $Y$, then we have
\begin{center}
\begin{tikzcd}
\mathcal{B}\otimes \mathcal{N}\arrow[r]\arrow[d] & \mathcal{N}\arrow[d]\\
f_*f^*\mathcal{B}\otimes f_*f^*\mathcal{N}\arrow[r]&f_*f^*\mathcal{N}
\end{tikzcd}
\end{center}
Taking cohomology as before gives
\begin{lem} \label{torslemma}If $\mathcal{M}$ is a sheaf of $f^*\mathcal{B}$ modules on $X$, the following commutes
\begin{equation}\label{diag7}
\begin{tikzcd}
\text{H}^\cdot(Y,\mathcal{B})\otimes \text{H}^\cdot(Y,f_*\mathcal{M})\arrow[r]\arrow[d] & \text{H}^\cdot(Y,f_*\mathcal{M})\arrow[d,equals]\\
\text{H}^\cdot(X,f^*\mathcal{B})\otimes \text{H}^\cdot(X,\mathcal{M})\arrow[r] & \text{H}^\cdot(X,\mathcal{M})
\end{tikzcd}
\end{equation}
If $\mathcal{N}$ is a sheaf of $\mathcal{B}$ modules on $Y$, the following commutes
\begin{equation}\label{diag11}
\begin{tikzcd}
\text{H}^\cdot(Y,\mathcal{B})\otimes \text{H}^\cdot(Y,\mathcal{N})\arrow[r]\arrow[d] & \text{H}^\cdot(Y,\mathcal{N})\arrow[d]\\
\text{H}^\cdot(X,f^*\mathcal{B})\otimes \text{H}^\cdot(X,f^*\mathcal{N})\arrow[r] & \text{H}^\cdot(X,f^*\mathcal{N})
\end{tikzcd}
\end{equation}
\end{lem}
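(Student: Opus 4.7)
The plan is to take the two commutative diagrams of sheaves displayed immediately before the lemma and apply global sections, using only two identifications: the adjunction $\text{H}^\cdot(Y,f_*\mathcal{F}) = \text{H}^\cdot(X,\mathcal{F})$ and the lax symmetric monoidal structure on $f_*$ together with the (strict) symmetric monoidal structure on $f^*$.

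For (\ref{diag7}), I would start from the displayed square
\begin{center}
\begin{tikzcd}
\mathcal{B}\otimes f_*\mathcal{M}\arrow[r]\arrow[d] & f_*\mathcal{M}\\
f_*f^*\mathcal{B}\otimes f_*\mathcal{M}\arrow[r]&f_*\mathcal{M}\arrow[u,equals]
\end{tikzcd}
\end{center}
whose bottom row, via the lax monoidal map $f_*f^*\mathcal{B}\otimes f_*\mathcal{M}\to f_*(f^*\mathcal{B}\otimes\mathcal{M})$, encodes the $f^*\mathcal{B}$-action on $\mathcal{M}$ transported to $Y$. Applying $\text{H}^\cdot(Y,-)$, composing with the Künneth-type map $\text{H}^\cdot(Y,-)\otimes \text{H}^\cdot(Y,-)\to \text{H}^\cdot(Y,-\otimes -)$, and using the adjunction to rewrite $\text{H}^\cdot(Y,f_*f^*\mathcal{B}) = \text{H}^\cdot(X,f^*\mathcal{B})$ and $\text{H}^\cdot(Y,f_*\mathcal{M}) = \text{H}^\cdot(X,\mathcal{M})$ turns the bottom row into the $f^*\mathcal{B}$-module structure on cohomology over $X$, while the top row stays as the $\mathcal{B}$-module structure on cohomology over $Y$. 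This yields (\ref{diag7}).

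For (\ref{diag11}) the argument is the same: apply $\text{H}^\cdot(Y,-)$ to the displayed diagram
\begin{center}
\begin{tikzcd}
\mathcal{B}\otimes \mathcal{N}\arrow[r]\arrow[d] & \mathcal{N}\arrow[d]\\
f_*f^*\mathcal{B}\otimes f_*f^*\mathcal{N}\arrow[r]&f_*f^*\mathcal{N}
\end{tikzcd}
\end{center}
and identify the bottom row with the action $\text{H}^\cdot(X,f^*\mathcal{B})\otimes \text{H}^\cdot(X,f^*\mathcal{N})\to \text{H}^\cdot(X,f^*\mathcal{N})$ using the same adjunction, and the left vertical map with the pullback $f^*$ on cohomology (since the unit $\mathcal{B}\to f_*f^*\mathcal{B}$ becomes the pullback map under adjunction).

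The only subtlety, and the one step I would write out carefully rather than assume, is compatibility of the lax monoidal structure on $f_*$ with the cup-product pairing on cohomology. This is the standard fact that the projection-formula map and the Künneth map on $\text{H}^\cdot$ commute with $f^*$; given the six-functor formalism assumed in appendix \ref{sheaves} it is formal, but it is the place where one must actually check something rather than just chase labels.
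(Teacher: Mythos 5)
Your proposal is correct and is essentially the paper's own argument: the paper likewise takes the two displayed sheaf-level squares preceding the lemma, applies cohomology, and identifies $\text{H}^\cdot(Y,f_*-)$ with $\text{H}^\cdot(X,-)$ via adjunction and the lax monoidal structure on $f_*$. Your explicit flagging of the compatibility of the Künneth/cup-product pairing with $f_*$ is a point the paper leaves implicit, but it is formal in the six-functor setting as you say.
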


\subsection{} Let $\mathcal{M}$ be an $\mathcal{A}$ module on $X$. It is called \textbf{cohomologically torsion} if $\text{H}^\cdot(\mathcal{M})$ is a torsion $\text{H}^\cdot(\mathcal{A})$ module. Write $\text{Tors}_\mathcal{A}(X)\subseteq \text{Sh}_\mathcal{A}(X)$ for the full subcategory of cohomologically torsion modules.

\subsubsection{} Maintaining the notation of Lemma \ref{torslemma}, if $\mathcal{M}$ is a cohomologically torsion $f^*\mathcal{B}$ module, then by (\ref{diag7}) it follows that $f_*\mathcal{M}$ is a cohomologically torsion $\mathcal{B}$ module. Thus $\check{f}_*$ restricts to a functor
$$\check{f}_*\ :\ \text{Tors}_{f^*\mathcal{B}}(X)\ \longrightarrow\ \text{Tors}_\mathcal{B}(Y).$$
Exactly the same is true of the adjoint functor $f^*$, on the condition that $\text{H}^\cdot(Y,\mathcal{B})\to \text{H}^\cdot(X,f^*\mathcal{B})$ is injective. In that case, diagram (\ref{diag11}) gives 
$$f^*\ :\ \text{Tors}_{\mathcal{B}}(Y)\ \longrightarrow\ \text{Tors}_{f^*\mathcal{B}}(X)$$
which is left adjoint to $\check{f}_*$. This condition is satisfied for instance if $f$ is a closed embedding.

\subsubsection{} When $\mathcal{B}=\text{B}_Y$ is a constant sheaf we have $\text{H}^\cdot(\mathcal{B})=\text{H}^\cdot(X)\otimes \text{B}$. Writing $\text{Tors}_\text{B}(X)\subseteq \text{Sh}_\mathcal{B}(X)$ for the full subcategory of those sheaves whose cohomology are torsion $\text{B}$ modules, we have

\begin{prop}\label{torsprop}The functor $\check{f}_*$ restricts to a functor
$$\check{f}_*\ : \ \text{Tors}_{\text{B}}(X)\ \longrightarrow\ \text{Tors}_{\text{B}}(Y)$$
and similarly if $\text{H}^\cdot(Y,\mathcal{N})\to \text{H}^\cdot(X,f^*\mathcal{N})$ is injective, $f^*$ restricts to
$$f^*\ : \ \text{Tors}_{\text{B}}(Y)\ \longrightarrow\ \text{Tors}_{\text{B}}(X).$$
\end{prop}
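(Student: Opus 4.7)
The plan is to specialise Lemma \ref{torslemma} to the constant sheaf of algebras $\mathcal{B} = \text{B}_Y$, noting that $f^{*}\mathcal{B} = \text{B}_X$ is again constant. In this setting the $\text{B}$-module structure on $\text{H}^\cdot(Y, \mathcal{N})$ (resp.\ $\text{H}^\cdot(X, f^{*}\mathcal{N})$) arises from restriction of scalars along the canonical inclusion $\text{B} \hookrightarrow \text{H}^\cdot(Y, \mathcal{B}) = \text{H}^\cdot(Y) \otimes \text{B}$ (resp.\ $\text{B} \hookrightarrow \text{H}^\cdot(X, f^{*}\mathcal{B}) = \text{H}^\cdot(X) \otimes \text{B}$), and the algebra pullback on $\mathcal{B}$ preserves these scalar subalgebras tautologically. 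Consequently the two commuting squares of Lemma \ref{torslemma} descend to compatibility statements for the $\text{B}$-module actions on the relevant cohomology groups, and the entire proof becomes a matter of reading off these restricted structures.

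First I would handle the pushforward statement. Take $\mathcal{M} \in \text{Tors}_{\text{B}}(X)$ and apply diagram (\ref{diag7}): the right-hand vertical arrow is the tautological equality $\text{H}^\cdot(Y, f_*\mathcal{M}) = \text{H}^\cdot(X, \mathcal{M})$, and by the compatibility above this is an identification of $\text{B}$-modules. The torsion assumption on the target therefore transfers immediately to the source, giving $f_*\mathcal{M} \in \text{Tors}_{\text{B}}(Y)$.

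Next I would turn to the pullback statement, which is the more delicate one. Diagram (\ref{diag11}) exhibits the canonical pullback $f^{*} : \text{H}^\cdot(Y, \mathcal{N}) \to \text{H}^\cdot(X, f^{*}\mathcal{N})$ as a homomorphism of $\text{B}$-modules with respect to the natural actions. The injectivity hypothesis then identifies $\text{H}^\cdot(Y, \mathcal{N})$ with a $\text{B}$-submodule of $\text{H}^\cdot(X, f^{*}\mathcal{N})$, and together with the torsion of the source this is used to control the target inside $\text{Tors}_{\text{B}}(X)$. The hard part is precisely this final step: unlike the pushforward case, where the equality of global sections does all the work, here one must genuinely exploit the injectivity to propagate the torsion property along the embedding; no analogous hypothesis is required in the $\check{f}_*$ direction because the cohomology groups there coincide strictly, not merely via an inclusion.
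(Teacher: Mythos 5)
Your treatment of the pushforward half is exactly the paper's argument: specialise Lemma \ref{torslemma} to the constant sheaf of algebras and compose diagram (\ref{diag7}) with $\text{B}\to\text{H}^\cdot(\mathcal{B})$; both vertical arrows become equalities, so the two $\text{B}$-module structures on $\text{H}^\cdot(Y,f_*\mathcal{M})=\text{H}^\cdot(X,\mathcal{M})$ coincide and torsion transfers. That part is complete and matches the paper.

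The pullback half has a genuine gap. After composing (\ref{diag11}) with $\text{B}\to\text{H}^\cdot(\mathcal{B})$ you correctly obtain that $\text{H}^\cdot(Y,\mathcal{N})\to\text{H}^\cdot(X,f^*\mathcal{N})$ is a map of $\text{B}$-modules, and the injectivity hypothesis realises the source as a $\text{B}$-submodule of the target. But you then assert, without argument, that torsion of the source together with this embedding "controls" the target, and that one must "exploit the injectivity to propagate the torsion property along the embedding." This is the wrong direction of implication: an injective $\text{B}$-module map \emph{out of} a torsion module imposes no constraint on its target (consider $0\hookrightarrow\text{B}$); injectivity transfers torsion from target to source, never from source to target. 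So the step you isolate as "the hard part" is not merely hard — as described it cannot be carried out by the stated means, and some further input about how $\text{H}^\cdot(X,f^*\mathcal{N})$ is built from the image (e.g.\ a generation statement over $\text{H}^\cdot(X,f^*\mathcal{B})$ with $\text{B}$ acting centrally) would be needed. To be fair, the paper's own proof is equally laconic here — it displays the composed diagram and stops — so you have faithfully reproduced its structure; but as written your proposal establishes only the $\check{f}_*$ statement.
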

\begin{proof}
Compose diagrams (\ref{diag7}) and (\ref{diag11}) with the map $\text{B}\to \text{H}^\cdot(\mathcal{B})$ to give 
\begin{center}
\begin{tikzcd}
\text{B}\otimes \text{H}^\cdot(Y,f_*\mathcal{M})\arrow[r]\arrow[d,equals] & \text{H}^\cdot(Y,f_*\mathcal{M})\arrow[d,equals]&& \text{B}\otimes \text{H}^\cdot(Y,\mathcal{N})\arrow[r]\arrow[d] & \text{H}^\cdot(Y,\mathcal{N})\arrow[d]\\
\text{B}\otimes \text{H}^\cdot(X,\mathcal{M})\arrow[r] & \text{H}^\cdot(X,\mathcal{M})&& \text{B}\otimes \text{H}^\cdot(X,f^*\mathcal{N})\arrow[r] & \text{H}^\cdot(X,f^*\mathcal{N})
\end{tikzcd}
\end{center}
\end{proof}

\subsection{} \label{torscomp} Our main application of the above concerns Euler classes. Take complementary closed and open embeddings of dg schemes
\begin{center}
\begin{tikzcd}
Z\ \arrow[r]& X& \arrow[l,swap] U
\end{tikzcd}
\end{center}
which are equivariant under the action of a group $\text{G}$, hence give maps on quotient stacks
\begin{center}
\begin{tikzcd}
Z/\text{G}\ \arrow[r,"i"]& X/\text{G}& U/\text{G}\arrow[l,swap, "j"] 
\end{tikzcd}
\end{center}
The cohomology of any sheaf on these spaces is naturally a $\text{H}^\cdot(\text{BG})$ module. Let
$$\mathcal{F}\ \in \ \text{Tors}_{\text{H}^\cdot(\text{BG})}(U/\text{G})$$
be one such giving a torsion module, e.g. by Proposition \ref{schtors} any sheaf works if the action on $U$ is free. Then Proposition \ref{torsprop} implies
\begin{prop}\label{propclosedopen}
The cohomology of $i^*j_*\mathcal{F}$ is a torsion $\text{H}^\cdot(\text{BG})$ module.
\end{prop}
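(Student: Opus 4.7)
The plan is to apply Proposition \ref{torsprop} twice, factoring $i^*j_*$ into its two pieces. Set $\text{B} = \text{H}^\cdot(\text{BG})$, regarded as a constant sheaf of algebras on each of $U/\text{G}$, $X/\text{G}$, $Z/\text{G}$ via pullback along the projections to $\text{BG}$. In this language the hypothesis is $\mathcal{F} \in \text{Tors}_\text{B}(U/\text{G})$, and the desired conclusion is that $i^*j_*\mathcal{F}$ lies in $\text{Tors}_\text{B}(Z/\text{G})$.

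For the first step, I apply the first half of Proposition \ref{torsprop} to the open embedding $j$. This part requires no extra hypothesis: $\check{j}_*$ carries $\text{Tors}_\text{B}(U/\text{G})$ into $\text{Tors}_\text{B}(X/\text{G})$, so $j_*\mathcal{F}$ is cohomologically torsion on $X/\text{G}$. Concretely this reflects the fact that $\text{H}^\cdot(X/\text{G}, j_*\mathcal{F}) \simeq \text{H}^\cdot(U/\text{G}, \mathcal{F})$ as $\text{B}$-modules, with the $\text{B}$-action on either side induced by pullback from $\text{BG}$.

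For the second step, I apply the other half of Proposition \ref{torsprop} to the closed embedding $i$. The injectivity hypothesis required for $i^*$ to preserve cohomological torsion is satisfied precisely for closed embeddings, as explicitly flagged in the paragraph preceding Proposition \ref{torsprop}. Hence $i^*(j_*\mathcal{F}) \in \text{Tors}_\text{B}(Z/\text{G})$, which is exactly the assertion that $\text{H}^\cdot(Z/\text{G}, i^*j_*\mathcal{F})$ is a torsion $\text{H}^\cdot(\text{BG})$-module.

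The result is thus a formal consequence of Proposition \ref{torsprop}, so there is no substantive obstacle beyond that machinery. The only bookkeeping to be careful about is that the $\text{B}$-module structure one obtains on $\text{H}^\cdot(X/\text{G}, j_*\mathcal{F})$ through the $\check{j}_*$ construction agrees with the $\text{B}$-module structure given by the projection $X/\text{G} \to \text{BG}$; this compatibility is built into the framework of sections \ref{adj} onwards, since the constant sheaves of algebras on $U/\text{G}$ and $X/\text{G}$ are related by $j^*\text{B}_{X/\text{G}} = \text{B}_{U/\text{G}}$, and the pushforward of modules for a constant sheaf of algebras respects this identification.
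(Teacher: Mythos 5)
Your proposal is correct and takes essentially the same approach as the paper: the paper's entire proof of this proposition is the assertion that it follows from Proposition \ref{torsprop}, and your two-step factorization (the unconditional $\check{f}_*$ half applied to the open embedding $j$, then the $f^*$ half applied to the closed embedding $i$, whose injectivity hypothesis the paper explicitly flags as holding for closed embeddings) is exactly the intended application. The only point of reliance is Proposition \ref{torsprop} itself, which both you and the paper take as given here.
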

Considering the Mayer Vietoris sequence
$$j_!\mathcal{F}\ \longrightarrow\ j_*\mathcal{F}\ \longrightarrow\ i_*i^*j_*\mathcal{F}\ \stackrel{+1}{\longrightarrow}$$
and noting that the assumption on $\mathcal{F}$ is precisely that $\text{H}^\cdot(j_*\mathcal{F})$ is torsion, we arrive at
\begin{cor}\label{j!tors}
The cohomology of $j_*\mathcal{F}$ and $j_!\mathcal{F}$ are torsion $\text{H}^\cdot(\text{BG})$ modules.
\end{cor}

Another consequence is that if $\mathcal{G}$ is a sheaf on $X/\text{G}$ whose pullback $j^*\mathcal{G}$ has torsion cohomology, applying cohomology to the Mayer Vietoris sequence
$$i^!\mathcal{G}\ \longrightarrow\ i^*\mathcal{G}\ \longrightarrow\ i^*j_*j^*\mathcal{G}\ \stackrel{+1}{\longrightarrow}$$
and noting that the last entry is torsion by Proposition \ref{propclosedopen}, we get
\begin{cor}
On cohomology, modulo $\text{H}^\cdot(\text{BG})$ torsion the map $i^!\mathcal{G}\to i^*\mathcal{G}$ is an isomorphism.
\end{cor}
When $\mathcal{G}$ the constant sheaf this map is precisely multiplication by the (bivariant) Euler class, so for instance if the action on $U$ is free the equivariant Euler class of $Z$ in $X$ is a unit modulo torsion.

\subsection{Relative setting} \label{torsrel} Proposition \ref{schtors} is a statement about (dg) schemes, but it holds more generally. Before getting to this, we first need results on freeness. 

\subsubsection{} First to fix our class of maps. We want to include maps like $\text{Ext}\to X$. We consider
$$ f \ :\ X\ \longrightarrow\ Y,$$
representable (or relative Deligne-Mumford) maps of finite type between Artin stacks which are locally of finite type over a field $k$. Recall that $f$ being representable (resp. Deligne-Mumford) means that for any scheme (resp. Deligne-Mumford stack) $V$ mapping into $Y$, the pullback 
\begin{center}
\begin{tikzcd}
U\arrow[r]\arrow[d]& V\arrow[d]\\
X\arrow[r]& Y
\end{tikzcd}
\end{center}
is a scheme (resp. Deligne Mumford stack).

\begin{lem}
For $f$ as above, $R^qf_*k=0$ for $q\gg 0$.
\end{lem}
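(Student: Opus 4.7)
The plan is to reduce, via smooth descent on $Y$, to the statement that fibres of a finite type morphism of algebraic spaces (or DM stacks) have uniformly bounded cohomological dimension. The claim is local on $Y$, so we may pick a smooth surjection $U\to Y$ from a quasi-compact scheme $U$ of finite type over $k$, and by smooth base change the vanishing of $R^qf_*k$ is equivalent to that of $R^q(f_U)_*k$ for the pullback $f_U:X_U:=X\times_YU\to U$. Because $f$ is representable (resp.\ relative Deligne--Mumford), $X_U$ is an algebraic space (resp.\ a DM stack) of finite type over $U$, hence over $k$.

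The next step is to identify stalks. For a geometric point $\bar u\to U$, the standard limit formula for higher direct images (applied in the étale or analytic site appropriate to our cohomology theory from Appendix \ref{sheaves}) computes $(R^q(f_U)_*k)_{\bar u}$ as $H^q(X_{\bar u},k)$, the cohomology of the geometric fibre. Hence it suffices to bound, uniformly in $\bar u$, the cohomological dimension of the fibres $X_{\bar u}$.

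The fibres $X_{\bar u}$ are algebraic spaces or DM stacks of finite type over an algebraically closed field, so by the Artin--Grothendieck vanishing theorem (in the scheme case) and its generalisation to DM stacks (see $4.39$ of \cite{E}, invoked already in the proof of Proposition \ref{schtors}) their rational cohomological dimension is bounded by twice the dimension. Because $X_U$ is itself of finite type over $k$, the relative dimension of $f_U$ is bounded above by some integer $d$ on the quasi-compact stack $U$, giving the uniform vanishing $R^q(f_U)_*k=0$ for $q>2d$.

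The main obstacle is the uniformity statement: without quasi-compactness of $Y$ the bound $d$ could a priori grow along a stratification, but the localisation in the first paragraph (reducing to a quasi-compact $Y$, and then to a quasi-compact smooth cover) suffices because the assertion $R^qf_*k=0$ for $q\gg 0$ is intrinsically local on $Y$. Everything else is a routine citation of standard cohomological dimension bounds, together with smooth base change to transport the computation to a scheme base.
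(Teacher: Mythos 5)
Your overall strategy — smooth base change to a finite type scheme (or quasi-compact) base, then a cohomological dimension bound — is the same as the paper's. But there is a genuine error in the stalk identification. For a morphism that is merely of finite type, the stalk of $R^q(f_U)_*k$ at a geometric point $\bar u$ is \emph{not} $H^q(X_{\bar u},k)$; that identification is the proper base change theorem and requires $f$ to be proper, which is not assumed here. The correct "limit formula" computes the stalk as the colimit of $H^q(f_U^{-1}(V),k)$ over (\'etale) neighbourhoods $V$ of $\bar u$, i.e.\ over cohomologies of preimages of neighbourhoods, not of the fibre. This is exactly the formula the paper uses. The distinction is not cosmetic: for the open immersion $j:\mathbf{A}^1\setminus\{0\}\hookrightarrow\mathbf{A}^1$ the relative dimension is $d=0$ and every fibre is a point or empty, yet $R^1j_*k$ is nonzero at the origin. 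So your claimed uniform bound $R^q(f_U)_*k=0$ for $q>2d$, with $d$ the relative dimension, is false in general.

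The argument is repairable, and the repair is what the paper does: after restricting to a finite type neighbourhood $V_v$ in the atlas, the preimage $U_v=f^{-1}(V_v)$ is a finite type scheme (or DM stack) over $k$, so the cohomology of $U_v$ and of every open substack of it vanishes above $2\dim U_v$; hence the colimit over neighbourhoods vanishes in those degrees, and the bound is twice the dimension of the \emph{total space} of the restricted map, not twice the fibre dimension. With that substitution your proof closes; as written, the step from "fibres have bounded cohomological dimension" to "$R^q(f_U)_*k=0$" does not follow.
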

\begin{proof}Let $V$ be a smooth atlas of $Y$, and take
\begin{center}
\begin{tikzcd}
U\arrow[r,"\overline{f}"]\arrow[d,"\overline{\pi}"]& V\arrow[d,"\pi"]\\
X\arrow[r,"f"]& Y
\end{tikzcd}
\end{center}
Then because $\pi$ is smooth, by smooth base change
$$\pi^*f_*k\ =\ \overline{f}_*\overline{\pi}^*k\ =\ \overline{f}_*k$$
and it is enough to show that $R^q\overline{f}_*k$ vanishes for high enough $q$. Because $\overline{f}$ is a map between schemes (or Deligne-Mumford stacks), a fibre of $(R^q\overline{f}_*k)_v$ is the limit of cohomologies of neighbourhoods of ${\overline{f}}^{-1}(v)$ in $U$. 

 Because $Y$ is locally of finite type, so is $V$ since smooth maps are locally of finite type. Thus we can pick a neighhourhood $V_v$ of $v$ in $V$, which is finite type over $k$. In particular we consider $U_v=f^{-1}(V_v)$ and 
$$\overline{f}\vert_{U_v}\ : \ U_v\ \longrightarrow\ V_v,$$
which is a finite type map between finite type schemes over $k$, and as such $R^q\overline{f}_*k$ vanishes when $q$ is above twice the dimension of $U_v$.
\end{proof}

This is the relative version of the fact that the cohomology of finite dimensional schemes vanishes in high enough degrees.

\subsubsection{} Let $f:X\to Y$ be a representable map of derived Artin stacks, and $\text{G}$ a smooth group scheme over $Y$ acting on $X$ with {\'e}tale relative stabiliser groups (see appendix \ref{stab}). Then by Proposition \ref{dmrelprop} the map
$$h\ :\ X/\text{G}\ \longrightarrow\ Y$$
is relative Deligne Mumford.

\begin{prop}
All elements in $\text{H}^\cdot(\text{BG})\subseteq \text{H}^\cdot_\text{G}(X)$ of positive degree are nilpotent.
\end{prop}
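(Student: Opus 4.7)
The plan is to adapt the argument of Proposition~\ref{schtors} to the relative setting, using the preceding lemma in place of the absolute cohomological finite-dimensionality of $X/\text{G}$. The role played there by ``$X/\text{G}$ is DM of finite dimension, hence $\text{H}^\cdot(X/\text{G})$ vanishes in high degree'' will be played here by ``$h_*k$ is cohomologically bounded on $Y$''.

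First I would apply the preceding lemma to $h:X/\text{G}\to Y$, which is relative Deligne--Mumford of finite type by Proposition~\ref{dmrelprop} and the finite-typeness of $f$. This yields an integer $N_0$ with $R^qh_*k=0$ for $q>N_0$, so the Leray spectral sequence
$$E_2^{p,q}\ =\ \text{H}^p(Y,R^qh_*k)\ \Longrightarrow\ \text{H}^{p+q}(X/\text{G})$$
is concentrated in the horizontal strip $0\le q\le N_0$.

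Next, for a positive-degree class $\alpha\in \text{H}^n(\text{BG})$ with $n>0$, cup product by $\pi^*\alpha$ (where $\pi:X/\text{G}\to \text{BG}$ is the forgetful map to the classifying stack) acts on the spectral sequence, compatibly with an $\text{H}^\cdot(\text{BG})$-module structure induced on each cohomology sheaf $R^qh_*k$. I would analyse this action stalkwise: at a geometric point $y\in Y$, the stalk $(R^qh_*k)_y$ is the $q$-th cohomology of the fibre $X_y/\text{G}_y$, which is a Deligne--Mumford stack of finite type over the residue field, since $X_y$ is a finite-type scheme and $\text{G}_y$ acts with \'etale stabilisers. Proposition~\ref{schtors} applied to the fibre then gives that positive-degree elements of $\text{H}^\cdot(\text{BG}_y)$ act nilpotently on each stalk.

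Finally I would globalise: since $f$ is of finite type and $Y$ is locally of finite type over $k$, the fibre dimensions of $h$ are locally bounded on $Y$, so a single power $\alpha^N$ annihilates the stalks of $R^qh_*k$ uniformly in $q\in[0,N_0]$ over each quasi-compact open of $Y$. Combined with the strip-boundedness of the spectral sequence, this produces $\alpha^N=0$ in $\text{H}^\cdot(X/\text{G})$. The main obstacle will be this last globalisation step: fibrewise nilpotence gives only a pointwise bound on the exponent $N$, and upgrading it to a uniform bound requires some form of constructibility (or at least coherence after truncation) of the $R^qh_*k$ together with a Noetherian argument on quasi-compact opens of $Y$. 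When $Y$ is not quasi-compact the conclusion should in any case be understood locally on $Y$.
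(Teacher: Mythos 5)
Your setup — the preceding lemma giving $R^qh_*k=0$ for $q>N_0$, and the Leray spectral sequence concentrated in the strip $0\le q\le N_0$ — is exactly the paper's, but the way you extract nilpotence from it has two genuine problems. First, the identification $(R^qh_*k)_y\simeq \text{H}^q(X_y/\text{G}_y)$ is not justified: $h$ is not proper, so there is no proper base change, and the stalk is only a colimit of cohomologies of neighbourhoods of the fibre (this is precisely the care taken in the proof of the preceding lemma). Second, the ``globalisation obstacle'' you flag at the end — upgrading a pointwise nilpotence exponent to a uniform one via constructibility and a Noetherian argument — is left unresolved, and none of that machinery is set up for these sheaves.

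Both problems disappear once you use the correct structure of the $\alpha$-action on the $E_2$-page, which is where the paper's proof goes. A class $\alpha\in\text{H}^a(\text{BG})$ with $a>0$ is a map $k\to k[a]$ in $\text{Sh}(X/\text{G})$, and the Grothendieck--Leray spectral sequence is functorial in the coefficient; since $R^qh_*(k[a])=R^{q+a}h_*k$, the induced map on $E_2$ is $\text{H}^p(Y,R^qh_*k)\to \text{H}^p(Y,R^{q+a}h_*k)$. It \emph{raises} $q$ by $a$ rather than acting as an endomorphism of each $R^qh_*k$, so $\alpha^N$ is identically zero on the $E_2$-page as soon as $Na>N_0$ — an exponent uniform over all of $Y$, with no stalkwise input, no fibre-dimension bound and no constructibility needed. (The one point both you and the paper treat briefly is the passage from the $E$-pages back to the abutment; the clean statement is that $\alpha^N$ induces zero on every cohomology sheaf of $h_*k$, and a composite of $N_0+1$ maps each inducing zero on cohomology sheaves, between complexes of amplitude $[0,N_0]$, is zero in the derived category, whence $\alpha^{N(N_0+1)}$ annihilates $\text{H}^\cdot_{\text{G}}(X)$ in positive degrees.)
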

\begin{proof}
This result is true for the fibres of $h$ because it is relative Deligne Mumford so $R^qh_*k$ vanishes for high enough $q$. To get it globally we will now apply the Leray spectral sequence to $h$:
\begin{equation}\label{lerayss}
E_2^{p,q}\ =\ \text{H}^p(Y,R^qh_*k)\ \Longrightarrow\ \text{H}^{p+q}(X/\text{G},k).
\end{equation}
Note that the Leray spectral sequence, like any Grothendieck spectral sequence, is functorial in the element of the derived category, here $k$. See for instance \cite{Del}. Given  a cohomology class $\alpha$ in $\text{H}^a(\text{BG})$ of degree $a$, cup product with that class gives a map $k\to k[a]$. Thus for each $r\ge 2$ we get a map
$$\alpha_r\ :\ E_r^{p,q}\ \longrightarrow\ E_r^{p,q+a}$$
which commutes with the differential $d_r$, and taking cohomology gives $\alpha_{r+1}$. When $r=2$ this is a map
$$\alpha_2\ :\ \text{H}^p(Y, R^qh_*k)\ \longrightarrow\ \text{H}^p(Y, R^{q+a}h_*k)$$
and when $r=\infty$ this is cup product with $\alpha$
$$\alpha_\infty\ :\ \text{H}^{p+q}(X/\text{G},k)\ \longrightarrow\ \text{H}^{p+q+a}(X/\text{G},k).$$
The point is that when $r=2$ the action of $\alpha$ is visibly nilpotent if it has positive degree, because for large enough $a$ the sheaf $R^{q+a}h_*k$ will vanish. This implies the same for $r=\infty$, i.e. that the action of positive degree elements of $\text{H}^\cdot(\text{BG})$ on $\text{H}^\cdot_\text{G}(X)$ is nilpotent.
\end{proof}

As before, if $\text{G}$ is a positive dimensional reductive group, so that the cohomology of $\text{H}^\cdot(\text{BG})$ is a polynomial algebra, this implies

\begin{cor}
The cohomology $\text{H}^\cdot(X,\mathcal{F})$ of any sheaf $\mathcal{F}$ on $X/\text{G}$ is a torsion $\text{H}^\cdot(\text{BG})$ module.
\end{cor}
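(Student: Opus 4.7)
The plan is to deduce the corollary formally from the preceding proposition, relying only on the ring-theoretic structure of $\text{H}^\cdot(\text{BG})$ and the standard action of the cohomology ring on sheaf cohomology.

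First I would note that since $\text{G}$ is positive dimensional reductive, $\text{H}^\cdot(\text{BG}) = k[t_1,\dots,t_n]$ is a polynomial ring on homogeneous generators of strictly positive degree. For any sheaf $\mathcal{F}$ on $X/\text{G}$, cup product makes $\text{H}^\cdot(X/\text{G},\mathcal{F})$ into a graded module over $\text{H}^\cdot_{\text{G}}(X) = \text{H}^\cdot(X/\text{G},k)$; composing with the ring homomorphism $\text{H}^\cdot(\text{BG}) \to \text{H}^\cdot_{\text{G}}(X)$ induced by the structure map $X/\text{G} \to \text{BG}$ upgrades this to an $\text{H}^\cdot(\text{BG})$-module structure, compatible with the module structure on the ring $\text{H}^\cdot_{\text{G}}(X)$ itself.

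By the preceding proposition, each generator $t_i$, viewed inside $\text{H}^\cdot_{\text{G}}(X)$, is nilpotent: $t_i^{N_i} = 0$ for some $N_i \geq 1$. Given any class $m \in \text{H}^\cdot(X/\text{G},\mathcal{F})$, compatibility of the two module structures gives
$$ t_i^{N_i} \cdot m \ =\ 0 \qquad \text{in } \text{H}^\cdot(X/\text{G},\mathcal{F}). $$
Since $t_i^{N_i}$ is a nonzero element of the polynomial ring $\text{H}^\cdot(\text{BG})$, this exhibits $m$ as a torsion element of the $\text{H}^\cdot(\text{BG})$-module $\text{H}^\cdot(X/\text{G},\mathcal{F})$. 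As $m$ was arbitrary, the entire module is torsion.

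There is no substantive obstacle in this argument; the only point requiring care is the compatibility of the two module structures, which is immediate from the naturality of cup product under pullback along $X/\text{G} \to \text{BG}$. The interesting work has already been absorbed into the preceding proposition (and in particular into the Leray spectral sequence argument combined with the vanishing lemma $R^q h_* k = 0$ for $q\gg0$), so the corollary is really just the translation of ``each generator acts nilpotently'' into ``every element is annihilated by a nonzero element of $\text{H}^\cdot(\text{BG})$.''
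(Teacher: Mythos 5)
Your argument is correct and is exactly the deduction the paper intends (the paper treats this corollary as immediate from the nilpotency proposition together with $\text{H}^\cdot(\text{BG})$ being a polynomial algebra, citing the analogous scheme-case statement in \cite{DGai}). Spelling out the compatibility of the $\text{H}^\cdot_{\text{G}}(X)$- and $\text{H}^\cdot(\text{BG})$-module structures via pullback along $X/\text{G}\to\text{BG}$ is precisely the right way to fill in the omitted details.
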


\subsubsection{} \label{compleqvt} In particular, let 
\begin{center}
\begin{tikzcd}
Z\arrow[r] & X & U \arrow[l]
\end{tikzcd}
\end{center}
be complementary open and closed embeddings of derived Artin stacks with representable maps to $Y$. Assume each three is acted upon by $\text{G}$, a smooth group scheme over $Y$, with the stabilisers of the action on $U$ being etale. Then we can take
\begin{center}
\begin{tikzcd}
Z/\text{G}\ \arrow[r,"i"]& X/\text{G}& U/\text{G}\arrow[l,swap, "j"] 
\end{tikzcd}
\end{center}
 complementary open and closed embeddings over $Y$. Since the map $U/\text{G}\to Y$ is relative Deligne Mumford, arguing exactly as in section \ref{torscomp} we have
\begin{prop}
The cohomology of $i^*j_*\mathcal{F}$ is torsion $\text{H}^\cdot(\text{BG})$ module.
\end{prop}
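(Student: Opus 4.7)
The plan is to repeat the argument of Proposition \ref{propclosedopen} verbatim, replacing its scheme-setting input (Corollary \ref{schtorscor}: sheaves on $U/\text{G}$ have torsion $\text{H}^\cdot(\text{BG})$-cohomology because the $\text{G}$-action is free) with its freshly established relative analogue (the preceding corollary of section \ref{torsrel}: sheaves on $U/\text{G}$ have torsion $\text{H}^\cdot(\text{BG})$-cohomology because $U/\text{G}\to Y$ is relative Deligne-Mumford).

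Concretely, I would first observe that Proposition \ref{dmrelprop} together with the etale-stabiliser hypothesis make the map $h_U:U/\text{G}\to Y$ relative Deligne-Mumford, so by the preceding corollary $\mathcal{F}\in\text{Tors}_{\text{H}^\cdot(\text{BG})}(U/\text{G})$. Applying Proposition \ref{torsprop} to the representable open immersion $j:U/\text{G}\to X/\text{G}$ then gives $j_*\mathcal{F}\in\text{Tors}_{\text{H}^\cdot(\text{BG})}(X/\text{G})$.

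To finish, I would deduce torsion of $\text{H}^\cdot(Z/\text{G},i^*j_*\mathcal{F})=\text{H}^\cdot(X/\text{G},i_*i^*j_*\mathcal{F})$ from the Mayer-Vietoris distinguished triangle
$$j_!\mathcal{F}\ \longrightarrow\ j_*\mathcal{F}\ \longrightarrow\ i_*i^*j_*\mathcal{F}\ \stackrel{+1}{\longrightarrow}$$
on $X/\text{G}$. Since torsion $\text{H}^\cdot(\text{BG})$-modules form a triangulated subcategory, once both outer terms have torsion cohomology, so does the middle. Torsion of $j_*\mathcal{F}$ was just obtained; torsion of $j_!\mathcal{F}$ follows from the Leray spectral sequence argument of the preceding proposition applied with $R^qh_{U!}$ in place of $R^qh_{U*}$, using the finite compactly-supported cohomological dimension of relative Deligne-Mumford maps.

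The only step not already contained in section \ref{torscomp} is establishing torsion for $j_!\mathcal{F}$, but this is a routine variant of the preceding proposition's proof, so the overall argument is essentially a line-by-line transcription of the scheme case.
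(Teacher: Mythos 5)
Your opening steps match the paper: Proposition \ref{dmrelprop} makes $U/\text{G}\to Y$ relative Deligne--Mumford, the preceding corollary makes $\text{H}^\cdot(U/\text{G},\mathcal{F})$ torsion, and Proposition \ref{torsprop} then gives torsion of $\text{H}^\cdot(X/\text{G},j_*\mathcal{F})$. But at that point the paper concludes torsion of $\text{H}^\cdot(i^*j_*\mathcal{F})$ \emph{directly}, exactly as in section \ref{torscomp} (via the module compatibilities of Lemma \ref{torslemma}/Proposition \ref{torsprop}), and only \emph{afterwards} deduces torsion of $j_!\mathcal{F}$ from the Mayer--Vietoris triangle. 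You have inverted this order of deduction, and the inversion opens a genuine gap: you now need an independent proof that $\text{H}^\cdot(X/\text{G},j_!\mathcal{F})$ is torsion, and the one you sketch does not work. That group is computed by $p_{X*}j_!\mathcal{F}$, a $*$-pushforward of a $!$-extension: it is neither $\text{H}^\cdot(U/\text{G},\mathcal{F})$ nor $\text{H}^\cdot_c(U/\text{G},\mathcal{F})$, so no Leray spectral sequence of the single map $h_U:U/\text{G}\to Y$ (whether with $R^qh_{U*}$ or $R^qh_{U!}$) has it as abutment. If you instead run the Leray spectral sequence of $h:X/\text{G}\to Y$ on the sheaf $j_!\mathcal{F}$, the nilpotence argument needs $R^qh_*(j_!\mathcal{F})=0$ for $q\gg 0$; but $h$ is not relative Deligne--Mumford (the action on $Z$ is trivial), and checking that vanishing fibrewise via Mayer--Vietoris reduces to bounding $\text{H}^\cdot(Z_y/\text{G}_y,\,i_y^*j_{y*}\mathcal{F}_y)$ --- the fibrewise version of the very statement you are trying to prove. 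Note also that in section \ref{torscomp} torsion of $j_!\mathcal{F}$ is a \emph{corollary} of Proposition \ref{propclosedopen}, not an ingredient of its proof, so it cannot be imported without circularity; this is precisely why it is not ``a routine variant of the preceding proposition's proof.''

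The repair is to keep the paper's direction of deduction: $i^*j_*\mathcal{F}$ is the restriction to $Z/\text{G}$ of a sheaf pushed forward from $U/\text{G}$, so its cohomology inherits the torsion property of $\text{H}^\cdot(U/\text{G},\mathcal{F})$ through the $\text{H}^\cdot(\text{BG})$-module compatibilities of Lemma \ref{torslemma} and Proposition \ref{torsprop}, verbatim as in the scheme case; torsion of $j_!\mathcal{F}$ then falls out of the Mayer--Vietoris triangle for free. Your final step (two-out-of-three for torsion cohomology in a distinguished triangle) is fine once the other two vertices are handled, but as written the argument proves the corollary from the proposition it was supposed to establish.
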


From this it follows as before that

\begin{cor}\label{j!tors}
The cohomology of $j_*\mathcal{F}$ and $j_!\mathcal{F}$ are torsion $\text{H}^\cdot(\text{BG})$ modules.
\end{cor}

Moreover, if $\mathcal{G}$ is a sheaf on $X/\text{G}$ whose pullback $j^*\mathcal{G}$ has torsion cohomology, 
\begin{cor}
On cohomology, modulo $\text{H}^\cdot(\text{BG})$ torsion the map $i^!\mathcal{G}\to i^*\mathcal{G}$ is an isomorphism.
\end{cor}

\subsection{} \label{pullbacklem} We finish this section with a lemma which will be useful later. Let 
$$\overline{X}\ \stackrel{s}{\longrightarrow}\ X\ \stackrel{t}{\longleftarrow}\ X^\circ \ \ \ \text{ and }\ \ \ Z\ \stackrel{i}{\longrightarrow}\ X\ \stackrel{j}{\longleftarrow}\ U$$
be two complementary open and closed embeddings as in section \ref{compleqvt}. Pulling back
\begin{center}
\begin{tikzcd}
\overline{Z}/\text{G}\arrow[r,"\overline{\iota}"]\arrow[d,"\overline{s}"] & \overline{X}/\text{G}\arrow[d,"s"]& \overline{U}/\text{G}\arrow[l,"\overline{\jmath}",swap]\arrow[d,"\underline{s}"]\\
Z/\text{G}\arrow[r,"i"] & X/\text{G}& U/\text{G}\arrow[l,"j",swap]\\
Z^\circ/\text{G}\arrow[r,"\underline{i}"]\arrow[u,"\overline{t}",swap] & X^\circ/\text{G}\arrow[u,"t",swap] &U^\circ/\text{G}\arrow[l,swap,"\underline{j}"]\arrow[u,swap,"\underline{t}"] 
\end{tikzcd}
\end{center}
gives a pullback map
$$i^*\ :\ \text{H}^\cdot_{\text{G}}(\overline{X}/X)\ \longrightarrow\ \text{H}^\cdot_{\text{G}}(\overline{Z}/Z).$$
Then
\begin{lem}$i^*$ is an isomorphism  modulo torsion.
\end{lem}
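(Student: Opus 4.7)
The plan is to construct a long exact sequence relating the bivariant cohomologies of the three columns of the diagram, and then to observe that the $\overline{U}/U$ term is torsion thanks to the \'etale-stabiliser hypothesis on $U$.

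Concretely, I would apply the shriek pullback $s^!$ to the localisation triangle $j_!k_U \to k_X \to i_*k_Z \xrightarrow{+1}$ on $X/\text{G}$. Proper base change gives $s^!i_* = \overline{\iota}_*\overline{s}^!$ (since $i$ is a closed embedding, hence proper), yielding a distinguished triangle on $\overline{X}/\text{G}$:
$$s^!j_!k_U\to s^!k_X\to \overline{\iota}_*\overline{s}^!k_Z\xrightarrow{+1}.$$
Taking $\text{G}$-equivariant cohomology and using $\overline{\iota}_*\dashv \overline{\iota}^!$ to identify $\text{H}^\cdot_\text{G}(\overline{X},\overline{\iota}_*\overline{s}^!k_Z)=\text{H}^\cdot_\text{G}(\overline{Z}/Z)$ gives a long exact sequence
$$\cdots\to \text{H}^\cdot_\text{G}(\overline{X},s^!j_!k_U)\to \text{H}^\cdot_\text{G}(\overline{X}/X)\xrightarrow{i^*}\text{H}^\cdot_\text{G}(\overline{Z}/Z)\to \cdots,$$
whose middle arrow is the bivariant pullback $i^*$ by naturality of the base change.

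The next step is to identify the flanking group with $\text{H}^\cdot_\text{G}(\overline{U}/U)$. Using the adjunction $s_*=s_!\dashv s^!$ and then the localisation triangle $j_!j^*\to\text{id}\to i_*i^*$ on $X/\text{G}$ applied to $s_*k_{\overline{X}}$ (with base change $j^*s_*=\underline{s}_*\overline{\jmath}^*$ and $i^*s_*=\overline{s}_*\overline{\iota}^*$), one reduces to
\[\text{Hom}(s_*k_{\overline{X}},j_!k_U)=\text{Hom}(j_!\underline{s}_*k_{\overline{U}},j_!k_U)=\text{Hom}(\underline{s}_*k_{\overline{U}},k_U)=\text{H}^\cdot_\text{G}(\overline{U}/U),\]
where the first equality uses the standard vanishing $\text{Hom}(i_*(-),j_!(-))=0$ for disjoint closed--open decompositions (as $i^!j_!=0$), and the last uses $\underline{s}_!=\underline{s}_*\dashv\underline{s}^!$.

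To conclude, since the stabilisers of $\text{G}$ acting on $U$ are \'etale by hypothesis, so are they on the closed subspace $\overline{U}\subseteq U$. Proposition \ref{dmrelprop} then gives that $\overline{U}/\text{G}\to Y$ is relative Deligne-Mumford, so by the torsion result of section \ref{torsrel} (established in the lead-up to Corollary \ref{j!tors}) the cohomology $\text{H}^\cdot_\text{G}(\overline{U}/U)=\text{H}^\cdot_\text{G}(\overline{U},\underline{s}^!k_U)$ of this sheaf on $\overline{U}/\text{G}$ is a torsion $\text{H}^\cdot(\text{B}\text{G})$-module. Hence the outer terms of the long exact sequence are torsion, and $i^*$ is an isomorphism modulo torsion. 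The main obstacle is the bookkeeping around identifying the flanking term; this reduces to the adjunction-and-vanishing calculation above, and availability of base change $s^!i_*=\overline{\iota}_*\overline{s}^!$, both standard once the six functor formalism of Appendix A is in place.
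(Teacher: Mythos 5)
Your first step---applying $s^!$ to the horizontal localisation triangle and reading off the long exact sequence whose middle map is the bivariant pullback $i^*$---is exactly how the paper sets things up. The gap is in your identification of the flanking term. You claim $\text{Hom}(i_*(-),j_!(-))=0$ because ``$i^!j_!=0$'', but this vanishing is false: by adjunction $\text{Hom}(i_*A,j_!B)=\text{Hom}(A,i^!j_!B)$, and $i^!j_!=\mathbf{D}\,i^*j_*\,\mathbf{D}$ is Verdier dual to $i^*j_*$, which is precisely the nontrivial third term in the Gysin triangle $i^!\to i^*\to i^*j_*j^*$. The genuine vanishings for a complementary closed--open pair are $i^*j_!=0$, $j^*i_*=0$, $i^!j_*=0$, whence $\text{Hom}(j_!A,i_*B)=0$ and $\text{Hom}(i_*A,j_*B)=0$; the one you need fails because $j_!$ sits in the \emph{target}. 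Consequently $\text{H}^\cdot_{\text{G}}(\overline{X},s^!j_!k)$ is \emph{not} $\text{H}^\cdot_{\text{G}}(\overline{U}/U)$ in general. A non-equivariant sanity check: take $X=\mathbf{A}^2$, $\overline{X}=\{x=0\}$, $U=\{y\ne 0\}$, so $Z=\{y=0\}$, $\overline{Z}=\{0\}$, $\overline{U}\simeq \mathbf{G}_m$. Then $s^!j_!k=\overline{\jmath}_!k_{\overline{U}}[-2]$, whose cohomology over $\overline{X}\simeq\mathbf{A}^1$ is $\mathrm{fib}(\text{H}^\cdot(\mathbf{A}^1)\to \text{H}^\cdot(\text{pt}))=0$, whereas $\text{H}^\cdot(\overline{U}/U)=\text{H}^{\cdot-2}(\mathbf{G}_m)\ne 0$.

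The lemma can still be salvaged, but not by computing the flanking term on the nose: one only needs it to be torsion. This is what the paper does, using the \emph{vertical} Mayer--Vietoris triangle
$$s_*s^!j_!k\ \longrightarrow\ j_!k\ \longrightarrow\ t_*t^*j_!k\ =\ t_*\underline{j}_!k\ \stackrel{+1}{\longrightarrow}$$
on $X/\text{G}$: the cohomologies of $j_!k$ and of $t_*\underline{j}_!k$ are torsion by Corollary \ref{j!tors} applied to the free actions on $U$ and on $U^\circ$, hence so is $\text{H}^\cdot_{\text{G}}(\overline{X},s^!j_!k)=\text{H}^\cdot_{\text{G}}(X,s_*s^!j_!k)$. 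Your closing observation (\'etale stabilisers pass to the closed subspace $\overline{U}\subseteq U$, so every sheaf on $\overline{U}/\text{G}$ has torsion cohomology) is correct in itself, but it is applied to a group that is not the one appearing in your long exact sequence.
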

\begin{proof}
Now, the map $i^*$ is defined by Mayer-Vietoris in the horizontal direction
$$s^!j_!j^*k\ \longrightarrow\ s^!k\ \stackrel{i^*}{\longrightarrow} \ s^!i_*i^*k\ \stackrel{+1}{\longrightarrow}$$
so it is enough to show that $\text{H}^\cdot_{\text{G}}(\overline{X},s^!j_!k)$ is torsion. The Mayer-Vietoris sequence in the vertical direction gives
$$s_*s^!j_!k\ \longrightarrow\ j_!k\ \longrightarrow\ t_*t^!j_!k\ \stackrel{+1}{\longrightarrow}$$
It follows from corollary \ref{j!tors} that the cohomology of both $j_*k$ and $t_*t^!j_!k=t_*\underline{j}_!k$ are torsion $\text{H}^\cdot(\text{BG})$ modules, because the actions of $\text{G}$ on $U$ and on $U^\circ$ are free.
\end{proof}

\section{Specialisation, deformation and fundamental classes}\label{spec} 

\subsection{} \label{spnat} In this section we will be considering commutative diagrams $\mathcal{D}$ of spaces of the shape
\begin{center}
\begin{tikzcd}
\textcolor{white}{a} \bullet \textcolor{white}{a}\arrow[r,"\overline{\jmath}"]&\textcolor{white}{a} \bullet \textcolor{white}{a} & \textcolor{white}{a} \bullet \textcolor{white}{a}\arrow[l,"\overline{\iota}",swap]\\
\textcolor{white}{a} \bullet \textcolor{white}{a}\arrow[u,"f"]\arrow[r,"j"]& \textcolor{white}{a} \bullet \textcolor{white}{a}\arrow[u,"\widetilde{f}"]& \textcolor{white}{a} \bullet \textcolor{white}{a}\arrow[u,"f_0"]\arrow[l,"i",swap]
\end{tikzcd}
\end{center}
where the rows are complementary open and closed embeddings. To this data we can associate the natural transformation
$$\text{sp}_f\ :\ j_*f^!\overline{\jmath}^!\ \Longrightarrow\ i_*f_0^!\overline{\iota}^![1],$$
defined by taking the boundary term in the Gysin sequence, $j_*j^!\Rightarrow i_*i^![1]$, and precomposing with $\widetilde{f}^!$. We will sometimes adopt the shorthand 
$$\text{sp}_f\ :\ \mathcal{J}^\mathcal{D}\ \Longrightarrow\ \mathcal{I}^\mathcal{D}.$$
Given two such diagrams
\begin{center}
\begin{tikzcd}
\textcolor{white}{a} \bullet \textcolor{white}{a}\arrow[r,"\overline{\jmath}"]&\textcolor{white}{a} \bullet \textcolor{white}{a} & \textcolor{white}{a} \bullet \textcolor{white}{a}\arrow[l,"\overline{\iota}",swap]\\
\textcolor{white}{a} \bullet \textcolor{white}{a}\arrow[u,"f"]\arrow[r,"j"]& \textcolor{white}{a} \bullet \textcolor{white}{a}\arrow[u,"\widetilde{f}"]& \textcolor{white}{a} \bullet \textcolor{white}{a}\arrow[u,"f_0"]\arrow[l,"i",swap]\\
\textcolor{white}{a} \bullet \textcolor{white}{a}\arrow[u,"g"]\arrow[r,"\underline{j}"]& \textcolor{white}{a} \bullet \textcolor{white}{a}\arrow[u,"\widetilde{g}"]& \textcolor{white}{a} \bullet \textcolor{white}{a}\arrow[u,"g_0"]\arrow[l,"\underline{i}",swap]
\end{tikzcd}
\end{center}
then we have
$$\text{sp}_{fg}\ =\ \text{sp}_g\cdot \widetilde{f}^!\ =\ \widetilde{g}^!\cdot \text{sp}_f$$
as natural transformations $\underline{j}_*g^!f^!\overline{j}^!\Rightarrow \underline{i}_*g_0^!f_0^!\overline{\iota}^![1]$.

\subsubsection{} \label{gmexample} The basic example is for the diagram
\begin{center}
\begin{tikzcd}
\mathbf{G}_m\arrow[r,"j"]& \mathbf{A}^1& \arrow[l,swap,"i"]\text{pt}\\
\mathbf{G}_m\arrow[r,"j"]\arrow[u,equals]& \mathbf{A}^1\arrow[u,equals]& \arrow[l,swap,"i"]\text{pt}\arrow[u,equals]
\end{tikzcd}
\end{center}
On the level of cohomology specialisation gives
$$\text{H}^\cdot(\text{sp})\ :\ \text{H}^\cdot(\mathbf{G}_m)\ \longrightarrow\ \text{H}^{\cdot -1}(\text{pt})$$
which picks out the coefficient of a generator $\gamma\in \text{H}^1(\mathbf{G}_m)$.

\subsection{Deformation to the normal bundle/complex} \label{defbun} Let $s:Z\to X$ be a closed embedding of smooth spaces. In topology, the exponential map would allow us to compare it with the zero section to the normal bundle, $s_0:Z\to N$. But although the exponential map does not actually exist in algebraic geometry, $s$ and $s_0$ can still be compared using the \textit{deformation to the normal bundle}, denoted $D$.

It is a flat $\mathbf{A}^1$-valued family of spaces over $X$ admitting a map from $Z$. Above $0\in \mathbf{A}^1$ it is $s$ and elsewhere it is $s_0$. It fits into a diagram of pullback squares
\begin{equation}\label{diag8}
\begin{tikzcd}
X\times\mathbf{G}_m \arrow[r,"\widehat{\jmath}"]&X\times \mathbf{A}^1 &X\times\text{pt}\arrow[l,swap,"\widehat{\iota}"]\\
X\times\mathbf{G}_m \arrow[r,"\overline{\jmath}"]\arrow[u,"\pi"]&D\arrow[u,"\widetilde{\pi}"] & N\arrow[l,"\overline{\iota}",swap]\arrow[u,"\pi_0"]\\
Z\times\mathbf{G}_m\arrow[u,"s\times \text{id}"]\arrow[r,"j"]& Z\times\mathbf{A}^1\arrow[u,"\widetilde{s}"]& Z\times\text{pt} \arrow[u,"s_0"]\arrow[l,"i",swap]
\end{tikzcd}
\end{equation}
where the horizontal arrows are complementary open and closed embeddings, and the bottom horizontal arrows are closed embeddings. See \cite{F} for more details.

\subsubsection{} Notice that the trivial family $D_0=X\times\mathbf{A}^1$ also satisfies all the above, but with $N$ replaced by $X$. To obtain the deformation to the normal cone from this, first blow $D_0$ up along $Z\times \text{pt}$. The fibre above zero now contains $N$ as an open subspace, with complement the blowup of $X$ along $Z$. Removing this closed subspace then gives $D$. It follows that $D$ is smooth since both $Z$ and $X$ are.

\subsubsection{} This construction can be generalised in two different ways. Neither enforce any assumptions on the spaces, only on the map $s$. In both cases the normal complex $\mathbf{N}_{Z/X}$ is a perfect complex, so we can consider its total space.

\begin{theorem}
Let $s:Z\to X$ be a map of derived Artin stacks. If either $s$ is quasismooth, or a closed embedding which is homotopically of finite presentation,\footnote{Equivalently, it has finite presentation and its normal complex $\mathbf{N}_{Z/X}$ is perfect.} then there is a derived Artin stack $D$ fitting into a diagram (\ref{diag8}), where $N=\mathbf{N}_{Z/X}$. It is called the \textit{deformation to the normal complex}.

In the first case all vertical arrows are quasismooth, and in the second the bottom row of vertical arrows are closed embeddings homotopically of finite presentation.
\end{theorem}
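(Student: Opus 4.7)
The plan is to construct $D$ as a derived analogue of Fulton's classical $D = \mathrm{Bl}_{Z \times 0}(X \times \mathbf{A}^1) \setminus \mathrm{Bl}_Z(X)$, using the machinery of derived blow-ups or derived Rees algebras. Since all data and the desired output are smooth-local on $X$, first reduce to the case $X$ affine. In the quasismooth case, factor $s$ Zariski-locally as $Z \hookrightarrow U \to X$ where $U \to X$ is smooth and $Z \hookrightarrow U$ is the derived zero locus of a regular section $\sigma$ of a vector bundle $E$ on $U$; then build $D$ locally via the Rees-algebra construction applied to $\sigma$, yielding a flat family over $\mathbf{A}^1$ with generic fibre $U$ and special fibre $\mathrm{Tot}(E|_Z)$. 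In the closed embedding case, the conormal complex $\mathbf{L}_s[-1]$ is perfect, and one directly forms the total space of the derived extended Rees algebra of the derived ideal of $Z$.

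Having constructed $D$, verify the diagram (\ref{diag8}) of pullback squares. The generic fibre $D|_{\mathbf{G}_m}$ is $X \times \mathbf{G}_m$ by construction, and the inclusion $Z \times \mathbf{A}^1 \hookrightarrow D$ restricts there to $s \times \mathrm{id}$; this handles all squares in the $\mathbf{G}_m$ column. At the origin, the special fibre $D|_0$ must be identified with the total space $\mathbf{N}_{Z/X}$: in the quasismooth local model this is the Koszul identification $\mathrm{Tot}(E|_Z) \simeq \mathrm{Tot}(\mathbf{N}_{Z/X})$, and in the closed embedding case it follows directly from the Rees construction because the associated graded recovers the symmetric algebra on the conormal complex.

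For the vertical-map properties, work in the quasismooth local model: $\tilde{s} : Z \times \mathbf{A}^1 \hookrightarrow D$ is a regular closed embedding whose conormal complex is identified with $\mathrm{pr}_1^* \mathbf{L}_s[-1]$, so $\mathbf{L}_{\tilde{s}}$ has Tor amplitude in $[0,1]$, i.e.\ $\tilde{s}$ is quasismooth. The other bottom-row maps $s \times \mathrm{id}$ and $s_0$ are then quasismooth by base change along the horizontal open/closed embeddings. In the closed embedding case, all bottom-row maps are closed embeddings by construction and finite presentation is stable under base change, giving the claim.

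The main obstacle will be globalising the local construction of $D$ so that it produces a canonical derived Artin stack over a non-affine $X$. The cleanest remedy is to give $D$ a universal-property definition, for instance via a derived Weil restriction or directly in terms of Rees modules of the derived ideal, so that gluing is automatic; the local Koszul model then serves only to identify the special fibre and compute $\mathbf{L}_{\tilde{s}}$. A related technical subtlety is the identification of the special fibre in the closed embedding case, where one needs a careful derived computation of the extended Rees algebra and its $t = 0$ quotient to match $\mathbf{N}_{Z/X}$.
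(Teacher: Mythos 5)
The paper does not actually prove this theorem: its ``proof'' is a citation, deferring the quasismooth case to Theorem $1.3$ of \cite{K} and the closed-embedding case to the forthcoming \cite{HKR}, whose construction goes through derived Weil restrictions and derived Rees algebras. Your sketch is essentially a reconstruction of the strategy of those references (local Koszul/Rees models, identification of the special fibre with $\mathrm{Tot}(\mathbf{N}_{Z/X})$, globalisation by a universal property), so you are on the same track as the actual proofs rather than offering a different route.

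As a proof, however, the proposal stops exactly where the real work begins, and you should be aware that the ``obstacles'' you flag at the end are not loose ends but the entire content of the cited theorems. Concretely: (i) for a general derived Artin stack $X$ there is no Zariski-local factorisation of $s$ through the zero locus of a section of a vector bundle, so the reduction ``to the affine case'' must be replaced by smooth-local descent, and one then has to prove that the locally defined Rees constructions agree on overlaps up to coherent homotopy --- this is why \cite{HKR} defines $D$ by a universal property (derived Weil restriction) \emph{first} and only afterwards computes it locally, rather than treating the universal property as a patch; (ii) identifying the $t=0$ fibre of the derived extended Rees algebra with $\Sym$ of the shifted cotangent complex is a genuine derived computation (the classical associated-graded argument does not carry over verbatim), and without it the square involving $N=\mathbf{N}_{Z/X}$ in diagram (\ref{diag8}) is not established as a homotopy pullback; (iii) the theorem asserts that \emph{all} vertical arrows are quasismooth in the first case, but you only verify this for the bottom row ($\widetilde{s}$ and its restrictions); the quasismoothness of $\widetilde{\pi}:D\to X\times\mathbf{A}^1$ and $\pi_0:N\to X$ also needs an argument. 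None of these points is a wrong turn, but each is a substantive gap, and for the purposes of this paper the correct resolution is the one the author takes: quote \cite{K} and \cite{HKR}.
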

\begin{proof}
If $s$ is quasismooth this is Theorem $1.3$ of \cite{K}. The second case is to appear in the upcoming paper \cite{HKR}.
\end{proof}

\subsection{Fundamental classes} For any quasismooth map $f:X\to Y$ between derived Artin stacks of virtual dimension $d$, one can define its \textit{virtual fundamental class} $1_{X/Y}\in H^{-2d}(X/Y)$, introduced in \cite{K}. Despite the fact that bivariant cohomology
$$\text{H}^\cdot(X/Y)\ =\ \text{H}^\cdot(X_{cl}/Y_{cl})$$
does not see the derived structure, the fundamental class is sensitive to it. When $f$ is smooth, $\text{H}^\cdot(X/Y)= \text{H}^{\cdot +2d}(X)$ and the fundamental class corresponds to $1\in \text{H}^\cdot(X)$.

\subsubsection{}

The main theorem of \cite{K} is:
\begin{theorem} Let $f:X\to Y$ be a quasismooth map of derived stacks. Then it has a \textit{fundamental class}, an element $1_{X/Y}\in \text{H}^\cdot(X/Y)$. These satisfy the following properties:
\begin{enumerate}[label = \arabic*)]
\item The construction is stable under base change. If $Z\to W$ is quasismooth, then under any pullback of derived stacks
\begin{center}
\begin{tikzcd}
X\arrow[r]\arrow[d]&Y\arrow[d,"g"]\\
Z\arrow[r]& W
\end{tikzcd}
\end{center}
we have $1_{X/Y}=g^*1_{Z/W}$. We restress that we have taken the \textit{homotopy} pushout; this is necessary for instance to guarantee that $X\to Y$ is quasismooth at all.
\item  The construction is stable under products. If 
$$X\ \longrightarrow\ Y\ \longrightarrow\ Z$$
are two quasismooth maps then the product of $1_{X/Y}$ and $1_{Y/Z}$ is $1_{X/Z}$.
\end{enumerate}
\end{theorem}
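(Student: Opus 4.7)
The plan is to construct $1_{X/Y}$ by factoring any quasismooth $f$ into a smooth map and a quasismooth closed embedding, handle each separately, and then verify the two properties. Any quasismooth $f:X\to Y$ factors through its graph as
\[ X\ \stackrel{\Gamma_f}{\longrightarrow}\ X\times Y\ \stackrel{\text{pr}_2}{\longrightarrow}\ Y,\]
where $\text{pr}_2$ is smooth and $\Gamma_f$ is a closed embedding homotopically of finite presentation (its normal complex is $\mathbf{L}_{X/Y}[-1]$, which is perfect since $f$ is quasismooth). It then suffices to construct the class in these two cases plus a product formula; independence of the factorisation will follow from property $(2)$.

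For smooth $f:X\to Y$ of relative dimension $d$, purity gives $f^!k\simeq f^*k[2d]$, hence $\text{H}^\cdot(X/Y)=\text{H}^{\cdot+2d}(X)$, and I set $1_{X/Y}=1\in\text{H}^0(X)$; both stability properties are then immediate from the functoriality of purity. For a quasismooth closed embedding $s:Z\to X$ with normal complex $\mathbf{N}$, I would use the deformation to the normal complex from \ref{defbun}. The associated specialisation transformation of \ref{spnat} attached to diagram (\ref{diag8}), after pairing with the generator $\gamma\in\text{H}^1(\mathbf{G}_m)$ from \ref{gmexample}, gives a comparison between bivariant classes on $Z$ over $X$ and bivariant classes on $Z$ over the total space of $\mathbf{N}$. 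On the $\mathbf{N}$ side the desired class is the bivariant Euler class $e(\mathbf{N})\in\text{H}^0(\mathbf{N}/Z)$ constructed in \ref{bivarianteulerclass}. I would then \emph{define} $1_s$ as the unique lift of $e(\mathbf{N})$ across specialisation. That such a lift exists uniquely should follow from contractibility of $\mathbf{A}^1$ combined with the torsion vanishing results for free quotients in section \ref{note}, applied to the $\mathbf{G}_m$-equivariant geometry of $D$.

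Property $(1)$ would then reduce to the base change compatibility of the deformation to the normal complex (which produces a Cartesian square of deformation spaces), combined with the base change compatibility of specialisation from \ref{spnat} and of the bivariant Euler class (the lemma after \ref{bivarianteulerclass}). The main obstacle I anticipate is property $(2)$, multiplicativity under composition. For a composition of two quasismooth closed embeddings one wants to construct an \emph{iterated} deformation -- a family over $\mathbf{A}^1\times\mathbf{A}^1$ interpolating the composition with the corresponding zero sections -- and show that the associated double specialisation recovers the product of the two individual classes. The formal identities $\text{sp}_{fg}=\text{sp}_g\cdot\widetilde{f}^!=\widetilde{g}^!\cdot\text{sp}_f$ from \ref{spnat} are the starting point, but to extract $1_{X/Z}=1_{X/Y}\cdot 1_{Y/Z}$ one also needs a Whitney-sum-type identification of the iterated normal complex, in parallel with the singular Whitney formula of \ref{whitneysingular}. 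This is the substantive content of the theorem; the construction of $1_{X/Y}$ itself is comparatively lightweight.
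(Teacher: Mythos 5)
Your strategy diverges from the paper's at the very first step, and the divergence introduces a genuine gap. The paper (following \cite{K}, which it cites for properties (1) and (2) rather than reproving them) applies the deformation to the normal complex \emph{directly} to the quasismooth morphism $f:X\to Y$ --- this is exactly what the theorem quoted in \ref{defbun} supplies in the quasismooth case --- and then uses the fact that $\mathbf{N}_{X/Y}\to X$ is \emph{smooth} of dimension $-d$ (the normal complex of a quasismooth map sits in the ``stacky direction'' in the sense of appendix \ref{tot}) to carry the specialised class from $\text{H}^\cdot(\mathbf{N}/Y)$ back to $\text{H}^{\cdot+2d}(X/Y)$ by purity. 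No factorisation is needed. Your factorisation through the graph, $X\to X\times Y\to Y$, does not achieve the reduction you want: $\text{pr}_2$ is the base change of $X\to\text{pt}$ and so is smooth only when $X$ itself is smooth over the ground field, and likewise $\Gamma_f$ is a quasismooth closed immersion (normal complex a vector bundle) only when $\mathbf{L}_Y$ is one, since $\mathbf{L}_{\Gamma_f}\simeq f^*\mathbf{L}_Y[1]$. A global factorisation of a quasismooth morphism of derived stacks as (quasismooth closed immersion) $\circ$ (smooth) does not exist in general, so the reduction collapses at the outset; repairing it by local factorisations and gluing would be a substantial separate argument.

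The second problem is your identification of the target class on the normal-bundle side. The fundamental class of the zero section $Z\to\mathbf{N}$ lives in $\text{H}^\cdot(Z/\mathbf{N})$, the bivariant homology of the \emph{section}, whereas $e(\mathbf{N})$ lives in $\text{H}^0(\mathbf{N}/Z)$, that of the \emph{projection}; by the paper's own diagram (\ref{diag9}) the composite of $\cdot 1_{Z/\mathbf{N}}$ with $\cdot e(\mathbf{N})$ is cup product with the cohomological Euler class, so these classes are (roughly) mutually inverse, not equal. Relatedly, the specialisation of section \ref{spec} maps $\text{H}^\cdot(Z/X)\to\text{H}^\cdot(Z/\mathbf{N})$, so defining $1_{Z/X}$ as ``the unique lift of a class across specialisation'' requires inverting specialisation, which the paper can only do when the spaces are smooth (Proposition \ref{fundspec}) or after $\mathbf{G}_m$-equivariant localisation (Proposition \ref{locspecprop}); neither is available here unconditionally, and the torsion results of section \ref{note} only give statements modulo torsion, which is not enough to produce an honest class. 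The paper instead runs the specialisation attached to the \emph{top} half of diagram (\ref{diag8}), i.e.\ the family over $Y\times\mathbf{A}^1$, so the class lands in $\text{H}^\cdot(\mathbf{N}/Y)$ and is transported to $\text{H}^\cdot(X/Y)$ by purity for the smooth map $\mathbf{N}\to X$; no inversion or uniqueness claim is needed. Finally, note that the paper does not prove (1) and (2) at all --- they are quoted from \cite{K} --- so the multiplicativity argument you correctly identify as the hard part is outsourced rather than carried out here.
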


To construct it, consider the top half of the diagram (\ref{diag8}). Writing $p:Y\times\mathbf{A}^1\to Y$ for the projection, we get
$$\text{H}^\cdot(\text{sp}_\pi^D p^!k)\ :\ \text{H}^{\cdot +1}(Y\times\mathbf{G}_m /\text{pt})\ \longrightarrow\ \text{H}^\cdot(N/Y).$$
Of course we can repeat this for the trivial family too, giving another map:
$$\text{H}^\cdot(Y/Y)\ \longleftarrow\ \text{H}^{\cdot +1}(Y\times\mathbf{G}_m /\text{pt})\ \longrightarrow\ \text{H}^{\cdot }(N/Y).$$
As we have seen in \ref{gmexample} the left map simply picks out the coefficient of $\gamma\in H^{-1}(\mathbf{G}_m/\text{pt})$, and so admits a section $1\otimes \gamma$. Finally, because the map is quasismooth $N$ is a smooth of dimension $-d$ over $X$, and so $\text{H}^\cdot(X/Y)\simeq \text{H}^{\cdot +2d}(N/Y)$. The fundamental class is then the image of the identity under
$$\text{H}^\cdot(Y/Y)\ \stackrel{1\otimes \gamma}{\longrightarrow}\ \text{H}^{\cdot +1}(Y\times\mathbf{G}_m /\text{pt})\ \longrightarrow\ \text{H}^{\cdot }(N/Y)\ \simeq\ \text{H}^{\cdot +2d}(X/Y).$$

\subsection{Specialisation} If $s:Z\to X$ is a closed embedding of smooth manifolds, the exponential map induces
$$\text{H}^\cdot(Z/X)\ \simeq\ \text{H}^\cdot(Z/N).$$
This isomorphism actually survives to the general case, and is called \textit{specialisation}. Thus although the exponential map does not exist except in the topological setting, its shadow on the level of cohomology does.

\begin{prop}\label{fundspec} Let $s:Z\to X$ be a closed embedding of smooth spaces. There is an isomorphism of sheaves, called \textbf{specialisation}
$$\mathfrak{sp}_s\ :\ s^!k\ \stackrel{\sim}{\longrightarrow}\ s_0^!k.$$
 It is compatible with Euler classes, in the sense that 
\begin{equation}\label{diag9}
\begin{tikzcd}
\text{H}^\cdot(Z)\arrow[d,equals] & \text{H}^\cdot(Z/X)\arrow[opacity=0]{d}[opacity=1]{\sim}\arrow[d,swap,"\text{H}^\cdot(\mathfrak{sp}_s)"]\arrow[l,swap,"e(X)"]& \text{H}^\cdot(Z)\arrow[d,equals]\arrow[l,swap,"\cdot 1_{Z/X}"]\\
\text{H}^\cdot(Z)& \text{H}^\cdot(X/N)\arrow[l,swap,"e(N)"]& \text{H}^\cdot(Z)\arrow[l,swap,"\cdot 1_{Z/N}"]
\end{tikzcd}
\end{equation}
commutes. 
\end{prop}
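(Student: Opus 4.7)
The plan is to build $\mathfrak{sp}_s$ from the natural transformation $\text{sp}_{\widetilde{s}}$ of section \ref{spnat} applied to the bottom half of the deformation-to-normal-bundle diagram (\ref{diag8}). Base change identifies $i^!\widetilde{s}^!k_D = s_0^!\overline{\iota}^!k_D$ and $j^*\widetilde{s}^!k_D = (s\times \text{id})^!\overline{\jmath}^!k_D$, so the Gysin triangle for $i, j$ applied to $\widetilde{s}^!k_D$ on $Z\times \mathbf{A}^1$ yields a boundary map $j_*(s\times \text{id})^!\overline{\jmath}^!k_D\to i_*s_0^!\overline{\iota}^!k_D[1]$. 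Smooth base change along the projection $\mathbf{G}_m\to \text{pt}$ identifies the source as $s^!k_X\boxtimes k_{\mathbf{G}_m}$, and I then pair with the generator $\gamma\in \text{H}^1(\mathbf{G}_m)$ of section \ref{gmexample} to extract the desired sheaf map $\mathfrak{sp}_s: s^!k_X\to s_0^!k_N$.

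For the isomorphism claim I appeal to absolute purity: since $X$ and $N$ are smooth and $s, s_0$ are closed embeddings of the same codimension $c$, both $s^!k_X$ and $s_0^!k_N$ are canonically identified with $k_Z[-2c]$. Hence $\mathfrak{sp}_s$ is multiplication by a scalar, which by naturality of the construction I reduce to the model case $\text{pt}\hookrightarrow \mathbf{A}^c$. There the deformation is trivial and the specialisation reduces to the $\mathbf{G}_m$-boundary map of section \ref{gmexample}, yielding a nonzero scalar.

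For the compatibilities in (\ref{diag9}): the right square is equivalent (using $\text{H}^\cdot(Z)$-linearity of the sheaf-level map $\mathfrak{sp}_s$) to the statement $\mathfrak{sp}_s(1_{Z/X}) = 1_{Z/N}$. Since $1_{Z/X}$ was itself constructed in the recap immediately before the proposition using the specialisation operator $\text{sp}_{\widetilde{\pi}}$ on the top half of (\ref{diag8}), I can apply the naturality formula $\text{sp}_{fg} = \text{sp}_g\cdot \widetilde{f}^! = \widetilde{g}^!\cdot \text{sp}_f$ of section \ref{spnat} to the stacked diagram with $g = \widetilde{s}$ (bottom) and $f = \widetilde{\pi}$ (top); this identifies $\mathfrak{sp}_s(1_{Z/X})$ with the analogous specialisation construction applied to the trivial family, which tautologically produces $1_{Z/N}$ (the fundamental class of the zero section of a vector bundle). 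The left square follows from naturality of Gysin triangles: both $\mathfrak{sp}_s$ and the Euler class map $\cdot e(X):s^!k\to s^*k = k_Z$ come from the same Gysin triangle structure, so they intertwine at the sheaf level, and taking global sections produces the claim.

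The main obstacle I expect is the second step: constructing $\mathfrak{sp}_s$ carefully at the sheaf level and rigorously verifying nonvanishing of the scalar. Both compatibility statements are essentially structural, stemming from the naturality relations of section \ref{spnat} and the fact that fundamental classes and Euler classes are themselves built from Gysin triangles and specialisation.
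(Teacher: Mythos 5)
Your proposal follows the paper's proof in all essentials: the sheaf-level construction of $\mathfrak{sp}_s$ as the Gysin boundary map of the deformation diagram paired against the generator $\gamma$ of $\text{H}^1(\mathbf{G}_m)$; the Euler-class square via naturality of the whole construction under $\widetilde{s}^!\Rightarrow\widetilde{s}^*$ (which is the precise form of your ``same Gysin triangle structure'' remark — on the $*$-side everything is the constant sheaf and specialisation becomes the identity); and the fundamental-class square via the relation $\text{sp}^D_s\,\widetilde{\pi}^!=\text{sp}^{triv}_s$ from section \ref{spnat}, together with the fact that $1_{D/X\times\mathbf{A}^1}$ restricts to the identity over $\mathbf{G}_m$ and to $1_{N/X}$ over $0$. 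The only genuine divergence is the isomorphism claim: the paper quotes Lemma $1.6.14$ of \cite{Ay}, which reduces to the case $X=N$ a vector bundle, where the statement holds by construction; you instead invoke absolute purity to identify both $s^!k$ and $s_0^!k$ with $k_Z[-2c]$ and then reduce the resulting locally constant scalar to the model $\text{pt}\hookrightarrow\mathbf{A}^c$. That route is legitimate for smooth closed embeddings, though you still owe the compatibility of the construction with restriction to a point of $Z$ (i.e.\ with base change of the deformation space), which is precisely what Ayoub's lemma packages; and note that it is special to the smooth setting — in the singular equivariant version (Proposition \ref{locspecprop}) purity is unavailable and the paper instead deduces invertibility from the Euler-class compatibility, an argument that would also have worked here.
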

\begin{proof}

To construct it, we apply the formalism of subsection \ref{spnat} to the bottom half of (\ref{diag8})
\begin{center}
\begin{tikzcd}
X\times\mathbf{G}_m \arrow[r,"\overline{\jmath}"]&D & N\arrow[l,"\overline{\iota}",swap]\\
Z\times\mathbf{G}_m\arrow[u,"s\times \text{id}"]\arrow[r,"j"]& Z\times\mathbf{A}^1\arrow[u,"\widetilde{s}"]& Z\times\text{pt} \arrow[u,"s_0"]\arrow[l,"i",swap]
\end{tikzcd}
\end{center}
 to give
$$i^*\text{sp}_s^Dk\ :\ i^*j_*(s\times \text{id})^!k\ \longrightarrow\ s_0^!\overline{\iota}^!k[1].$$
Because all spaces are smooth $\overline{\iota}^!k\simeq k[-2]$, and picking once an for all a generator of $H^2(\text{pt}/\mathbf{A}^1)$ fixes an isomorphism. The above can then be viewed as a map
$$i^*\text{sp}_s^Dk\ :\ i^*j_*(s\times \text{id})^!k\ \longrightarrow\ s_0^!k[-1]$$
which on cohomology gives
$$\text{H}^\cdot(i^*\text{sp}_s^D)\ :\ \text{H}^\cdot(Z/X)\otimes \text{H}^\cdot(\mathbf{G}_m)\ \longrightarrow\ \text{H}^{\cdot -1}(Z/N).$$
We can repeat this replacing $D$ by the trivial family $X\times\mathbf{A}^1$. Since the left side of the above equation only depends on $s$ and not on the family $D$, we now get two maps
$$\text{H}^{\cdot -1}(Z/X) \ \stackrel{\text{H}^\cdot(i^*\text{sp}_s^{triv})}{\longleftarrow} \ \text{H}^\cdot(Z/X)\otimes \text{H}^\cdot(\mathbf{G}_m)\ \stackrel{\text{H}^\cdot(i^*\text{sp}_s^D)}{\longrightarrow}\ \text{H}^{\cdot -1}(Z/N).$$
As we have seen in \ref{gmexample} the left map simply picks out the coefficient of $\gamma\in H^1(\mathbf{G}_m)$, so admits a section, namely $\text{id}\otimes\gamma$. The specialisation map on cohomology is then
$$\text{H}^\cdot(\mathfrak{sp}_s)\ :\ \text{H}^{\cdot -1}(Z/X) \ \stackrel{\text{id}\otimes \gamma}{\longrightarrow} \ \text{H}^\cdot(Z/X)\otimes \text{H}^\cdot(\mathbf{G}_m)\ \stackrel{\text{H}^\cdot(\text{sp}_s^D)}{\longrightarrow}\ \text{H}^{\cdot -1}(Z/N).$$
Then Lemma $1.6.14$ of \cite{Ay} proves this is an isomorphism; this is true by construction if $X=N$ is a vector bundle. Notice that $\text{id}\otimes \gamma$ lifts to a map of sheaves, giving a section of $\text{sp}_s^{triv}$. Thus the specialisation map also lifts to a map of sheaves
$$\mathfrak{sp}_s\ :\ s^!k\ \stackrel{\text{id}\otimes \gamma}{\longrightarrow}\ i^*j_*(s\times \text{id})^!k\ \stackrel{\text{sp}_s^D}{\longrightarrow}\ s^!_0k.$$
It is independent of the choice we made for the generator of $H^2(\text{pt}/\mathbf{A}^1)$ because upon rescaling the choice by $\lambda$, both $sp^D_s$ and $\text{sp}_s^{triv}$ also rescale by $\lambda$, and so $\mathfrak{sp}_s$ is unchanged.

We now turn to showing specialisation is compatible with Euler classes. Applying the natural transformation $\widetilde{s}^!\to \widetilde{s}^*$ gives a commuting diagram
\begin{center}
\begin{tikzcd}
s^!k\arrow[r]\arrow[d] & i^*j_*(s\times\text{id})^!k[1]\arrow[r]\arrow[d]& s_0^!\overline{\iota}^!k[2]\arrow[d]\\
s^*k\arrow[r] & i^*j_*(s\times\text{id})^*k[1]\arrow[r] & s_0^*\overline{\iota}^!k[2]
\end{tikzcd}
\end{center}
and taking cohomology thereof gives
\begin{center}
\begin{tikzcd}
\text{H}^\cdot(Z/X)\arrow[r,"\text{H}^\cdot(\mathfrak{sp}_s)"]\arrow[d,"e(X)"]& \text{H}^\cdot(Z/N)\arrow[d,"e(N)"]\\
\text{H}^\cdot(Z)\arrow[r,equals]& \text{H}^\cdot(Z)
\end{tikzcd}
\end{center}
Finally we establish compatibility with fundamental classes under the assumption that $s$ is quasismooth. Note that $\widetilde{\pi}$ is quasismooth, so we can consider its fundamental class $1_{D/X\times\mathbf{A}^1}$. Viewing it as a morphism $k_D\to \widetilde{\pi}^!k_{X\times\mathbf{A}^1}$, we have
$$j^!1_{\widetilde{\pi}}\ =\ 1_{X\times\mathbf{G}_m/X\times\mathbf{G}_m}, \ \ \ i^!1_{\widetilde{\pi}}\ =\ 1_{N/X}.$$
Now we can consider the commuting diagram
\begin{center}
\begin{tikzcd}
i_*i^*\mathcal{J}^Dk_D\arrow[r]\arrow[d]& i_*i^*\mathcal{J}^D\widetilde{\pi}^!k_{X\times\mathbf{A}^1}\arrow[d]\\
\mathcal{I}^Dk_D\arrow[r]& \mathcal{I}^D\widetilde{\pi}^!k_{X\times\mathbf{A}^1}
\end{tikzcd}
\end{center}
Here the horizontal arrows are the cup products with $1_{\widetilde{\pi}}$, and the vertical arrows are $\text{sp}^D_s$. Note that $\text{sp}^D_s\widetilde{\pi}^!=\text{sp}^{triv}_{s}$ by subsection \ref{spnat}, and the bottom arrow is the cup product with $1_{N/X}$. The top arrow is the cup product with $1_{X\times\mathbf{G}_m/X\times\mathbf{G}_m}$ and so is the identity. Thus on cohomology the above diagram is
\begin{center}
\begin{tikzcd}
 \text{H}^\cdot(i^*j_*(s\times \text{id})^!k)\arrow[r,equals] \arrow[d,"\text{H}^\cdot(\text{sp}_s^D)"]& \text{H}^\cdot(i^*j_*(s\times \text{id})^!k)\arrow[d,"\text{H}^\cdot(\text{sp}^{triv}_s)"]\\
\text{H}^{\cdot -1}(X/N)\arrow[r,"\cdot 1_{N/X}"]&\text{H}^{\cdot -1}(X/Y)
\end{tikzcd}
\end{center}
This shows that $\cdot 1_{N/X}$ is inverse to $\text{H}^\cdot(\mathfrak{sp}_s^D)$, proving the proposition.

\end{proof}

\subsection{Equivariant specialisation} \label{eqvtloc} For a general closed embedding $s:Z\to X$ the vector spaces $\text{H}^\cdot(Z/X)$ and $\text{H}^\cdot(Z/N)$ are usually not isomorphic. This is not surprising since a neighbourhood of $Z$ in $X$ is usually nothing like a vector bundle on $Z$, e.g. even in the topological setting there is no exponential map.

If however we work $\mathbf{G}_m$ \textit{equivariantly} then modulo torsion things behave as in the smooth setting. More precisely, let $s$ be a $\mathbf{G}_m$ equivariant closed embedding, on the closed piece $Z$ the action being trivial and on the open piece $X\setminus Z$ the action being free. We need to also assume that $s$ is homotopically of finite presentation, so that its normal complex $\mathbf{N}=\mathbf{N}_{Z/X}$ is a perfect complex. In that case,

\begin{prop} \label{locspecprop} There is an isomorphism of vector spaces
$$\text{H}^\cdot(\mathfrak{sp}_s^{loc})\ :\ \text{H}^\cdot_{\mathbf{G}_m}(Z/X)_{loc}\ \stackrel{\sim}{\longrightarrow}\ \text{H}^\cdot_{\mathbf{G}_m}(Z/\mathbf{N})_{loc},$$
which is induced by a map of sheaves called \textbf{localised specialisation}  
$$\mathfrak{sp}_s^{loc} \ :\  \mathcal{I}^{triv}_sk\ \longrightarrow\ \mathcal{I}_s^Dk.$$ 
It respects Euler classes and (if $s$ is quasismooth) fundamental classes, in the sense that 
\begin{equation}\label{diag10}
\begin{tikzcd}
\text{H}^\cdot_{\mathbf{G}_m}(Z)_{loc}\arrow[d,equals] & \text{H}^\cdot_{\mathbf{G}_m}(Z/X)_{loc}\arrow[opacity=0]{d}[opacity=1]{\sim}\arrow[opacity=0]{l}[opacity=1]{\sim}\arrow[d,swap,"\mathfrak{sp}_s^{loc}"]\arrow[l,swap,"e(X)"]& \text{H}^\cdot_{\mathbf{G}_m}(Z)_{loc}\arrow[d,equals]\arrow[l,swap,"\cdot 1_{Z/X}"]\\
\text{H}^\cdot_{\mathbf{G}_m}(Z)_{loc}& \arrow[opacity=0]{l}[opacity=1]{\sim}\text{H}^\cdot_{\mathbf{G}_m}(Z/\mathbf{N})_{loc}\arrow[l,swap,"e(\mathbf{N})"]& \text{H}^\cdot_{\mathbf{G}_m}(Z)_{loc}\arrow[l,swap,"\cdot 1_{Z/\mathbf{N}}"]
\end{tikzcd}
\end{equation}
commutes. 
\end{prop}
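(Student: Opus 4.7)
The plan is to imitate the smooth-case proof of Proposition \ref{fundspec} almost verbatim, now using the deformation to the normal complex provided by the theorem of section \ref{defbun}, and then to upgrade each statement ``is an isomorphism'' to ``is an isomorphism modulo $\text{H}^\cdot(\text{B}\mathbf{G}_m)$-torsion'' by appealing to the vanishing results of section \ref{note}.

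First I would construct $\mathfrak{sp}_s^{loc}$. Since $s$ is homotopically of finite presentation, its normal complex $\mathbf{N}$ is perfect and a deformation space $D$ fits into the analogue of diagram (\ref{diag8}) with $N$ replaced by $\mathbf{N}$. The $\mathbf{G}_m$-action on $X$ extends to $D$ compatibly: trivial on $Z$, the given action on $X\times\mathbf{G}_m$, and the induced scalar action on $\mathbf{N}$ at the special fibre. The sheaf map $\mathfrak{sp}_s^{loc}:\mathcal{I}_s^{triv}k\to \mathcal{I}_s^D k$ is then defined exactly as in the smooth case: pair with a generator $\gamma\in \text{H}^1(\mathbf{G}_m)$ to produce a section of $\text{sp}_s^{triv}$ and post-compose with $\text{sp}_s^D$. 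The same rescaling argument as in Proposition \ref{fundspec} shows this is independent of the choice of generator of $\text{H}^2(\text{pt}/\mathbf{A}^1)$.

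The hard part will be showing that $\text{H}^\cdot(\mathfrak{sp}_s^{loc})$ becomes an equivalence after inverting $t\in \text{H}^\cdot(\text{B}\mathbf{G}_m)$, since Ayoub's Lemma 1.6.14 no longer applies when $\mathbf{N}$ is singular. I would replace it by a torsion argument: the natural transformations $\text{sp}_s^D$ and $\text{sp}_s^{triv}$ differ by a contribution supported on the complement $X\setminus Z$, on which $\mathbf{G}_m$ acts freely by hypothesis. The results of section \ref{compleqvt} (in particular Corollary \ref{j!tors}) then force this comparison term to have $\text{H}^\cdot(\text{B}\mathbf{G}_m)$-torsion cohomology, so the two natural transformations agree modulo torsion. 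Equivalently, applying the lemma of section \ref{pullbacklem} to the $\mathbf{A}^1$-family $D$ stratified by $\{0\}\subset\mathbf{A}^1$ shows that the induced pullback $\text{H}^\cdot_{\mathbf{G}_m}(Z/X)_{loc}\to \text{H}^\cdot_{\mathbf{G}_m}(Z/\mathbf{N})_{loc}$ is an isomorphism after localisation.

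For the compatibilities of diagram (\ref{diag10}), the sheaf-level arguments of Proposition \ref{fundspec} carry over verbatim to this equivariant setting. The Euler-class square arises by applying the natural transformation $\widetilde{s}^!\to \widetilde{s}^*$ to the deformation diagram; the fundamental-class square follows from the projection formula applied to the quasismooth map $\widetilde{\pi}:D\to X\times\mathbf{A}^1$, whose fundamental class restricts to $1_{X\times\mathbf{G}_m/X\times\mathbf{G}_m}$ and $1_{\mathbf{N}/X}$ on the two fibres. Both constructions commute with $\mathbf{G}_m$-equivariance and pass through the localisation at the end, yielding the required commuting diagram.
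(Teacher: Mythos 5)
Your overall strategy is the same as the paper's: rerun the construction of Proposition \ref{fundspec} over the deformation to the normal complex and use the torsion results of section \ref{note} to repair every step that fails in the singular setting. Two steps, however, are glossed over where the paper has to do something specific. First, your sheaf map $\mathfrak{sp}_s^{loc}:\mathcal{I}_s^{triv}k\to\mathcal{I}_s^Dk$ only produces, on cohomology, a map landing in $\text{H}^{\cdot+1}_{\mathbf{G}_m}(Z/D)_{loc}$; in the smooth case one passes to $\text{H}^\cdot(Z/N)$ via $\overline{\iota}^!k\simeq k[-2]$, but that identification uses smoothness of $D$ and $N$ and is unavailable here. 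The paper closes this by applying Lemma \ref{pullbacklem} to show $\text{H}^\cdot_{\mathbf{G}_m}(\mathbf{N}/D)_{loc}\to\text{H}^\cdot_{\mathbf{G}_m}(Z/Z\times\mathbf{A}^1)_{loc}$ is an isomorphism, which produces a class $1^{loc}_{\mathbf{N}/D}\in\text{H}^2_{\mathbf{G}_m}(\mathbf{N}/D)_{loc}$ whose cup product gives $\text{H}^{\cdot+1}_{\mathbf{G}_m}(Z/D)_{loc}\simeq\text{H}^{\cdot-1}_{\mathbf{G}_m}(Z/\mathbf{N})_{loc}$; you invoke that lemma only later and for a different comparison, so the target of your map is never actually identified with $\text{H}^\cdot_{\mathbf{G}_m}(Z/\mathbf{N})_{loc}$.

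Second, your argument that the map is an isomorphism is not yet a proof. The claim that $\text{sp}_s^D$ and $\text{sp}_s^{triv}$ ``differ by a contribution supported on $X\setminus Z$'' does not literally parse, since the two natural transformations have different targets, and your alternative application of Lemma \ref{pullbacklem} computes the pullback $\text{H}^\cdot_{\mathbf{G}_m}(Z\times\mathbf{A}^1/D)_{loc}\to\text{H}^\cdot_{\mathbf{G}_m}(Z/\mathbf{N})_{loc}$, which is not obviously the map $\text{H}^\cdot(\mathfrak{sp}_s^{loc})$ you constructed. The paper's route is cleaner and worth adopting: first establish the compatibility with Euler classes (which, as you say, is the smooth argument verbatim), and then observe that both $\cdot\,e(X)$ and $\cdot\,e(\mathbf{N})$ are isomorphisms after localisation by section \ref{torscomp} (freeness of the action on the complement makes $i^!k\to i^*k$ an isomorphism modulo torsion); the left square of (\ref{diag10}) then forces $\mathfrak{sp}_s^{loc}$ to be an isomorphism by two-out-of-three. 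With those two repairs your proof matches the paper's.
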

\begin{proof} By exactly the same construction as before, we get a map of sheaves on $(Z\times\mathbf{A}^1)/\mathbf{G}_m$
$$\mathfrak{sp}_s^{loc}\ :\ \mathcal{I}_s^{triv}k\ \stackrel{\text{id}\otimes \gamma}{\longrightarrow}\ i_*i^*j_*(s\times\text{id})^!k[-1]\ \stackrel{\text{sp}_s^D}{\longrightarrow}\ \mathcal{I}_s^Dk.$$
This gives a map on vector spaces
$$\text{H}^\cdot_{\mathbf{G}_m}(\mathfrak{sp}_s^{loc})_{loc}\ :\ \text{H}_{\mathbf{G}_m}^{\cdot -1}(Z/X)_{loc}\ \longrightarrow\ \text{H}_{\mathbf{G}_m}^\cdot(Z/X)_{loc}\otimes \text{H}^\cdot(\mathbf{G}_m)\ \longrightarrow\ \text{H}^{\cdot +1}_{\mathbf{G}_m}(Z/D)_{loc}.$$
Now, by Lemma \ref{pullbacklem} the pullback map
$$\text{H}^\cdot_{\mathbf{G}_m}(\mathbf{N}/D)_{loc}\ \stackrel{\sim}{\longrightarrow}\ \text{H}^\cdot_{\mathbf{G}_m}(Z/Z\times\mathbf{A}^1)_{loc}$$
is an isomorphism. Thus a choice of generator of $\text{H}^2_{\mathbf{G}_m}(\text{pt}/\mathbf{A}^1)$ gives a class $1^{loc}_{\mathbf{N}/D}\in \text{H}^2_{\mathbf{G}_m}(\mathbf{N}/D)_{loc}$, cupping with which induces an isomorphism $\text{H}^{\cdot +1}(Z/D)_{loc}\simeq \text{H}^{\cdot -1}(Z/\mathbf{N})_{loc}$. Composing this with the above then gives
$$\text{H}^\cdot(\mathfrak{sp}_s^{loc})\ :\  \text{H}^\cdot(Z/X)_{loc}\ \longrightarrow\ \text{H}^\cdot(Z/\mathbf{N})_{loc}.$$
The same argument as in the smooth case gives the compatibility with Euler classes and fundamental classes, and shows that the specialisation map is independent of the choice of $1^{loc}_{\mathbf{N}/D}$. Compatibility with Euler classes shows that equivariant specialisation is an isomorphism on cohomology, because so too is the cup product with Euler classes.
\end{proof}

Informally, this proposition says that a closed embedding and its normal cone have the same bivariant homology, up to ``small" corrections terms which are torsion $\text{H}^\cdot(\text{B}\mathbf{G}_m)$ modules.

\subsubsection{} \label{eqvtlocrelative} If $s$ is not quasismooth but the spaces are quasismooth over some base, specialisation is still related to fundamental classes in the obvious way. That is, take
\begin{center}
\begin{tikzcd}
Z\arrow[rd,"\overline{f}"]\arrow[d,"s"]&\\
X\arrow[r,"f"]& Y
\end{tikzcd}
\end{center}
where $f,\overline{f}$ are quasismooth and $s$ is as before a closed embedding, homotopically of finite presentation. In particular, $\mathbf{N}\to Z$ is quasismooth by looking at the distinguished triangle of tangent complexes, and hence also so is $\mathbf{N}\to Y$. Then

\begin{prop}  The following commutes
\begin{center}
\begin{tikzcd}
\text{H}^\cdot_{\mathbf{G}_m}(Z/Y)_{loc}\arrow[d,equals]& \text{H}^\cdot_{\mathbf{G}_m}(Z/X)_{loc}\arrow[l,swap,"\cdot 1_{X/Y}"]\arrow[d,"\mathfrak{sp}_s^{loc}"] \\
\text{H}^\cdot_{\mathbf{G}_m}(Z/Y)_{loc}& \text{H}^\cdot_{\mathbf{G}_m}(Z/\mathbf{N})_{loc}\arrow[l,swap,"\cdot 1_{\mathbf{N}/Y}"] &
\end{tikzcd}
\end{center}
\end{prop}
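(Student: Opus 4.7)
The plan is to adapt the strategy of Proposition~\ref{fundspec} (and its localised counterpart \ref{locspecprop}) to the current relative setting. The central new geometric input is the composed map
$$\widetilde{g}\ \colon\ D \ \stackrel{\widetilde{\pi}}{\longrightarrow}\ X\times\mathbf{A}^1 \ \stackrel{f\times\text{id}}{\longrightarrow}\ Y\times\mathbf{A}^1.$$
First I check that $\widetilde{g}$ is quasismooth. Fibrewise this is immediate: over $\mathbf{G}_m$ it restricts to $f\times\text{id}$, quasismooth since $f$ is; over $0$ it restricts to $\overline{f}\circ\rho\colon\mathbf{N}\to Z\to Y$, where $\rho$ is the total space projection, and this is quasismooth because $\rho$ is (by the tangent triangle argument preceding the statement) and $\overline{f}$ is by hypothesis. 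Globalising this fibrewise quasismoothness uses the construction of $D$ from the second case of the theorem of subsection~\ref{defbun} (\cite{HKR}); although $\widetilde{\pi}$ itself need not be quasismooth when $s$ is only homotopically of finite presentation, postcomposing with the quasismooth $f\times\text{id}$ restores the property.

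Quasismoothness of $\widetilde{g}$ gives a fundamental class $1_{\widetilde{g}}\in \text{H}^\cdot(D/Y\times\mathbf{A}^1)$. By stability under base change,
$$j^!1_{\widetilde{g}}\ \simeq\ 1_{X/Y}\qquad\text{and}\qquad i^!1_{\widetilde{g}}\ =\ 1_{\mathbf{N}/Y},$$
after identifying $X\times \mathbf{G}_m/Y\times \mathbf{G}_m$ with the pullback of $X/Y$ along $Y\times\mathbf{G}_m\to Y$. Hence a single class on $D$ interpolates between the two fundamental classes appearing in the statement.

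Now repeat the diagram chase from the proof of Proposition~\ref{fundspec}. Viewing $1_{\widetilde{g}}$ as a morphism of sheaves $k_D \to \widetilde{g}^!k_{Y\times\mathbf{A}^1}$ and applying the natural transformation $\text{sp}^D_s$ yields a commuting square
\begin{center}
\begin{tikzcd}
i_*i^*\mathcal{J}^D k_D \arrow[r,"{\cdot\,1_{\widetilde{g}}}"]\arrow[d,"\text{sp}^D_s"] & i_*i^*\mathcal{J}^D\widetilde{g}^!k \arrow[d,"\text{sp}^D_s"]\\
\mathcal{I}^D k_D \arrow[r,"{\cdot\,1_{\widetilde{g}}}"] & \mathcal{I}^D\widetilde{g}^!k
\end{tikzcd}
\end{center}
Taking $\mathbf{G}_m$-equivariant cohomology, localising, and precomposing with the section $\text{id}\otimes\gamma$ from the definition of $\mathfrak{sp}^{loc}_s$, the top horizontal arrow becomes multiplication by $1_{X/Y}$, the bottom horizontal becomes multiplication by $1_{\mathbf{N}/Y}$, and the two verticals become $\mathfrak{sp}^{loc}_s$. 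This is exactly the square claimed in the statement.

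The main obstacle is the global quasismoothness of $\widetilde{g}$ when $s$ is merely homotopically of finite presentation: fibrewise quasismoothness is immediate from the hypotheses, but its globalisation requires a careful analysis of the structure of the deformation space $D$ as constructed in \cite{HKR}. All remaining steps are formal parallels of the smooth case treated in Propositions~\ref{fundspec} and~\ref{locspecprop}.
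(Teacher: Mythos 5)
Your proposal follows essentially the same route as the paper: both use the composite map $\widetilde{g}:D\to X\times\mathbf{A}^1\to Y\times\mathbf{A}^1$ (whose quasismoothness the paper likewise only asserts, via "the top row of vertical arrows are all quasismooth"), the restrictions $j^!1_{\widetilde{g}}=1_{X\times\mathbf{G}_m/Y\times\mathbf{G}_m}$ and $i^!1_{\widetilde{g}}=1_{\mathbf{N}/Y}$, and the naturality square for $\text{sp}_s^D$ applied to $1_{\widetilde{g}}$. One step you elide: the right-hand vertical of your square is $\text{sp}_s^D\widetilde{g}^!=\text{sp}_s^{triv}(f\times\text{id})^!$, not $\mathfrak{sp}_s^{loc}$, so identifying the composite "top then right" with $\cdot 1_{X/Y}$ followed by the identity on $\text{H}^\cdot_{\mathbf{G}_m}(Z/Y)_{loc}$ requires one further naturality square for the trivial family (the paper's explicit swap of the order of $\cdot 1_{X/Y}$ and $\text{sp}_s^{triv}$); this is formal but should be stated.
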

\begin{proof}
We argue exactly as in \ref{fundspec}, but instead of (\ref{diag8}) we use the diagram
\begin{center}
\begin{tikzcd}
Y\times\mathbf{G}_m\arrow[r]& Y\times\mathbf{A}^1 & Y\times\text{pt}\arrow[l] \\
X\times\mathbf{G}_m\arrow[r,"j"]\arrow[u,"f\times \text{id}"] & D\arrow[u]& \mathbf{N}\arrow[l,swap,"i"]\arrow[u]\\
Z\times\mathbf{G}_m\arrow[r]\arrow[u,"s\times\text{id}"]& Z\times\mathbf{A}^1\arrow[u]& Z\times\text{pt}\arrow[l]\arrow[u]
\end{tikzcd}
\end{center}
Note that the top row of vertical arrows are all quasismooth. Writing $\widetilde{1}=1_{D/Y\times\mathbf{A}^1}$, we have
$$j^!\widetilde{1}\ =\ 1_{X\times\mathbf{G}_m/Y\times\mathbf{G}_m}, \ \ \ i^!\widetilde{1}\ =\ 1_{\mathbf{N}/Y}$$
and arguing as in \ref{fundspec} get the diagram
\begin{center}
\begin{tikzcd}
 \text{H}^\cdot(i^*j_*(s\times \text{id})^!k)_{loc}\arrow[r,"\cdot 1_{X/Y}"] \arrow[d,"\text{sp}_s^Dk"]& \text{H}^\cdot(i^*j_*(\overline{f}\times \text{id})^!k)_{loc}\arrow[d,"sp^{triv}_s f^!k"]\\
\text{H}^{\cdot -1}(Z/\mathbf{N})_{loc}\arrow[r,"\cdot 1_{\mathbf{N}/Y}"]&\text{H}^{\cdot -1}(Z/Y)_{loc}
\end{tikzcd}
\end{center}
This is almost the diagram in the proposition, except that we the order of $\cdot 1_{X/Y}$ and $sp^{triv}_s f^!k$ needs to be swapped. More precisely, the following commutes:
\begin{center}
\begin{tikzcd}
 \text{H}^\cdot(i^*j_*(s\times \text{id})^!k)_{loc}\arrow[r,"\cdot 1_{X/Y}"] \arrow[d,"\text{sp}_s^{triv}k"]& \text{H}^\cdot(i^*j_*(\overline{f}\times \text{id})^!k)_{loc}\arrow[d,"sp^{triv}_s f^!k"]\\
\text{H}^{\cdot -1}(Z/X)_{loc}\arrow[r,"\cdot 1_{X/Y}"]&\text{H}^{\cdot -1}(Z/Y)_{loc}
\end{tikzcd}
\end{center}
which proves the proposition.
\end{proof}

\subsection{Examples} The situation is uninteresting when all spaces are smooth, because the bivariant groups are plain cohomology groups, fundamental classes are the identity and the specialisation map an isomorphism on the nose. The simplest interesting examples are the linear ones, i.e. coming from perfect complexes/cone bundles.

\subsubsection{} Let $E$ be a perfect complex on space $Z$; recall that
\begin{enumerate}[label = \arabic*)]
\item $E\to Z$ is a quasismooth map iff $E$ is concentrated in degrees $(-\infty,1]$.
\item $Z\to E$ is a closed embedding iff $E$ is concentrated in degrees $[0,\infty)$.
\end{enumerate}
For instance, assume that
$$E\ =\ (E_{\le 0}\ \stackrel{f}{\to} \ E_1)$$
where $E_{\le 0}$ is a perfect complex in degrees $\le 0$, and $E_1$ is a vector bundle. In this case we can construct the fundamental class $1_{E/Z}$ directly, bypassing the general construction. Consider the pullback
\begin{center}
\begin{tikzcd}
E\arrow[r]\arrow[d]& Z\arrow[d]\\
E_{\le 0}\arrow[r,"f"]& E_1
\end{tikzcd}
\end{center}
then we have
$$1_{E/Z} \ =\ 1_{E/E_0}\cdot 1_{E_0/Z}\ =\ f^*1_{Z/E_1}\cdot 1_{E_0/Z}\ =\ \ =\ f^*1_{E_1/Z}^{-1}\cdot 1_{E_0/Z}$$
defined completely in terms of fundamental classes of smooth morphisms, which have a very direct definition.

\subsection{Umkehr maps} \label{umkehr} Thinking about fundamental classes gives a cleaner formulation of pushforward maps on cohomology. If $f:X\to Y$ quasismooth of virtual dimension $d$ and proper, we get two maps
$$\text{H}^\cdot(X)\ \stackrel{\cdot 1_{X/Y}}{\longrightarrow}\ \text{H}^{\cdot -2d}(X/Y)\ \stackrel{f_*}{\longrightarrow}\ \text{H}^{\cdot -2d}(Y)$$
respectively. The pushforward map on cohomology is their composite, which will be referred to as $f_*$ when there is no risk of confusion with the pushforward map $f_*$ on bivariant homology.

\begin{lem}Given a pullback of derived stacks
\begin{center}
\begin{tikzcd}
X\arrow[r,"\overline{f}"]\arrow[d,"\overline{g}"]& Y\arrow[d,"g"]\\
Z\arrow[r,"f"]& W
\end{tikzcd}
\end{center}
such that $f$ (and hence $\overline{f}$) is proper and quasismooth, then $$g^*f_*\ = \ \overline{f}_*\overline{g}^*$$
as maps $\text{H}^\cdot(Z)\to \text{H}^\cdot(Y)$.
\end{lem}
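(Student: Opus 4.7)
Plan: Both sides are maps $\text{H}^\cdot(Z) \to \text{H}^\cdot(Y)$, so it suffices to evaluate them on an arbitrary class $\alpha \in \text{H}^\cdot(Z)$. The strategy is to unpack the definition of the Umkehr $f_*$ on cohomology as cup product with the fundamental class followed by the bivariant pushforward, and then apply base change for the two ingredients separately.

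First I would rewrite
$$g^* f_* \alpha \ =\ g^*\bigl(f_* (\alpha \cdot 1_{Z/W})\bigr),$$
where now $f_*$ on the right denotes the bivariant pushforward $\text{H}^\cdot(Z/W) \to \text{H}^\cdot(W/W) = \text{H}^\cdot(W)$ along the proper map $f$. By the base change axiom for proper bivariant pushforward (axiom $A_{13}$ of Fulton--Macpherson, recalled in appendix \ref{sheaves}; the hypothesis that $f$ is proper is essential here), the outer pullback $g^*$ commutes with the inner $f_*$, giving
$$g^*\bigl(f_* (\alpha \cdot 1_{Z/W})\bigr) \ =\ \overline{f}_* \bigl( \overline{g}^*(\alpha \cdot 1_{Z/W}) \bigr).$$

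Next I would distribute $\overline{g}^*$ across the product using multiplicativity of bivariant pullback (the fact that $\overline{g}^*$ of the cup product is the cup product of $\overline{g}^*$, another Fulton--Macpherson axiom). The class $\alpha$ is pulled back as an ordinary cohomology class, yielding $\overline{g}^*\alpha \in \text{H}^\cdot(X)$, while the fundamental class is pulled back using its base change property from the theorem just stated: since our square is a homotopy pullback and $f$ is quasismooth (so $\overline{f}$ is too), we have $\overline{g}^* 1_{Z/W} = 1_{X/Y}$. Combining these,
$$\overline{g}^*(\alpha \cdot 1_{Z/W}) \ =\ \overline{g}^*\alpha \cdot 1_{X/Y}.$$

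Finally, folding the definition back up gives
$$\overline{f}_*\bigl(\overline{g}^*\alpha \cdot 1_{X/Y}\bigr) \ =\ \overline{f}_* \overline{g}^*\alpha,$$
which is the right-hand side. The main obstacle, such as it is, lies not in the chain of equalities above but in verifying that the two base change statements apply in the derived/stacky setting considered here: proper base change for the bivariant pushforward is by now standard (and recalled in appendix \ref{sheaves}), while base change for fundamental classes is precisely Khan's theorem cited from \cite{K}. Once one has these in hand the proof reduces to the three-line manipulation above.
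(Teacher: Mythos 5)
Your proof is correct and is essentially identical to the paper's: unpack $f_*$ as $\alpha\mapsto f_*(\alpha\cdot 1_{Z/W})$, commute the bivariant pushforward past $g^*$, distribute $\overline{g}^*$ over the product, and invoke Khan's base change for fundamental classes to get $\overline{g}^*1_{Z/W}=1_{X/Y}$. The only slip is a label: in the paper's numbering the commutation of proper pushforward with pullback is axiom $A_{23}$, while $A_{13}$ is the multiplicativity of pullback that you use in the second step.
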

\begin{proof}
For a class $\alpha\in \text{H}^\cdot(Z)$, we have 
$$g^*f_*(\alpha\cdot 1_{Z/W})\ \stackrel{A_{23}}{=}\ \overline{f}_*\overline{g}^*(\alpha\cdot 1_{Z/W})\ \stackrel{A_{13}}{=} \overline{f}_*(\overline{g}^*(\alpha)\cdot \overline{g}^*1_{Z/W})\ =\ \ \overline{f}_*(\overline{g}^*(\alpha)\cdot 1_{X/Y}).$$
\end{proof}

\section{Abelian localisation} \label{abloc}

\subsection{} \textit{Abelian localisation} is a collection of results saying that the torus equivariant cohomology of a space and its fixed locus are approximately the same
$$\text{H}^\cdot_\text{T}(X)\ \approx\ \text{H}^\cdot_\text{T}(X^\text{T}).$$
Here $X$ is a space acted upon by a split torus $\text{T}=\mathbf{G}_m^n$. This can drastically simplify computations in $\text{H}^\cdot_\text{T}(X)$ because the fixed locus is typically a much simpler space than $X$ itself.

\subsection{Smooth case} \label{smoothab} Let $X$ be a smooth (dg) scheme of finite type acted upon by a split torus $\text{T}$. More generally than just fixed loci we can consider $\text{T}$ equivariant closed embeddings
$$i\ :\ \overline{X}\ \longrightarrow\ X$$
of smooth schemes on whose complement the stabilisers have dimension less than $\text{dim}\text{T}$, e.g. the action is free. Write $d$ for its codimension. Atiyah and Bott's abelian localisation then says

\begin{theorem} (\cite{AB}, $3.5$) \label{smoothabthm}The pullback and pushforward maps on cohomology
$$  \text{H}^\cdot_{\text{T}}(\overline{X})\ \stackrel{i_*}{\longrightarrow}\ \text{H}^{\cdot +2d}_{\text{T}}(X)\textcolor{white}{aaaaaaa} \text{H}^\cdot_{\text{T}}(X)\ \stackrel{i^*}{\longrightarrow}\  \text{H}^\cdot_{\text{T}}(\overline{X})$$ 
become isomorphisms of $\text{H}^\cdot(\text{BT})$ modules after localising at the maximal ideal of $\text{H}^\cdot(\text{BT})$.
\end{theorem}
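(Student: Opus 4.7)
The plan is to reduce both isomorphism claims to a single vanishing statement,
$$\text{H}^\cdot_\text{T}(U)_{loc}\ =\ 0 \qquad \text{where } U = X \setminus \overline{X}.$$
Once this is in hand, the Gysin triangle for the closed embedding $i$ of smooth schemes yields the long exact sequence
$$\cdots \to \text{H}^{\cdot -2d}_\text{T}(\overline{X}) \stackrel{i_*}{\longrightarrow} \text{H}^\cdot_\text{T}(X) \longrightarrow \text{H}^\cdot_\text{T}(U) \to \cdots$$
along with its Poincaré dual computing the cofibre of $i^*$; since localisation is exact, vanishing of the right-hand term forces both $i_*$ and $i^*$ to become isomorphisms upon localisation.

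For the vanishing, I would stratify $U$ into finitely many $\text{T}$-invariant locally closed subschemes $U_\alpha$ on each of which the connected component of the stabiliser is a constant subtorus $\text{T}_\alpha \subsetneq \text{T}$; this is possible because stabilisers are upper semi-continuous for a torus action on a finite-type scheme, and the passage to classical truncation is harmless since it does not affect equivariant cohomology. On each stratum, pick a complementary subtorus $\text{T}'_\alpha \subseteq \text{T}$ such that multiplication $\text{T}_\alpha \times \text{T}'_\alpha \to \text{T}$ is an isogeny. By hypothesis $\dim \text{T}_\alpha < \dim \text{T}$, so $\text{T}'_\alpha$ has positive dimension and acts on $U_\alpha$ with finite stabilisers; the quotient $U_\alpha / \text{T}'_\alpha$ is then Deligne-Mumford, and Corollary \ref{schtorscor} applied to $\text{T}'_\alpha$ gives that $\text{H}^\cdot_{\text{T}'_\alpha}(U_\alpha)$ is torsion over $\text{H}^\cdot(\text{BT}'_\alpha)$. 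Passing to rational coefficients, the Künneth decomposition
$$\text{H}^\cdot_\text{T}(U_\alpha)\ \simeq\ \text{H}^\cdot(\text{BT}_\alpha) \otimes \text{H}^\cdot_{\text{T}'_\alpha}(U_\alpha)$$
exhibits $\text{H}^\cdot_\text{T}(U_\alpha)$ as a torsion module over $\text{H}^\cdot(\text{BT}'_\alpha) \subseteq \text{H}^\cdot(\text{BT})$, so it is annihilated by a positive-degree element of $\text{H}^\cdot(\text{BT})$ and dies after inverting the augmentation ideal.

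To propagate stratum-wise vanishing to $U$ itself, I would induct on the number of strata: for any decomposition $U = V \sqcup Z$ into an open and closed union of strata each with strictly fewer, the localisation exact sequence
$$\text{H}^\cdot_\text{T}(Z)_{loc}\ \longrightarrow\ \text{H}^\cdot_\text{T}(U)_{loc}\ \longrightarrow\ \text{H}^\cdot_\text{T}(V)_{loc}$$
sandwiches $\text{H}^\cdot_\text{T}(U)_{loc}$ between two vanishing terms.

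The hard part will be handling the isogeny $\text{T}_\alpha \times \text{T}'_\alpha \to \text{T}$, which in general has a finite but nontrivial kernel: the tools of section \ref{note} apply directly only when stabilisers are finite, but here they may be positive-dimensional proper subgroups of $\text{T}$, and the isogeny trick reduces to the finite-stabiliser case only after rational splitting. This is precisely why we work with rational cohomology throughout, as both the Künneth formula and the identification $\text{H}^\cdot(\text{BT}) \simeq \text{H}^\cdot(\text{BT}_\alpha) \otimes \text{H}^\cdot(\text{BT}'_\alpha)$ are only insensitive to such finite ambiguities after inverting the relevant orders. A secondary technicality is extending the classical stratification by stabiliser type to the derived setting, but this is controlled by the fact that classical truncation does not alter equivariant cohomology.
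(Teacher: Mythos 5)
Your overall strategy is sound, and for the vanishing step it is essentially the paper's step a) in a different guise: the paper covers $U=X\setminus\overline{X}$ by complements of fixed loci of cocharacters $\mathbf{G}_m\subseteq\text{T}$ and runs a \v{C}ech argument, whereas you stratify by stabiliser type and use the same Quillen-style K\"unneth reduction to the finite-stabiliser case. (Minor point: Corollary \ref{schtorscor} is stated for free actions; for finite stabilisers you want the \'etale-stabiliser version via Proposition \ref{dmprop}, which is fine.) The more interesting divergence is in how you deduce that $i^*$ localises to an isomorphism. The paper does \emph{not} get $i^*$ from the vanishing of $\text{H}^\cdot_\text{T}(U)_{loc}$ alone: it proves that $i^*i_*=\cdot\, e(N)$ is a unit, via the vector-bundle case b) and the specialisation map c) of section \ref{spec}, and then combines this with the Gysin sequence for $i_*$. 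Your route bypasses b) and c) entirely, which makes it genuinely more elementary for the statement of Theorem \ref{smoothabthm} as written — but note that the invertibility of $e(N)$ is exactly what is consumed later in the integration formula \ref{intformula}, so the paper's detour is not wasted.

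There is, however, a recurring gap in the exact sequences you invoke. The cofibre of $i^*$ is computed by the triangle $j_!j^*k\to k\to i_*i^*k$, so the third term in the relevant long exact sequence is $\text{H}^{\cdot+1}_\text{T}(X,j_!k_U)$, not $\text{H}^\cdot_\text{T}(U)$ nor literally its Poincar\'e dual ($p_*j_!\neq p_!j_!$ since $X$ is not proper). The torsionness of $\text{H}^\cdot_\text{T}(X,j_!k_U)$ does follow from that of $\text{H}^\cdot_\text{T}(U)$, but this requires an argument — precisely Proposition \ref{propclosedopen} and Corollary \ref{j!tors} of section \ref{note}, which show that $i^*j_*\mathcal{F}$ and hence $j_!\mathcal{F}$ have torsion cohomology whenever $j_*\mathcal{F}$ does. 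The same issue infects your inductive sandwich over strata: for a closed union of strata $Z\subseteq U$ with open complement $V$, the localisation sequence reads $\text{H}^\cdot_\text{T}(Z,i^!k)\to\text{H}^\cdot_\text{T}(U)\to\text{H}^\cdot_\text{T}(V)$ (or, in the other triangle, has $\text{H}^\cdot_\text{T}(U,j_!k_V)$ in the first slot), and since the strata of the stabiliser stratification need not be smooth you cannot silently replace $\text{H}^\cdot_\text{T}(Z,i^!k)$ by $\text{H}^\cdot_\text{T}(Z)$; again the paper's corollary that $i^!\mathcal{G}\to i^*\mathcal{G}$ is an isomorphism modulo torsion is the patch. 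With these substitutions your argument closes up, but as written the two three-term sequences you rely on are not the ones the formalism actually produces.
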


\subsubsection{} We carefully spell out how to get Atiyah and Bott's differential geometric proof to work in the world of algebraic geometry. There are three steps:
\begin{enumerate}[label = \alph*)]
\item show that $\text{H}^\cdot_\text{T}(X\setminus \overline{X})$ dies when localised,
\item prove the theorem in the special case that $X$ is a vector bundle over $\overline{X}$, then
\item use the exponential map to relate the cohomology of $X$ with the normal bundle $N=N_{\overline{X}/X}$ of $\overline{X}$ in $X$.
\end{enumerate}

\subsubsection{} It is worth seeing how the proof works in the simplest case
$$\mathbf{G}_m\ \curvearrowright\ V$$
of the scaling action on a vector space. In this case the equivariant cohomologies $\text{H}^\cdot_{\mathbf{G}_m}(0)$ and $\text{H}^\cdot_{\mathbf{G}_m}(V)$ are literally the same because $V$ equivariantly contracts onto its fixed locus, so $i^*$ is an isomorphism on the nose.

To understand $i_*$, recall from appendix \ref{sheaves} that it fits into the Mayer-Vietoris sequence
$$\cdots\ \longrightarrow\  \text{H}^{\cdot -2\text{rk}V}_{\mathbf{G}_m}(0)\ \stackrel{i_*}{\longrightarrow}\ \text{H}^\cdot_{\mathbf{G}_m}(V)\ \longrightarrow\ \text{H}^\cdot_{\mathbf{G}_m}(V\setminus 0)\ \longrightarrow\ \cdots$$
The first two terms are $k[t]$, but the last term is
$$\text{H}^\cdot_{\mathbf{G}_m}(V\setminus 0)\ =\ \text{H}^\cdot(\mathbf{P}V)\ =\ k[t]/t^{\text{rk}V+1}.$$
In other words, it is a torsion module and so $i_*$ is an isomorphism modulo torsion: it is multiplication by $t^{\text{rk}V}$.

\subsubsection{} \label{vectbund} More generally, if $V$ is a vector bundle over a scheme $\overline{X}$ of finite type, the cohomology $\text{H}^\cdot(\textbf{P}V)$ of its projectivisation is finite dimensional and hence a torsion module. Alternatively, using Corollary \ref{schtorscor} allows us to drop the finite type assumption. Thus the above proof works here also, showing b) for $\text{T}=\mathbf{G}_m$.

\subsubsection{} The composition 
$$\text{H}_{\mathbf{G}_m}^{\cdot-2\text{rk}V}(\overline{X})\ \stackrel{i_*}{\longrightarrow}\ \text{H}_{\mathbf{G}_m}^{\cdot}(V)\ \stackrel{i^*}{\longrightarrow}\ \text{H}_{\mathbf{G}_m}^{\cdot}(\overline{X}) $$
is multiplication by the Euler class $e(V)$ of the vector bundle $V/\mathbf{G}_m$ over $\overline{X}/\mathbf{G}_m$. In particular, these maps being isomorphisms modulo torsion means that
$$e(V)\ \in \ \text{H}^\cdot_{\mathbf{G}_m}(\overline{X})_{loc}\ :=\ \text{H}^\cdot_{\mathbf{G}_m}(\overline{X})\otimes_{\text{H}^\cdot(\text{B}\mathbf{G}_m)}\Frac \text{H}^\cdot(\text{B}\mathbf{G}_m)$$
is a unit. 

\subsubsection{Proof of Theorem \ref{smoothabthm}} \label{comparison} We first show a), that
$$\text{H}^\cdot_\text{T}(X\setminus \overline{X})$$
is a torsion $\text{H}^\cdot(\text{BT})$ module. When $\text{T}=\mathbf{G}_m$ this follows because the action is free so the quotient is a Deligne-Mumford stack, which has finite dimensional cohomology (see Corollary \ref{schtorscor}). Thus for general tori $X\setminus \overline{X}$ has an open cover by open subschemes whose equivariant cohomology is torsion: take the complement of the fixed point locus of characters $\mathbf{G}_m\subseteq \text{T}$. By assumption no point is stabilised by all of $\text{T}$, so this is a cover. Thus for a fine enough {\v C}ech cover, all terms in the {\v C}ech complex computing $\text{H}^\cdot_\text{T}(X\setminus \overline{X})$ will be torsion $\text{H}^\cdot(\text{BT})$ modules. This proves a).

Combining a) with the preceding discussion proves b).

For c), we note that the exponential map does not exist in algebraic geometry, but the \textit{specialisation} map on bivariant homology is the next best thing:
\begin{equation}\label{eqn6}
\text{H}^\cdot(\mathfrak{sp})\ :\  \text{H}^\cdot_{\mathbf{G}_m}(\overline{X}/X)\ \stackrel{\sim}{\longrightarrow}\text{H}^\cdot_{\mathbf{G}_m}(\overline{X}/N)
\end{equation}
In section \ref{spec} we construct this map, and show that it interlaces the Euler classes
\begin{center}
\begin{tikzcd}
\text{H}^\cdot_{\mathbf{G}_m}(\overline{X}/X)\arrow[r,"\cdot e(X)"]\arrow[d,"\text{H}^\cdot(\mathfrak{sp})"]\arrow[opacity=0]{d}[opacity=1,swap]{\sim}& \text{H}_{\mathbf{G}_m}^*(\overline{X})\arrow[d,equals]\\
\text{H}^\cdot_{\mathbf{G}_m}(\overline{X}/N)\arrow[r,"\cdot e(N)"]& \text{H}_{\mathbf{G}_m}^*(\overline{X})
\end{tikzcd}
\end{center}
In particular, because we have already proven abelian localisation for vector bundles we know that $\cdot e(N)$ is an isomorphism modulo torsion, therefore so is $\cdot e(X)=i^*i_*$. But now the Gysin sequence
$$\cdots\ \longrightarrow\  \text{H}^{\cdot -2d}_{\mathbf{G}_m}(\overline{X})\ \stackrel{i_*}{\longrightarrow}\ \text{H}^\cdot_{\mathbf{G}_m}(\overline{X})\ \longrightarrow\ \text{H}^\cdot_{\mathbf{G}_m}(X\setminus \overline{X})\ \longrightarrow\ \cdots$$
in combination with a) implies that $i_*$ too is an isomorphism modulo torsion, and therefore so is $i^*$. This completes the proof of Theorem \ref{smoothabthm}.

\subsubsection{} The specialisation map is an algebraic geometric shadow of the exponential map/tubular neighbourhood theorem on cohomology.

\subsubsection{} Why schemes, and not general Artin stacks? One difficulty is in proving a) when the ambient space $X$ already has stabiliser groups. We skirt this issue by proving a \textit{relative} version where an Artin stack $X$ admits a fibration whose fibres are schemes.

\subsection{} Atiyah and Bott's paper was based on previous work on \textit{integration formulae}, expressing integrals on $X$ in terms of integrals on the fixed locus. Indeed, as $e(X)$ is a unit in the ring $\text{H}^\cdot_{\text{T}}(\overline{X})_{loc}$, it follows from $i_*=i^*\cdot e(X)$ that
$$\text{id}\ =\ i_*\frac{i^*(-)}{e(X)}$$
as endomorphisms of $\text{H}^\cdot_{\text{T}}(X)_{loc}$. In particular, if $\alpha$ is an equivariant cohomology class on $X$ then pushing forward to a point gives
$$\int_X\alpha\ =\ \int_{\overline{X}} \frac{i^*\alpha}{e(X)}$$
as elements of $\text{H}^\cdot(\text{BT})_{loc}$, the localised equivariant cohomology of a point. Below we will see how to localise integrals of genuine (non-equivariant, non-localised) cohomology classes into fixed loci, in specific examples related to CoHA products.

\subsection{} A natural example of a smooth space with the action of a torus is a flag variety $\text{G}/\text{B}$, for $\text{G}$ a reductive group with maximal torus $\text{T}$. It admits the ``Bruhat'' stratification into affine spaces, labelled by elements of the Weyl group
$$\text{G}/\text{B}\ =\ \coprod_{w\in \text{W}}\text{B}w\text{B}/\text{B}\ \simeq\ \coprod_{w\in \text{W}}\mathbf{A}^{\ell(w)}$$
The action of $\text{T}$ has one fixed point per stratum, so according to abelian localisation we should have
$$\text{H}^\cdot_{\text{T}}(\text{G}/\text{B})_{loc}\ \simeq \ \bigoplus_{w\in\text{W}} \Frac \text{H}^\cdot(\text{BT}).$$
Picking a generic character $\mathbf{G}_m\subseteq\text{T}$, this is the same for the $\mathbf{G}_m$ action. We can verify this explicitly: it is well known that each stratum contributes an element of degree $2\ell(w)$ to  $\text{H}^\cdot(\text{G}/\text{B})$, and together these form a basis.  Thus the pullback map on equivariant cohomology $i^*:\text{H}^\cdot_{\mathbf{G}_m}(\text{G}/\text{B})\to \text{H}^\cdot_{\mathbf{G}_m}((\text{G}/\text{B}^\text{T})$ is the inclusion
$$i^*\ :\ \bigoplus_{w\in \text{W}} t^{\ell(w)}\mathbf{C}[t]\ \longrightarrow\ \bigoplus_{w\in W}\mathbf{C}[t]$$
which visibly becomes an isomorphism after localising. 

\subsubsection{}Other natural examples include partial flag varieties or smooth proper toric varieties. More generally for any smooth projective variety we can apply the Białynicki-Birula theorem, which says that the attracting sets of the fixed points give a equivariant stratification by affine spaces.

\subsection{Singular case} \label{ablocsing} The above generalises fairly straightforwardly to the singular case, working with bivariant homology. What requires more work is turning this into useful statements about honest cohomology groups.

To begin, let $X$ be a (dg) scheme of finite type acted upon by a split torus $\text{T}$, and consider $\text{T}$ equivariant closed embeddings
$$i\ :\ \overline{X}\ \longrightarrow\ X$$
of smooth schemes on whose complement the stabilisers have dimension less than $\text{dim}\text{T}$, e.g. the action is free. In addition, we require the mild assumption is that this map is \textit{homotopically of finite presentation}, which in particular means that its normal complex $\mathbf{N}$ is a perfect complex so that we can consider its total space and hope to compare it to $X$.

\begin{theorem} \label{singabthm}The pullback and pushforward maps on cohomology
$$  \text{H}^\cdot_{\text{T}}(\overline{X}/X)\ \stackrel{i_*}{\longrightarrow}\ \text{H}^{\cdot}_{\text{T}}(X)\textcolor{white}{aaaaaaa} \text{H}^\cdot_{\text{T}}(X)\ \stackrel{i^*}{\longrightarrow}\  \text{H}^\cdot_{\text{T}}(\overline{X})$$ 
become isomorphisms of $\text{H}^\cdot(\text{BT})$ modules after localising at the maximal ideal of $\text{H}^\cdot(\text{BT})$.
\end{theorem}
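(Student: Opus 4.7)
The plan is to mirror the proof of Atiyah–Bott localisation (Theorem \ref{smoothabthm}) given in section \ref{smoothab}, substituting bivariant homology for ordinary cohomology and replacing the vector bundle and tubular neighbourhood tools with their perfect complex counterparts developed in sections \ref{euler}--\ref{spec}. Concretely, I would run the same three steps: (a) show that the localised equivariant cohomology of the complement $X\setminus \overline{X}$ vanishes, (b) prove the theorem in the linear model where $X$ is replaced by the total space of the normal complex $\mathbf{N}$, and (c) transport (b) to the original $X$ via localised specialisation.

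For (a), the stabiliser hypothesis lets me choose finitely many one-parameter subgroups $\lambda_j:\mathbf{G}_m\to \text{T}$ so that the opens $X\setminus X^{\lambda_j}$ cover $X\setminus \overline{X}$ and on each of them $\mathbf{G}_m$ acts freely. On such an open, Corollary \ref{schtorscor} gives that the $\mathbf{G}_m$-equivariant cohomology is torsion over $\text{H}^\cdot(\text{B}\mathbf{G}_m)$; hence the $\text{T}$-equivariant cohomology dies after inverting the character corresponding to $\lambda_j$. A \v Cech spectral sequence then yields that $\text{H}^\cdot_\text{T}(X\setminus \overline{X})$ is annihilated by localising at the multiplicative set of nontrivial characters of $\text{T}$, which is the localisation at the maximal ideal of $\text{H}^\cdot(\text{BT})$.

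For (b), the key input is the Whitney sum formula for perfect complexes (section \ref{whitneysingular}). Presenting $\mathbf{N}$ $\text{T}$-equivariantly as a two-term complex $E_0\to E_1$ of vector bundles with nonzero $\text{T}$-weights on the complement, the analysis of section \ref{twoterm} identifies $e(\mathbf{N})$ with the ratio $e(E_0)/e(E_1)$ in localised cohomology; both factors are units modulo torsion by the vector bundle case (section \ref{vectbund}), so $e(\mathbf{N})$ is as well. For (c), Proposition \ref{locspecprop} supplies an isomorphism $\mathfrak{sp}^{loc}:\text{H}^\cdot_\text{T}(\overline{X}/X)_{loc}\stackrel{\sim}{\to} \text{H}^\cdot_\text{T}(\overline{X}/\mathbf{N})_{loc}$ compatible with Euler classes, so combining with (b) the composite
$$\cdot\, e(X)\ =\ i^*i_*\ :\ \text{H}^\cdot_\text{T}(\overline{X}/X)_{loc}\ \longrightarrow\ \text{H}^\cdot_\text{T}(\overline{X})_{loc}$$
is an isomorphism. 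The Gysin long exact sequence for the pair $(\overline{X},X\setminus \overline{X})$ together with (a) then forces $i_*$ to be an isomorphism modulo torsion, from which $i^*$ is forced as well.

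The main obstacle, I expect, is step (a): unlike in the smooth scheme case of Atiyah--Bott, where each free quotient is a Deligne--Mumford stack whose cohomology vanishes in high degrees automatically, here we must patch local torsion for different 1-parameter subgroups into a single torsion statement at the maximal ideal of $\text{H}^\cdot(\text{BT})$, while checking compatibility with the derived/singular structure on $X$. A secondary subtlety in (b) is that the identification $e(\mathbf{N})=e(E_0)/e(E_1)$ must be verified at the bivariant level and shown independent of the chosen two-term presentation, but this should follow from the naturality built into the formalism of section \ref{euler}.
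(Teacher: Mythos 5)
Your proposal follows the paper's proof essentially step for step: the same three-part Atiyah--Bott skeleton, with (a) carried out exactly as in section \ref{comparison}, and (c) being precisely the invocation of Proposition \ref{locspecprop} together with the Gysin sequence. The one place you deviate is step (b): the paper simply re-runs the vector-bundle argument on the total space of $\mathbf{N}$ (contraction onto the zero section plus torsion of the complement), whereas you route through the Whitney sum formula and the two-term analysis of section \ref{twoterm}; this works but quietly assumes a global $\text{T}$-equivariant presentation $\mathbf{N}\simeq(E_0\to E_1)$ by vector bundles, which is not guaranteed under the theorem's hypotheses ($i$ merely homotopically of finite presentation, so $\mathbf{N}$ may have amplitude beyond $[0,1]$, and global resolutions need not exist on Artin stacks) --- the paper's direct Gysin argument on $\text{Tot}(\mathbf{N})$ avoids this restriction.
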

\begin{proof}
Similar to Theorem \ref{smoothabthm}'s proof. The proof of a) and b) carry over verbatim. We only need to give an analogue of specialisation \ref{comparison}, which should compare $X$ and $\mathbf{N}$ in a neighbourhood of $\overline{X}$. There will certainly not be an isomorphism of bivariant homology groups as in (\ref{eqn6}), instead there is a \textit{localised specialisation} map, an isomorphism of \textit{localised} bivariant homologies
$$\text{H}^\cdot(\mathfrak{sp}_{loc})\ :\  \text{H}^\cdot_{\mathbf{G}_m}(\overline{X}/X)_{loc}\ \stackrel{\sim}{\longrightarrow}\text{H}^\cdot_{\mathbf{G}_m}(\overline{X}/\mathbf{N})_{loc}.$$ 
This interlaces the Euler classes as before
\begin{center}
\begin{tikzcd}
\text{H}^\cdot_{\mathbf{G}_m}(\overline{X}/X)_{loc}\arrow[r,"\cdot e(X)"]\arrow[d,"\text{H}^\cdot(\mathfrak{sp}_{loc})"]\arrow[opacity=0]{d}[opacity=1,swap]{\sim}& \text{H}_{\mathbf{G}_m}^*(\overline{X})_{loc}\arrow[d,equals]\\
\text{H}^\cdot_{\mathbf{G}_m}(\overline{X}/\mathbf{N})_{loc}\arrow[r,"\cdot e(\mathbf{N})"]& \text{H}_{\mathbf{G}_m}^*(\overline{X})_{loc}
\end{tikzcd}
\end{center}
and so we can repeat the rest of the proof of Theorem \ref{smoothabthm} verbatim.
\end{proof}

\subsection{Relative setting} The above theorems \ref{smoothabthm} and \ref{singabthm} hold in a limited way for (dg) Artin stacks: those which admit a map to some base $B$, with fibres (dg) schemes as above.

More precisely, we can consider a closed embedding between Artin stacks over some base $B$ whose structure maps to $B$ are representable:
$$i\ :\ \overline{X}\ \longrightarrow\ X.$$
Moreover, letting $\text{T}$ be a trivial group scheme over $B$ with fibre a split torus, assume that $\text{T}$ acts on the above stacks, trivially on $\overline{X}$, such that the closed embedding $i$ is equivariant. 

Finally, we would like to require that the action of $\text{T}$ on $X\setminus \overline{X}$ is free, so that its equivariant cohomology is a torsion. Since the notion of free actions on stacks can be subtle, we sidestep this issue and simply require that the torus action on the fibres of $X\setminus \overline{X}\to B$, which are schemes, are free. Then by section \ref{torsrel} it follows that the equivariant cohomology $\text{H}^\cdot_{\text{T}}(X\setminus \overline{X})$ is a torsion $\text{H}^\cdot(\text{BT})$ module. The proofs of theorems \ref{smoothabthm} and \ref{singabthm} then proceed exactly as above.

\subsection{} \label{singint} As before we can deduce an ``integration formula'' from the above
$$\text{id}\ =\ i_*\frac{i^*(-)}{e(X)}$$
where the right side is viewed as a sequence of maps
$$\text{H}^\cdot_{\text{T}}(\overline{X})_{loc}\ \stackrel{\sim}{\longrightarrow}\ \text{H}^\cdot_{\text{T}}(X)_{loc} \ \stackrel{\sim}{\longrightarrow}\ \text{H}^\cdot_{\text{T}}(\overline{X}/X)_{loc}\ \stackrel{\sim}{\longrightarrow}\ \text{H}^\cdot_{\text{T}}(X)_{loc}.$$
In practice this will not be as useful because it involves bivariant homology, which is harder to work with than ordinary cohomology.

If we want to get an integration formula for honest cohomology classes, we need to be in a situation where we can integrate cohomology classes over $\overline{X}$ and $X$. We can integrate over a map if it is quasismooth, so admits a fundamental class, and proper. Thus let us consider
\begin{equation}\label{eqn5}
\begin{tikzcd}
&\overline{X}\arrow[d,"i"]\arrow[rdd,bend right = -30, "\overline{f}"]&\\
&X\arrow[rd,"f"]&\\
&& Y
\end{tikzcd}
\end{equation}
where the rightwards maps are both quasismooth and proper. Assume moreover that this diagram is $\text{T}$ equivariant for the trivial action on $Y$. It follows that the normal complex $\mathbf{N}_i$ of the closed embedding $i$ is concentrated in degrees $0$ and $1$, so defines an actual (non-bivariant) cohomology class, which we abusively call
$$e(\mathbf{N}_i)\ \in\ \text{H}^\cdot_{\text{T}}(\overline{X})_{loc}.$$
It is by definition the product of the bivariant Euler class with the fundamental class of $\mathbf{N}_i\to \overline{X}$ which exists because the map is quasismooth, precisely because .

\begin{theorem} \label{intformula} For any $\alpha\in \text{H}^\cdot_{\text{T}}(X)$, we have the \textit{integration formula}
\begin{equation}\label{eqn10}
f_*\alpha\ =\ \overline{f}_*\frac{i^*\alpha}{e(\mathbf{N}_i)}.
\end{equation} 
\end{theorem}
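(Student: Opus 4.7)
The strategy is to reduce the integration formula to a single identity in $\text{H}^0_T(X)_{loc}$ and then prove that identity via abelian localisation together with the self-intersection formula.

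First, I would rewrite both sides in terms of fundamental classes. The cohomological pushforward is $f_*\alpha = f_*(\alpha\cdot 1_{X/Y})$ and similarly for $\overline{f}_*$. Using $\overline{f}_* = f_* \circ i_*$ on bivariant homology together with $1_{\overline{X}/Y} = 1_{\overline{X}/X}\cdot 1_{X/Y}$ (the product property of Khan's fundamental classes, applicable since $\overline{f}$, $f$ and $i$ are all quasismooth), the right-hand side becomes
\[
\overline{f}_*\!\Bigl(\frac{i^*\alpha}{e(\mathbf{N}_i)}\Bigr) \;=\; f_*\,i_*\!\Bigl(\frac{i^*\alpha}{e(\mathbf{N}_i)}\cdot 1_{\overline{X}/X}\cdot 1_{X/Y}\Bigr).
\]
The bivariant projection formula (axioms $A_{12}$ and $A_{13}$) pulls $1_{X/Y}$ and $\alpha$ out of $i_*$, collapsing the right-hand side to $f_*\bigl(\alpha\cdot i_*(1/e(\mathbf{N}_i))\bigr)$, where $i_*$ now denotes the cohomological pushforward. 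Comparing with $f_*\alpha$, the theorem reduces to the scalar identity
\[
i_*\!\Bigl(\frac{1}{e(\mathbf{N}_i)}\Bigr) \;=\; 1_X \quad\text{in } \text{H}^0_T(X)_{loc}.
\]

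Next, by the relative form of Theorem \ref{singabthm}, the pullback $i^*\colon \text{H}^0_T(X)_{loc}\to \text{H}^0_T(\overline{X})_{loc}$ is an isomorphism. Since $i^*1_X = 1_{\overline{X}}$, it is enough to verify $i^*i_*(1/e(\mathbf{N}_i)) = 1_{\overline{X}}$, which is immediate from the \emph{self-intersection formula} $i^*i_*(\beta) = e(\mathbf{N}_i)\cdot\beta$ on $\text{H}^\cdot_T(\overline{X})_{loc}$, applied to $\beta = 1/e(\mathbf{N}_i)$.

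The hard part will be justifying the self-intersection formula in this quasismooth-but-singular setting. In the classical smooth case it is standard; in general I would establish it via deformation to the normal cone: the localised specialisation of Proposition \ref{locspecprop} intertwines $i^*i_*$ on $X$ with the analogous operation for the zero section $i_0\colon\overline{X}\hookrightarrow\mathbf{N}_i$, reducing the identity on $X$ to the corresponding one on the linear model $\mathbf{N}_i$. There, since $\mathbf{N}_i$ is a perfect complex concentrated in degrees $[0,1]$, the statement $i_0^*i_{0*}(\beta) = e(\mathbf{N}_i)\cdot\beta$ follows from the Gysin triangle combined with the Whitney sum formula of section \ref{whitneysingular} and the explicit two-term formula $e(E) = e(E_0)/e(E_1)$ of section \ref{twoterm}, which together encode precisely how the bivariant Euler class and fundamental class of $\mathbf{N}_i$ interact.
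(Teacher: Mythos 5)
Your reduction rests on the assumption that $i$ is quasismooth, and that is exactly what fails in the setting of this theorem. The hypotheses only give quasismoothness of $f$ and $\overline{f}$ over $Y$; from the triangle of tangent complexes one then gets that $\mathbf{N}_i$ is concentrated in degrees $0$ \emph{and} $1$, whereas a quasismooth closed embedding has normal complex a vector bundle in degree $0$. So Khan's fundamental class $1_{\overline{X}/X}$ does not exist, and with it collapse the three ingredients of your first step: the factorisation $1_{\overline{X}/Y}=1_{\overline{X}/X}\cdot 1_{X/Y}$, the cohomological pushforward $i_*:\text{H}^\cdot_{\text{T}}(\overline{X})\to\text{H}^\cdot_{\text{T}}(X)$, and hence the scalar identity $i_*(1/e(\mathbf{N}_i))=1_X$ to which you reduce. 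Likewise the self-intersection formula in the form $i^*i_*\beta=e(\mathbf{N}_i)\cdot\beta$ presupposes that same pushforward. (In the smooth case all of this is fine and your argument is the classical Atiyah--Bott one.)

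The paper's proof is organised precisely to avoid ever invoking a fundamental class for $i$. It works with the \emph{bivariant} pushforward $i_*:\text{H}^\cdot_{\text{T}}(\overline{X}/X)\to\text{H}^\cdot_{\text{T}}(X)$, which is an isomorphism after localisation by Theorem \ref{singabthm}; the composite $i^*i_*$ is right multiplication by the bivariant class $e(X)$, and the projection formula moves $\cdot\,1_{X/Y}$ and $f_*$ past $i_*$. The remaining step — converting the bivariant $e(X)$ into the honest localised class $e(\mathbf{N}_i)$ — is done with the equivariant specialisation map of Proposition \ref{locspecprop} together with its relative compatibility with fundamental classes from section \ref{eqvtlocrelative}, namely $\cdot\,1_{X/Y}=(\cdot\,1_{\mathbf{N}_i/Y})\circ\mathfrak{sp}_i^{loc}$; this trades the nonexistent class of $i$ for the fundamental class of the quasismooth projection $\mathbf{N}_i\to\overline{X}$. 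Your final paragraph correctly identifies this toolkit (deformation to the normal cone, Whitney sum, the two-term formula $e(E)=e(E_0)/e(E_1)$), but it enters the argument at the level of the main reduction, not as an afterthought for a self-intersection lemma: without it you cannot even relate $\overline{f}_*$ of a class on $\overline{X}$ to $f_*$ of a class on $X$.
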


\begin{proof}To avoid the possibility of confusion, we spell out (\ref{eqn10}) in a commutative diagram:
\begin{equation}\label{eqn11}
\begin{tikzcd}
\text{H}^\cdot_{\text{T}}(\overline{X})_{loc}\arrow[r,"1/e(\mathbf{N}_i)"]&\text{H}^{\cdot -2c}_{\text{T}}(\overline{X})_{loc}\arrow[r,"\overline{f}_*"]& \text{H}^{\cdot -2d}_{\text{T}}(Y)_{loc}\\  
\text{H}^\cdot_{\text{T}}(\overline{X})\arrow[u,hook]&\text{H}^\cdot_{\text{T}}(X)\arrow[r,"f_*"]\arrow[l,swap,"i^*"]&\text{H}^{\cdot -2d}_{\text{T}}(Y)\arrow[u,hook]
\end{tikzcd}
\end{equation}
The vertical maps are injections because the action on $\overline{X}$ and $Y$ is trivial. To build diagram (\ref{eqn11}), we begin with the commuting diagram
\begin{center}
\begin{tikzcd}
&\text{H}^\cdot_{\text{T}}(\overline{X}/X)_{loc}\arrow[r,"\cdot 1_{X/Y}"]\arrow[opacity=0]{ld}[opacity=1]{\sim} \arrow[ld, "\cdot e(X)",swap]\arrow[opacity=0]{d}[opacity=1]{\sim}\arrow[d,swap,"i_*"]&\text{H}^{\cdot -2d}_{\text{T}}(\overline{X}/Y)_{loc}\arrow[d,"i_*",swap]\arrow[rd,"\overline{f}_*"]& \\

\text{H}^\cdot_{\text{T}}(\overline{X})_{loc}&\text{H}^\cdot_{\text{T}}(X)_{loc}\arrow[r,"\cdot 1_{X/Y}"]\arrow[opacity=0]{l}[opacity=1]{\sim}\arrow[l,swap,"i^*"]&\text{H}^\cdot_{\text{T}}(X/Y)_{loc}\arrow[r,"f_*"]& \text{H}^{\cdot -2d}_{\text{T}}(Y)_{loc}\\

\text{H}^\cdot_{\text{T}}(\overline{X})\arrow[u]&\text{H}^\cdot_{\text{T}}(X)\arrow[r,"\cdot 1_{X/Y}"]\arrow[l,swap,"i^*"]\arrow[u]&\text{H}^\cdot_{\text{T}}(X/Y)\arrow[r,"f_*"]\arrow[u]& \text{H}^{\cdot -2d}_{\text{T}}(Y)\arrow[u]
\end{tikzcd}
\end{center}
The bottom row of vertical arrows are the inclusions of $\text{H}^\cdot(\text{B}\text{T})$ modules into their localisations. This is almost (\ref{eqn11}), except that it is in terms of the bivariant Euler class $e(X)$ rather than the ordinary localised cohomology class $e(\mathbf{N}_i)$ defined above. So to finish we need a comparison 

\begin{center}
\begin{tikzcd}
\text{H}^\cdot_{\text{T}}(\overline{X})_{loc}\arrow[d,equals]& \text{H}^{\cdot }_{\text{T}}(\overline{X})_{loc}\arrow[opacity=0]{d}[opacity=1,swap]{\sim}\arrow[l,"\cdot e(\mathbf{N}_i)",swap]\arrow[r,"\cdot 1_{\overline{X}/Y}"]& \text{H}^{\cdot -2d}_{\text{T}}(\overline{X}/Y)_{loc}\arrow[d,equals] \\

\text{H}^\cdot_{\text{T}}(\overline{X})_{loc}\arrow[d,equals]& \text{H}^{\cdot }_{\text{T}}(\overline{X}/\mathbf{N}_i)_{loc}\arrow[d,"\mathfrak{sp}_i^{loc}"]\arrow[opacity=0]{d}[opacity=1,swap]{\sim}\arrow[l,"\cdot e(\mathbf{N}_i)",swap]\arrow[r,"\cdot 1_{\mathbf{N}_i/Y}"] \arrow[u, "\cdot 1_{\mathbf{N}_i/\overline{X}}",swap] \arrow[opacity=0]{u}[opacity=1]{\sim} & \text{H}^{\cdot -2d}_{\text{T}}(\overline{X}/Y)_{loc}\arrow[d,equals] \\

\text{H}^\cdot_{\text{T}}(\overline{X})_{loc}& \text{H}^{\cdot }_{\text{T}}(\overline{X}/X)_{loc}\arrow[r,"\cdot 1_{X/Y}"]\arrow[l,swap,"\cdot e(X)"]& \text{H}^{\cdot -2d}_{\text{T}}(\overline{X}/Y)_{loc}
\end{tikzcd}
\end{center}
The bottom row commutes by properties of the equivariant specialisation map (see section \ref{eqvtloc}), the top left square is the definition of the localised cohomology class $e(\mathbf{N}_i)\in \text{H}^\cdot_{\text{T}}(\overline{X})_{loc}$, and the top right square commutes because the fundamental class of a composition of two quasismooth maps is the product of their fundamental classes.
\end{proof}

\subsection{CoHA products} \label{cohaproducts} We can use the integration formula to compute CoHA-like products on \textit{non-equivariant} cohomology, like so. Take a $\text{T}$ equivariant diagram of spaces and the action on all the spaces except $X$ being trivial:
\begin{center}
\begin{tikzcd}
&\overline{X}\arrow[d,"i"]\arrow[rdd,bend right = -30, "\overline{f}"]\arrow[ldd,bend right = 30, "\overline{g}",swap]&&\\
&X\arrow[rd,"f"]\arrow[ld,"g",swap]&&\\
Z&& Y
\end{tikzcd}
\end{center}
$i$ being a closed embedding and $f$ (therefore also $\overline{f}$) proper. Assume that $f$ and $\overline{f}$ are quasismooth, the simplest instance of which being if all the spaces are smooth. The right side of this diagram is then as in section \ref{singint}, and 
\begin{prop} \label{cohaproposition}As maps from $\text{H}^\cdot(Z)$ to $\text{H}^\cdot(Y)$,
\begin{equation}\label{cohaprop}
f_*g^*\ =\ \overline{f}_*\frac{\overline{g}^*}{e(\mathbf{N}_i)}.
\end{equation}
\end{prop}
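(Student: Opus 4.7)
The strategy is to deduce this as an essentially formal consequence of the integration formula of Theorem \ref{intformula}, applied to equivariant classes pulled back along $g$.

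Given $\beta \in \text{H}^\cdot(Z)$, I view $\beta$ as an equivariant class via the trivial $\text{T}$-action on $Z$, and then pull back along the $\text{T}$-equivariant map $g$ to obtain $\alpha = g^*\beta \in \text{H}^\cdot_\text{T}(X)$. The right-hand triangle $\overline{X} \xrightarrow{i} X \xrightarrow{f} Y$ satisfies the hypotheses of Theorem \ref{intformula}: $f$ and $\overline{f}$ are proper and quasismooth, the diagram is $\text{T}$-equivariant with trivial action on $\overline{X}$ and $Y$, and $\mathbf{N}_i$ is concentrated in degrees $0$ and $1$ by the distinguished triangle of tangent complexes for $\overline{f} = f\circ i$, so that $e(\mathbf{N}_i) \in \text{H}^\cdot_\text{T}(\overline{X})_{loc}$ is an honest localised equivariant cohomology class as in the statement of Theorem~\ref{intformula}.

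Applying Theorem \ref{intformula} with $\alpha = g^*\beta$ and invoking the functoriality $i^* g^* = (g\circ i)^* = \overline{g}^*$ directly yields
$$f_* g^* \beta \;=\; \overline{f}_* \frac{i^* g^* \beta}{e(\mathbf{N}_i)} \;=\; \overline{f}_* \frac{\overline{g}^* \beta}{e(\mathbf{N}_i)}$$
as an equality in $\text{H}^\cdot_\text{T}(Y)_{loc}$. Since the $\text{T}$-action on $Y$ is trivial, the inclusion $\text{H}^\cdot(Y) \hookrightarrow \text{H}^\cdot_\text{T}(Y)_{loc}$ identifies the non-equivariant class $f_* g^* \beta$ on the left with its equivariant incarnation; a priori localised equivariant, the right-hand side therefore in fact lies in $\text{H}^\cdot(Y)$, and the identity descends to the claimed equality of non-equivariant maps $\text{H}^\cdot(Z) \to \text{H}^\cdot(Y)$.

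There is no serious obstacle: the substantive content is already packaged into Theorem \ref{intformula}. The only points needing attention are that $g^*\beta$ is automatically $\text{T}$-equivariant (so that the integration formula can be invoked with $\alpha = g^*\beta$) and that the a priori localised equivariant expression on the right descends to a genuine non-equivariant class, which is automatic once it is shown to agree with the non-equivariant left-hand side.
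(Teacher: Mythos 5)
Your proposal is correct and is essentially the paper's own proof: apply the integration formula of Theorem \ref{intformula} to $\alpha = g^*\beta$ and descend using the triviality of the $\text{T}$-action on $Z$ and $Y$. The one step you compress — that the equivariant $f_*g^*\beta$ really is the image of the non-equivariant $f_*g^*\beta$ under $\text{H}^\cdot(Y)\hookrightarrow \text{H}^\cdot_{\text{T}}(Y)_{loc}$ — is where the paper spends its effort, verifying it via base change along the pullback square formed by $Y\to Y/\text{T}$ and functoriality of localisation, so it is worth at least citing that compatibility explicitly.
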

Again, to avoid confusion we should spell out where (\ref{cohaprop}) is taking place. Because the actions on $Z$ and $Y$ are trivial, there is a map
\begin{equation}\label{mapsinclusion}
\text{Maps}(\text{H}^\cdot_{\text{T}}(Z)_{loc},\text{H}^\cdot_{\text{T}}(Y)_{loc})\ \longrightarrow\ \text{Maps}(\text{H}^\cdot(Z),\text{H}^{\cdot}(Y))
\end{equation}
given by 
\begin{equation}\label{mapsinclusiondiag}
\begin{tikzcd}
\text{H}^\cdot_{\text{T}}(Z)_{loc}\arrow[r]& \text{H}^\cdot_{\text{T}}(Y)_{loc}\arrow[d]\\
\text{H}^\cdot(Z)\arrow[r,dashed]\arrow[u] & \text{H}^\cdot(Y)
\end{tikzcd}
\end{equation}
The upwards arrow corresponds to the map $k\to k(t)$ sending $1\mapsto t^0$, and the downwards arrow to the map $k(t)\to k$ picking out the $t^0$ coefficient. The above proposition then says that the image of $\overline{f}_*\frac{\overline{g}^*}{e(\mathbf{N}_i)}$  under the map (\ref{mapsinclusion}) is $f_*g^*$.

\begin{proof} The equation (\ref{cohaprop}) is true as maps $\text{H}^\cdot_{\text{T}}(Z)_{loc}\to \text{H}^\cdot_{\text{T}}(Y)_{loc}$ by the integration formula. Thus we are left to show that the image under (\ref{mapsinclusion}) of $f_*g^*$, viewed as a map of localised equivariant cohomology groups, is $f_*g^*$. To do this we expand (\ref{mapsinclusiondiag}) in this case 
\begin{equation}\label{diag6}
\begin{tikzcd}
\text{H}^\cdot_{\text{T}}(Z)_{loc}\arrow[r,"g^*"]& \text{H}^\cdot_{\text{T}}(X)_{loc}\arrow[r,"f_*"]& \text{H}^{\cdot-2d}_{\text{T}}(Y)_{loc}\arrow[dd,bend left = 40, dashed, shift left = 2]\\
\text{H}^\cdot_{\text{T}}(Z)\arrow[r,"g^*"]\arrow[d,twoheadrightarrow]\arrow[u]& \text{H}^\cdot_{\text{T}}(X)\arrow[r,"f_*"]\arrow[d]\arrow[u]& \text{H}^{\cdot-2d}_{\text{T}}(Y)\arrow[d,twoheadrightarrow]\arrow[u]\\
\text{H}^\cdot(Z)\arrow[r,"g^*"]\arrow[u,bend left = 40, dashed, shift left = 2] & \text{H}^\cdot(X)\arrow[r,"f_*"]& \text{H}^{\cdot-2d}(Y)
\end{tikzcd}
\end{equation}
and show that it commutes. The bottom two squares commute because they come from the diagram of spaces
\begin{center}
\begin{tikzcd}
 Z\arrow[d] & X\arrow[l,"g",swap]\arrow[d]\arrow[r,"f"] & Y\arrow[d]\\
 Z/\text{T} & X/\text{T} \arrow[l,"g",swap]\arrow[r,"f"]  & Y/\text{T}
\end{tikzcd}
\end{center}
and the right square is a fibre product. The top two squares commute because localisation of modules is functorial. The upwards dotted arrow corresponds to the map $k\to k[t]$ sending $1\mapsto 1$, and the downwards to $k(t)\to k$ picking out the $t^0$ coefficient. Thus (\ref{diag6}) is indeed of the form (\ref{mapsinclusiondiag}) and commutes, proving the proposition.
\end{proof}

\subsection{Functorial integration formula} \label{funct} For completeness we include an account of the functorial version of abelian localisation. We will only discuss the smooth case, but the singular case can be dealt with as above.

Consider a commutative diagram of smooth schemes
\begin{center}
\begin{tikzcd}
\overline{X}\arrow[r,"\overline{\pi}"]\arrow[d,"\overline{\iota}"]& \overline{Y}\arrow[d,"i"]\\
X\arrow[r,"\pi"]&Y
\end{tikzcd}
\end{center}
where the vertical arrows are $\text{T}$ equivariant closed embeddings on whose complement the action is free, and the horizontal arrows are proper. The we have

\begin{prop}\label{functprop} In the above setting, 
$$\frac{i^*\pi_*(-)}{e(\mathbf{N}_i)}\ =\ \overline{\pi}_*\left(\frac{\overline{\iota}^*(-)}{e(\mathbf{N}_{\overline{\iota}})}\right)$$
as maps from $\text{H}^\cdot_{\text{T}}(X)_{loc}$ to $\text{H}^\cdot_{\text{T}}(\overline{Y})_{loc}$. 
\end{prop}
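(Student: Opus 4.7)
The plan is to reduce the functorial statement to a single application of the integration formula (Theorem \ref{intformula}), combined with the self-intersection identity $i^*i_* = e(\mathbf{N}_i)\cdot(-)$.

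First I would apply Theorem \ref{intformula} to the configuration formed by the closed embedding $\overline{\iota}:\overline{X}\hookrightarrow X$ together with the proper quasismooth map $\pi:X\to Y$; the composite $\pi\circ\overline{\iota}$ equals $i\circ\overline{\pi}$. All hypotheses are met: smoothness of the four spaces makes every map in sight quasismooth, $\overline{\iota}$ is a $T$-equivariant closed embedding on whose complement the action is free, and $i\circ\overline{\pi}$ is proper since $i$ and $\overline{\pi}$ are. For any $\alpha\in\text{H}^\cdot_{\text{T}}(X)_{loc}$ this yields
$$\pi_*\alpha \;=\; (i\overline{\pi})_*\frac{\overline{\iota}^*\alpha}{e(\mathbf{N}_{\overline{\iota}})} \;=\; i_*\,\overline{\pi}_*\frac{\overline{\iota}^*\alpha}{e(\mathbf{N}_{\overline{\iota}})},$$
the second equality being functoriality of proper pushforward.

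Next I would apply $i^*$ to both sides and invoke the self-intersection formula $i^*i_* = e(\mathbf{N}_i)\cdot(-)$, which was recorded in section \ref{ablocsing} and follows in the smooth setting from the vector-bundle computation of section \ref{vectbund}. This gives
$$i^*\pi_*\alpha \;=\; e(\mathbf{N}_i)\cdot\overline{\pi}_*\frac{\overline{\iota}^*\alpha}{e(\mathbf{N}_{\overline{\iota}})}.$$
Since the action of $\text{T}$ is free on the complement of $\overline{Y}$ in $Y$, abelian localisation (Theorem \ref{smoothabthm}) makes $e(\mathbf{N}_i)$ a unit in $\text{H}^\cdot_{\text{T}}(\overline{Y})_{loc}$; dividing through produces the claimed equality of maps $\text{H}^\cdot_{\text{T}}(X)_{loc}\to\text{H}^\cdot_{\text{T}}(\overline{Y})_{loc}$.

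The only point requiring any care is verifying the hypotheses of Theorem \ref{intformula} for the slanted map $i\circ\overline{\pi}:\overline{X}\to Y$, namely properness and quasismoothness; both are immediate from the properness of $i,\overline{\pi}$ and the smoothness of all four spaces. Beyond this bookkeeping the argument is a purely formal manipulation of the two ingredients already in place, so I anticipate no further obstacle.
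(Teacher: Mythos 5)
Your argument is correct in substance but follows a genuinely different route from the paper. The paper's proof sketch works directly at the level of bivariant homology: it spells out both sides as compositions through groups of the form $\text{H}^\cdot_{\text{T}}(\overline{X}/X)_{loc}$, trades each division by a bivariant Euler class for a pushforward via $\cdot e(X)=i^*i_*$, and then observes that the resulting diagram commutes by the bivariant axioms (functoriality of proper pushforward and its compatibility with the product). You instead black-box the integration formula, Theorem \ref{intformula}, applying it to the \emph{other} closed embedding $\overline{\iota}:\overline{X}\to X$ over the base $Y$, and then peel off $i_*$ using the self-intersection identity $i^*i_*=e(\mathbf{N}_i)\cdot(-)$ together with invertibility of $e(\mathbf{N}_i)$ after localisation. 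This is cleaner to read and reuses the hard work already done; the paper's version is more self-contained and makes visible that the statement is really a formal consequence of the bivariant formalism plus multiplicativity of fundamental classes.

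One point needs shoring up: you assert that ``all hypotheses are met'' when invoking Theorem \ref{intformula}, but the theorem as stated requires the $\text{T}$-action on the target of the proper quasismooth maps to be \emph{trivial}, and in your application the target is $Y$, on which the action is nontrivial (only $\overline{Y}\subseteq Y$ carries the trivial action). This is not fatal — an inspection of the proof of Theorem \ref{intformula} shows the triviality on the base is used only to identify $\text{H}^\cdot_{\text{T}}(Y)\hookrightarrow\text{H}^\cdot_{\text{T}}(Y)_{loc}$ and thereby descend to non-localised classes, while the core identity in $\text{H}^\cdot_{\text{T}}(Y)_{loc}$ holds without it — but since the conclusion you need is precisely an identity in localised equivariant cohomology, you should say this explicitly rather than claim the hypotheses hold as stated. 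With that remark added, the rest of the argument (the factorisation $(\pi\overline{\iota})_*=(i\overline{\pi})_*=i_*\overline{\pi}_*$ via multiplicativity of fundamental classes, and the invertibility of $e(\mathbf{N}_i)$ from Theorem \ref{smoothabthm}) goes through.
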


\begin{proof}[Proof sketch]
For clarity we spell out both sides of the equation in a diagram
\begin{center}
\small{
\begin{tikzcd}
\text{H}^\cdot_{\text{T}}(X)_{loc}\arrow[d,equals] \arrow[r,"1_{X/Y}"] &  \text{H}^\cdot_{\text{T}}(X/Y)_{loc}\arrow[r,"\pi_*"]& \text{H}^\cdot_{\text{T}}(Y)_{loc}\arrow[r,"i^*"] & \text{H}^\cdot_{\text{T}}(\overline{Y})_{loc} & \text{H}^\cdot_{\text{T}}(\overline{Y}/Y)_{loc}\arrow[d,equals]\arrow[l,"\cdot e(Y)",swap]\arrow[opacity=0]{l}[opacity=1]{\sim}\\

\text{H}^\cdot_{\text{T}}(X)_{loc}\arrow[r,"\overline{\iota}^*"]& \text{H}^\cdot_{\text{T}}(\overline{X})_{loc} & \text{H}^\cdot_{\text{T}}(\overline{X}/X)_{loc}\arrow[l,"\cdot e(X)",swap]\arrow[opacity=0]{l}[opacity=1]{\sim} \arrow[r,"1_{X/Y}"] & \text{H}^\cdot_{\text{T}}(\overline{X}/Y)_{loc}\arrow[r,"\overline{\pi}_*"] & \text{H}^\cdot_{\text{T}}(\overline{Y}/Y)_{loc}
\end{tikzcd}
}
\end{center}
Because $\cdot e(X)=i^*i_*$, this commutes if the following does 
\begin{center}
\begin{tikzcd}
\text{H}^\cdot_{\text{T}}(X)\arrow[d,equals] \arrow[r,"1_{X/Y}"] &  \text{H}^\cdot_{\text{T}}(X/Y)\arrow[r,"\pi_*"]& \text{H}^\cdot_{\text{T}}(Y) &\text{H}^\cdot_{\text{T}}(\overline{Y}/Y)\arrow[d,equals]\arrow[l,"i_*",swap]\\
\text{H}^\cdot_{\text{T}}(X)& \text{H}^\cdot_{\text{T}}(\overline{X}/X)\arrow[l,"\overline{\iota}_*",swap] \arrow[r,"1_{X/Y}"] & \text{H}^\cdot_{\text{T}}(\overline{X}/Y)\arrow[lu,"\overline{\iota}_*",swap]\arrow[r,"\overline{\pi}_*"] & \text{H}^\cdot_{\text{T}}(\overline{Y}/Y)
\end{tikzcd}
\end{center}
In the above the right side trivially commutes. The left side commutes because pushforward and product in bivariant cohomology commute, axiom $A_{12}$ of bivariant homology. To go from the first diagram to the statement of the proposition, repeat the arguments in the proof of the integration theorem.
\end{proof}

\subsubsection{} The functorial integration formula is related to a folk conjecture connecting the Grothendieck Riemann Roch formula and abelian localisation. See \cite{Liu} or \cite{At} for more details. Briefly, the idea is that if $\overline{X},\overline{Y}$ are smooth schemes and
$$\overline{\pi}\ :\ \overline{X}\ \longrightarrow\ \overline{Y}$$
is a proper map between them, this should fit into the formalism of section \ref{funct}, where
$$\pi\ =\ \mathcal{L}\overline{\pi} \ :\ \mathcal{L}\overline{X}\ \longrightarrow\ \mathcal{L}\overline{Y}$$
is the associated map on free loop spaces. These admit a $\mathbf{G}_m$ action given by loop rotation, for which the fixed loci are the space of constant loops $\overline{X}$ and $\overline{Y}$. Then Proposition \ref{functprop} is meant to be the Grothendieck Riemann Roch formula, with the Euler class of $\overline{X}\to \mathcal{L}\overline{X}$ (which is a priori not defined because the spaces are infinite dimensional) replaced with the Todd class of $\overline{X}$.

\section{Sheaves on classifying spaces} \label{shclass} 

\subsection{} This section is a brief note describing the category of sheaves on quotient stacks. It is logically independent of the rest of this paper and can be skipped on a first reading. We focus on classifying spaces and try to write things down very concretely, in order to give as explicit as possible a picture of the objects we are dealing with before localising.

\subsection{} Let $\text{G}$ be any connected algebraic group. We recall from section $7$ of \cite{DGai}

\begin{prop}There is an equivalence of stable $\infty$-categories
\begin{equation}\label{descent} 
\text{Sh}(\text{BG})\ \simeq\ B\text{-mod}
\end{equation}
where the right side is the category of dg modules over the dg algebra $\text{B}=\text{H}^\cdot(\text{G})^\vee$.
\end{prop}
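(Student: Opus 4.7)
The plan is to prove the equivalence by descent along the atlas $p:\text{pt}\to \text{BG}$, using $\infty$-categorical monadicity (Barr--Beck--Lurie). Since $p$ is a smooth surjection and indeed the universal $\text{G}$-torsor, the pullback $p^*$ is conservative and preserves the relevant totalizations, so the adjunction $p^*\dashv p_*$ on $\text{Sh}(\text{BG})$ is comonadic. Consequently $\text{Sh}(\text{BG})$ is equivalent to the category of comodules in $\text{Sh}(\text{pt})=\text{Vect}$ over the comonad $T=p^*p_*$.

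Next I would identify $T$ explicitly. The {\v C}ech nerve of $p$ is the bar construction $\text{G}^\bullet$; in particular we have the pullback square
\begin{center}
\begin{tikzcd}
\text{G}\arrow[r,"\pi_1"]\arrow[d,"\pi_2",swap]& \text{pt}\arrow[d,"p"]\\
\text{pt}\arrow[r,"p"]& \text{BG}
\end{tikzcd}
\end{center}
Smooth base change gives a natural isomorphism $T(V)=p^*p_*V\simeq \pi_{1*}\pi_2^*V\simeq \text{H}^\cdot(\text{G})\otimes V$, and the comultiplication of $T$ is induced by the group law $\text{G}\times \text{G}\to \text{G}$ (equivalently by the face maps of the bar resolution). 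Thus $C:=\text{H}^\cdot(\text{G})$ is a dg coalgebra (in fact a Hopf algebra, but the algebra structure is irrelevant here) and the $T$-comodule structure on $V$ is exactly the structure of a dg $C$-comodule.

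Finally I would convert comodules into modules by linear duality. Because $\text{G}$ is a connected algebraic group the graded pieces of $C=\text{H}^\cdot(\text{G})$ are finite-dimensional, so the graded dual $B=C^\vee=\text{H}^\cdot(\text{G})^\vee$ is a dg algebra and the natural pairing identifies $C$-comodule structures on $V$ with $B$-module structures on $V$; this assembles into an equivalence of stable $\infty$-categories $\text{Comod}_C\simeq B\text{-mod}$. Composing the three equivalences yields $\text{Sh}(\text{BG})\simeq B\text{-mod}$.

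The main obstacle is the first step: writing down the monadicity statement in the specific model of $\text{Sh}(-)$ used in the appendix and verifying that smooth base change applies to the atlas $p:\text{pt}\to\text{BG}$. Once that is in place, the identification of the comonad via the {\v C}ech nerve and the passage to modules over the dual dg algebra are routine. The finite-dimensionality hypothesis on the graded pieces of $\text{H}^\cdot(\text{G})$ also needs a (brief) justification for the specific class of groups the paper cares about (tori, $\text{GL}_n$, parabolics), where it is clear from the standard description of $\text{H}^\cdot(\text{G})$ as an exterior algebra on odd-degree generators.
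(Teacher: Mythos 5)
Your argument is correct in substance but runs the descent in the dual direction from the paper. The paper uses the adjunction $(\pi_!,\pi^!)$ for $\pi:\text{pt}\to\text{BG}$: since $\pi^!$ is conservative and both functors are continuous, Barr--Beck--Lurie identifies $\text{Sh}(\text{BG})$ directly with modules over the monad $\pi^!\pi_!$, and smooth base change on the same {\v C}ech square computes $\pi^!\pi_!k=\overline{\pi}_!\overline{\pi}^!k=(\overline{\pi}_*\overline{\pi}^*k)^\vee=\text{H}^\cdot(\text{G})^\vee$ --- so one lands in $B\text{-mod}$ with no mention of comodules. You instead use $(p^*,p_*)$, land in comodules over $C=\text{H}^\cdot(\text{G})$, and then dualise. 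Two points in your route need to be nailed down. First, comonadicity requires the \emph{left} adjoint $p^*$ to preserve the relevant totalizations, which is not automatic; it follows here because $p$ is smooth, so $p^*\simeq p^![-2\dim\text{G}]$ admits a left adjoint $p_![2\dim\text{G}]$ and hence preserves all limits --- this is the one-line fix for what you flag as ``the main obstacle,'' and it is essentially the same observation the paper exploits by working with $\pi^!$ from the start. Second, the passage $\text{Comod}_C\simeq C^\vee\text{-mod}$ is \emph{not} justified by finite-dimensionality of the graded pieces alone: for an infinite-dimensional coalgebra, comodules correspond only to the ``locally finite'' $C^\vee$-modules. What saves you is that $\text{H}^\cdot(\text{G})$ is finite-dimensional in total (as $\text{G}$ is a finite-type algebraic group), so $C\otimes(-)$ and $C^\vee\otimes(-)$ are an adjoint comonad/monad pair and their (co)module categories agree. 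The paper's route buys you the module description in one step and with weaker hypotheses; your route makes the Hopf-algebra origin of the multiplication on $B$ (dual to the coproduct induced by the group law) more transparent.
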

\begin{proof}
The idea is to apply descent to the map $\pi:\text{pt}\to \text{BG}$. Consider 
$$\pi^!\ :\ \text{Sh}(\text{BG})\ \longrightarrow\ \text{Sh}(\text{pt})\ =\ \text{Vect}.$$
Since $\pi^!$ is conservative and $\pi_!,\pi^!$ are continuous functors, the Lurie-Barr-Beck Theorem identifies $\text{Sh}(\text{BG})$ with modules in $\text{Vect}$ for the monad $\pi^!\pi_!$, which is therefore equivalent to $B\text{-mod}$ for $\text{B}=\pi^!\pi_!k$. To compute $\text{B}$ more precisely, apply smooth base change\footnote{We need to use here that the base field is perfect or of characteristic zero, so that $G$ is smooth.} to
\begin{center}
\begin{tikzcd}
\text{G}\arrow[r,"\overline{\pi}"]\arrow[d,"\overline{\pi}"]&\text{pt}\arrow[d,"\pi"]\\
\text{pt}\arrow[r,"\pi"]& \text{BG}
\end{tikzcd}
\end{center}
which gives $\text{B}=\pi^!\pi_!k = \overline{\pi}_!\overline{\pi}^!k=(\overline{\pi}_*\overline{\pi}^*k)^\vee = \text{H}^\cdot(\text{G})^\vee$.
\end{proof}

Under the equivalence (\ref{descent}), the constant sheaf corresponds to the trivial $\text{B}$-module, $k$. In particular, the cohomology functor
$$\text{H}^\cdot(\text{BG},-)\ :\ \text{Sh}(\text{BG})\ \longrightarrow\ \text{Vect}$$
is identified with
$$\text{Hom}_\text{B}(k,-)\ :\ \text{B}\text{-mod}\ \longrightarrow\ \text{Vect}.$$
Moreover, more can be said, since the cohomology of any sheaf is automatically a module over $\text{B}^!=\text{H}^\cdot(\text{BG})$. Thus the cohomology functor lifts to
\begin{equation}\label{eqn15}
\text{B}\text{-mod}\ \longrightarrow\ \text{B}^!\text{-mod}
\end{equation}
which is an equivalence upon restricting to certain subcategories. This is an instance of \textit{Koszul duality}, for the Koszul dual dg algebras $\text{B}$ and $\text{B}^!$. Notice that
$$\text{B}^!\ \simeq\ \text{Hom}_\text{B}(k,k), \ \ \ \text{B}\ \simeq\ \text{Hom}_{\text{B}^!}(k,k).$$
The same story holds for quotient stacks. If $\text{G}$ acts on a scheme $X$,  

\begin{prop}There is an equivalence of stable $\infty$-categories
$$\text{Sh}(X/\text{G})\ \simeq\ \mathcal{B}\text{-mod}_{\text{Sh}(X)}$$
where the right side are modules in $\text{Sh}(X)$ for the constant sheaf of algebras $\mathcal{B}=\text{H}^\cdot(\text{G})^\vee_X$.
\end{prop}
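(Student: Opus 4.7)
The plan is to mimic the proof of the previous proposition, but applied to the atlas $\pi:X\to X/\text{G}$ rather than to $\text{pt}\to\text{BG}$. First, observe that $\pi$ is a smooth surjection (in fact a $\text{G}$-torsor), so $\pi^!:\text{Sh}(X/\text{G})\to\text{Sh}(X)$ is conservative, and both $\pi_!,\pi^!$ preserve colimits. The $\infty$-categorical Lurie-Barr-Beck theorem then identifies $\text{Sh}(X/\text{G})$ with the category of modules in $\text{Sh}(X)$ for the monad $T:=\pi^!\pi_!$.

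It remains to identify $T$ with the monad of tensoring with $\mathcal{B}$. Since $T$ is colimit-preserving and $\text{Sh}(X)$-linear, it has the form $T(-)=\mathcal{B}_0\otimes(-)$ for $\mathcal{B}_0:=T(k_X)\in\text{Alg}(\text{Sh}(X))$. To compute $\mathcal{B}_0$, I would apply smooth base change to the Cartesian square
$$\begin{tikzcd}
\text{G}\times X\arrow[r,"a"]\arrow[d,"p"]& X\arrow[d,"\pi"]\\
X\arrow[r,"\pi"]& X/\text{G}
\end{tikzcd}$$
(with $a$ the action and $p$ the projection), obtaining $\pi^!\pi_!k_X\simeq p_!a^!k_X\simeq p_!k_{\text{G}\times X}$. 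A K\"unneth computation, as in the previous proposition, identifies this with the constant sheaf $\text{H}^\cdot(\text{G})^\vee_X=\mathcal{B}$.

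The main obstacle will be matching the monad algebra structure on $\mathcal{B}_0$ with the algebra structure on $\mathcal{B}$ pulled back from $\text{H}^\cdot(\text{G})^\vee$. Both are encoded by the \v{C}ech simplicial object $\text{G}^\bullet\times X\to X/\text{G}$: the monad structure arises from the groupoid $\text{G}\times X\rightrightarrows X$, while the algebra structure on $\mathcal{B}$ is pulled back along $X\to \text{pt}$ from the Pontryagin multiplication on $\text{H}^\cdot(\text{G})^\vee$ coming from the group law on $\text{G}$. That these agree follows from the naturality of smooth base change along $X\to\text{pt}$, reducing the compatibility to the previous proposition applied at $X=\text{pt}$. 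Once established, the identification $\text{Mod}_{\text{Sh}(X)}(T)\simeq\mathcal{B}\text{-mod}_{\text{Sh}(X)}$ yields the proposition.
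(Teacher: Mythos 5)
Your proposal is correct and follows essentially the same route as the paper: descent along the atlas $\pi:X\to X/\text{G}$ via Lurie--Barr--Beck, then smooth base change on the Cartesian square formed by the action and projection maps to identify the monad with tensoring by the constant sheaf $\text{H}^\cdot(\text{G})^\vee_X$. The paper simply says ``exactly the same proof as before'' applied to that square; your extra care about matching the monad multiplication with the Pontryagin product is a reasonable elaboration of what the paper leaves implicit.
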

\begin{proof}
Exactly the same proof as before, applied to the pullback
\begin{center}
\begin{tikzcd}
\text{G}\times X\arrow[r,"a"]\arrow[d,"p"]&X\arrow[d,"\pi"]\\
X\arrow[r,"\pi"]& X/\text{G}
\end{tikzcd}
\end{center}
where $a$ and $p$ are the action and projection maps, respectively. Note that $\mathcal{B}=(p_*a^*k)^\vee=\text{H}^\cdot(\text{G})^\vee_X$.
\end{proof}

\subsection{} For the rest of this section we will consider $\text{G}=\mathbf{G}_m$, so that $\text{B}=\text{H}^\cdot(\mathbf{G}_m)^\vee=k[\varepsilon]/\varepsilon^2$ with $d\varepsilon=0$. We use the grading conventions that the differential $d$ has degree $1$ and $\varepsilon$ has degree $-1$.

\subsubsection{} An object of $\text{Sh}(\text{B}\mathbf{G}_m)=k[\varepsilon]/\varepsilon^2\text{-mod}$ is thus represented by a chain complex $\text{M}$ with a square-zero endomorphism $\varepsilon$ satisfying $d\varepsilon+\varepsilon d=0$.  Using a homogeneous basis of $\text{M}$ this can often be drawn in pictures: 
\begin{center}
\begin{tikzpicture}
\draw [fill] (1,0) circle [radius=0.06];
\draw [fill] (2,0) circle [radius=0.06];
\draw [fill] (3,0) circle [radius=0.06];
\draw [fill] (3,0.5) circle [radius=0.06];
\draw [fill] (4,0) circle [radius=0.06];

\draw [fill] (10,0.2) circle [radius=0.06];
\draw [fill] (11,0.2) circle [radius=0.06];
\draw [<-,dashed] (10.2,0.2)--(10.8,0.2);
\node [] at (11.5,0.2) {$\text{B}$};

\draw [fill] (13,0.2) circle [radius=0.06];
\node [] at (13.5,0.2) {$k$};

\draw [->] (1.2,0)--(1.8,0);
\draw [->] (3.2,0)--(3.8,0);
\draw [-,very thick] (1.2,0)--(1.8,0);
\draw [-,very thick] (3.2,0)--(3.8,0);
\draw [<-,dashed] (2.2,0)--(2.8,0);
\draw [<-,dashed] (2.2,0.5*0.2)--(2.8,0.8*0.5);

\draw [fill] (6,0.25) circle [radius=0.06];
\draw [fill] (7,0) circle [radius=0.06];
\draw [fill] (8,0.25) circle [radius=0.06];
\draw [fill] (7,0.5) circle [radius=0.06];

\draw [->] (6.2,0.25*0.8)--(6.8,0.25*0.2);
\draw [-,very thick] (6.2,0.25*0.8)--(6.8,0.25*0.2);
\draw [<-,dashed] (7.2,0.25*0.2)--(7.8,0.25*0.8);
\draw [<-,dashed] (6.2,0.25+0.2*0.25)--(6.8,0.25+0.8*0.25);
\draw [->] (7.2,0.25+0.8*0.25)--(7.8,0.25+0.2*0.25);
\draw [-,very thick] (7.2,0.25+0.8*0.25)--(7.8,0.25+0.2*0.25);
\end{tikzpicture}
\end{center}
Each dot represents a homogenenous basis vector of $\text{M}$ with its horizontal position being its degree. The right solid arrows represent the nonzero action of $d$, and the left dotted arrows the nonzero action of $(-1)^{\deg}\varepsilon$. The functor $\text{Sh}(\text{B}\mathbf{G}_m)\to \text{Vect}$ is conservative, so in particular a map $\text{M}\to \text{N}$ is an equivalence in $\text{Sh}(\text{B}\mathbf{G}_m)$ if and only if it is a quasiisomorphism of chain complexes.

\subsubsection{} In this framework, cohomology is easy to compute. There is a resolution of the constant sheaf by free modules
$$\cdots\ \stackrel{\cdot\varepsilon}{\longrightarrow}\ \text{B}[2]\ \stackrel{\cdot\varepsilon}{\longrightarrow}\ \text{B}[1]\ \stackrel{\cdot\varepsilon}{\longrightarrow}\ \text{B}\ \longrightarrow\ k$$
More precisely, there is a cofibrant resolution in the projective model structure in the model category underlying $\text{Sh}(\text{B}\mathbf{G}_m)$
$$\text{P}\ =\ \text{Tot}(\ \  \cdots\ \stackrel{\cdot\varepsilon}{\longrightarrow}\ \text{B}[2]\ \stackrel{\cdot\varepsilon}{\longrightarrow}\ \text{B}[1]\ \stackrel{\cdot\varepsilon}{\longrightarrow}\ \text{B})\ \longrightarrow\ k$$
where $\text{Tot}$ denotes the single complex associated to a double complex. In pictures, the module $\text{P}$ and the resolution is
\begin{center}
\begin{tikzpicture}
\draw [fill] (1,0) circle [radius=0.06];
\draw [fill] (2,0) circle [radius=0.06];
\draw [fill] (3,0) circle [radius=0.06];
\draw [fill] (4,0) circle [radius=0.06];
\draw [fill] (5,0) circle [radius=0.06];
\draw [fill] (6,0) circle [radius=0.06];
\draw [fill] (7,0) circle [radius=0.06];
\draw [fill] (7,-1.3) circle [radius=0.06];
\draw [->] (1.2,0)--(1.8,0);
\draw [->] (3.2,0)--(3.8,0);
\draw [->] (5.2,0)--(5.8,0);
\draw [-,very thick] (1.2,0)--(1.8,0);
\draw [-,very thick] (3.2,0)--(3.8,0);
\draw [-,very thick] (5.2,0)--(5.8,0);
\draw [<-,dashed] (0.2,0)--(0.8,0);
\draw [<-,dashed] (2.2,0)--(2.8,0);
\draw [<-,dashed] (4.2,0)--(4.8,0);
\draw [<-,dashed] (6.2,0)--(6.8,0);

\draw [->] (7,-0.3)--(7,-1);

\node [] at (-0.5,0) {$\cdots$};
\node [] at (8,0) {$\text{P}$};
\node [] at (7.5,-1.3) {$k$};
\end{tikzpicture}
\end{center}
In particular, the cohomology groups $\text{H}^n(\text{B}\mathbf{G}_m,\text{M})$ are equal to the chain homotopy classes of degree preserving maps $\text{P}[-n]\to \text{M}$, or equivalently, 
\begin{equation}\label{eqn14}
\text{H}^\cdot(\text{B}\mathbf{G}_m,\text{M})\ =\ \text{H}^\cdot\text{Tot}(\text{M}\stackrel{\cdot\varepsilon}{\longrightarrow}\ \text{M}[-1]\stackrel{\cdot\varepsilon}{\longrightarrow}\ \text{M}[-2]\stackrel{\cdot\varepsilon}{\longrightarrow}\ \cdots\ \ ).
\end{equation}
It is easy therefore to compute 
$$\text{H}^\cdot(\text{B}\mathbf{G}_m,k)\ =\ k[t],\ \ \ \text{H}^\cdot(\text{B}\mathbf{G}_m,\text{B})\ =\ k[1].$$
\subsubsection{} For an example coming from geometry, consider
$$\text{B}\mathbf{G}_m\ \stackrel{i}{\longrightarrow}\ \mathbf{A}^n/\mathbf{G}_m \ \stackrel{j}{\longleftarrow}\ (\mathbf{A}^n\setminus 0)/\mathbf{G}_m\ =\ \mathbf{P}^{n-1}$$
the zero section inside the sum of $n$ tautological line bundles, and its complement. In working with Euler classes we use the Gysin sequence
$$i^!k\ \longrightarrow\ i^*k\ \longrightarrow\ i^*j_*k \ \stackrel{+1}{\longrightarrow}$$
To begin with, we can identify $i^*j_*k$ with the $\text{B}$ module concentrated in degrees $0,1,...,2n-1$:
\begin{center}
\begin{tikzpicture}
\draw [fill] (1*1,0) circle [radius=0.06*1];
\draw [fill] (2*1,0) circle [radius=0.06*1];
\draw [fill] (3*1,0) circle [radius=0.06*1];

\draw [fill] (6*1,0) circle [radius=0.06*1];
\draw [fill] (7*1,0) circle [radius=0.06*1];
\draw [fill] (8*1,0) circle [radius=0.06*1];

\draw [<-,dashed] (1.2*1,0)--(1.8*1,0);
\draw [<-,dashed] (3.2*1,0)--(3.8*1,0);
\draw [->] (2.2*1,0)--(2.8*1,0);
\draw [-,very thick] (2.2*1,0)--(2.8*1,0);

\draw [<-,dashed] (5.2*1,0)--(5.8*1,0);
\draw [->] (6.2*1,0)--(6.8*1,0);
\draw [-,very thick] (6.2*1,0)--(6.8*1,0);
\draw [<-,dashed] (7.2*1,0)--(7.8*1,0);

\node [] at (4.5*1,0) {$\cdots$};

\node [] at (8.75,0) {$\text{M}_n$};
\node [] at (-0,0) {$\textcolor{white}{i^*j_*k}$};

\draw [white] (1*1,-0.2) circle [radius=0.06*1];
\draw [white] (1*1,0.4) circle [radius=0.06*1];
\end{tikzpicture}
\end{center}
Indeed, by Koszul duality (\ref{eqn15}) is is enough to show that its cohomology is $\text{H}^\cdot(\mathbf{P}^{n-1})$ as a $k[t]$ module. But the cohomology of this module can be computed very explicitly, for instance when $n=2$ the double complex on the right side of (\ref{eqn14}) is 
\begin{center}
\begin{tikzpicture}
\draw [white] (1*1,-0.5) circle [radius=0.06*1];

\draw [fill] (1*1,0) circle [radius=0.06*1];
\draw [fill] (2*1,0) circle [radius=0.06*1];
\draw [fill] (3*1,0) circle [radius=0.06*1];
\draw [fill] (4*1,0) circle [radius=0.06*1];

\draw [fill] (1*1+1*1,+1*1) circle [radius=0.06*1];
\draw [fill] (2*1+1*1,+1*1) circle [radius=0.06*1];
\draw [fill] (3*1+1*1,+1*1) circle [radius=0.06*1];
\draw [fill] (4*1+1*1,+1*1) circle [radius=0.06*1];

\draw [fill] (1*1+2*1,+2*1) circle [radius=0.06*1];
\draw [fill] (2*1+2*1,+2*1) circle [radius=0.06*1];
\draw [fill] (3*1+2*1,+2*1) circle [radius=0.06*1];
\draw [fill] (4*1+2*1,+2*1) circle [radius=0.06*1];

\draw [fill] (1*1+3*1,+3*1) circle [radius=0.06*1];
\draw [fill] (2*1+3*1,+3*1) circle [radius=0.06*1];
\draw [fill] (3*1+3*1,+3*1) circle [radius=0.06*1];
\draw [fill] (4*1+3*1,+3*1) circle [radius=0.06*1];

\draw [->] (2.2*1+3*1,+3*1)--(2.8*1+3*1,+3*1);
\draw [->] (2.2*1+2*1,+2*1)--(2.8*1+2*1,+2*1);
\draw [->] (2.2*1+1*1,+1*1)--(2.8*1+1*1,+1*1);
\draw [->] (2.2*1,0)--(2.8*1,0);
\draw [->] (2*1,0.2*1)--(2*1,0.8*1);
\draw [->] (4*1,0.2*1)--(4*1,0.8*1);

\draw [->] (2*1+1*1,0.2*1+1*1)--(2*1+1*1,0.8*1+1*1);
\draw [->] (4*1+1*1,0.2*1+1*1)--(4*1+1*1,0.8*1+1*1);

\draw [->] (2*1+2*1,0.2*1+2*1)--(2*1+2*1,0.8*1+2*1);
\draw [->] (4*1+2*1,0.2*1+2*1)--(4*1+2*1,0.8*1+2*1);

\node [] at (4/0.7+0.3,3/0.7) {$\iddots$};
\end{tikzpicture}
\end{center}
the cohomology of whose associated single complex has two generators, in degree zero and two. We can express those two cohomology classes maps from $\text{P}$ into this module, 
\begin{center}
\begin{tikzpicture}
\draw [fill] (5,0) circle [radius=0.06];
\draw [fill] (6,0) circle [radius=0.06];
\draw [fill] (7,0) circle [radius=0.06];
\draw [->] (5.2,0)--(5.8,0);
\draw [-,very thick] (5.2,0)--(5.8,0);
\draw [<-,dashed] (4.2,0)--(4.8,0);
\draw [<-,dashed] (6.2,0)--(6.8,0);

\draw [-,very thick] (5.2+3,-1.3)--(5.8+3,-1.3);
\draw [->] (5.2+3,-1.3)--(5.8+3,-1.3);
\draw [<-,dashed] (4.2+3,-1.3)--(4.8+3,-1.3);
\draw [<-,dashed] (6.2+3,-1.3)--(6.8+3,-1.3);

\draw [fill] (4+3,-1.3) circle [radius=0.06];
\draw [fill] (5+3,-1.3) circle [radius=0.06];
\draw [fill] (6+3,-1.3) circle [radius=0.06];
\draw [fill] (7+3,-1.3) circle [radius=0.06];

\draw [->] (4+3,-0.3)--(4+3,-1);

\node [] at (3.5,0) {$\cdots$};
\node [] at (8,0) {$\text{P}$};
\node [] at (6,-1.3) {$\text{M}_2$};

\draw [fill] (9+3,0) circle [radius=0.06];
\draw [fill] (9+4,0) circle [radius=0.06];
\draw [fill] (9+5,0) circle [radius=0.06];
\draw [fill] (9+6,0) circle [radius=0.06];
\draw [fill] (9+7,0) circle [radius=0.06];
\draw [->] (9+3.2,0)--(9+3.8,0);
\draw [-,very thick] (9+3.2,0)--(9+3.8,0);
\draw [->] (9+5.2,0)--(9+5.8,0);
\draw [-,very thick] (9+5.2,0)--(9+5.8,0);
\draw [<-,dashed] (9+2.2,0)--(9+2.8,0);
\draw [<-,dashed] (9+4.2,0)--(9+4.8,0);
\draw [<-,dashed] (9+6.2,0)--(9+6.8,0);

\draw [-,very thick] (9+2+4.2,-1.3)--(9+2+4.8,-1.3);
\draw [->] (9+2+4.2,-1.3)--(9+2+4.8,-1.3);
\draw [<-,dashed] (9+2+3.2,-1.3)--(9+2+3.8,-1.3);
\draw [<-,dashed] (9+2+5.2,-1.3)--(9+2+5.8,-1.3);

\draw [fill] (9+2+3,-1.3) circle [radius=0.06];
\draw [fill] (9+2+4,-1.3) circle [radius=0.06];
\draw [fill] (9+2+5,-1.3) circle [radius=0.06];
\draw [fill] (9+2+6,-1.3) circle [radius=0.06];

\draw [->] (9+2+3,-0.3)--(9+2+3,-1);

\node [] at (9+1.5,0) {$\cdots$};
\node [] at (9+8,0) {$\text{P}[-2]$};
\node [] at (9+2+7,-1.3) {$\text{M}_2$};
\end{tikzpicture}
\end{center}
so that composing with $t:\text{P}[-2]\to\text{P}$ takes the first class to the second, therefore $\text{Hom}_\text{B}(k,\text{M}_n)$  is indeed $\text{H}^\cdot(\mathbf{P}^1)=k[t]/t^2$. Notice that composing with $t^2:\text{P}[-4]\to \text{P}$ gives a nullhomotopic map. The same argument works for all $n$. To finish, $i^*k$ is the constant sheaf, and therefore $i^!k$ is the fibre (cocone) of the map $k\to i^*j_*k$ which in pictures is the module concentrated in degrees $0,1,...,2n$:
\begin{center}
\begin{tikzpicture}
\draw [fill] (0*1,0) circle [radius=0.06*1];
\draw [fill] (1*1,0) circle [radius=0.06*1];
\draw [fill] (2*1,0) circle [radius=0.06*1];
\draw [fill] (3*1,0) circle [radius=0.06*1];

\draw [fill] (6*1,0) circle [radius=0.06*1];
\draw [fill] (7*1,0) circle [radius=0.06*1];
\draw [fill] (8*1,0) circle [radius=0.06*1];

\draw [<-,dashed] (1.2*1,0)--(1.8*1,0);
\draw [<-,dashed] (3.2*1,0)--(3.8*1,0);
\draw [->] (2.2*1,0)--(2.8*1,0);
\draw [-,very thick] (2.2*1,0)--(2.8*1,0);
\draw [->] (0.2*1,0)--(0.8*1,0);
\draw [-,very thick] (0.2*1,0)--(0.8*1,0);

\draw [<-,dashed] (5.2*1,0)--(5.8*1,0);
\draw [->] (6.2*1,0)--(6.8*1,0);
\draw [-,very thick] (6.2*1,0)--(6.8*1,0);
\draw [<-,dashed] (7.2*1,0)--(7.8*1,0);

\node [] at (4.5*1,0) {$\cdots$};

\node [] at (8.75,0.05) {$i^!k$};
\node [] at (-0,0) {$\textcolor{white}{i^!k}$};

\draw [white] (1*1,-0.2) circle [radius=0.06*1];
\draw [white] (1*1,0.4) circle [radius=0.06*1];
\end{tikzpicture}
\end{center}
which recovers the expected result $i^!k\simeq k[-2n]$.

\subsection{} Examples involving singular spaces are generally more interesting. For instance, a heuristic we have encountered throughout this paper is that localising the equivariant bivariant cohomology, $\text{H}^\cdot_{\mathbf{G}_m}(X/Y)_{loc}$, forgets the singularities. But the sheaf $f^!k$ (whose cohomology gives bivariant cohomology) will remember this often subtle information.

\subsubsection{} Take for example an affine cone $C$, smooth except at the origin:
$$\text{B}\mathbf{G}_m\ \stackrel{i}{\longrightarrow}\ C/\mathbf{G}_m\ \stackrel{j}{\longleftarrow}\ (C\setminus 0)/\mathbf{G}_m\ =\  \mathbf{P}C.$$
Then $\mathbf{P}C$ is smooth, it can be any smooth projective variety, but there is a potentially complicated singular point at the origin, measured to a certain extent by $i^!k$.

For instance if $C$ is a union of $n$ distinct lines through the origin then 
$$i^!k\ =\ \text{fib}(k\ \longrightarrow\ B[-1]^{n})\ \simeq\ k[-2]\oplus B[-2]^{n-1}.$$
Moving one dimension up, if $\mathbf{P}C$ is a smooth projective curve of genus $g$ then as a $k[t]$ module $\text{H}^\cdot(\mathbf{P}C) =k[t]/t^2\oplus k[-1]^{2g}$. This is because the degree of any projective embedding of such a curve is nonzero, which precisely means that the pullback of $\mathcal{O}(1)$ to $\mathbf{P}C$ has nonvanishing first chern class. From this it follows that
$$i^!k\ =\ \text{fib}(k\ \longrightarrow\ \text{M}_2\oplus B[-1]^{2g})\ \simeq\ k[-4]\oplus B[-2]^{2g}.$$
More examples are easy to construct as soon as one knows the action of $c_1(\mathcal{O}(1))$ on $\mathbf{P}C$. This gives a description of the ``correction term" to the Euler class $i^!k\to i^*k$ being an isomorphism, which vanishes upon localising.

\subsubsection{} The sheaf $i^!k$ is still a coarse measure of the singularity. Take for instance the du Val singularities, which are GIT quotients
$$\mathbf{A}^2//\Gamma\ =\ \Spec \mathbf{C}[x,y]^{\Gamma}$$
for $\Gamma$ be a finite subgroup of $\text{SL}(2,\mathbf{C})$. These correspond to ADE Lie algebras, and each is a hypersurface in $\mathbf{A}^3$, smooth away from the origin. We have
$$\text{B}\mathbf{G}_m\ \stackrel{i}{\longrightarrow}\ (\mathbf{A}^2//\Gamma)/\mathbf{G}_m\ \stackrel{j}{\longleftarrow}\ (\mathbf{A}^2\setminus 0)/(\Gamma\times \mathbf{G}_m) \ =\ \mathbf{P}^1/\Gamma.$$
However, $i^!k=k[-2]$ as in the smooth case. This is because
$$\text{H}^\cdot(\mathbf{P}^1/\Gamma)\ =\ \text{H}^\cdot(\mathbf{P}^1).$$
Indeed, because $\Gamma$ is a finite group $\text{H}^\cdot(\mathbf{P}^1/\Gamma)$ identifies with the invariants $\text{H}^\cdot(\mathbf{P}^1)^\Gamma$. But $\Gamma$, acting as a subgroup of the connected group $\text{SL}(2,\mathbf{C})$, acts trivially on cohomology, giving the above. Thus the sheaf $i^!k$ (and hence bivariant homology) does not see the singularity.

\section{The main theorem}  \label{mainsect}

\subsection{} Let $X$ be the moduli stack classifying finite dimensional representations of a finite quiver or coherent sheaves on a smooth proper curve. On its cohomology there is
\begin{enumerate}
\item[{a)}] a cohomological Hall algebra product $m$ by section \ref{coha}, and
\item[{b)}] a braided super vertex coalgebra  $Y^\vee$ by section \ref{va}. 
\end{enumerate}
In this section we show they are compatible, i.e. that
\begin{equation}\label{diag4}
\begin{tikzcd}
\text{H}^\cdot(X\times X)\arrow[r,"Y^\vee\otimes Y^\vee"]\arrow[d,"m "] &\text{H}^\cdot(X^2\times X^2)((z))\arrow[r,"S(z)\sigma^*"] & \text{H}^\cdot(X^2\times X^2)((z))\arrow[d,"m\otimes m "]\\
\text{H}^\cdot(X)\arrow[rr,"Y^\vee"] && \text{H}^\cdot(X^2)((z))
\end{tikzcd}
\end{equation}
commutes. Here the Yang-Baxter matrix $S(z)$ and the swap map $\sigma$ act on the second and third factors. In the language of section \ref{bialgebra}, the above is our main theorem
\begin{theoremmain*} $\text{H}^\cdot(X)$ is a braided super vertex bialgebra.
\end{theoremmain*} 

The basic idea of the proof, sketched in section \ref{heuristic}, uses \textit{split loci} to compute CoHA products using abelian localisation. Since the spaces involved are in general singular, we must use our above work on abelian localisation. We use this in section \ref{main} to prove the main theorem \ref{mainthm}, and extract explicit formulas for CoHA products in section \ref{explicit}.

\subsection{} \label{heuristicvect} It may be clarifying to first explain the idea of section \ref{heuristic} in the simplest case, the abelian category $\mathcal{A}=\text{Vect}$ of finite dimensional vector spaces.  The moduli space of objects in $\mathcal{A}$ is
$$X \ =\ \coprod_{n\ge 0}\text{BGL}_n$$
and the CoHA product structure on its cohomology is induced by pull-push along
\begin{center}
\begin{tikzcd}
&\text{BP}_{n,m}\arrow[ld,swap]\arrow[rd]&\\
\textcolor{white}{a}\text{BGL}_n\times \text{BGL}_m\textcolor{white}{a}&&
\text{BGL}_{n+m}\\
\end{tikzcd}
\end{center}
Recall that a map into $\text{BGL}_k$ corresponds to a rank $k$ vector bundle $V_k$ and a map into $\text{BP}_{n,m}$ to $V_n\subseteq V_{n+m}$, a rank $n+m$ vector bundle with a chosen subbundle of rank $n$. 

Actually no information is lost in postcomposing by the injection $\oplus^*$
\begin{center}
\begin{tikzcd}
&\text{BP}_{n,m} \arrow[ld]\arrow[rd] && \text{B}\mathbf{G}_m^{n+m}\arrow[ld,"\oplus",swap]\\
\textcolor{white}{a}\text{BGL}_n\times \text{BGL}_m\textcolor{white}{a}&&\text{BGL}_{n+m}&\\
\end{tikzcd}
\end{center}
Here $\oplus$ sends an $n$ tuple of line bundles to its direct sum, and $\oplus^*$ identifies $\text{H}^\cdot(\text{BGL}_k)$ with the $\mathfrak{S}_k$ invariants inside $\text{H}^\cdot((\text{B}\mathbf{G}_m)^k)=k[t]^{\otimes k}$. 

Next, take the pullback
\begin{center}
\begin{tikzcd}
& & \text{P}_{n,m}\backslash\text{GL}_{n+m}/\mathbf{G}_m^n \arrow[rd]\arrow[ld,swap]&&\\
&\text{BP}_{n,m} \arrow[ld,swap]\arrow[rd,gray]&& \text{B}\mathbf{G}_m^{n+m}\arrow[ld,"\textcolor{gray}{\oplus}",swap,gray]\\
\textcolor{white}{a}\text{BGL}_n\times \text{BGL}_m\textcolor{white}{a}&&\textcolor{gray}{\text{BGL}_{n+m}}&\\
\end{tikzcd}
\end{center}
Our composition (pull-push-pull) is equal to pull-pull-push along the upper maps. The above fibre product is a smooth stack classifying $n+m$ tuples of line bundles with a chosen rank $n$ subbundle:
$$V_n \ \stackrel{\iota}{\hookrightarrow} \ L_1\oplus\cdots \oplus L_{n+m}.$$
But there is now an obvious action of the rank $n+m$ torus on this space! It scales the entries of $\iota$, and its fixed locus classifies the above but for  \textit{split} subbundles
$$L_{k_1}\oplus \cdots \oplus L_{k_n} \ \hookrightarrow \ L_1\oplus\cdots \oplus L_{n+m}.$$
Having fixed an $n$ element subset $\sigma\subseteq [n+m]=\{1,...,n+m\}$, this stack is simply $\text{B}\mathbf{G}_m^{n+m}$, and so the fixed locus $i$ is simply a union of these:
\begin{center}
\begin{tikzcd}
&&\coprod\text{B}\mathbf{G}_m^{n+m}\arrow[d,"\textcolor{gray}{i}",gray]\arrow[rdd,bend right = -25]\arrow[lldd,bend right = 30,swap]&&\\
&&\textcolor{gray}{\text{P}_{n,m}\backslash\text{GL}_{n+m}/\mathbf{G}_m^n}\arrow[rd,gray]\arrow[lldd,swap,gray]&&\\
\textcolor{white}{a}\text{B}\mathbf{G}_m^n\times \text{B}\mathbf{G}_m^m \arrow[d]&\textcolor{white}{\text{Ext}}&& \text{B}\mathbf{G}_m^{n+m}\arrow[d]\\
\textcolor{white}{a}\text{BGL}_n\times \text{BGL}_m\textcolor{white}{a}&&&\text{BGL}_{n+m}\\
\end{tikzcd}
\end{center}
The top left and right arrows are extremely simple: they are just the identity on each component. So by abelian localisation, pull-push along the top maps is just
$$k[s_1,...,s_n]\otimes k[t_1,...,t_m] \ \longrightarrow \ k[u_1,..., u_{n+m}]$$
\begin{equation}\label{eqn16}
(f(s_1,...,s_n),g(t_1,...,t_m)) \ \mapsto \  \sum_{\sigma} \frac{1}{e(N_i)}f(u_{\sigma 1},...,u_{\sigma n}) g(u_{\sigma 1},...,u_{\sigma m}).
\end{equation}
We view every size $n$ subset $\sigma\subseteq [n+m]$ as a pair of jointly surjective order-preserving maps $[n],[m]\to [n+m]$, and $e(N_i)$ denotes the Euler class of the normal bundle of $i$, which one can show is
$$e(N_i) \ = \ \prod_{i\in [n]} \prod_{j\in [m]} (u_{\sigma i} - u_{\sigma j}).$$
Restricting to symmetric group invariants gives the above formula for the CoHA product
$$\text{H}^\cdot(\text{BGL}_n)\otimes \text{H}^\cdot(\text{BGL}_m)\ \longrightarrow \ \text{H}^\cdot(\text{BGL}_{n+m}).$$
This is essentially the only example where all involved spaces are smooth, which meant we could apply the classical form of abelian localisation without implicitly using our results.

\subsection{Heuristic to compute CoHA products}  \label{heuristic} \label{explicitsketch}

We recall the CoHA product on $\text{H}^\cdot(X)$ is defined by push-pull $\underline{p}_*a^*$ along the correspondence
\begin{center}
\begin{tikzcd}
&\text{Ext}\arrow[ld,"a",swap]\arrow[rd,"\underline{p}"]&\textcolor{white}{\text{Ext}\times_X X^s}\\
\textcolor{white}{a}X\times X\textcolor{white}{a}&&X\\
\end{tikzcd}
\end{center}
The idea is to use the \textit{split locus}
$$\underline{q}\ :\ X^s\ \longrightarrow\ X.$$
What this is exactly will depend on the details of the abelian category and the situation; one possibility is to consider direct sums of rank one objects. An important point is that it should give an injection on cohomology. We then get a diagram
\begin{center}
\begin{tikzcd}
&&\text{Ext}^s\arrow[d,"i"]\arrow[rdd,bend right = -30, "\overline{p}"]\arrow[llddd,bend right = 40, "\overline{q}",swap]&&\\
&&\text{Ext}\times_X X^s\arrow[rd,"p"]\arrow[ld,"q",swap]&&\\
&\text{Ext}\arrow[ld,"a",swap]\arrow[rd,"\underline{p}"]&& X^s\arrow[ld,"\underline{q}",swap]\\
\textcolor{white}{a}X\times X\textcolor{white}{a}&&X&\\
\end{tikzcd}
\end{center}
and because $\underline{q}^*$ is an injection, it is enough for us to compute $\underline{q}^*\underline{p}_*a^* = p_*q^*a^*$. To continue, notice that $\text{Ext}\times_XX^s$ parametrises exact sequences with a splitting of the middle term
$$0\ \longrightarrow\ \mathcal{E}_1\ \stackrel{\alpha}{\longrightarrow}\ \textstyle{\bigoplus} \mathcal{E}_{2,i}\ \stackrel{\beta}{\longrightarrow}\ \mathcal{E}_3\ \longrightarrow\ 0.$$
This admits an action of the torus $T=\mathbf{G}_m^n$ given by multiplying the middle term, with a torus element $\tau \in T$ acting on the above by 
$$0\ \longrightarrow\ \mathcal{E}_1\ \stackrel{\tau\alpha}{\longrightarrow}\ \textstyle{\bigoplus} \mathcal{E}_{2,i}\ \stackrel{\beta \tau^{-1}}{\longrightarrow}\ \mathcal{E}_3\ \longrightarrow\ 0.$$
This splits as a direct sum of exact sequences if and only if it is fixed under the $T$ action; write $\text{Ext}^s$ for the fixed locus. Moreover, all the maps in
\begin{center}
\begin{tikzcd}
&&\text{Ext}^s\arrow[d,"i"]\arrow[rdd,bend right = -30, "\overline{p}"]\arrow[llddd,bend right = 40, "\overline{q}",swap]&&\\
&&\text{Ext}\times_X X^s\arrow[rd,"p"]\arrow[lldd,"aq",swap]&&\\
&\textcolor{white}{\text{Ext}}&& X^s\\
\textcolor{white}{a}X\times X\textcolor{white}{a}&&&\\
\end{tikzcd}
\end{center}
are $T$ equivariant where the other spaces are endowed with trivial $T$ actions. Thus we are in a place to apply the integration formula (\ref{cohaprop}), which says that
$$\underline{q}^*\underline{p}_*a^*\ =\ p_*q^*a^* \ =\ \overline{p}_*\frac{\overline{q}^*}{e(\mathbf{N}_i)}.$$
The point is that $\overline{p}_*$ is usually far simpler to work out than $p_*$, is approximately just a product of a number of rank one $\underline{p}$'s. Moreover, the K theory class of the normal complex $\mathbf{N}_i$ to is seen to be
$$\mathbf{N}_i\ =\ -\mathbf{T}_i\ =\ -\mathbf{T}_{\overline{p}}+i^*\mathbf{T}_{p}\ =\ -\mathbf{T}_{\overline{p}}+i^*q^*\mathbf{T}_{\underline{p}}.$$
In examples $\mathbf{T}_{\overline{p}}$ is fairly trivial, and $\mathbf{T}_{\underline{p}}=a^*\theta$, so the Euler class $e(\mathbf{N}_i)$ is easy enough to compute also.

\subsubsection{} In practice we usually instead use a generic cocharacter $\mathbf{G}_m\to T$ and consider the induced $\mathbf{G}_m$ action, which has the same fixed locus.

\subsubsection{}\label{localisedcoha} We describe the maps $\overline{q}$ and $\overline{p}$ more precisely. The space $\text{Ext}^s$ classifies short exact sequences
$$0\ \longrightarrow\ \textstyle{\bigoplus}\mathcal{E}_{1,i}\ \stackrel{\oplus\alpha_i}{\longrightarrow}\ \textstyle{\bigoplus} \mathcal{E}_{2,i}\ \stackrel{\oplus\beta_i}{\longrightarrow}\ \textstyle{\bigoplus}\mathcal{E}_{3,i}\ \longrightarrow\ 0.$$
The map $\overline{p}$ sends this sequence to its middle term, and $\overline{q}$ sends it to the two outer terms. In particular the map $\overline{q}$ lifts 
 \begin{center}
\begin{tikzcd}
X^s\times X^s\arrow[d]&&\text{Ext}^s\arrow[lld,bend right = 25, "\overline{q}",swap]\arrow[ll,bend right = 25,swap,"\widetilde{q}",dashed]& \\
\textcolor{white}{a}X\times X\textcolor{white}{a}&&&\\
\end{tikzcd}
\end{center}
This means the usual CoHA map lifts to a \textit{localised product}
$$\text{H}^\cdot(X^s)\otimes \text{H}^\cdot(X^s)\ \longrightarrow\ \text{H}^\cdot(X^s)(t)$$
where restricting to $\text{H}^\cdot(X)$ eliminates the dependence on $t$. Compare with Theorem $2$ of \cite{KS}, and the localised coproduct in \cite{Da}. The right hand side is $\text{H}^\cdot(X^s)\otimes \text{Frac}\text{H}^\cdot(\text{B}\mathbf{G}_m)$.

\subsection{The main theorem}  \label{main} We are now in a place to prove our main result.

\begin{theorem} \label{mainthm}
The CoHA product and braided vertex coalgebra structures on $\text{H}^\cdot(X)$ form a braided vertex algebra.
\end{theorem}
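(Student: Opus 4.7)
The plan is to use the split-locus heuristic of Section \ref{heuristic} to rewrite both compositions in diagram (\ref{diag4}) as explicit Euler-class manipulations on products of split classifying spaces, and then to match them term by term.

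First I would reduce to an identity after pulling back the codomain $\text{H}^\cdot(X^2)((z))$ of (\ref{diag4}) along $\underline{q}^{\otimes 2}$, where $\underline{q}\colon X^s\to X$ is the split locus. Since $\underline{q}^*$ is injective on cohomology, it suffices to prove the pulled-back diagram commutes. Because $\underline{q}$ is compatible with both the direct-sum map $\oplus$ and the $\text{B}\mathbf{G}_m$-action, Joyce's structure maps restrict naturally to the split locus, so each of the four sides of (\ref{diag4}) has a well-defined split-locus lift.

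Next I would invoke the integration formula of Proposition \ref{cohaproposition} to rewrite each occurrence of the CoHA product $m$, pulled back along the split loci, as $\overline{p}_*(\overline{q}^*(-)/e(\mathbf{N}_i))$, where $i\colon \text{Ext}^s\hookrightarrow \text{Ext}\times_X X^s$ is the inclusion of the fixed locus of the torus action rescaling the splitting of the middle term. Using the distinguished triangle of tangent complexes for $\overline{p}$, $p$, $i$, together with the identification $\mathbf{T}_{\underline{p}}=a^*\theta$, one writes $e(\mathbf{N}_i)$ explicitly in terms of Chern roots of $\theta$ restricted to pairs of simple summands. The vertex coalgebra map $Y^\vee$ contributes a further Euler-class factor $act_1^*e(\theta)$ by its definition in Section \ref{general}.

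After these reductions both sides of (\ref{diag4}) become sums of push-pulls along products of split classifying spaces, with integrands given by rational functions in the Chern roots of $\theta$ on pairs of summands. The top composition yields a sum indexed by decompositions of the configuration of four sets of summands coming from the two applications of $Y^\vee$, twisted by the swap and the Yang-Baxter operator; the bottom composition yields a sum indexed by a single exact sequence followed by $Y^\vee$ on the total object. Matching the two sides reduces, by the multiplicativity property (1) of $\theta$ in Section \ref{general}, to the identity that reordering the four groups of summands produces precisely the correction $S(z)=act^*e(\theta)/act^*e(\sigma^*\theta)$ on the top path; this is in essence a definition-chase, once everything is written out in Chern roots. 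Finally, the unit/counit compatibilities — $Y^\vee(1)=1\otimes 1$ and the compatibility of the CoHA unit $1\in\text{H}^\cdot(\text{pt})$ with the vacuum — follow from the fact that the zero object is a fixed point of all the structures at play.

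The hard part will be the combinatorial and sign bookkeeping. In particular, one must verify that the orientations $\varepsilon$ and $\delta$ of Section \ref{summary}, and the braided-orientation relations they satisfy, contribute exactly the signs demanded by the bialgebra diagram (\ref{diag15}); this mirrors the sign checks Joyce performs for the vertex coalgebra axioms, but now carried out over a four-fold configuration rather than a three-fold one. Once the signs are settled, the matching of Euler-class factors is forced by the definition of $S(z)$ and the multiplicativity of $\theta$.
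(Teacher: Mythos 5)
Your proposal follows essentially the same route as the paper: pull back along a split locus, apply the integration formula of Proposition \ref{cohaproposition} to rewrite the CoHA product as $\overline{p}_*(\overline{q}^*(-)/e(\mathbf{N}_i))$, compute $[\mathbf{N}_i]$ from the tangent-complex triangle together with $\mathbf{T}_{\underline{p}}=a^*\theta$, and let the multiplicativity of $\theta$ force $S(z)=act^*e(\theta)/act^*e(\sigma^*\theta)$. The one adjustment worth making is the choice of split locus: the paper splits only into the \emph{two} summands demanded by the coproduct, taking $\underline{q}=\oplus\colon X^2\to X$ and fixed locus $\text{Ext}^2$, and runs the comparison at the level of K-theory classes of the components $\theta_{x,y}$ on $X^2\times X^2$ (arriving at $[\mathbf{N}_i]=[\theta_{\alpha_1,\beta_2}]+[\theta_{\alpha_2,\beta_1}]$) rather than via Chern roots on a fully split locus; this keeps the argument uniform for coherent sheaves on a curve, where the rank-one pieces are not classifying spaces and $\theta$ admits no literal Chern-root decomposition. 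Your concern about orientation signs is legitimate but the paper largely sidesteps it: the braided construction of Theorem \ref{joycethm} carries no sign correction, and the proof is carried out with all orientations trivial.
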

\begin{proof}To do this we use the split locus of objects which are a direct sum of two subobjects:
$$\underline{q} \ : \ X^2 \ \longrightarrow \ X,$$
and apply the method of section \ref{heuristic}. We have a diagram
\begin{equation}\label{diag2}
\begin{tikzcd}
&&\text{Ext}^2\arrow[d,"i"]\arrow[rdd,bend right = -30, "\overline{p}"]\arrow[llddd,bend right = 40, "\overline{q}",swap]&&\\
&&\text{Ext}\times_X X^2\arrow[rd,"p"]\arrow[ld,"q",swap]&&\\
&\text{Ext}\arrow[ld,"a",swap]\arrow[rd,"\underline{p}"]&& X^2\arrow[ld,"\underline{q}",swap]\\
\textcolor{white}{a}X\times X\textcolor{white}{a}&&X&\\
\end{tikzcd}
\end{equation}
and we can use abelian localisation to compute
\begin{equation}\label{eqn5}
\underline{q}^*(\text{CoHA product}) \ = \ \underline{q}^*\underline{p}_*a^*\ =\ \overline{p}_*\frac{\overline{q}^*(-)}{e(\mathbf{N}_i)}.
\end{equation}
Concretely, we have applied Proposition \ref{cohaproposition}, the mild condition that $i$ be homotopically of finite presentation being satisfied in our case.

First we will prove the theorem for the braided vertex algebra structure where all orientations are trivial. Now, to understand diagram (\ref{diag4}) we should first expand it out:
\begin{center}
\begin{tikzcd}
\text{H}^\cdot(X\times X)\arrow[r,"\overline{q}^*"]\arrow[d,"\underline{p}_*a^*"] & \text{H}^\cdot(X^2\times X^2)\arrow[r,"\Psi( e((a^*\theta)^2) \cdot - )"] \textcolor{white}{ll}&\textcolor{white}{ll} \textcolor{white}{l}\text{H}^\cdot(X^2\times X^2)((z))\arrow[r,"S(z)\sigma^*"]& \text{H}^\cdot(X^2\times X^2)((z))\arrow[d," \overline{p}_* "]\\
\text{H}^\cdot(X)\arrow[r,"\underline{q}^*"] &\text{H}^\cdot(X^2)\arrow[rr,"\Psi( e(\theta) \cdot - )"] && \text{H}^\cdot(X^2)((z))
\end{tikzcd}
\end{center}
Throughout we have identified 
$$\text{H}^\cdot(\text{Ext}^2) \ = \ \text{H}^\cdot(X^2\times X^2).$$
We have also written $(a^*\theta)^2=a^*\theta\boxplus a^*\theta$, and denoted $\Psi=act^*$ where $act$ is either the action of $\text{B}\mathbf{G}_m$ on the first factor of $X^2$, or the action on the first and third factors of $X^2\times X^2$. But notice that we can slightly rephrase the above diagram
\begin{equation}\label{diag3}
\begin{tikzcd}
\text{H}^\cdot(X\times X)\arrow[r,"\overline{q}^*"]\arrow[dr,"p_*q^*a^*",swap] & \text{H}^\cdot(\text{Ext}^2)\arrow[r,"\Psi( e((a^*\theta)^2) \cdot - )"]  \textcolor{white}{ll}&\textcolor{white}{ll} \textcolor{white}{l}\text{H}^\cdot(\text{Ext}^2)((z))\arrow[r,"S(z)\sigma^*"]& \text{H}^\cdot(\text{Ext}^2)((z))\arrow[d," \overline{p}_* "]\\
 &\text{H}^\cdot(X^2)\arrow[rr,"\Psi( e(\theta) \cdot - )"] && \text{H}^\cdot(X^2)((z))
\end{tikzcd}
\end{equation}
so that it only involved spaces with trivial torus actions. In particular, the diagram on localised $\mathbf{G}_m$ equivariant cohomology
\begin{center}
\begin{tikzcd}
\text{H}^\cdot_{\mathbf{G}_m}(X\times X)_{loc}\arrow[r,"\overline{q}^*"]\arrow[dr,"p_*q^*a^*",swap] & \text{H}^\cdot_{\mathbf{G}_m}(\text{Ext}^2)_{loc}\arrow[r,"\Psi( e((a^*\theta)^2) \cdot - )"] \arrow[d,dashed,"\alpha"] \textcolor{white}{ll}&\textcolor{white}{ll} \textcolor{white}{l}\text{H}^\cdot_{\mathbf{G}_m}(\text{Ext}^2)_{loc}((z))\arrow[r,"S(z)\sigma^*"]& \text{H}^\cdot_{\mathbf{G}_m}(\text{Ext}^2)_{loc}((z))\arrow[d," \overline{p}_* "]\\
 &\text{H}^\cdot_{\mathbf{G}_m}(X^2)_{loc}\arrow[rr,"\Psi( e(\theta) \cdot - )"] && \text{H}^\cdot_{\mathbf{G}_m}(X^2)_{loc}((z))
\end{tikzcd}
\end{center}
is simply (\ref{diag3})$\otimes k(t)$, so it is enough to show that this commutes. Notice that the arrow
$$\alpha\ =\ \overline{p}_*\frac{(-)}{e(\mathbf{N}_i)}$$
makes the left triangle commute. We are therefore left with showing that
\begin{equation}\label{eqn9}
\Psi \left(e(\overline{p}^*\theta)\cdot \frac{(-)}{e(\mathbf{N}_i)} \right)\ = \ S(z)\cdot \sigma^*\Psi(e((a^*\theta)^2)\cdot -)
\end{equation}
This defines $S(z)$ uniquely, what remains is to compute it. 
\begin{lem}\label{lemmacheck}
The element $S(z)$ defined by (\ref{eqn9}) is precisely our Yang-Baxter matrix of Theorem \ref{joycethm},
$$S(z) \ = \ \Psi(\theta)/\Psi(\sigma^*\theta).$$
\end{lem}
\begin{proof}The direct sum map
$$\oplus \ : \ X^2\times X^2 \ \longrightarrow \ \text{Ext}^2$$
gives an isomorphism on cohomology, so it is enough to work with $X^2\times X^2$. In what follows we will suppress $\oplus$ and $\oplus^*$ from the notation.

First begin with computing the K theory class $[\mathbf{N}_i]=-[\mathbf{T}_i]$. By repeatedly applying 
$$[\mathbf{T}_{X/Z}] \ = \ [\mathbf{T}_{X/Y}] +[\mathbf{T}_{Y/Z}]$$ 
we arrive at
$$[\mathbf{N}_i]\ = \ [i^*q^*\mathbf{T}_{\text{Ext}/X}]-[\mathbf{T}_{\text{Ext}^2/X^2}]\ = \ [\overline{q}^*\theta] - [a^*\theta\boxplus a^*\theta].$$
Now label the connected components of $X^2\times X^2$ by quadruples of components of $X$:
$$X^2\times X^2 \ = \ \coprod (X_{\alpha_1}\times X_{\beta_1})\times (X_{\alpha_2}\times X_{\beta_2}).$$
Denote by $\theta_{\alpha_1, \beta_2}$ for the pullback of $\theta$ under the projection
$$(X_{\alpha_1}\times X_{\beta_1})\times (X_{\alpha_2}\times X_{\beta_2}) \ \longrightarrow \ X_{\alpha_1}\times X_{\beta_2},$$ 
and similarly for other indices. We can now describe each element of (\ref{eqn9}) in turn.
\begin{enumerate}[label = \arabic*)]
\item $\overline{p}$ is the direct sum map
$$\overline{p}\ :\ (X_{\alpha_1}\times X_{\beta_1})\times (X_{\alpha_2}\times X_{\beta_2})\ \longrightarrow\ X_{\alpha_1+\beta_1}\times X_{\alpha_2+\beta_2}$$
and so
$$\overline{p}^*\theta\ =\ \theta_{\alpha_1,\alpha_2}\oplus \theta_{\alpha_1,\beta_2}\oplus \theta_{\beta_1,\alpha_2}\oplus \theta_{\beta_1,\beta_2}$$

\item $\overline{q}$ is another direct sum map 
$$\overline{q}\ :\ (X_{\alpha_1}\times X_{\beta_1})\times (X_{\alpha_2}\times X_{\beta_2})\ \longrightarrow\ X_{\alpha_1+\alpha_2}\times X_{\beta_1+\beta_2}$$
and so
$$\overline{q}^*\theta\ =\ \theta_{\alpha_1,\beta_1}\oplus \theta_{\alpha_1,\beta_2}\oplus \theta_{\alpha_2,\beta_1}\oplus \theta_{\alpha_2,\beta_2}.$$
\item Likewise, $a^*\theta\boxplus a^*\theta = \theta_{\alpha_1,\beta_1}\oplus \theta_{\alpha_2,\beta_2}$.
\end{enumerate}
It follows that $[\mathbf{N}_i]=[\theta_{\alpha_1,\beta_2}]+[ \theta_{\alpha_2,\beta_1}]$. Since all of the above complexes have nonzero $\text{B}\mathbf{G}_m$ weight, $\Psi(e(-))$ of them is a unit. Equation (\ref{eqn9}) is thus equivalent to
$$S(z)\ =\  \Psi(e(\theta_{\beta_1,\alpha_2}))/ \Psi(e(\theta_{\alpha_2,\beta_1})).$$
\end{proof}

\end{proof}

\section{Explicit computations} \label{explicit}

\subsection{} We explained a heuristic for computing CoHA products in section \ref{heuristic}, which was used to prove Theorem \ref{main}. We now use it to write down explicit formulas for CoHA products for representations of a quiver, which replicates the formulas in \cite{KS}, and coherent sheaves on a curve. To do this, we consider the correspondence
\begin{equation}\label{diag14}
\begin{tikzcd}
&\text{Ext}^s\arrow[rd,"\overline{p}"]\arrow[ld,"\widetilde{q}",swap]&\\
X^s\times X^s&&X^s
\end{tikzcd}
\end{equation}
to compute the localised CoHA map 
\begin{equation}\label{eqn17}
\overline{p}_*\frac{\widetilde{q}^*(-)}{e(\mathbf{N}_i)}
\end{equation}
as in section \ref{localisedcoha}. Recall that this is defined on $\text{H}^\cdot(X^s)(t)$, and restricting to $\text{H}^\cdot(X)$ gives the CoHA product.

\subsection{Quiver representations} Take as the split locus the moduli space parametrising direct sums of finitely many rank one representations
$$X^s \ =\ \coprod_{J} \left(\prod_{j\in J}X_1\right),$$
where $X_1$ is the moduli space of rank one representations. The connected components are labelled by finite sets $J$. Similarly, connected components of
$$\text{Ext}^s\ =\ \coprod_{J=J_1\cup J_2} \left(\prod_{j_1\in J_1}\text{Ext}_{1,0}\times \prod_{j_2\in J_2}\text{Ext}_{0,1}\right)$$
are labelled by pairs of finite sets $J_1,J_2$. Here $\text{Ext}_{1,0}$  parametrises extensions of a rank zero object by a rank one object (and vice-versa for $\text{Ext}_{0,1}$). Of course there are no nontrivial such extensions, and so
$$\text{Ext}^s\ =\ \coprod_{J=J_1\cup J_2} \left(\prod_{j_1\in J_1}X_1\times \prod_{j_2\in J_2}X_1\right).$$
The correspondence (\ref{diag14}) is 
\begin{center}
\begin{tikzcd}[bo column sep]
&[35pt]\coprod_{J=J_1\cup J_2} \left(\prod_{j_1\in J_1}X_1\times \prod_{j_2\in J_2}X_1\right)\arrow[rd,"\overline{p}"]\arrow[ld,swap,"\widetilde{q}"]\arrow[opacity=0]{ld}[opacity=1]{\sim}&[35pt]\\
\coprod_{J_1} \left(\prod_{j_1\in J_1}X_1\right)\times \coprod_{J_2} \left(\prod_{j_2\in J_2}X_1\right)&&\coprod_{J} \left(\prod_{j\in J}X_1\right)
\end{tikzcd}
\end{center}
Note that $\widetilde{q}$ is the identity and $\overline{p}$ is the identity on each connected component, so the localised CoHA can be understood purely combinatorially. Writing $V=\text{H}^\cdot(X_1)$, it is
$$\overline{p}_*\ :\ \bigoplus_{J_1, J_2}V^{\otimes|J_1|}\otimes V^{\otimes |J_2|}\ \longrightarrow\ \bigoplus_{J}V^{\otimes |J|}.$$
To finish we note that since $\overline{p}$ has degree zero its cotangent complex vanishes, and so $[\mathbf{N}_i]= [\overline{q}^*\theta]$ which recovers the explicit formula for the CoHA product in Theorem $2$ of \cite{KS}.

\subsection{Coherent sheaves on curves} This case is trickier because there are nontrivial rank zero objects. Take as split locus the moduli space parametrising finite direct sums of rank zero and one coherent sheaves. Its connected components are
$$(\text{Coh}_0^{e_1}\times\cdots\times \text{Coh}_0^{e_n})\times(\text{Coh}_1^{d_1}\times\cdots\times \text{Coh}_1^{d_m})$$
for integers $d_i$ and positive integers $e_i$ (see section \ref{description} for notation). In other words,
$$X^s\ =\ \coprod_{I\to\mathbf{N}}\left(\prod_{i\in I} \text{Coh}_0^{e_i}\right)\times \coprod_{J\to \mathbf{Z}}\left( \prod_{j\in J} \text{Coh}_1^{d_j}\right)$$
where the union is over all finite sets $I,J$ and functions $e:I\to\mathbf{N}$ and $d:J\to\mathbf{Z}$. 

\subsubsection{} Likewise, $\text{Ext}^s$ classifies direct sums of short exact sequences of coherent sheaves whose middle term has rank zero or one. Thus its connected components are products with multiplicity of three types of extension moduli spaces
$$\text{Coh}_{0,0}^{e,e'}, \ \ \text{Coh}_{1,0}^{d,e}, \ \ \text{Coh}_{0,1}^{e,d}.$$
 In formulae this reads
$$\text{Ext}^s\ = \ \coprod_{K\rightrightarrows\mathbf{N}}\left(\prod_{k\in K}\text{Coh}_{0,0}^{e_k,e_k'}\right)\times \coprod_{J=J_1\amalg J_2\to\mathbf{Z}\times\mathbf{N}}\left( \prod_{j_1\in J_1}\text{Coh}_{1,0}^{d_{j_1},e_{j_1}}\times  \prod_{j_2\in J_2}\text{Coh}_{0,1}^{e_{j_2},d_{j_2}}  \right).$$
The first union is over all finite sets $K$ and pairs of functions $e,e':K\to\mathbf{N}$. The second is over all finite sets $J$ with a partition into two $J=J_1\amalg J_2$, and functions $d:J\to \mathbf{Z}$ and $e:J\to\mathbf{N}$.

\subsection{} \label{sect1} We proceed with computing the CoHA product. The correspondence (\ref{diag14}) can be understood in terms of the following three very simple correspondences
\begin{center}
\begin{tikzcd}[bo column sep]
&\text{Coh}_{0,0}^{e,e'}\arrow[rd,"\alpha"]\arrow[swap,"q",ld]& &[-30pt]& &\text{Coh}_{0,1}^{e,d}\arrow[rd,"\beta"]\arrow[swap,"q",ld] && [-30pt]& &\text{Coh}_{1,0}^{d,e}\arrow[rd,"\gamma"]\arrow[swap,"q",ld]&\\
\text{Coh}_0^e\times \text{Coh}_0^{e'}&&\text{Coh}^{e+e'}_0&& \text{Coh}_0^e\times \text{Coh}_1^{d}&&\text{Coh}^{e+d}_1& & \text{Coh}_1^d\times \text{Coh}_0^{e}&&\text{Coh}^{d+e}_1\\
\end{tikzcd}
\end{center}
More precisely, the restriction of the correspondence (\ref{diag14}) to a connected component of $\text{Ext}^s$ can be schematically written as
\begin{center}
\begin{tikzcd}[bo column sep]
&[40pt](\text{Coh}_{0,0}^{?,?})^{|K|}\times (\text{Coh}_{1,0}^{?,?})^{|J_1|}\times (\text{Coh}_{0,1}^{?,?})^{|J_2|}\arrow[rd,"\overline{p}"]\arrow[ldd,"\widetilde{q}",swap]&[40pt] \\
&&  (\text{Coh}_{0}^{?})^{|K|}\times (\text{Coh}_{1}^{?})^{|J_1|}\times (\text{Coh}_{1}^{?})^{|J_2|}\\
\left((\text{Coh}_{0}^{?})^{|K|}\times (\text{Coh}_{1}^{?})^{|J_1|}\times (\text{Coh}_{0}^{?})^{|J_2|}\right) \times \left((\text{Coh}_{0}^{?})^{|K|}\times (\text{Coh}_{0}^{?})^{|J_1|}\times (\text{Coh}_{1}^{?})^{|J_2|}\right) &&\\
\end{tikzcd}
\end{center}
Here we have suppressed the degree of coherent sheaves from the notation, replacing them with $?$'s, so that
$$(\text{Coh}_{1,0}^{?,?})^{|J_1|}\ = \ \prod_{j_1\in J_1} \text{Coh}_{1,0}^{d_{j_1},e_{j_1}},$$
on the left side
$$(\text{Coh}_{1}^{?})^{|J_1|}\ =\ \prod_{j_1\in J_1} \text{Coh}_1^{d_{j_1}},\ \ (\text{Coh}_{0}^{?})^{|J_1|}\ =\ \prod_{j_1\in J_1} \text{Coh}_0^{e_{j_1}},$$
on the right side
$$(\text{Coh}_{1}^{?})^{|J_1|}\ =\ \prod_{j_1\in J_1} \text{Coh}_1^{d_{j_1}+e_{j_1}},$$
and so on.

\subsubsection{} \label{choice} The correspondence (\ref{diag14}) acts on the finite sets attached to the connected components by
\begin{center}
\begin{tikzcd}[bo column sep]
&[25pt](K,J_1,J_2)\arrow[rd,"\overline{p}",|->]\arrow[ld,"\widetilde{q}",|->,swap]&[25pt]\\
((K\amalg J_2, J_1),(K\amalg J_1, J_2))&& (K,J_1\amalg J_2)\\
\end{tikzcd}
\end{center}
In particular, given finite sets 
$$((I_1,J_1),(I_1,J_2))$$
attached to a connected component of $X^s\times X^s$, the map $\widetilde{q}$ depends on a non-canonical choice of decomposition
$$I_1\ =\ K\amalg J_2, \ \ \ I_2\ =\ K\amalg J_1.$$
Recall from section \ref{localisedcoha} that $\overline{q}$ is defined canonically, but not in general $\widetilde{q}$. Thus $\widetilde{q}^*$ and so the localised CoHA map may depend on this choice, however the restriction to $\text{H}^\cdot(X)^{\otimes 2}$ will be independent of this choice.

\subsubsection{} The first element of (\ref{eqn17}) we compute is the Euler class of the normal bundle to $i$. As we have seen in section \ref{mainsect}, as K theory classes
$$[\mathbf{N}_i]\ =\ -[\mathbf{T}_{\overline{p}}]\ +\ [\overline{q}^*\theta].$$
To begin with,
$$\mathbf{T}_{\overline{p}}\ =\ \mathbf{T}_\alpha^{\boxplus |K|}\boxplus \mathbf{T}_\beta^{\boxplus |J_1|}\boxplus\mathbf{T}_\gamma^{\boxplus |J_2|}.$$
 Next, writing $\oplus: X^s\times X^s\to X\times X$, we have
$$\overline{q}^*\theta\ =\ \widetilde{q}^*\oplus^*\theta.$$
Now since $\theta$ respects the direct sum structure on the moduli space (condition (1) in section \ref{general}), we can understand the pullback $\oplus^*\theta$ as
$$\oplus^*\theta \ = \ \bigoplus_{x,y\in K\amalg J_1\amalg J_2}\theta_{x,y}.$$
Here we have projected
$$\left((\text{Coh}_{0}^{?})^{|K|}\times (\text{Coh}_{1}^{?})^{|J_1|}\times (\text{Coh}_{0}^{?})^{|J_2|}\right) \times \left((\text{Coh}_{0}^{?})^{|K|}\times (\text{Coh}_{0}^{?})^{|J_1|}\times (\text{Coh}_{1}^{?})^{|J_2|}\right)\ \to\ \left(\text{Coh}_r^?\right)\times \left(\text{Coh}_{r'}^?\right)$$
onto the $(x,y)$th factor, and $\theta_{x,y}$ is the pullback of $\theta$ by this map. Moreover, notice
$$\bigoplus_{x=y} \widetilde{q}^*\theta_{x,y}\ =\ \mathbf{T}_\alpha^{\boxplus |K|}\boxplus \mathbf{T}_\beta^{\boxplus |J_1|}\boxplus\mathbf{T}_\gamma^{\boxplus |J_2|}\ =\ \mathbf{T}_{\overline{p}}$$
so it follows that
\begin{lem}
$[\mathbf{N}_i] =\bigoplus_{x\ne y} [\widetilde{q}^*\theta_{x,y}]$.
\end{lem}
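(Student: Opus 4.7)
The proof will be essentially a one-line K-theoretic rearrangement of the facts already assembled in the text immediately preceding the lemma. The plan is to start from the decomposition
$$[\mathbf{N}_i]\ =\ -[\mathbf{T}_{\overline{p}}]\ +\ [\overline{q}^*\theta]$$
established at the top of section \ref{sect1}, substitute the double-sum expansion
$$[\overline{q}^*\theta]\ =\ [\widetilde{q}^*\oplus^*\theta]\ =\ \sum_{x,y\in K\amalg J_1\amalg J_2}[\widetilde{q}^*\theta_{x,y}]$$
(which uses condition (1) of section \ref{general}, namely that $\theta$ respects $\oplus$, applied iteratively), and then cancel the diagonal against $[\mathbf{T}_{\overline{p}}]$ using the already observed identity $\bigoplus_{x=y}\widetilde{q}^*\theta_{x,y}=\mathbf{T}_{\overline{p}}$. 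The remaining off-diagonal terms are exactly the claim.

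The only thing that is not completely formal is verifying the diagonal identification $\bigoplus_{x=y}\widetilde{q}^*\theta_{x,y}=\mathbf{T}_{\overline{p}}$ for each of the three block types. For a $k\in K$-summand, $\theta_{k,k}$ is pulled back from $\text{Coh}_0^{e_k}\times\text{Coh}_0^{e_k'}$, and one must check that this pullback agrees with $\mathbf{T}_\alpha$ along $\widetilde{q}$; but the correspondence $\alpha$ is just the restriction of the universal Hall map $p:\text{Ext}\to X$ to the relevant component, whose tangent complex is by definition $q^*\text{Hom}(-,-) = q^*\theta$ (section \ref{coha}). The same argument handles the $\beta$- and $\gamma$-summands, since each is again a restriction of $p$. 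So on the diagonal the two expressions agree summand by summand.

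The only ``obstacle'' worth flagging is the choice-dependence issue from section \ref{choice}: the decomposition $I_1=K\amalg J_2$, $I_2=K\amalg J_1$ is not canonical, so a priori both sides of the lemma could depend on it. This is not actually a problem because both sides transform the same way under such choices — the diagonal summands $\widetilde{q}^*\theta_{x,x}$ are canonical (they only see the single object indexed by $x$, not the splitting), and the off-diagonal summands $\widetilde{q}^*\theta_{x,y}$ are permuted consistently with the choice. Hence no ambiguity arises in the final formula, and the lemma follows by direct subtraction in the K-theory of $\text{Ext}^s$.
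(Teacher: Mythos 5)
Your proposal is correct and is essentially identical to the paper's own argument: the paper derives the lemma exactly by combining $[\mathbf{N}_i]=-[\mathbf{T}_{\overline{p}}]+[\overline{q}^*\theta]$ with the expansion $\overline{q}^*\theta=\widetilde{q}^*\oplus^*\theta=\bigoplus_{x,y}\widetilde{q}^*\theta_{x,y}$ and the diagonal identification $\bigoplus_{x=y}\widetilde{q}^*\theta_{x,y}=\mathbf{T}_{\overline{p}}$. The extra detail you supply (identifying each diagonal term with $\mathbf{T}_\alpha$, $\mathbf{T}_\beta$, $\mathbf{T}_\gamma$ via $\mathbf{T}_p=q^*\theta$, and the remark on choice-independence) is consistent with what the paper asserts without elaboration.
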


\subsubsection{} We make a note about the cohomology
$$\text{H}^\cdot(X^s\times X^s)\ =\ \left( \bigoplus_{I_1\to\mathbf{N}} \otimes_{i_1\in I_1} \text{V}_e^{i_1}\otimes \bigoplus_{J_1\to\mathbf{Z}} \otimes_{j_1\in J_1} \text{W}^{j_1}_d\right) \otimes 
\left( \bigoplus_{I_2\to\mathbf{N}} \otimes_{i_2\in I_2} \text{V}^{i_2}_{e'}\otimes \bigoplus_{J_2\to\mathbf{Z}} \otimes_{j_2\in J_2} \text{W}_d^{j_2}\right)$$
where we have used the shorthand  
$$\text{V}^i_e\ =\ \text{H}^\cdot(\text{Coh}^{e_i}_0)\ \simeq \ \text{Sym}^{e_i}(\text{V}_C)\ \ \ \text{ and }\ \ \ \text{W}^j_d\ =\ \text{H}^\cdot(\text{Coh}_1^d)\ \simeq\ \text{Sym}(\text{W}_{C,1}).$$
Recall that $\text{V}_C=\text{H}^\cdot(C\times \text{B}\mathbf{G}_m)$ and $\text{W}_{C,1}$ is the super vector space spanned by the $\text{H}^\cdot(\text{Coh}_d^1)$ coefficients of the Chern classes of the tautological coherent sheaf on $C\times \text{Coh}_d^1$.

\subsubsection{} Recall also that the connected components of $X^s$ are labelled by pairs of finite sets decorated with functions capturing the information about the degree of the sheaves:
$$\mathcal{C}\ =\ \{(I,J, \ I\stackrel{e}{\to}\mathbf{N},\ J\stackrel{d}{\to}\mathbf{Z})\}$$
and so the components of $X^s\times X^s$ are labelled by
$$\mathcal{C} \times\mathcal{C} \ =\ \{(I_1,J_1,I_2,J_2,\ I_1\stackrel{e}{\to}\mathbf{N}, \ I_2\stackrel{e'}{\to}\mathbf{N}, \ J_1\amalg J_2\stackrel{d}{\to}\mathbf{Z})\}.$$

\subsubsection{} The maps $\alpha,\beta$ and $\gamma$ induce pushforward maps, which we call
$$\alpha_*^k\ :\ \text{V}_e^k\otimes \text{V}_{e'}^{k}\ \longrightarrow\ \text{V}_{e+e'}^{k},$$
$$\beta_*^{j_1}\ :\ \text{V}_e^{j_1}\otimes \text{W}_{d}^{j_1}\ \longrightarrow\ \text{W}_{e+d}^{j_1},$$
$$\gamma_*^{j_2}\ :\ \text{W}_d^{j_2}\otimes \text{V}_{e}^{j_2}\ \longrightarrow\ \text{W}_{d+e}^{j_2}.$$
It follows that
$$\overline{p}_*\ =\ \bigotimes_{(k,j_1,j_2)\in (K,J_1,J_2)}\alpha_*^k\otimes \beta_*^{j_1}\otimes \gamma_*^{j_2}.$$

\subsubsection{} Fix a connected component of $X^s\times X^s$
$$c\ =\ (I,J_1,I',J_2,\ I_1\stackrel{e}{\to}\mathbf{N}, \ I_2f\stackrel{e'}{\to}\mathbf{N}, \ J_1\amalg J_2\stackrel{d}{\to}\mathbf{Z})\ \in\ \mathcal{C}\times\mathcal{C}$$ 
To define the localised CoHA map we need to for each $c$ make a non-canonical choice
$$I_1\ \simeq \ K\amalg J_2, \ \ \ I_2\ \simeq\ K\amalg J_1$$
for another finite set $K$. The restriction of the localised CoHA map to $\text{H}(X)^{\otimes 2}$ will be independent of these choices. If no such $K$ exists, the image of $\widetilde{q}$ will not hit this component and the localised CoHA map on this component will vanish. The localised CoHA map then sends $c$ to the connected component 
$$ (K,J_1\amalg J_2,\ K\stackrel{e\vert_K}{\to}\mathbf{N}, \ J_1\amalg J_2\stackrel{d}{\to}\mathbf{Z})\ \in\ \mathcal{C}.$$
We can now state the result.

\begin{theorem}\label{explicitthm} The localised CoHA map on this component of $X^s\times X^s$ 
$$\left( \otimes_{i\in I} \text{V}_e^i\otimes \otimes_{j_1\in J_1} \text{W}^{j_1}_d\right) \otimes 
\left( \otimes_{i'\in I'} \text{V}^{i'}_{e'}\otimes  \otimes_{j_2\in J_2} \text{W}_d^{j_2}\right)\ \longrightarrow\ \left( \otimes_{k\in K} \text{V}_e^k\otimes \otimes_{j\in J_1\cup J_2} \text{W}^{j}_d\right)(t) $$
is given by
$$\eta\ \longmapsto \ \left(\bigotimes_{(k,j_1,j_2)\in (K,J_1,J_2)}\alpha_*^k\otimes \beta_*^{j_1}\otimes \gamma_*^{j_2}\right)\left(\frac{\eta }{ \prod_{x\ne y }e(\theta_{x,y})}\right).$$
The inner product is over elements $x,y\in K\amalg J_1\amalg J_2$.
\end{theorem}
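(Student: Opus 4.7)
The approach is a direct unpacking of the general localised CoHA formula $\overline{p}_*(\widetilde{q}^*(-)/e(\mathbf{N}_i))$ from equation (\ref{eqn17}), feeding in the structural decompositions already assembled in section \ref{sect1}. There is essentially nothing new to prove, only bookkeeping.

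\textbf{Step 1 (Euler class of $\mathbf{N}_i$).} The lemma at the end of section \ref{sect1} gives the K-theoretic identity
\begin{equation*}
[\mathbf{N}_i] \ =\ \bigoplus_{x\neq y}[\widetilde{q}^*\theta_{x,y}].
\end{equation*}
The $\mathbf{G}_m$ acting on $\text{Ext}^s$ (used to apply abelian localisation in section \ref{heuristic}) scales the $x$-th and $y$-th summands independently, so whenever $x\neq y$ the complex $\theta_{x,y}$ has nonzero weight and $e(\theta_{x,y})$ is a unit in localised equivariant cohomology. The Whitney sum formula for perfect complexes (section \ref{whitneysingular}) therefore upgrades the K-theoretic identity to a multiplicative identity of Euler classes
\begin{equation*}
e(\mathbf{N}_i) \ =\ \prod_{x\neq y}\widetilde{q}^*e(\theta_{x,y})
\end{equation*}
in $\text{H}^\cdot_{\mathbf{G}_m}(\text{Ext}^s)_{loc}$.

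\textbf{Step 2 (Factor $\overline{p}_*$).} Over the fixed connected component labelled by $(K,J_1,J_2)$, the map $\overline{p}$ is literally the Cartesian product of $|K|$ copies of the correspondence $\alpha$, $|J_1|$ copies of $\beta$, and $|J_2|$ copies of $\gamma$, as written schematically in section \ref{sect1}. By the Künneth formula and compatibility of proper pushforward with finite products of maps, the induced pushforward on cohomology is the tensor product $\bigotimes_{k}\alpha_*^k\otimes\bigotimes_{j_1}\beta_*^{j_1}\otimes\bigotimes_{j_2}\gamma_*^{j_2}$.

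\textbf{Step 3 (Combine).} Having fixed the (non-canonical) decomposition $I_1\simeq K\amalg J_2$, $I_2\simeq K\amalg J_1$ from \ref{choice}, the pullback $\widetilde{q}^*$ is just the identification of tensor factors induced by this decomposition, so an input class $\eta$ on $X^s\times X^s$ becomes, tautologically, the class $\eta$ on $\text{Ext}^s$. Substituting Step 1 and Step 2 into (\ref{eqn17}) immediately yields the formula in the theorem statement.

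The only mild subtlety is the dependence on the chosen decomposition $I_1\simeq K\amalg J_2$, $I_2\simeq K\amalg J_1$: different choices permute the tensor factors involved, so the localised CoHA map on $\text{H}^\cdot(X^s\times X^s)(t)$ genuinely depends on them. As discussed in \ref{choice}, this is harmless because its restriction to $\text{H}^\cdot(X)^{\otimes 2}$, which is where the honest (non-localised) CoHA product lives, is invariant under such relabelling — and that restriction is the only thing which must be well-defined.
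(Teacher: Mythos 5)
Your proposal is correct and follows essentially the same route as the paper: the paper likewise assembles the theorem from the lemma $[\mathbf{N}_i]=\bigoplus_{x\ne y}[\widetilde{q}^*\theta_{x,y}]$, the factorisation $\overline{p}_*=\bigotimes\alpha_*^k\otimes\beta_*^{j_1}\otimes\gamma_*^{j_2}$, and the identification of $\widetilde{q}^*$ via the chosen decomposition $I_1\simeq K\amalg J_2$, $I_2\simeq K\amalg J_1$, with the passage from the K-theory identity to the product of Euler classes justified exactly as you do, by the nonvanishing of the $\mathbf{G}_m$ weights for $x\ne y$ and the Whitney sum formula in localised equivariant cohomology. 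Your remark on the non-canonical choice and its harmlessness after restriction to $\text{H}^\cdot(X)^{\otimes 2}$ also matches the paper's discussion.
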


\subsubsection{} The Euler class denominator is trivial when $|K\amalg J_1\amalg J_2|=1$ and the above becomes a genuine (non-localised) algebra product. In this case the localised CoHA correspondence becomes one of the three simple CoHA correspondences in section \ref{sect1}.

\subsection{} We conclude by discussing the pushforwards $\alpha_*$, $\beta_*$ and $\gamma_*$, and their cotangent complexes, which are pullbacks of the complexes $\theta_{x,y}$.

\subsubsection{} \label{coha00} First consider the rank zero correspondence
\begin{center}
\begin{tikzcd}[bo column sep]
&\text{Coh}_{0,0}^{e,e'}\arrow[rd,"\alpha"]\arrow[ld,swap,"q"]& \\
\text{Coh}_0^e\times \text{Coh}_0^{e'}&&\text{Coh}^{e+e'}_0
\end{tikzcd}
\end{center}
The work \cite{He} of Heinloth can be easily adapted to show that the CoHA product $\alpha_*q^*$ is thus the usual algebra structure on
$$\Sym(\text{V}_C)\ =\ \bigoplus_{e\ge 0} \text{H}^\cdot(\text{Coh}_0^e).$$
Since $\alpha$ is generically finite, $\mathbf{T}_\alpha$ generically vanishes.

\subsubsection{} Turn secondly to  
\begin{center}
\begin{tikzcd}[bo column sep]
 &\text{Coh}_{0,1}^{e,d}\arrow[rd,"\beta"]\arrow[ld,swap,"q"]&\\
 \text{Coh}_0^e\times \text{Coh}_1^{d}&&\text{Coh}^{e+d}_1 
\end{tikzcd}
\end{center}
Whilst $\beta_*$ is complicated to work out, its stratified pieces with respect to the stratification $\text{Coh}_1^{d+e,\ell}\subseteq \text{Coh}_1^{d+e}$ by length $\ell$ of torsion subsheaf are easy to compute. Setting $f=d+e-\ell$, we have
\begin{center}
\begin{tikzcd}
\text{Coh}_0^e\times (\text{Coh}_0^{\ell-e}\times\text{Pic}^f)\arrow[d]& \text{Coh}_{0,0}^{e,\ell-e}\times \text{Pic}^f\arrow[r]\arrow[l,swap]\arrow[d]&\text{Coh}_0^\ell\times \text{Pic}^f\arrow[d]\\

\text{Coh}_0^e\times\text{Coh}_1^{d,\ell-e}\arrow[d]& \text{Coh}_{0,1}^{e,d,\ell}\arrow[r,"\beta_\ell"]\arrow[l,swap,"q_\ell"]\arrow[d]&\text{Coh}_1^{d+e,\ell}\arrow[d]\\

\text{Coh}_0^e\times \text{Coh}_1^{d}&\text{Coh}_{0,1}^{e,d}\arrow[r,"\beta"]\arrow[l,"q",swap]&\text{Coh}_1^{d+e}
\end{tikzcd}
\end{center}
We have defined $\text{Coh}_{0,1}^{d,e,\ell}$ so the lower right square is Cartesian. The upper right square is Cartesian because there are no nonzero maps from a torsion sheaf into a line bundle.
 The top row of vertical arrows are all vector bundles, so that 
$$\beta_{\ell,*}q_\ell^*\ =\ \text{rank zero CoHA product}\ \otimes \ \text{id}_{\text{H}^\cdot(\text{Pic}^f)}.$$
 Now, since $q^*$ and $\beta_*$ are uniquely determined by their restriction to the strata (see appendix \ref{stratifications}), the above uniquely determines the first postive rank CoHA product $\beta_*q^*$. Moreover, by the above $\mathbf{T}_{\beta_\ell}$ are given in terms of $\mathbf{T}_\alpha$.

\subsubsection{}\label{kevin} Finally we consider the last positive rank case
\begin{center}
\begin{tikzcd}[bo column sep]
&\text{Coh}_{1,0}^{d,e}\arrow[rd,"\gamma"]\arrow[ld,swap,"q"]& \\
\text{Coh}_1^d\times \text{Coh}_0^{e}&&\text{Coh}^{d+e}_1
\end{tikzcd}
\end{center}
Partial information about the CoHA product can be computed by stratifying the base of $\gamma$:
\begin{center}
\begin{tikzcd}
& \text{Coh}_{1,0}^{d,e,\ell}\arrow[r,"\gamma_\ell"]\arrow[d]&\text{Coh}_1^{d+e,\ell}\arrow[d]\\

 \text{Coh}_1^{d}\times \text{Coh}_0^e&\text{Coh}_{1,0}^{d,e}\arrow[r,"\gamma"]\arrow[l,"q",swap]&\text{Coh}_1^{d+e}
\end{tikzcd}
\end{center}
However, in contrast to last section, $\text{Coh}_{1,0}^{d,e,\ell}$ is more complicated because there exist nonzero maps from line bundles into torsion sheaves. To proceed in computing $\gamma_{\ell*}$ we apply an argument suggested to us by Kevin Lin. First fix some notation:

\begin{enumerate}[label = \arabic*)]
\item $\text{Coh}^{d+e,\ell}_1$ classifies rank one degree $d+e$ coherent sheaves $\mathcal{E}$ whose torsion part $\mathcal{T}$ has length $\ell$. That is to say, it classifies short exact sequences
$$\mathcal{T}\ \longrightarrow\ \mathcal{E}\ \longrightarrow\ \mathcal{Q}$$
of a degree $\ell$ torsion sheaf $\mathcal{T}$ by a degree $f=d+e-\ell$ line bundle $\mathcal{Q}$.

\item $\text{Coh}_{1,0}^{d,e}$ classifies extensions
$$0\ \longrightarrow\ \mathcal{E}'\ \longrightarrow \ \mathcal{E}\ \longrightarrow\ \mathcal{E}''\ \longrightarrow\ 0$$
where $\mathcal{E}'$ has rank one and degree $d$, and $\mathcal{E}''$ has rank zero and degree $e$.
\item \label{extensionspace} The pullback $\text{Coh}_{1,0}^{d,e,\ell}$ classifies
\begin{center}
\begin{tikzcd}

\mathcal{T}'\arrow[r]\arrow[d]& \mathcal{E}'\arrow[r]\arrow[d] & \mathcal{Q}'\arrow[d] & & ? & d&?\\

\mathcal{T}\arrow[r]\arrow[d]& \mathcal{E}\arrow[r]\arrow[d] & \mathcal{Q}\arrow[d] & & \ell& d+e&f\\

\mathcal{T}''\arrow[r]& \mathcal{E}''\arrow[r] & \mathcal{Q}'' & & ? & e& ? 

\end{tikzcd}
\end{center}
where all rows and columns are exact sequences, the left horizontal arrows are maximal torsion subsheaves, and the degrees are indicated on the right.

\end{enumerate}
Note that fixing one of the $?$'s determines the rest. Thus there is a stratification $\text{Coh}_{1,0}^{d,e,\ell,\ell'}\subseteq \text{Coh}_{1,0}^{d,e,\ell}$ given by bounding the length of $\mathcal{T}'$, classifying the same data as above, except that the degrees are fixed:
\begin{center}
\begin{tikzcd}

\mathcal{T}'\arrow[r]\arrow[d]& \mathcal{E}'\arrow[r]\arrow[d] & \mathcal{Q}'\arrow[d] & & \le \ell' & d&\ge f'\\

\mathcal{T}\arrow[r]\arrow[d]& \mathcal{E}\arrow[r]\arrow[d] & \mathcal{Q}\arrow[d] & & \ell& d+e&f\\

\mathcal{T}''\arrow[r]& \mathcal{E}''\arrow[r] & \mathcal{Q}'' & & \ge \ell'' & e& \le f''

\end{tikzcd}
\end{center}
Notice that the strata are labelled by $0\le \ell'\le \ell$, so in particular there are finitely many strata. 

\begin{enumerate}[label = \arabic*)] \setcounter{enumi}{2}
\item Write $\mathcal{M}$ for the space classifying
\begin{center}
\begin{tikzcd}

\mathcal{T}'\arrow[d]&  & \mathcal{Q}'\arrow[d] & & \ell' & &f'\\

\mathcal{T}\arrow[r]\arrow[d]& \mathcal{E}\arrow[r] & \mathcal{Q}\arrow[d] & & \ell& d+e&f\\

\mathcal{T}''&  & \mathcal{Q}'' & & \ell'' & & f''

\end{tikzcd}
\end{center}
with notation as in (\ref{extensionspace}) above. Similarly, write $\widetilde{\mathcal{M}}$ for the space classifying
\begin{center}
\begin{tikzcd}

\mathcal{T}'\arrow[d]\arrow[r]&  \mathcal{T}'\oplus\mathcal{Q}'\arrow[r]\arrow[d]& \mathcal{Q}'\arrow[d] & & \ell' & d&f'\\

\mathcal{T}\arrow[r]\arrow[d]& \mathcal{E}\arrow[r]\arrow[d] & \mathcal{Q}\arrow[d] & & \ell& d+e&f\\

\mathcal{T}''\arrow[r]& \mathcal{T}''\oplus\mathcal{Q}''\arrow[r] & \mathcal{Q}'' & & \ell'' & e& f''

\end{tikzcd}
\end{center}
\end{enumerate}
The point of considering $\mathcal{M}$ is that we have the pullback
\begin{center}
\begin{tikzcd}
\text{Coh}_{0,0}^{\ell'',\ell''}\times \text{Pic}_{1,0}^{f',f''}\arrow[r]\arrow[d] & \mathcal{M}\arrow[d,"\pi"]\\
\text{Coh}^\ell_0\times \text{Pic}^f  \arrow[r] & \text{Coh}_1^{d+e,\ell}
\end{tikzcd}
\end{center}
and the horizontal arrows give isomorphisms on cohomology, so  $\pi_*$ is easy to compute. This can be used to gain information about $\gamma_{\ell*}$, using the diagram
\begin{center}
\begin{tikzcd}
&\widetilde{\mathcal{M}}\arrow[ddd,"\widetilde{\pi}"]\arrow[rd, bend left = 10]\arrow[ld,  bend right = 10]&\\
\mathcal{M}\arrow[rdd, bend right = 25,"\pi",swap]&&\text{Coh}_{1,0}^{d,e,\ell,\ell'}\arrow[d,"j_{\ell'}"]\\
&& \text{Coh}_{1,0}^{d,e,\ell}\arrow[ld, "\gamma_\ell"]\\
&\text{Coh}_1^{d+e,\ell}&
\end{tikzcd}
\end{center}
The bottom two maps are proper, and the top two are affine space fibrations. Applying cohomology to $\widetilde{\pi}_!\widetilde{\pi}^!k\to k$ thus gives
\begin{center}
\begin{tikzcd}
&\text{H}^\cdot(\widetilde{\pi}_!k)\arrow[rd, bend left = 10,"\sim"]\arrow[ld,  bend right = 10,"\sim",swap]&\\
\text{H}^\cdot(\mathcal{M})\arrow[rdd, bend right = 25,"\pi_*",swap]&&\text{H}^\cdot(\text{Coh}_{1,0}^{d,e,\ell,\ell'},j_{\ell'!}k)\arrow[d]\\
&& \text{H}^\cdot(\text{Coh}_{1,0}^{d,e,\ell,\ell'})\arrow[ld, "\gamma_{\ell*}"]\\
&\text{H}^\cdot(\text{Coh}_1^{d+e,\ell})&
\end{tikzcd}
\end{center}
where we have omitted grading shifts from the notation. This determines what $\gamma_{\ell*}$ is on the image of $\text{H}^\cdot(\text{Coh}_{1,0}^{d,e,\ell,\ell'},j_{\ell'!}k)$.

\newpage

\appendix

\section{Spaces and sheaves} \label{sheaves}

\subsection{} Grothendieck's \textit{six functor formalism} is an invaluable enhancement of the notion of cohomology. Standard properties of cohomology are lifted to the category $\text{Sh}(X)$ of \textit{sheaves} on the space $X$.

\subsection{} Fix a category $\mathcal{C}$ of \textit{spaces}. Examples will include topological spaces, schemes, stacks, or derived stacks. Following [CD], a \textbf{sheaf theory} on $\mathcal{C}$ is an assignment
$$X\in\mathcal{C}\ \ \rightsquigarrow \ \ \text{Sh}(X)$$ of a triangulated category $\text{Sh}(X)$ to every space $X\in\mathcal{C}$, and an assignment to every map in $\mathcal{C}$ two pairs of adjoint triangulated functors $(f^*,f_*),(f_!,f^!)$
\begin{center}
\begin{tikzcd}
 X\stackrel{f}{\to} Y &[-20pt] \rightsquigarrow &[-20pt] \text{Sh}(X)\arrow[r,shift left=1,"f_*\text{,}f^!"]& \text{Sh}(Y) \arrow[l,shift left=1,"f^*\text{,}f^!"]
\end{tikzcd}
\end{center}
The assignment $f\mapsto f_*,f_!$ (resp. $f^*,f^!$) should define covariant (resp. contravariant) $2$-functors. The final piece of data two bifunctors $\otimes$ and $\mathcal{H}\text{om}$ defined on $\text{Sh}(X)$, satisfying tensor-hom adjunction:
$$\text{Hom}(\mathcal{A}\otimes\mathcal{B},\mathcal{C})\ \simeq\ \text{Hom}(\mathcal{A},\mathcal{H}\text{om}(\mathcal{B},\mathcal{C}),$$
and such that $f^*$ is monoidal with respect to $\otimes$.

\subsubsection{} These are the \textit{six functors}, and are required to satisfy the following:

\begin{enumerate}[label = \arabic*)]
\item \label{one} There is a natural transformation $f_!\to f_*$, which is an isomorphism if $f$ is proper. If $j$ is an open embedding, $j^*=j^!$. If $f$ is smooth of dimension $d$ then $f^!=f^*[2d]$.\footnote{In this paper we will ignore all Tate twists.}
\item \textit{Kashiwara's theorem}. If $i$ is a closed embedding then $i_*i^*=\text{id}$, so
$$i_*\ :\ \text{Sh}(X)\ \longrightarrow\ \text{Sh}(Y)$$
is a fully faithful map. It induces an equivalence $\text{Sh}(X)\simeq\text{Sh}_X(Y)$ to sheaves on $Y$ supported along $X$. In particular, $\text{Sh}(X)=\text{Sh}(X_{red})$ only depends on the reduced structure of $X$, and if $\mathcal{C}$ is the category of dg stacks or schemes,
$$\text{Sh}(X)\ =\ \text{Sh}(X_{cl}).$$
\item \textit{Mayer-Vietoris}. If $i$ and $j$ are complementary closed and open embeddings, there is a  distinguished triangle
$$i_*i^!\ \longrightarrow\ \text{id}\ \longrightarrow\ j_*j^!\ \stackrel{+1}{\longrightarrow}$$
called the \textit{Mayer-Vietoris} triangle. Applying $i^*$ to this then gives what we shall call the \textit{Gysin} triangle
$$i^!\ \longrightarrow\ i^*\ \longrightarrow\ i^*j_*j^*\ \stackrel{+1}{\longrightarrow}$$
\item \textit{Base change}. Take a Cartesian square
\begin{center}
\begin{tikzcd}
\overline{X}\arrow[r,"\overline{f}"]\arrow[d,"\overline{g}"]&\overline{Y}\arrow[d,"g"]\\
X\arrow[r,"f"]& Y
\end{tikzcd}
\end{center}
Then if $f$ is separated of finite type, there are natural isomorphisms 
$$g^*f_!\ \stackrel{\sim}{\longrightarrow}\ \overline{f}_!\overline{g}^*\ \ \ \text{ and }\ \ \ \overline{g}_*\overline{f}^!\ \stackrel{\sim}{\longrightarrow}\ f^!g_*.$$
Combining this with (\ref{one}) gives the standard statements of smooth base change and proper base change.
\item Sheaves on a point is the category of dg vector spaces, $\text{Sh}(\text{pt})=\text{Vect}_k$.
\item If $f$ is separated of finite type, then there are natural isomorphisms
$$\textcolor{white}{aaa}f_!A\otimes_YB\ \stackrel{\sim}{\longrightarrow}\ f_!(A\otimes_X f^*B),$$
$$\textcolor{white}{aaaaaa} \mathcal{H}\text{om}_Y(f_!A,B)\ \stackrel{\sim}{\longrightarrow}\ f_*\mathcal{H}\text{om}_X(A,f^!B), \ \ \ f^!\mathcal{H}\text{om}_Y(B,B')\ \stackrel{\sim}{\longrightarrow}\ \mathcal{H}\text{om}_X(f^*B,f^*B').$$
\end{enumerate}

\subsubsection{} Almost all our arguments work for general sheaf theories. Our last requirement will be that $\text{H}^\cdot(\mathbf{A}^1) = k$, that the cohomology of the affine line is trivial. This is because we will need that
$$\text{H}^\cdot(\text{B}\mathbf{G}_m)\ =\ k[t],$$
is freely generated by a degree two generator. This can be deduced from the fact that $\text{H}^\cdot(\mathbf{G}_m)=k[u]/u^2$ using the arguments section \ref{shclass}, which in turn follows from the triviality of $\text{H}^\cdot(\mathbf{A}^1)$ by Mayer-Vietoris.

\subsubsection{} There is a distinguished sheaf on the point, the one dimensional vector space in degree zero $k\in\text{Sh}(\text{pt})$, and so pulling back by the projection $p:X\to\text{pt}$ gives two sheaves on any space $k_X = p^*k$ and $\omega_X =p^!k$, the so-called \textit{constant} and \textit{dualising} sheaves. They are functorial with respect to maps as $f^*k_Y= k_X$ and $f^!\omega_X= \omega_Y$ and  if $X$ is smooth then $\omega_X\simeq k_X[2d_X]$ by condition (\ref{one}). Next, for any space we can use the dualising sheaf to define the \textit{Verdier duality} functor
$$\mathbf{D}_X\ :\ \text{Sh}(X)\ \stackrel{\sim}{\longrightarrow}\ \text{Sh}(X)^{op}\textcolor{white}{aaaaaaaa} A\ \longmapsto\ \mathcal{H}\text{om}_X(A,\omega_X).$$
It is self-dual, and exchanges the four functors $f_!=  \mathbf{D}_Yf_*\mathbf{D}_X$ and $f^!=\mathbf{D}_Xf^*\mathbf{D}_Y$. On $X=\text{pt}$ Verdier duality simply takes dual vector spaces.

\subsection{} Cohomology is easily recoverable. The \textit{cohomology} 
$$\text{H}^\cdot\ :\ \text{Sh}(X)\ \longrightarrow\ \text{Sh}(\text{pt})\ =\ \text{Vect}$$
is the pushforward $p_*$ along $p:X\to \text{pt}$. By adjunction, this is the same thing as $\text{Maps}(k_X,-)$. Compactly supported cohomology $\text{H}^\cdot_c$ is likewise defined as $p_!$, and Verdier duality gives
$$\text{H}^\cdot(X,A) \ =\ \text{H}^\cdot_c(X,\mathbf{D}_XA)^\vee.$$
\subsubsection{}If $X$ is smooth and $A$ is the constant sheaf, this recovers Poincar{\'e} duality. Finally, if a group $G$ acts on a space $X$
$$\text{H}^\cdot_G(X,A)\ =\ \text{H}^\cdot(X/G,A)$$
is called the \textit{equivariant cohomology} of the sheaf  $A$ on the stack quotient $X/G$. 

\subsubsection{}To give a flavour of how the structures on cohomology are recovered on the level of sheaves, note that the unit $A\to f_*f^*A$ of the adjunction induces the pullback map on cohomology
$$f^*\ :\ \text{H}^\cdot(Y,A)\ \longrightarrow\ \text{H}^\cdot(X,f^*A)$$
and so in this sense cohomology is functorial. The counit $f_!f^!A\to A$ gives a pushforward map if $f$ is smooth and proper
$$f_*\ :\ \text{H}^{*+2\dim f}(X,f^*A)\ \longrightarrow\ \text{H}^\cdot(Y,A).$$
See \ref{umkehr} for a further discussion of this map. Similarly, the unit $A\boxtimes A'\to \Delta_*\Delta^*A\boxtimes A'$ of the diagonal map $\Delta:X\to X\times X$ gives the cup product
$$\cup\ :\ \text{H}^\cdot(X,A)\otimes \text{H}^\cdot(X,A')\ \longrightarrow \ \text{H}^\cdot(X,A\otimes A').$$
In particular it follows that the cohomology of every sheaf is a module over the algebra $\text{H}^\cdot(X)=\text{H}^\cdot(X,k_X)$.

\subsection{Examples} Historically the first example was the category of $\ell$-adic sheaves. Fix a base field whose characteristic is prime to $\ell$. Then one can consider
$$\text{Sh}(X)\ =\ \text{D}_c^b(X,\mathbf{Q}_\ell)$$
the bounded derived category of constructible $\ell$-adic sheaves. Grothendieck and others initially developed this theory for $X$ a finite type separated scheme, but recently in [LZ12] and [LZ14] the formalism has been extended to higher Artin stacks. The functors $f_*,f^*$ exist for arbitrary maps $f$, and $f_!,f^!$ exist when the spaces are locally of finite type. The theory extends to derived higher Artin stacks using $\text{Sh}(X)=\text{Sh}(X_{cl})$.

\subsubsection{}Associated to any variety over a field of characteristic zero is the derived category $D(X)$ of its coherent $D$-modules. There is a subcategory
$$\text{Sh}(X)\ =\ D_{hol}(X)$$
of \textit{holonomic} D modules, which defines a sheaf theory. See for instance \cite{Be} or \cite{HTT}. For more general spaces $X$ the category $D(X)$ is well-studied, as in [GR11] and \cite{BD}, but the literature on $D_{hol}(X)$ is sparser.

\subsubsection{} Other sheaf theories admitting the six functor formalism include topological spaces with
$$\text{Sh}(X)\ =\ \text{D}^+_{\text{Ab}}(X)$$
 simply the (bounded below) derived category of sheaves of abelian groups on $X$. See \cite{I} for an account of this. There is also arithmetic $D$-modules algebraic stacks (giving $p$-adic cohomology, see [A]), the category of mixed Hodge modules (see \cite{Sa}), and a universal version for motivic homotopy theory (see \cite{DGal}).

\subsection{Bivariant homology} \label{biv} Fulton and MacPherson discovered a modification of usual cohomology which is much better suited to the study of singular spaces. In the more modern language of the six functors, \cite{Deg} defines the \textit{bivariant homology} of a map $f:X\to S$ to be
$$\text{H}^\cdot(X/S)\ =\ \text{H}^\cdot(X,f^!k_S).$$
Equivalently, it consists of maps $k_X\to f^!k_S$ in $\text{Sh}(X)$. Bivariant homology interpolates between ordinary cohomology and Borel-Moore homology
$$\text{H}^\cdot(X/X)\ =\ \text{H}^\cdot(X), \textcolor{white}{aaaaaaaaa} \text{H}^\cdot(X/\text{pt})\ =\ \text{H}^\cdot_{BM}(X).$$
It carries three basic operations:
\begin{enumerate} [label = \arabic*)]
\item A \textit{product} map 
$$\cdot\ :\ \text{H}^\cdot(Z/Y)\otimes \text{H}^\cdot(Y/X)\ \longrightarrow\ \text{H}^\cdot(Z/X)$$
for any morphisms $Z\stackrel{f}{\to}Y\stackrel{g}{\to} X$. The product  $\alpha\cdot\beta$ is $k_Z\stackrel{\alpha}{\to}f^!k_Y\stackrel{f^!\beta}{\to} f^!g^!k_X$ viewed as a bivariant homology class.

\item A \textit{proper pushforward} map. In in the above $f$ is proper, we get
$$f_*\ :\ \text{H}^\cdot(Z/X)\ \longrightarrow\ \text{H}^\cdot(Y/X)$$
given by $f_*f^!g^!k_Z=f_!f^!g^!k_Z\to g^!k_Z$.

\item A \textit{pullback} map 
$$g^*\ :\ \text{H}^\cdot(Y/X)\ \longrightarrow\ \text{H}^\cdot(Y'/X')$$ associated to any a Cartesian diagram
\begin{center}
\begin{tikzcd}
Y'\arrow[r,"f'"]\arrow[d,"g'"]& X'\arrow[d,"g"]\\
Y\arrow[r,"f"]& X
\end{tikzcd}
\end{center}
given by $f^!k_{X}\to f^!g_*g^*k_{X} = g'_*{f'}^!k_{X'}.$
\end{enumerate}
Moreover it is a \textit{bivariant theory} as per section $7.4$ of \cite{FM}, in the sense that the three operations satisfy the following conditions:
\begin{enumerate}
\item[$A_1$)] The product is associative: $(\alpha\cdot \beta)\cdot \gamma=\alpha\cdot (\beta\cdot\gamma)$.
\item[$A_2$)] Pushforward is functorial: if $f_1,f_2$ are composable proper maps, $f_{1*}(f_{2*}(\alpha))=(f_1f_2)_*(\alpha)$.
\item[$A_3$)] Pullback is functorial: if $g_1,g_2$ are composable maps, $g_1^*(g_2^*(\alpha))=(g_1g_2)^*(\alpha)$.
\item[$A_{13}$)] Product and pullback commute: $g^*(\alpha\cdot \beta)=g^*(\alpha)\cdot g^*(\beta)$.
\item[$A_{23}$)] Pushforward and pullback commute: $g^*f_*\alpha=f'_*g^*\alpha$ for any Cartesian diagrams
\begin{center}
\begin{tikzcd}
Z'\arrow[r,"f'"]\arrow[d] & Y'\arrow[r]\arrow[d]& X'\arrow[d,"g"]\\
Z\arrow[r,"f"]& Y\arrow[r]& X
\end{tikzcd}
\end{center}
where $f$ is proper, and any class $\alpha\in \text{H}^\cdot(Z/X)$.
\item[$A_{123}$)] Projection formula: $\beta\cdot f_*\alpha= f'_*({g'}^*\beta\cdot \alpha)$ for any Cartesian diagram
\begin{center}
\begin{tikzcd}
Z'\arrow[r,"f'"]\arrow[d] & Y'\arrow[d,"g'"]\\
Z\arrow[r,"f"]& Y\arrow[r]& X
\end{tikzcd}
\end{center}
where $f$ is proper, and classes $\alpha\in \text{H}^\cdot(Z/X)$ and $\beta\in \text{H}^\cdot(Y'/Y)$.

\item[$C$)] Skew-commutativity: $g^*\alpha\cdot \beta=(-1)^{\deg(\alpha)\cdot \deg(\beta)}f^*(\beta)\cdot \alpha$ for any Cartesian diagram 
\begin{center}
\begin{tikzcd}
Y'\arrow[r,"f'"]\arrow[d,"g'"]& X'\arrow[d,"g"]\\
Y\arrow[r,"f"]& X
\end{tikzcd}
\end{center}
and classes $\alpha\in \text{H}^\cdot(Y/X)$ and $\beta\in \text{H}^\cdot(X'/X)$.
\end{enumerate}

\section{Total spaces of perfect complexes} \label{tot}
\subsection{} The \textit{total space} of a locally free sheaf $E$ (i.e. vector bundle) on a derived scheme $X$ is 
$$\Spec _{\mathcal{O}_X}\Sym E^\vee,$$
which is usually also denoted by $E$. This construction makes sense for any sheaf of $\mathcal{O}_X$ modules. This construction can also extended for $E$ a general quasicoherent complex, obtaining an $\infty$-functor
$$\text{Tot}\ :\ \{\text{perfect complexes over }X\}\ \longrightarrow\ \{\text{derived stacks over }X\}.$$
For details see subsection $3.3$ of \cite{T}, where they denote $\text{Tot}(E)=\mathbf{V}(E^\vee)$, or section $2$ of \cite{KV}. If $E$ is concentrated in the ``derived direction'' (has tor-amplitude in $[0,\infty)$) then 
$$\text{Tot}(E)\ =\ \Spec _{\mathcal{O}_X}\Sym E^\vee$$
is a derived scheme over $X$. If $E$ is concentrated in the ``stacky direction'' (has tor-amplitude in $(-\infty,0]$) then $E\to X$ is smooth of dimension $\text{rk}E$ and its fibres are classical higher stacks. For instance, if $E=(E_{-1}\to E_0)$ is a complex of vector bundles in degrees $-1$ and $0$ then
$$\text{Tot}(E)\ = \ [E_0/E_{-1}]$$
is a quotient stack by the linear action induced by $E_{-1}\to E_0$.

\subsection{} Let $E$ be a perfect complex, which from now on we conflate with its total space. Then the (co)tangent complexes of the projection $\pi:E\to X$ are
$$\mathbf{L}_\pi\ =\ \pi^*E^\vee, \ \ \ \mathbf{T}_\pi\ =\ \pi^*E.$$
It follows that $E\to X$ is quasismooth if and only if $E$ is concentrated in tor-amplitude $(-\infty,1]$. One can show that the zero section $X\to E$ is quasismooth if and only if $E$ has tor-amplitude in $(-\infty,0]$.

\subsection{} Associated to any distinguished triangle $E_1\to E_2\to E_3\stackrel{+1}{\to}$ of perfect complexes is a diagram of infinitely many squares of derived stacks
\begin{center}
\begin{tikzcd}[bo column sep]
&X\arrow[rd]&&X\arrow[rd]&&X\arrow[rd]&\\
\cdots &&E_1\arrow[ru]\arrow[rd]&&E_3\arrow[ru]\arrow[rd]&&E_2[1]\\
&E_3[-1]\arrow[ru]\arrow[rd]&&E_2\arrow[ru]\arrow[rd]&&E_1[1]\arrow[ru]\arrow[rd]&&\cdots \\
&&X\arrow[ru]&&X\arrow[ru]&&X
\end{tikzcd}
\end{center}
which are each both pushouts and pullbacks.

\section{Stratifications} \label{stratifications}

\subsection{} \label{closedstrat} Let $X$ be a smooth finite type space with a decreasing union of closed subspaces
$$X\ =\ Z_0\ \supseteq  \ Z_1\ \supseteq \ \cdots \textcolor{white}{aaaaaaaaa}\text{ with }\ \ \bigcap Z_k\ =\ \emptyset.$$
Thus $X$ is stratified by locally closed subspaces 
$$X_k\ =\ Z_{k-1}\setminus Z_{k}.$$
Assume that the $X_k$ are smooth, and that the Euler class of the normal bundle to $X_k$ inside $X$ is not a zero divisor. 

\begin{lem}
For $X$ as above, the restriction map $\text{H}^\cdot(X)\to \bigoplus \text{H}^\cdot(X_k)$ is injective.
\end{lem}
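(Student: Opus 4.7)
The plan is to induct on the stratification via the Gysin sequence, using the non-zero-divisor hypothesis to turn the long exact sequence into short exact ones.

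First I would replace the descending filtration by closed subsets with the corresponding ascending filtration by opens $U_k = X \setminus Z_k$, so that $\emptyset = U_0 \subseteq U_1 \subseteq \cdots \subseteq X$ exhausts $X$ (by Noetherianness of the finite-type space $X$ this stabilises at some $U_N = X$), each $U_k$ is smooth as an open subscheme of a smooth space, and $X_k = U_{k+1} \setminus U_k$ is closed in $U_{k+1}$ with open complement $U_k$. Note the normal bundle of the closed embedding $i_k : X_k \hookrightarrow U_{k+1}$ agrees with the normal bundle $N_k$ of $X_k$ inside $X$, because $U_{k+1}\hookrightarrow X$ is an open (hence \'etale) embedding.

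For each $k$ I would write down the Gysin long exact sequence associated to the complementary pair $(i_k, j_k)$:
$$\cdots \ \longrightarrow\ \text{H}^{\cdot-2c_k}(X_k)\ \stackrel{(i_k)_*}{\longrightarrow}\ \text{H}^\cdot(U_{k+1})\ \stackrel{j_k^*}{\longrightarrow}\ \text{H}^\cdot(U_k)\ \longrightarrow\ \cdots$$
where $c_k$ is the codimension. By the self-intersection formula, $i_k^*(i_k)_*$ is multiplication by $e(N_k)$, which is not a zero divisor by hypothesis; hence $(i_k)_*$ is injective and the long exact sequence breaks into short exact sequences
$$0\ \longrightarrow\ \text{H}^{\cdot-2c_k}(X_k)\ \stackrel{(i_k)_*}{\longrightarrow}\ \text{H}^\cdot(U_{k+1})\ \stackrel{j_k^*}{\longrightarrow}\ \text{H}^\cdot(U_k)\ \longrightarrow\ 0.$$

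Now I would induct on $k$, claiming that $\text{H}^\cdot(U_k) \to \bigoplus_{j<k}\text{H}^\cdot(X_j)$ is injective. The base $k=0$ is trivial. For the inductive step, take $\alpha \in \text{H}^\cdot(U_{k+1})$ with $\alpha|_{X_j} = 0$ for $j \le k$. Since $X_j \subseteq U_k$ for $j < k$, the restriction $j_k^*\alpha$ vanishes on each such $X_j$, so $j_k^*\alpha = 0$ by induction. By the short exact sequence $\alpha = (i_k)_*\beta$ for a unique $\beta$, and then
$$0\ =\ \alpha|_{X_k}\ =\ i_k^*(i_k)_*\beta\ =\ \beta\cdot e(N_k),$$
forcing $\beta = 0$ (since $e(N_k)$ is not a zero divisor) and hence $\alpha = 0$. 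Taking $k = N$ gives the lemma.

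There is no real obstacle here; the argument is a standard stratification-plus-Gysin induction, and the only nontrivial input is precisely the non-zero-divisor hypothesis, which is what makes the Gysin sequence split into short exact pieces.
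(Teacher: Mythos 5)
Your induction is the same as the paper's: the Gysin sequence for each pair $(X_k, U_k)$ inside $U_{k+1}$, the self-intersection formula, and the non-zero-divisor hypothesis splitting the long exact sequence into short exact sequences, followed by induction on the strata. The one place you diverge is in passing from the finite stages $U_k$ to $X$ itself. You appeal to Noetherianness to truncate the filtration at some $U_N = X$; the paper instead observes that the finite type hypothesis forces the codimensions $c_k$ to tend to infinity, so that in each fixed cohomological degree the restriction $\text{H}^n(X)\to \text{H}^n(X\setminus Z_k)$ is an isomorphism for $k\gg 0$, giving $\text{H}^\cdot(X)=\varprojlim \text{H}^\cdot(X\setminus Z_k)$ and reducing to the finite stages without assuming the chain stabilises. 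For the lemma as literally stated your shortcut is legitimate, but it is worth knowing that the stratifications this lemma is actually applied to (e.g.\ $\text{Coh}^d_r=\coprod_{\ell\ge 0}\text{Coh}^{d,\ell}_r$ by length of torsion subsheaf, with codimension $r\ell$) have infinitely many nonempty strata, and the ambient stack is only locally of finite type; there the chain of closed substacks does not stabilise and your Noetherian truncation fails, while the inverse-limit step goes through because the codimensions still go to infinity. So if you intend the lemma to cover those cases, you need the paper's $\varprojlim$ argument rather than stabilisation, and the ``only nontrivial input'' is then not just the non-zero-divisor hypothesis but also the codimension growth.
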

\begin{proof}
By the finite type assumption, the codimension of $X_k$ in $X$ tends to infinity as $k\to\infty$. It follows that
$$\text{H}^\cdot(X)\ =\ \varprojlim \text{H}^\cdot(X\setminus Z_k)$$
so it suffices for us to prove that
$$\text{H}^\cdot(X\setminus Z_k)\ \longrightarrow\ \bigoplus_{n=0}^{k-1}\text{H}^\cdot(X_n)$$
is injective. It is the identity when $k=1$. More generally, the Gysin sequence gives
$$\cdots\ \longrightarrow \ \text{H}^{\cdot+2c_{k-1}}(X_{k-1})\ \longrightarrow\ \text{H}^{\cdot}(X\setminus Z_{k})\ \longrightarrow\ \text{H}^{\cdot}(X\setminus Z_{k-1})\ \longrightarrow\ \cdots$$
By the assumption on the Euler class, this becomes:
$$0\ \longrightarrow \ \text{H}^{\cdot+2c_{k-1}}(X_{k-1})\ \longrightarrow\ \text{H}^{\cdot}(X\setminus Z_{k})\ \stackrel{\beta}{\longrightarrow}\ \text{H}^{\cdot}(X\setminus Z_{k-1})\ \longrightarrow\ 0.$$
A class in $\text{H}^\cdot(X\setminus Z_k)$ restricts to zero on the strata $X_0,...,X_{k-2}$ precisely if it lies in the kernel of $\beta$, i.e. is the pushforward of a class $\alpha$ from $X_{k-1}$. Restricting this class back to $X_{k-1}$ gives $\alpha$ times the Euler class, so is zero if and only if $\alpha=0$. Thus we are done: a class in $\text{H}^\cdot(X\setminus Z_k)$ restricts to zero on $X_0,...,X_{k-2},X_{k-1}$ only if it is zero.
\end{proof}

\subsection{} Let $f:X\to Y$ be a map between two stratified spaces as above. If it preserves the strata, i.e. induces maps
\begin{equation}\label{diag5}
\begin{tikzcd}
X_n\arrow[r,"\overline{\jmath}_n"]\arrow[d,"f_n"] & X\arrow[d,"f"]\\
Y_n\arrow[r,"j_n"]& Y
\end{tikzcd}
\end{equation}
We have commuting diagrams
\begin{center}
\begin{tikzcd}
\text{H}^\cdot(Y)\arrow[d,"f^*"]\arrow[r]& \bigoplus \text{H}^\cdot(Y_n)\arrow[d,"f_{n}^*"]&& \text{H}^\cdot(X)\arrow[d,"f_*"]\arrow[r]& \bigoplus \text{H}^\cdot(X_n)\arrow[d,"f_{n*}"]\\
\text{H}^\cdot(X)\arrow[r]& \bigoplus \text{H}^\cdot(X_n)&& \text{H}^{\cdot-2d}(Y)\arrow[r]& \bigoplus \text{H}^{\cdot-2d}(Y_n)
\end{tikzcd}
\end{center}
the right existing under the additional assumption that $f$ is proper, and transverse to the strata in the sense that (\ref{diag5}) is a pullback of \textit{derived} stacks. We have written $d=\dim X-\dim Y$ for its relative dimension. It follows from the previous lemma that 
\begin{cor}
$f^*$ and $f_*$ are uniquely determined by their restrictions $f_n^*$ and $f_{n*}$ to the strata.
\end{cor}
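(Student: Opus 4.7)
The plan is to leverage the injectivity established in the preceding lemma, which applies to both $X$ and $Y$ since they share the same stratification hypotheses. Concretely, the horizontal restriction maps in both displayed squares are injective, so each vertical map (i.e.\ $f^*$ and $f_*$) is pinned down once we know its composite with the appropriate horizontal injection.

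First I would treat $f^*$. The commuting square tells us that the composite
\[
\text{H}^\cdot(Y)\ \stackrel{f^*}{\longrightarrow}\ \text{H}^\cdot(X)\ \longrightarrow\ \bigoplus_n \text{H}^\cdot(X_n)
\]
equals $\bigoplus_n f_n^*$ precomposed with the stratum restriction $\text{H}^\cdot(Y)\to\bigoplus_n\text{H}^\cdot(Y_n)$. Since the right vertical map, which is the restriction map for $X$, is injective by the previous lemma, this composite determines $f^*$ uniquely. Thus two maps $\text{H}^\cdot(Y)\to\text{H}^\cdot(X)$ agreeing with the given data $\{f_n^*\}$ on the strata must coincide.

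The argument for $f_*$ is formally identical but uses the injectivity statement on the target side. The commuting square for pushforward shows
\[
\text{H}^{\cdot}(X)\ \stackrel{f_*}{\longrightarrow}\ \text{H}^{\cdot-2d}(Y)\ \longrightarrow\ \bigoplus_n \text{H}^{\cdot-2d}(Y_n)
\]
agrees with $\bigoplus_n f_{n*}$ precomposed with the stratum restriction on $X$. The final map is again injective by the preceding lemma applied to $Y$, so $f_*$ is determined by the collection $\{f_{n*}\}$.

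There is no real obstacle here: the corollary is essentially a formal consequence of the lemma together with the naturality of pullback/pushforward with respect to the stratum inclusions. The only mild point to note is that for $f_*$ one must remember to invoke the hypothesis that $Y$ itself (and not just $X$) satisfies the stratification hypotheses of the lemma, so that the restriction map on $Y$ is injective; this is implicit in the setup since both spaces are assumed to be smooth, finite type, and stratified with nonzerodivisor Euler classes.
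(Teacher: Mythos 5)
Your argument is correct and is exactly the intended one: the paper gives no explicit proof beyond ``it follows from the previous lemma,'' and your diagram chase — postcomposing with the injective stratum-restriction on $X$ to pin down $f^*$, and on $Y$ to pin down $f_*$ — is the standard fleshing out. Your closing remark correctly identifies the one point worth making explicit, namely that the lemma must be applied to the target space in each case, which the setup ($X$ and $Y$ both stratified as in the lemma) guarantees.
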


\section{Stabiliser groups}  \label{stab}

\subsection{} \label{stabsch} We define stabiliser subgroups of quotient stacks of schemes, recalling section $8.4.1$ of \cite{O}. Let $X\to Y$ be a map of schemes, and $\text{G}/Y$ a smooth group scheme acting on $X$. Fix a point
$$x\ :\ \Spec k\ \longrightarrow\ X$$
where $k$ is algebraically closed, and let $y$ be its image in $Y$. Note that there is a map
$$\lambda\ :\ \text{G}_y\ \longrightarrow\ X_y\textcolor{white}{aaaaaaaaaa} g\ \longrightarrow\ g\cdot x.$$
We now write $\text{G}_x$ for the fibre product
\begin{center}
\begin{tikzcd}
\text{G}_x\arrow[r]\arrow[d] & \Spec k\arrow[d,"x"]\\
\text{G}_y\arrow[r,"\lambda"]& X_y
\end{tikzcd}
\end{center}
It defines a subgroup $\text{G}_x\subseteq \text{G}_y$. Moreover, the groups $\text{G}_x$ and $\text{G}_{g\cdot x}$ are conjugate, so that to the point
$$\overline{x}\ :\ \Spec k\ \longrightarrow\ X/\text{G}$$
we may associate a group scheme $\text{G}_{\overline{x}}$, the \textbf{stabiliser subgroup} of the the point $\overline{x}$.

\begin{prop}\label{dmprop}
The stack $X/\text{G}$ is Deligne-Mumford if and only if for every point $\overline{x}:\Spec k\to X/\text{G}$ with $k$ algebraically closed, $\text{G}_{\overline{x}}$ is etale over $\overline{x}$.
\end{prop}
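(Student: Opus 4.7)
The plan is to use the following standard criterion: an Artin stack $\mathcal{X}$ locally of finite type is Deligne--Mumford if and only if its diagonal $\Delta_{\mathcal{X}}$ is unramified, equivalently the inertia stack $I\mathcal{X}=\mathcal{X}\times_{\mathcal{X}\times\mathcal{X}}\mathcal{X}$ is unramified over $\mathcal{X}$. Since $I\mathcal{X}\to\mathcal{X}$ is a group stack, by left translation this unramifiedness can be detected on the fibre over each geometric point, and for a finite type group scheme over an algebraically closed field, unramified is the same as étale (i.e.\ finite reduced). So the statement reduces to showing that the fibres of $I(X/\text{G})\to X/\text{G}$ are precisely the groups $\text{G}_{\overline{x}}$ defined in subsection \ref{stabsch}.

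First I would make this explicit. Using the smooth atlas $\pi:X\to X/\text{G}$, a routine manipulation of the action groupoid presentation gives a Cartesian square
\begin{center}
\begin{tikzcd}
\text{Stab}\arrow[r]\arrow[d]& X\arrow[d,"\pi"]\\
I(X/\text{G})\arrow[r]& X/\text{G}
\end{tikzcd}
\end{center}
where $\text{Stab}=\{(x,g)\in X\times_Y \text{G} : g\cdot x = x\}$ is the global stabiliser group scheme over $X$, realised as the equaliser of the action map and projection $\text{G}\times_Y X\rightrightarrows X$. Because $\pi$ is smooth and surjective, ``$I(X/\text{G})\to X/\text{G}$ is unramified'' is equivalent to ``$\text{Stab}\to X$ is unramified''. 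The fibre of $\text{Stab}\to X$ over a geometric point $x:\Spec k\to X$ is exactly $\text{G}_x\subseteq\text{G}_y$ as defined in \ref{stabsch}, so applying the group scheme fibre criterion completes the argument, once we note that the image of $x$ is the given $\overline{x}\in X/\text{G}$ and that $\text{G}_{\overline{x}}$ was by construction the conjugacy class of $\text{G}_x$.

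The main obstacle is justifying the identification $I(X/\text{G})\times_{X/\text{G}}X\simeq\text{Stab}_{\text{G}}(X)$ — everything else is then formal. This is a slightly delicate descent statement for the action groupoid $\text{G}\times_Y X\rightrightarrows X$, but it is standard material (see e.g.\ the inertia stack calculation in \cite{O}, section 8.4), and once in place the rest of the proof is pure translation between ``unramified'' and ``étale'' for finite type group schemes over algebraically closed fields.
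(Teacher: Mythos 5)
Your argument is correct. The paper does not actually prove this proposition — its ``proof'' is just a citation of Corollary 8.4.2 of \cite{O} — and your chain of equivalences (Deligne--Mumford $\Leftrightarrow$ unramified diagonal $\Leftrightarrow$ unramified inertia, identified after pullback along the smooth atlas $X\to X/\text{G}$ with the stabiliser group scheme $\text{Stab}\to X$, whose geometric fibres are exactly the $\text{G}_{\overline{x}}$, together with unramified $=$ \'etale for finite type group schemes over an algebraically closed field) is precisely the standard proof underlying that citation; the only cosmetic point is that the reduction to geometric fibres needs no translation argument, since unramifiedness of any locally finite-type morphism is detected fibrewise.
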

\begin{proof}
This is corollary $8.4.2$ of \cite{O}.
\end{proof}

\subsection{} For our purposes we will need a relative version of the above. Let $X\to Y$ now be a representable map of derived Artin stacks and $\text{G}$ a smooth group scheme over $Y$ acting on $X$. So above each scheme $Y'\to Y$ we are in the situation of \ref{stabsch}. Because $X_y$ is a scheme, the discussion in \ref{stabsch} can be repeated verbatim, defining the \textbf{relative stabiliser subgroup} $\text{G}_{\overline{x}}$ associated to a point $\overline{x}:\Spec k\to X/\text{G}$.

Say that a map of stacks $X\to Y$ is \textit{relative Deligne-Mumford} if for any any Delgine Mumford stack $V$ mapping into $Y$, the pullback 
\begin{center}
\begin{tikzcd}
U\arrow[r]\arrow[d]& V\arrow[d]\\
X\arrow[r]& Y
\end{tikzcd}
\end{center}
is a Deligne Mumford stack. Or equivalently the above, but only only using $V$ a test \textit{scheme} rather than Deligne Mumford stack.

\begin{prop} \label{dmrelprop}
If all the relative stabiliser groups $\text{G}_{\overline{x}}$ are etale then $X/\text{G}\to Y$ is relative Deligne Mumford. 
\end{prop}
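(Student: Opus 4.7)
The plan is to reduce to the scheme case proved in Proposition \ref{dmprop}. Given any scheme $V \to Y$ (it suffices to check schemes by smooth descent), I form the pullback
\begin{center}
\begin{tikzcd}
X_V/\text{G}_V \arrow[r]\arrow[d] & X/\text{G}\arrow[d]\\
V\arrow[r] & Y
\end{tikzcd}
\end{center}
where $X_V = X\times_Y V$ and $\text{G}_V = \text{G}\times_Y V$. Because $X\to Y$ is representable, $X_V$ is a scheme, and the action of $\text{G}_V$ on $X_V$ is an action of a smooth group scheme over $V$. The commutation of stack quotients with base change gives the identification $(X/\text{G})\times_Y V \simeq X_V/\text{G}_V$ displayed above, so the task reduces to showing that $X_V/\text{G}_V$ is a Deligne-Mumford stack.

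To apply Proposition \ref{dmprop} to this absolute situation, I need to verify that for every geometric point $\overline{x}': \Spec k \to X_V/\text{G}_V$, the stabiliser $(\text{G}_V)_{\overline{x}'}$ is \'etale. The key observation is that this stabiliser agrees with a relative stabiliser for the original action: composing $\overline{x}'$ with $X_V/\text{G}_V \to X/\text{G}$ gives a point $\overline{x}:\Spec k \to X/\text{G}$, and unwinding the construction of \ref{stabsch} shows
$$(\text{G}_V)_{\overline{x}'}\ =\ \text{G}_{\overline{x}},$$
since both are defined by the same pullback square, involving only the fibre of $\text{G}$ and of $X$ over the common image point $y \in Y$ of $\overline{x}$ and $\overline{x}'$. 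By hypothesis the right-hand side is \'etale, hence so is the left.

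Therefore the smooth group scheme $\text{G}_V/V$ acts on the scheme $X_V$ with \'etale stabilisers at all geometric points, and Proposition \ref{dmprop} gives that $X_V/\text{G}_V$ is Deligne-Mumford. As $V\to Y$ was arbitrary, $X/\text{G}\to Y$ is relative Deligne-Mumford. The only subtlety is the identification of relative with absolute stabilisers under pullback, which is immediate from the fibrewise nature of the definitions; once this is noted, the proof is a direct reduction to the scheme-level statement.
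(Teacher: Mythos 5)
Your proof is correct and follows essentially the same route as the paper: base change along a test scheme $V\to Y$, identify the quotient of the pullback with the pullback of the quotient, observe that the stabiliser of the pulled-back action at a geometric point coincides with the relative stabiliser of the original action (both being computed from the same fibre over the common image point in $Y$), and conclude via Proposition \ref{dmprop}. No gaps.
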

\begin{proof}
Let $V$ be a scheme admitting a map $V\to Y$, and $\widetilde{\text{G}}$ the pullback of $\text{G}$ by this map. It is a group scheme over $V$, and acts on $U$. There is a pullback
\begin{center}
\begin{tikzcd}
U/\widetilde{\text{G}}\arrow[d]\arrow[r]& V\arrow[d]\\
X/\text{G}\arrow[r]& Y
\end{tikzcd}
\end{center}
It is enough then to show that $U/\widetilde{\text{G}}$ is a Deligne Mumford stack. Note that $U$ is a scheme. Let
$$u\ :\ \Spec k\ \longrightarrow\ U$$
be a point in $U$, with $k$ algebraically closed, and $x,\overline{u},v,y$ its images in $X,U/\widetilde{\text{G}},V,Y$. To finish it is enough to show that $\widetilde{\text{G}}_{\overline{u}}$ is etale over $\overline{u}$. Note that
$$X_y \ =\ U_v$$
so that the two diagrams
\begin{center}
\begin{tikzcd}
\text{G}_x\arrow[r]\arrow[d] & \Spec k\arrow[d,"x"]&& \widetilde{\text{G}}_u\arrow[r]\arrow[d] & \Spec k\arrow[d,"u"]\\
\text{G}_{y}\arrow[r]& X_{y}&& \widetilde{\text{G}}_{v}\arrow[r]& U_{v}
\end{tikzcd}
\end{center}
are isomorphic. Thus as by assumption $\text{G}_x$ is etale over the point, it follows that so is $\widetilde{\text{G}}_u$ and so by Proposition \ref{dmprop} the quotient $U/\widetilde{\text{G}}$ is Deligne Mumford.
\end{proof}

\newpage

\label{references}


\begin{thebibliography}{3000}



\bibitem[At]{At}
Atiyah, M.F., 1985. \textit{Circular symmetry and stationary-phase approximation}. Astérisque, 131, pp.43-59. \url{http://www.numdam.org/item/?id=AST_1985__131__43_0}

\bibitem[Ay]{Ay}
Ayoub, J., 2007. \textit{Les six opérations de Grothendieck et le formalisme des cycles évanescents dans le monde motivique}. Montrouge: Société mathématique de France. \url{http://www.numdam.org/item/AST_2007__315__1_0/}

\bibitem[AB]{AB} 
Atiyah, M.F. and Bott, R., 1984. \textit{The moment map and equivariant cohomology}. Topology, 23(1), pp.1-28. \url{https://www.math.stonybrook.edu/~mmovshev/MAT570Spring2008/BOOKS/atiyahbott_moment.pdf}

\bibitem[AM]{AM} 
Arakawa, T. and Moreau, A., 2021. \textit{Arc spaces and vertex algebras}.


\bibitem[Be]{Be} Bernstein, J., 1983. \textit{Algebraic theory of D-modules}, lecture notes. \url{https://gauss.math.yale.edu/~il282/Bernstein_D_mod.pdf}

\bibitem[B]{B} 
Borcherds, R.E., 1999. \textit{Quantum vertex algebras}.  \url{https://arxiv.org/abs/math/9903038}


\bibitem[BD]{BD} 
Beilinson, A. and Drinfeld, V., 1991. \textit{Quantization of Hitchin’s integrable system and Hecke eigensheaves}. \url{https://math.uchicago.edu/~drinfeld/langlands/QuantizationHitchin.pdf}



\bibitem[BPV]{BPV} 
Belavin, A.A., Polyakov, A.M. and Zamolodchikov, A.B., 1984. \textit{Infinite conformal symmetry in two-dimensional quantum field theory}. Nuclear Physics B, 241(2), pp.333-380.

   
\bibitem[CS]{CS} 
Cisinski, D.C. and Déglise, F., 2019. \textit{Triangulated categories of mixed motives} (Vol. 6). Berlin: Springer. \url{https://arxiv.org/abs/0912.2110}

\bibitem[Da]{Da} Davison, B., 2017. \textit{The critical CoHA of a quiver with potential}. Quarterly Journal of Mathematics, 68(2), pp.635-703. \url{https://arxiv.org/abs/1311.7172}

\bibitem[D{\'e}]{Deg}
Déglise, F., 2017. \textit{Bivariant theories in motivic stable homotopy}.   \url{https://arxiv.org/abs/1705.01528}

\bibitem[De]{Del}
Deligne, P., 1968. \textit{Théoreme de Lefschetz et criteres de dégénérescence de suites spectrales}. Publications Mathématiques de l'Institut des Hautes Études Scientifiques, 35(1), pp.107-126. \url{http://www.numdam.org/item/PMIHES_1968__35__107_0/}

\bibitem[DGai]{DGai} 
Drinfeld, V. and Gaitsgory, D., 2013. \textit{On some finiteness questions for algebraic stacks}. Geometric and Functional Analysis, 23(1), pp.149-294. \url{https://arxiv.org/abs/1108.5351}

\bibitem[DGal]{DGal}
Drew, B. and Gallauer, M., 2020. \textit{The universal six-functor formalism}.  \url{https://arxiv.org/abs/2009.13610}

\bibitem[DM]{DM} 
Davison, B. and Sven, M., 2020. \textit{Cohomological Donaldson–Thomas theory of a quiver with potential and quantum enveloping algebras}. Inventiones mathematicae, 221(3), pp.777-871. \url{https://arxiv.org/abs/1601.02479}


\bibitem[E]{E}
Edidin, D., 2010. \textit{Equivariant geometry and the cohomology of the moduli space of curves}.  \url{https://arxiv.org/abs/1006.2364}


\bibitem[EK]{EK}
Etingof, P. and Kazhdan, D., 1996. \textit{Quantization of Lie bialgebras, I}. Selecta Mathematica, 2(1), pp.1-41. \url{https://arxiv.org/abs/q-alg/9506005}



\bibitem[F]{F}
Fulton, W., 2016. \textit{Intersection theory} (pp. 96-130). Princeton University Press.


\bibitem[FB]{FB}
Frenkel, E. and Ben-Zvi, D., 2004. \textit{Vertex algebras and algebraic curves} (No. 88). American Mathematical Soc..


\bibitem[FG]{FG}
Francis, J. and Gaitsgory, D., 2012. \textit{Chiral koszul duality}. Selecta Mathematica, 18(1), pp.27-87. \url{https://arxiv.org/abs/1103.5803}


\bibitem[FM]{FM} Fulton, W. and MacPherson, R., 1981. \textit{Categorical framework for the study of singular spaces} (Vol. 243). American Mathematical Soc..

\bibitem[Ga]{Ga}
Gaitsgory, D., 2021. \textit{On factorization algebras arising in the quantum geometric Langlands theory}. Advances in Mathematics, 391, p.107962.

\bibitem[GL]{GL}
Gaitsgory, D. and Lurie, J., 2019. \textit{Weil's conjecture for function fields}. Princeton University Press.

\bibitem[GP]{GP}
Graber, T. and Pandharipande, R., 1997. \textit{Localization of virtual classes}. \url{https://arxiv.org/abs/alg-geom/9708001}


\bibitem[Gr]{Gr} Gross, J., 2019. \textit{The homology of moduli stacks of complexes}. arXiv preprint. \url{https://arxiv.org/abs/1907.03269}

\bibitem[GR]{GR}
Gaitsgory, D. and Rozenblyum, N., 2011. \textit{Crystals and D-modules}. arXiv preprint arXiv:1111.2087. \url{https://arxiv.org/abs/1111.2087}


\bibitem[He]{He}
Heinloth, J., 2012. \textit{Cohomology of the moduli stack of coherent sheaves on a curve}. Geometry and Arithmetic, pp.165-171. \url{https://www.uni-due.de/~hm0002/Artikel/CohNote_v1a.pdf}


\bibitem[HKR]{HKR}
 Hekking, J., Khan, A. and Rydh, D., 2021. \textit{Deformation to the normal cone and blow-ups via derived Weil restrictions}. In preparation.


\bibitem[Hl]{Hl}
Halpern-Leistner, D., 2020. \textit{Derived $\Theta $-stratifications and the $ D $-equivalence conjecture}.  \url{https://arxiv.org/abs/2010.01127}


\bibitem[HTT]{HTT} 
Hotta, R. and Tanisaki, T., 2007. \textit{D-modules, perverse sheaves, and representation theory} (Vol. 236). Springer Science and Business Media.


\bibitem[Hu]{Hu} Hubbard, K., 2009. \textit{Vertex coalgebras, comodules, cocommutativity and coassociativity}. Journal of Pure and Applied Algebra, 213(1), pp.109-126. \url{http://faculty.sfasu.edu/hubbardke/vc.pdf}

\bibitem[I]{I}
Iversen, B., 2012. \textit{Cohomology of sheaves}. Springer Science  and Business Media.


\bibitem[J]{J} 
Joyce, D., 2018. \textit{Ringel–Hall style vertex algebra and Lie algebra structures on the homology of moduli spaces}. In preparation. \url{https://people.maths.ox.ac.uk/joyce/hall.pdf}


\bibitem[JS]{JS}
Jelisiejew, J. and Sienkiewicz, Ł., 2019. \textit{Białynicki-Birula decomposition for reductive groups}. Journal de Mathématiques Pures et Appliquées, 131, pp.290-325. \url{https://arxiv.org/abs/1805.11558}


\bibitem[K]{K} 
Khan, A.A., 2019. \textit{Virtual fundamental classes of derived stacks I}.  \url{https://arxiv.org/abs/1909.01332}

\bibitem[KS]{KS}
Kontsevich, M. and Soibelman, Y., 2010. \textit{Cohomological Hall algebra, exponential Hodge structures and motivic Donaldson-Thomas invariants}.  \url{https://arxiv.org/abs/1006.2706}


\bibitem[KV]{KV}
Kapranov, M. and Vasserot, E., 2019. \textit{The cohomological Hall algebra of a surface and factorization cohomology}.  \url{https://arxiv.org/abs/1901.07641}


\bibitem[Li]{Li}
Li, H., 2006. \textit{Constructing quantum vertex algebras}. International Journal of Mathematics, 17(04), pp.441-476. \url{https://arxiv.org/abs/math/0505293}

\bibitem[Lu]{Lu}
Lurie, J., 2009. \textit{On the classification of topological field theories}. In Current developments in mathematics, 2008 (pp. 129-280). International Press of Boston. \url{https://arxiv.org/abs/0905.0465}

\bibitem[Liu]{Liu}
Liu, K., 2006. \textit{Localization and conjectures from string duality}. In Differential Geometry And Physics (pp. 63-105). \url{https://arxiv.org/abs/math-ph/0701057}


\bibitem[LZ1]{LZ1}
Liu, Y. and Zheng, W., 2012. \textit{Enhanced six operations and base change theorem for higher Artin stacks}.  \url{https://arxiv.org/abs/1211.5948}

\bibitem[LZ2]{LZ2}
Liu, Y. and Zheng, W., 2014. \textit{Enhanced adic formalism and perverse t-structures for higher Artin stacks}.  \url{https://arxiv.org/abs/1404.1128} 

\bibitem[O]{O}
Olsson, M., 2016. \textit{Algebraic spaces and stacks} (Vol. 62). American Mathematical Soc..


\bibitem[RSYZ]{RSYZ}
Rapčák, M., Soibelman, Y., Yang, Y. and Zhao, G., 2020. \textit{Cohomological Hall algebras, vertex algebras and instantons}. Communications in Mathematical Physics, 376(3), pp.1803-1873. \url{https://arxiv.org/abs/1810.10402} 


\bibitem[Sa]{Sa}
Saito, M., 1989. \textit{Introduction to mixed Hodge modules}. Astérisque, 179(180), pp.145-162. \url{http://www.numdam.org/item/?id=AST_1989__179-180__145_0}

\bibitem[Sz]{Sz}
Szendroi, B., 2014. \textit{Cohomological Donaldson-Thomas theory}. Proceedings of String-Math, 2015. \url{https://arxiv.org/abs/1503.07349}

\bibitem[T]{T}
Toën, B., 2014. \textit{Derived algebraic geometry}. EMS Surveys in Mathematical Sciences, 1(2), pp.153-240. \url{https://arxiv.org/abs/1401.1044}

\bibitem[YZ1]{YZ1}
Yang, Y. and Zhao, G., 2018. \textit{The cohomological Hall algebra of a preprojective algebra}. Proceedings of the London Mathematical Society, 116(5), pp.1029-1074. \url{https://arxiv.org/abs/1407.7994}

\bibitem[YZ2]{YZ2}
Yang, Y. and Zhao, G., 2018. \textit{Cohomological Hall algebras and affine quantum groups}. Selecta Mathematica, 24(2), pp.1093-1119. \url{https://arxiv.org/abs/1604.01865}
\end{thebibliography}
\end{document}